\author[Favero]{David Favero}
\address{
	\begin{tabular}{l}
		David Favero \\
		\hspace{.1in} University of Alberta, Department of Mathematical and Statistical Sciences \\
		\hspace{.1in} Central Academic Building 632, Edmonton, AB, Canada T6G 2C7 \\
		\hspace{.1in} Korea Institute for Advanced Study \\
		\hspace{.1in} 85 Hoegiro, Dongdaemun-gu, Seoul, Republic of Korea 02455 \\
		\hspace{.1in} Email: {\bf favero@ualberta.ca} \\
	\end{tabular}
}
\author[Kim]{Bumsig Kim}
\address{
	\begin{tabular}{l}
		Bumsig Kim \\
		\hspace{.1in} Korea Institute for Advanced Study \\
		\hspace{.1in} 85 Hoegiro, Dongdaemun-gu, Seoul, Republic of Korea 02455 \\
		\hspace{.1in} Email: {\bf bumsig@kias.re.kr} \\
	\end{tabular}
}
\newtheorem{Thm}{Theorem}[section]
\newtheorem{Prop}[Thm]{Proposition}
\newtheorem{Prop/Def}[Thm]{Proposition/Definition}
\newtheorem{Def}[Thm]{Definition}
\newtheorem{disclaimer}[Thm]{Disclaimer}
\newtheorem{Def/Thm}[Thm]{Definition/Theorem}
\newtheorem{Cor}[Thm]{Corollary}
\newtheorem{Lemma}[Thm]{Lemma}
\theoremstyle{definition}
\newtheorem{Rmk}[Thm]{Remark}
\newtheorem{example}[Thm]{Example}
\numberwithin{equation}{section}
\newcommand{\ti }{\times}
\newcommand{\ot }{\otimes}
\newcommand{\ra }{\rightarrow}
\newcommand{\Hom }{{\mathrm{Hom}}}
\newcommand{\tr }{{\mathrm{tr}}}
\newcommand{\Spec}{{\mathrm{Spec}}\hspace{.5mm}}
\newcommand{\Ker}{{\mathrm{Ker}}}
\newcommand{\Sym}{{\mathrm{Sym}}}
\newcommand{\rank }{{\mathrm{rank}}}
\newcommand{\cA}{{\mathcal{A}}}
\newcommand{\cO}{{\mathcal{O}}}
\newcommand{\cE}{{\mathcal{E}}}
\newcommand{\cF}{{\mathcal{F}}}
\newcommand{\cH}{{\mathcal{H}}}
\newcommand{\cP}{{\mathcal{P}}}
\newcommand{\cQ}{{\mathcal{Q}}}
\newcommand{\cV}{{\mathcal{V}}}
\newcommand{\cC}{{\mathcal{C}}}
\newcommand{\cY}{{\mathcal{Y}}}
\newcommand{\cT}{{\mathcal{T}}}
\newcommand{\cX}{{\mathcal{X}}}
\newcommand{\cK}{{\mathcal{K}}}
\newcommand{\fC}{{\mathfrak{C}}}
\newcommand{\HH}{{\mathbb H}}
\newcommand{\PP }{{\mathbb P}}
\newcommand{\QQ }{{\mathbb Q}}
\newcommand{\CC }{{\mathbb C}}
\newcommand{\ZZ }{{\mathbb Z}}
\newcommand{\RR }{{\mathbb R}}
\newcommand{\ke }{{\varepsilon }}
\newcommand{\kb }{{\beta}}
\newcommand{\ka }{{\alpha}}
\newcommand{\kg }{{\gamma}}
\newcommand{\kG }{{\Gamma}}
\newcommand{\ch}{\mathrm{ch}}
\newcommand{\Dol}{\sA^{(0,\bullet)}_{\bar \partial}}
\newcommand{\dR}{\sA^{\bullet}_{\mathrm{dR}}}
\newcommand{\Dolbb}{\sA^{(\bullet,\bullet)}}
\newcommand{\rmlog}{\text{log}}
\def\nolog{ \omega_{\fC}}
\newcommand{\sHom}{\cH\kern -.5pt om}
\newcommand{\lan}{\langle}
\newcommand{\ran}{\rangle}
\newcommand{\cS}{\mathcal{S}}
\newcommand{\State}{\mathscr{H}}
\newcommand{\fB}{\mathfrak{B}}
\newcommand{\LL}{\mathbb{L}}
\newcommand{\vir}{\mathrm{vir}}
\newcommand{\sH}{\mathscr{H}}
\newcommand{\sA}{\mathscr{A}}
\newcommand{\Mgr}{\overline{M}_{g, r}}
\newcommand{\td}{\mathrm{td}}
\newcommand{\cB}{\mathcal{B}}
\newcommand{\Cone}{\mathrm{Cone}}
\newcommand{\fBo}{\fB^{\circ}}
\newcommand{\sG}{\mathscr{G}}
\newcommand{\tot}{\mathrm{tot}}
\newcommand{\cone}{\mathrm{Cone}}
\newcommand{\diag}{\mathsf{diag}}
\newcommand{\rC}{\mathrm{C}}
\newcommand{\tda}{\underline{\mathrm{td}}}
\newcommand{\CR}{\CC^{\ti}_{R}}
\newcommand{\virdim}{\mathrm{virdim}}
\newcommand{\bfdeg}{d_w}
\newcommand{\one}{\mathbbm{1}}
\newcommand{\age}{\mathrm{age}}
\newcommand{\inv}{\mathrm{inv}}
\newcommand{\tdch}{\mathrm{tdch}}
\newcommand{\tM}{{\tt{M}}}
\newcommand{\tX}{{\tt{X}}}
\newcommand{\tY}{{\tt{Y}}}
\newcommand{\tZ}{{\tt{Z}}}
\newcommand{\tT}{{\tt T}}
\newcommand{\tW}{{\tt W}}
\newcommand{\tV}{{\tt V}}
\newcommand{\cG}{\mathcal{G}}
\newcommand{\cR}{\mathcal{R}}
\newcommand{\fU}{\mathfrak{U}}
\newcommand{\bfk}{{\bf k}}
\newcommand{\vC}{\text{\bf \v{C}} }
\newcommand{\ath}{\widehat{at}}
\newcommand{\Thb}{\mathrm{Th}^{\bullet}}
\newcommand{\Th}{\mathrm{Th}^{\bullet}\mathrm{GK}}
\newcommand{\ThbGod}{\mathrm{Th}^{\bullet}\mathrm{G}}
\newcommand{\Kos}{\mathrm{K}}
\newcommand{\God}{\mathrm{G}}
\newcommand{\GodKos}{\mathrm{GK}}
\newcommand{\tN}{{\tt N}}
\newcommand{\ug}{\underline{h}}
\newcommand{\U}{\mathfrak{U}}
\newcommand{\Z}{\mathfrak{Z}}
\newcommand{\zzeta}{\mathfrak{n}}
\newcommand{\fullS}{S}
\newcommand{\GammaZ}{\Gamma _{\tZ}}
\newcommand{\ubfk}{\underline{\bfk}}
\newcommand{\tL}{{\tt L}}
\newcommand{\cXo}{\cX_0}
\newcommand{\So}{S^{\circ}}
\newcommand{\LGvir}{[\U]_W^{\vir}}
\newcommand{\Ugrd}{\U_{g, r, d}}
\newcommand{\tdchc}{\tdch _c}
\begin{document}

\title[General GLSM invariants \& their CoFTs]{General GLSM invariants and their cohomological field theories}

\begin{abstract}
We construct GLSM invariants for a general choice of stability in both the narrow and broad sector cases and prove they form a Cohomological Field Theory.  This is obtained by forming the analogue of a virtual fundamental class which lives in the local cohomology of the twisted Hodge complex.  This general construction comes from the use of two new ingredients. First, the use of the Thom-Sullivan and Godement resolutions applied to matrix factorizations are introduced  to handle poorly behaved (non-separated) moduli spaces.  Second, a localized Chern character map built from the Atiyah class of a matrix factorization is utilized to forgo the use of Hochschild homology.
\end{abstract} 

\maketitle 

\tableofcontents

\section{Introduction}

In mathematics, a gauged linear sigma model  $(V, \Gamma, \chi, w, \nu)$ is a fairly straightforward collection of data.     
Gauged linear sigma models (GLSMs)  
roughly consist of a choice of complex vector space $V$, a reductive subgroup $\Gamma \subseteq GL(V)$, 
a semi-invariant polynomial function $w$ on $V$, and a character $\nu$ of $\Gamma$.  The name GLSM is coined from high energy theoretical physics 
where this type of data is used to describe physical models for string theory.

GLSMs are an interesting object to study mathematically since they can specialize to objects appearing in both K\"ahler geometry and singularity theory (such as complete intersections in projective space and quantum singularities or affine Landau-Ginzburg models).  
Furthermore, by varying the character $\nu$, one can often make comparisons between these subjects (see Example~\ref{ex: LG/CY}).

This paper focuses on constructing a curve-counting enumerative theory for GLSMs broadly (pun intended) for many types of stability.  More precisely, to a GLSM $(V, \Gamma, \chi, w, \nu)$ one associates a graded vector space
\[
\State :=  \HH ^* (I\cX , (\Omega ^{\bullet}_{I\cX}, dw )) (\text{see }\S\ref{subsection:State})
\]
called the \emph{state space}.  Think of this as the cohomology of the GLSM.  
Our GLSM invariants are then a collection of linear maps
\[
\Omega_{g,r,d}: \State^{\otimes r} \to  H^*(\overline{M}_{g,r}^{an}, \CC) (\text{see }\S\ref{sec glsm inv})
\]
landing in the cohomology of the moduli space of genus $g$ curves of degree $d$ with $r$ markings.   

The GLSM invariants $\Omega_{g,r,d}$ are sometimes called the A-model of the GLSM.  They are a deformation invariant, independent of the complex structure of the GLSM.

Together, these maps satisfy a number of beautiful structural properties which were axiomatized by Kontsevich--Manin and called cohomological field theories.  Our invariants satisfy a later incarnation of these axioms.  The precise statement is the following.  
\begin{Thm}
Under mild assumptions (see page \pageref{eq: fixedloci}), the GLSM invariants
$$\{\Omega _{g, r, d}\}_{2g-2+r > 0, d\in  \Hom_{\ZZ} (\hat{G}, \QQ)}$$
form a cohomological field theory with unit in the sense of \cite{Pan18}.
\end{Thm}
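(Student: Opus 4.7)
The plan is to verify, one by one, the defining properties of a cohomological field theory with unit in the form of \cite{Pan18}: symmetry under permutations of the $r$ markings, compatibility with the two boundary gluing morphisms (separating and non-separating), the unit axiom for the forgetful morphism $\pi\colon \overline{M}_{g,r+1}\to\overline{M}_{g,r}$, and the normalization $\Omega_{0,3,0}(v_1,v_2,\one)=\eta(v_1,v_2)$ for the state-space pairing. Symmetry is the mildest axiom: both $\Ugrd$ and the twisted Hodge complex on the inertia stack $I\cX$ are constructed symmetrically in the markings, and the localized Chern character obtained from the Atiyah class is functorial, so $S_r$-equivariance of $\Omega_{g,r,d}$ should follow once one tracks the permutation of the evaluation maps at the marked points.

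The bulk of the work is the two gluing axioms. For a separating node I would pull back $\Omega_{g,r,d}$ along the gluing morphism $q\colon \overline{M}_{g_1,r_1+1}\times\overline{M}_{g_2,r_2+1}\to\overline{M}_{g,r}$ and express the result as a sum over splittings $d=d_1+d_2$ of products $\Omega_{g_1,r_1+1,d_1}\otimes\Omega_{g_2,r_2+1,d_2}$ contracted at the node through a pair of dual bases of $\State$. This requires three ingredients: first, a virtually Cartesian comparison of the appropriate boundary locus in $\Ugrd$ with the fiber product $\U_{g_1,r_1+1,d_1}\times_{I\cX}\U_{g_2,r_2+1,d_2}$ over the diagonal in the inertia stack; second, a decomposition of the restricted matrix factorization at the node as an external tensor product of the two factor matrix factorizations under the evaluation identifications; third, multiplicativity of the Atiyah class, and hence of the localized Chern character, with respect to this tensor decomposition, which is precisely the reason the Thom--Sullivan and Godement resolutions were set up. The non-separating axiom is handled by the same strategy applied to the self-gluing map $\overline{M}_{g-1,r+2}\to\overline{M}_{g,r}$.

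The unit axiom is verified by comparing $\Omega_{g,r+1,d}$ evaluated with $\one\in\State$ at the $(r+1)$-st marking with $\pi^*\Omega_{g,r,d}$. It reduces to showing that inserting $\one$ corresponds to pulling back the relevant matrix factorization along the universal curve and that the localized Chern character commutes with this pullback, both of which are formal consequences of the naturality of the Atiyah class in the Godement/Thom--Sullivan framework; the genus-zero three-pointed normalization is then a direct computation with $\Omega_{0,3,0}$. I expect the main obstacle to be the tensor-product decomposition of matrix factorizations at the node together with the multiplicativity of the localized Chern character under it. This is exactly the non-separated phenomenon that forced the use of Thom--Sullivan and Godement resolutions, and establishing the multiplicative behavior of the Atiyah class in this resolved setting is where the genuine technical content lies; the remainder of the proof should be formal bookkeeping around these resolved Chern characters.
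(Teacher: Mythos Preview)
Your overall architecture matches the paper's: Section~\ref{section: cohFT} verifies $S_r$-covariance, tree gluing, loop gluing, forgetting tails, and the metric axiom in that order, and the heavy lifting is indeed in the gluing axioms, where a fiber-product comparison of moduli plus a tensor decomposition of the virtual factorization plus multiplicativity of the localized Chern character (Corollary~\ref{cor: multi of ch}) carry the argument. Two points, however, are more than bookkeeping and your sketch does not capture them.

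First, in the gluing axioms the fiber product is taken over the \emph{twisted} diagonal $\Delta_\zeta\colon I\cX\to (I\cX)^2$, $(x,h)\mapsto((x,h),(\zeta\cdot x,h^{-1}))$, not the ordinary diagonal. The twist by $\zeta$ is forced because $w(\zeta\cdot x)=-w(x)$ and the log-differentials at the two branches of a node are identified with opposite sign; without it the superpotentials on the two factors do not add to the glued superpotential and the factorization comparison fails outright. Relatedly, the paper's mechanism for comparing factorizations on different ambient stacks is not raw multiplicativity of Chern characters but the equivalence relation ``transitively related'' (Definition~\ref{def:equiv}) together with the deformation-to-normal-cone identity of Proposition~\ref{prop:GRR}, which is what converts the categorical comparison of Theorem~\ref{thm: cat glue and tail} into an equality of integrals.

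Second, your account of the unit axiom is where the real gap lies. It is not a matter of pulling back a factorization along the universal curve: one must build a forgetful map $\rho_{f,\fB}\colon \fB_{\Gamma}^{g,r+1,d}(J)\to \fB_{\Gamma}^{g,r,d}$ at the level of the bundle moduli, and this requires a Hecke modification of the principal $\Gamma$-bundle at the forgotten marking (see \S\ref{sec:Hecke}). The unit $\one$ is defined as the Todd--Chern class of $\iota_*\cO_{\cX_0}$ for $\cX_0=\cX^{\CR}$, and the argument uses the hypotheses $(\dagger)$, $d_w\ge c_i\ge 0$, and $\nu$ large to ensure both that $\cX_0$ is expressible as a global quotient and that the Hecke-modified data lands in the right moduli stack. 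These are precisely the ``mild assumptions'' in the statement; your proposal neither invokes them nor explains what would go wrong without them. The metric axiom likewise needs the nontrivial vanishing of $\Omega_{0,3,d}$ for $d\ne 0$ and the selection rule $h_1h_2=1$ (Lemma~\ref{lemma:metric}), together with the identification $LG_{0,3,0}(\cX)(h,h^{-1},J)_0\cong \cX_h$ of Lemma~\ref{lemma:LGM0}, none of which is a direct computation.
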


This paper is, of course, not the first to construct enumerative invariants for GLSMs.  Instead, it is a continuation of the program set forth by Fan--Jarvis--Ruan \cite{FJR, FJR:GLSM} who constructed enumerative invariants for GLSMs in the narrow sector case as well as broad sector invariants for quantum singularities.  

Complementary work by Polishchuk--Vaintrob \cite{PV:MF}  built such invariants algebraically in the case of affine Landau-Ginzburg models.  Therein they constructed a type of fundamental matrix factorization which plays the role of a virtual fundamental class.  Using the associated Fourier-Mukai transform, they were able to build a cohomological field theory whose state space $\State$ is the Hochschild homology of the corresponding category of matrix factorizations.  This was later expanded by Ciocan-Fontanine--Favero--Gu\'er\'e--Kim--Shoemaker \cite{MF} to obtain enumerative invariants for convex hybrid models (see \S\ref{sec: convex}) in the broad sector case.

Recently, Kiem--Li \cite{KieLi} produced GLSM invariants for abelian affine LG models using cosection localization, 
 intersection homology, and Borel-Moore homology (see also \cite{ChKieLi}).  
 These invariants have the advantage of being topological in nature and form a cohomological field theory.

This paper takes a hybrid approach (this time, no pun intended) to the methods mentioned above.  As in \cite{MF, PV:MF}, we construct a fundamental factorization using the GLSM data.  However, rather than passing to Hochschild homology, we directly construct a cycle living in the local cohomology of a certain twisted Hodge complex (see Definition~\ref{def:virclass}).  To obtain this cycle, we construct a localized Chern character map (see Appendix~\ref{sec: B}) using the notion of Atiyah classes for matrix factorizations developed by Kim--Polishchuk \cite{KP}.  The role of our virtual cycle is then provided by the localized Chern character of the fundamental matrix factorization.

The main technical hurdle to overcome to obtain fully general GLSM invariants is the presence of potentially poorly behaved (non-separated and in particular not quasi-projective) moduli spaces.  Namely, in previous incarnations, the fundamental matrix factorization was constructed by choosing a $\Gamma$-acyclic Koszul factorization arising from the GLSM data.  The existence of this choice uses a form of projectivity of the moduli spaces involved.

As the spaces we consider need not even be separated, we require a more robust construction.  First, we observe that Koszul factorizations of a function $W$ arise from the commutative differential graded algebra (cdga) structure on the Koszul complex together with an element $\alpha$ of degree $-1$ satisfying $d\alpha = W$ (see \S\ref{sec:CDGfact}).  Second, we apply the Godement resolution to the Koszul complex to make it $\Gamma$-acyclic.  While this is no longer a sheaf of cdgas, it can be viewed as a cosimplicial sheaf of cdgas.  Hence, using the Thom-Sullivan construction (applied to the cosimplicial Godement resolution of the Koszul complex) we once again obtain a sheaf of cdgas which has the benefit of being $\Gamma$-acyclic.  This provides a fully general definition of the fundamental factorization.

While the components of the fundamental factorization are rather complicated, infinite, and not even quasi-coherent, it is still locally isomorphic to a locally-free factorization in the co-derived category of factorizations i.e.\ it is a perfect object (see Proposition~\ref{prop: perfect}).  Furthermore, the generality of the construction makes it rather flexible to work with, obeying pullback and base change properties.  We employ this flexibility readily in verifying the cohomological field theory axioms.

\subsection{Structure of the paper}
The paper is organized as follows.  To streamline the construction of the GLSM invariants, many of the technicalities in developing a trace map for non-separated spaces, a localized Chern character map for matrix factorizations, and properties of the Thom-Sullivan and Godement resolutions for $\cO_{\tX}$-modules are delegated to the appendix.

In \S\ref{sec: 2} we briefly review the concept of GLSMs and LG quasi-maps.  The new material is mostly Proposition~\ref{prop:U} which describes the construction of the DM stacks $\U_{g,r,d}$ where our fundamental factorizations live.

\S\ref{sec: 3} is concerned with the construction of the fundamental factorization.  For this, we develop the theory of TK factorizations i.e.\ those factorizations which are obtained from the Thom-Sullivan construction applied to the Godement resolution of a Koszul complex.   We define an equivalence relation on TK factorizations and prove that, despite the many choices in the construction of the fundamental factorization, all choices provide the same equivalence class.

\S\ref{sec: 4} is used to define the state space, its pairing, and our GLSM invariants.  The analogue of a virtual fundamental class is obtained as a localized Chern character of the fundamental factorization.  This is well-defined based on the independence of choices proven in the previous section.

In \S\ref{section: cohFT} we define the unit of our cohomological field theory and prove that the GLSM invariants satisfy all the cohomological field theory axioms.  Each axiom is proven in a separate subsection.  In addition, we prove that a certain graded component of our GLSM invariants provide a homogeneous cohomological field theory for convex hybrid models (see Theorem~\ref{thm: homog}) and demonstrate an K\"unneth property for sums of singularities (see \S~\ref{thm: homog}).

Appendix~\ref{sec: trace} describes a general trace map and its basic properties.  It is needed to define our GLSM invariants and the pairing on our state space.  This is essentially the integration map but it is a bit subtle due to the fact that our spaces need not be separated (and hence their analytification need not be Hausdorff).  We make use of the fact that our spaces admit a separated cover (and hence their analytifications admit Hausdorff covers).  This allows us to define a trace map following  the philosophy of \cite{CM2} i.e.\ using the Mayer-Vietoris resolution.

Appendix~\ref{sec: B} discusses many of the properties of factorizations we use.  Importantly, this appendix is where we construct our localized Chern character map.

Appendix~\ref{sec: TS} hashes out the details of the Thom-Sullivan and Godement construction in the context of sheaves on algebraic stacks.

For the reader's convenience, we also included a notational glossary as Appendix~\ref{sec:glossary}.

\subsection{Acknowledgements}

This project was inspired by collaboration and discussions with Ionut Ciocan-Fontanine, Mark Shoemaker, and J\'er\'emy Gu\'er\'e who we thank heartily.
We are also very grateful to Alexander Polishchuk for suggesting the use of the Thom-Sullivan functor, which has become a crucial aspect of the paper.  B. Kim was  supported by KIAS individual grant MG016403.  D. Favero was supported by NSERC through the Discovery Grant and Canada Research Chair programs.  The Fields Institute also supported  the authors for long-term visits where they collaborated on this project.  We thank Fields for a pleasant stay and remarkable work environment.

\subsection{Notation and conventions}

We let the base field $\bf{k}$ be the field  of complex numbers. \label{k}
 We do not require our DM stacks to be separated.  A full notational glossary can be found in Appendix~\ref{sec:glossary}.

\section{Gauged linear sigma models} \label{sec: 2}

This paper is mainly concerned with constructing enumerative invariants for gauged linear sigma models (GLSMs). 
These are roughly GIT quotients of affine space equipped with a function (called the \emph{superpotential}).  
The precise definition is a bit more intricate and we provide it now.  Similar invariants for GLSMs have previously been defined in \cite{MF, FJR:GLSM}.  
We refer the reader to these works for further details.

\subsection{Input data}\label{sec:input} A {\em gauged linear sigma model\/}  \cite{FJR:GLSM} for this paper  is a collection 
\[ (V, \Gamma, \chi, w, \nu) \]
consisting of:
\begin{itemize}
\item  a finite dimensional $\CC$-vector space $V$;
\item a reductive algebraic subgroup $\Gamma \subseteq GL(V)$;
\item a surjective character $\chi : \Gamma \twoheadrightarrow \CC^{\ti}$ of $\Gamma$; 
\item a $\chi$-invariant polynomial function $w: V \ra \mathbb{A}^1$, i.e., $$w\in (\CC_{\chi}\ot \Sym V^{\vee})^{\Gamma};$$ and
\item a $\QQ$-character $\nu$ of $\kG$,
\end{itemize}
subject to the following with $G:= \Ker (\chi )$ and $\theta:=\nu |_{G}$:

\begin{itemize}
\item  $V^{ss}(\nu) = V^{ss}(\theta) = V^{s}(\theta)$.
\item The critical locus $Z(dw)$  of  the function $[V^{ss}(\theta)/G]\ra \mathbb{A}^1$ induced from $w$ is proper over $\Spec \/ \CC$.
Abusing notation, we will denote the induced function also by $w$.
\end{itemize}
The quotient $\cX:=[V^{ss}(\theta)/G]$ is a separated DM stack since the $G$-action on $V^{ss}(\theta) = V^{s}(\theta)$ is proper.  \label{cX}

Since $\chi$ is a surjective character, there is a subgroup isomorphic to $\CC^{\ti}$, in the center of $\Gamma$ such that
the subgroup together with $G$ generates $\Gamma$.
We  make a choice of such a subgroup and denote it by $\CR$. 
\label{rcharge}  Let $d_w$ \label{dw} be the positive weight of $\chi |_{\CR}$
and let $J :=\exp (2\pi i /d_w)$. Then
note that there is a commutative diagram of  exact sequences of groups
\[ \xymatrix{ 1 \ar[r]  &     \lan J \ran \ar[r] \ar[d]   & \CR \ar[d] \ar[r]^{t\mapsto t^{d_w}} & \CC ^{\ti} \ar[r] \ar[d]_{=} &  1 \\
             1 \ar[r]  &       G \ar[r] & \Gamma \ar[r]^{\chi} &    \CC ^{\ti} \ar[r] &  1 .} \]
If $\widehat{\kG}$, $\widehat{\CR}$, $\widehat{G}$ denote the character groups of $\kG$, $\CR$, $G$ respectively,
this is a canonical isomorphism $\widehat{\kG}\ot \QQ \xrightarrow{\cong} (\widehat{\CR} \oplus \widehat{G})\ot \QQ$
between the $\QQ$-character groups.

\subsection{Convex hybrid models} \label{sec: convex}

\begin{Def} \cite[\S 1.4]{MF} 
A GLSM is called a {\em hybrid model } if there is a decomposition
$V=V_1\oplus V_2$ as a $\Gamma$-representation such that:
\begin{enumerate}
\item the $\CR$-action on $V_1$ is trivial and acts with positive weights on $V_2$,
\item the equality $V^{ss}(\theta) = V_1^{ss}(\theta) \times V_2$ holds.
\end{enumerate}
\end{Def}

Let $G_1$ be the quotient group $\Gamma /\CR$.
\begin{Def} \cite[\S 4.1]{MF}  A hybrid model is called {\em convex} if
for any representable morphism $f: C \to [V_1^{ss}/G_1]$ from any genus $0$ prestable orbicurve $C$ with an arbitrary number of markings,
the vector space $H^1(C, f^*T_{ [V_1^{ss}/G_1]/BG_1})$ is zero (where $T_{ [V_1^{ss}/G_1]/BG_1}$ is the relative tangent sheaf of  $[V_1^{ss}/G_1]$ over $BG_1$).
\end{Def}

\begin{Rmk}
When $G$ is abelian and $ [V_1^{ss}/G_1]$ is a variety, this notion of convexity agrees with the usual notion of convexity for the variety $ [V_1^{ss}/G_1]$.  This can be seen using the generalized 
Euler sequence.
\end{Rmk}

\begin{example} \label{ex: LG/CY}
The most famous examples of GLSMs are
\[
\mathbb{X}_{\pm} := (V=\CC^{\oplus 6}, \Gamma=(\CC^{\ti})^2, \chi, w , \nu_{\pm})
\] 
where
$\Gamma$ acts on the coordinates of $V$ by the weight matrix
\[ \left(\begin{array}{cccccc} 1 & 1 & 1 & 1 & 1 & -5 \\
                                      0 & 0 & 0 & 0 & 0 & 1 \end{array} \right),\] 
                                  the superpotential  is $w = x_6f(x_1, ..., x_5)$ for a homogeneous polynomial $f$ of degree 5  which defines a smooth quintic $3$-fold,
and
the characters are defined as  $\chi(s, t)=t$, $\nu_{+} (s, t) = s$, $\nu_{-}(s, t)=s^{-5}t$.  We also make the following choices for the R-charge action
\[ \CC^{\ti}_{R,+}= \{ (s, t) \in \Gamma \ | \ s=1\} , \ \ \CC^{\ti}_{R,-}= \{ (s, t) \in \Gamma \ | \ s^{-5}t=1 \}. \]
Then both GLSMs $\mathbb{X}_{\pm}$ are convex hybrid models with their respective choice of R-charge $\CC^{\ti}_{R,\pm}$.
The GIT quotient for the GLSM $\mathbb{X}_+$ is the total space of $\cO_{\PP^4}(-5)$ and the critical locus of $w$ is the quintic $3$-fold $Z(f)$.  On the other hand, the GIT quotient for the GLSM $\mathbb{X}_-$ is the quotient stack $[\mathbb A^5 / \ZZ_5]$ and the critical locus is the origin.
The GLSM $\mathbb{X}_+$ is called the CY phase.  Its GLSM invariants should correspond to the Gromov-Witten theory of $Z(f)$.  The GLSM $\mathbb{X}_{-}$ is called the LG phase.  Its GLSM invariants should correspond to the FJRW theory of $[\mathbb A^5 / \ZZ_5]$ with function $f$. (For example, these last two statements were proven in \cite{MF} for the similar GLSM invariants constructed therein.)
\end{example}

\subsection{Landau-Ginzburg quasimaps} \label{subsec: def LG qmaps}

\begin{Def}\label{def:LG quasimap}  \cite[Definition 4.2.2]{FJR:GLSM}
An {\em LG quasimap} to $[V^{ss}/G]$ of type $(g, r, d)$ over a scheme $T$ consists of:
\begin{enumerate}

\item a genus $g$ prestable orbicurve $C$ over $T$, which by definition comes with gerbe markings $\sG _i$ and sections of the gerbe markings $T \ra \sG_i$, $i=1, ..., r$;

\item a principal $\Gamma$-bundle $P$ on $C$ 
such that the induced morphism $C \ra B\Gamma$ associated to $P$ is representable;

\item\label{cond:kappa} an isomorphism $\kappa : P \times _{\Gamma} \CC _{\chi} \to \omega_{C}^{\mathrm{log}}$ of line bundles;  and 

\item a section $u: C \to P \ti _{\Gamma} V $ of a vector bundle $ P \ti _{\Gamma} V $ 
such that  for each geometric fiber $C_t$ of a geometric point $t\ra T$, the subset $B_t:= u ^{-1}(P \ti _{\Gamma} V - P \ti _{\Gamma}V^{ss})$ of
$C_t$ is a finite set away from the nodes and markings of $C_t$.

\end{enumerate}

\end{Def}

\medskip

 For simplicity we will often write the LG quasimap data as $(C, P, \kappa, u)$ omitting $\sG_i$, $i=1, ..., r$.
For any $\Gamma$-representation space $Y$ (or more generally a scheme $Y$ with a left $\Gamma$-action), 
we will often write $P(Y)$ instead of the quotient $P\ti _{\Gamma} Y := (P\ti Y )/ \Gamma $ which 
is an algebraic space over $C$ since $P$ is representable over $C$.  
Here the left $\Gamma$-action on $P\ti Y$ is given by $h\cdot (p, y) := (p\cdot h^{-1}, h\cdot y )$
for $h\in \Gamma$, $(p, y) \in P \ti Y$.  \label{pg: P(Y)}
For example, we have a vector bundle $P(V)$ and a line bundle $P (\CC_{\delta})$ 
for a character $\delta$ of $\Gamma$.                   
We denote by 
\[
[u] : C \to [V/\kG]
\]
the map induced by the section $u$. 
Elements of $B_s$ are called  base points.

\medskip

For the following definition, choose a positive integer $l$ such that $l\nu$ becomes a $\ZZ$-valued character,
let ${\bf e}$ be the least common multiple of the set of $l$ times the exponents of the automorphism groups of the geometric points of $[V^{ss}/G]$,
and let $L_\delta$ denote the line bundle $V\ti_{\kG} \CC_{\delta}$ on $[V/\kG]$.

For $b\in B_s$, let $l(b)$ be the length of the base point $b$ with respect to $\nu$.  This is defined as follows.  
For every $s \in H^0 ([V/\kG ] , L_ {m\nu} )$, let $\mathrm{ord}_{b}( [u]^*s)$ be the vanishing order of zero of $[u]^*s$ at $b$; see \cite[Definition 7.1.1]{CKM}
and \cite[Definition 2.4]{CK: bigI}.  Then
\[
l(b) := \mathop{min }\left \{ \frac{\mathrm{ord}_{b}([u]^*s)}{m} \ |\  \forall  m > 0, s  \text{ with } 
 [u]^*s \not\equiv 0 \right \}. \]

\begin{Def} \label{def: stab of LG maps}  
An LG quasimap to $[V^{ss}/G]$ is called {\em $\nu$-stable} (or simply stable) if for every geometric point $s$ of $S$
\begin{enumerate}

\item the $\QQ$-line bundle $\omega _{\underline{C}_s}^{\log} \ot (\rho_*(P(\CC_{\nu})|_{C_s}  ^{\ot {\bf e}} ) ) ^{1/{\bf e}}$ is ample,
where $\rho: C_s\to \underline{C}_s$ is the coarse moduli space  morphism; see \cite[\S 2.3]{CCK}, \label{coarse} and

\item  for $b\in B_s$, the inequality $ l(b) \le 1$ holds.  
\end{enumerate}
We call an LG quasimap {\em $\infty$-stable} (resp. $0+$-stable) if it is $\ke\nu$-stable for every large enough $\ke \in \QQ_{>0}$ (resp. for every 
small enough $\ke \in \QQ_{>0}$).
\end{Def}

Let $Y$ be a closed subscheme of $V$ and let $\mathcal{Y} := [(Y\cap V^{ss})/G]$.
A (stable) LG quasimap to $\cY$ is defined to be a (stable) LG quasimap to $[V^{ss}/G]$ such that the section
$u$ of $\cV$ factors through $P (Y )$.

The degree of an LG quasimap is, by definition the degree of $P$.  This is the morphism $d \in \Hom_{\ZZ} (\widehat{G}, \QQ )$ \label{characters} defined by 
$d(\delta) := \deg P(\CC_{\delta})$ for all $\delta \in \mathrm{Ker} (\widehat{\kG} \to \widehat{\CR})\ot _{\ZZ} \QQ = \widehat{G}\ot_{\ZZ}\QQ $.  
The type $(g,r,d)$ of a (stable) LG quasimap records the genus, number of markings, and degree respectively.

We will make use of the following theorem in what follows.
\begin{Thm} \cite[Theorem 1.1.1 \& Theorem 5.3.1]{FJR:GLSM} \label{FJR proper}
The moduli stack $LG_{g, r, d} (\cY)$ of stable LG quasimaps to $\cY$ of type $(g, r, d)$
 is a separated DM stack of finite type. When $\cY$ is proper over $\Spec \CC$, so is $LG_{g, r, d}(\cY)$.
\end{Thm}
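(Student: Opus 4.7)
The plan is to establish the four assertions—algebraicity, Deligne-Mumford, finite type, and (when $\cY$ is proper) properness—in order. First, I would realize $LG_{g,r,d}(\cY)$ as a locally closed substack of an iterated algebraic stack built over the Artin stack $\mathfrak{M}^{\mathrm{orb}}_{g,r}$ of prestable orbicurves with $r$ gerbe markings. On its universal curve $\mathcal{C}$, I successively add (i) a principal $\Gamma$-bundle $P$ (parameterized by the algebraic relative Hom-stack into $B\Gamma$), (ii) the open condition that the classifying morphism $C\to B\Gamma$ is representable, (iii) an isomorphism $\kappa$ (living on a $\GG_m$-torsor over the open locus where $P\times_\Gamma \CC_\chi \cong \omega^{\mathrm{log}}_C$), (iv) a section $u \in H^0(C, P(V))$, and (v) the open quasimap condition that $B_s$ is finite away from nodes and markings. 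The condition that $u$ factors through $P(Y)$ is closed and cuts out $LG_{g,r,d}(\cY)$ inside the ambient stack.

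Next, I would verify that $\nu$-stability is an open condition. The ampleness in Definition~\ref{def: stab of LG maps}(1) is open on the base by standard semicontinuity of ampleness, while the length bound $l(b)\le 1$ is open because $\ord_b([u]^*s)$ is upper-semicontinuous under specialization. The ampleness condition also ensures automorphism groups of stable quasimaps are finite, by an argument analogous to the Deligne-Mumford stability case for stable maps: an automorphism fixes an ample bundle, so it must have finite order. Hence $LG_{g,r,d}(\cY)$ is DM. For finite type, observe that fixing the degree $d$ and the orbifold structure at markings, the ampleness of the $\QQ$-line bundle in Definition~\ref{def: stab of LG maps}(1) bounds the number of irreducible components and the combinatorial type of the underlying orbicurve, after which bundles and sections of bounded degree on a bounded family form a finite-type stack.

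The main obstacle is separatedness and properness, which would be established via the valuative criterion. Given a stable LG quasimap over $\Spec K$ with $K$ the fraction field of a DVR $R$, I would first extend the underlying orbicurve to $\Spec R$ by stable reduction after a finite base change, then rationally extend $P$ and $u$ using flatness and Hartogs-type arguments, and finally run a Langton-style modification on the central fiber to kill destabilizing loci. The length condition $l(b)\le 1$ is precisely the input that allows any unstable bubbles produced in the procedure to be contracted back into a $\nu$-stable limit, while properness of $\cY$ (when assumed) ensures the section does not escape the semistable locus in the limit. Separatedness follows from the same analysis: any two extensions differ by bubbling at base points, and the condition $l(b)\le 1$ forces those bubbles to be contracted, yielding uniqueness. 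This Langton-contraction procedure, carried out in detail in \cite{FJR:GLSM} building on \cite{CKM}, is the technical heart of the proof and the step I would expect to occupy the most effort.
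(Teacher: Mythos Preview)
The paper does not give its own proof of this statement: it is quoted directly from \cite[Theorem 1.1.1 \& Theorem 5.3.1]{FJR:GLSM} and used as a black box. Your outline is a faithful sketch of the strategy in that reference (and in the antecedent \cite{CKM}), so there is nothing to compare at the level of this paper.
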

For simplicity we will often write $LG(\cX) := LG_{g, r, d}(\cX )$ when $g,r,d$ are implicit. 
 \label{LG}

\subsection{The construction of $\U$}\label{sec:const U}
 
 In this section, we construct a smooth, finite-type DM ${\bf k}$-stack $\U_{g, r, d}$, containing $LG(\cX)$ as a closed
substack.  This will be the home of the virtual factorization constructed in \S\ref{sec: 3}.

\begin{Def} Let $L$ be a line bundle on a pointed prestable orbicurve $C$. 
We say $C$ is {\em $L$-stable} if
the degree of $L$ restricted to any irreducible component of $C$
is non-negative and
 the degree of $L\ot \omega _C^{\log}$ restricted to any irreducible component of $C$ is strictly positive.
When $L = P(\CC _\nu)$ for a principal $\Gamma$ bundle $P$ on $C$, we simply call such $C$ {\em $\nu$-stable}.
\end{Def}

\begin{Rmk}
For those $((C, \sG_1, ..., \sG_r), P)$ allowing $\nu$-stable LG quasimaps $((C, \sG_1, ..., \sG_r), P, \kappa, u)$,
 the degree of the line bundle $P(\CC _{\nu})$ restricted to any irreducible component of $C$ is non-negative
(see \cite[Propostion 5.1.1]{FJR:GLSM}  or Remark \ref{rmk: degree and triviality}).
Hence, only $\nu$-stable orbicurves admit $\nu$-stable quasimaps.
\end{Rmk}

Fix the numerical data $g \in \ZZ_{\ge 0}$, $r\in \ZZ_{\ge 0}$, $d \in \Hom_{\ZZ} (\widehat{G}, \QQ )$. 
Denote by $\fB_{\Gamma, \chi, \nu}^{g, r, d}$ or simply by $\fB_{\Gamma}$ \label{fB}
 the moduli stack
parametrizing genus-$g$, $r$-pointed $\nu$-stable orbicurves $C$ together with a degree-$d$ principal $\Gamma$-bundle $P$ and an isomorphism 
$\kappa: P (\CC _{\chi}) \to \omega _C^{\rmlog}$ such that
the induced map $C \to B\Gamma$ is  representable.

We often write objects in $\fB_{\Gamma}$ as $(C, P, \kappa)$ omitting $\sG_i$, $i=1, ..., r$.
The stack $\fB_{\Gamma}$ is  a smooth, locally finite type, Artin stack of pure dimension $3g-3 + (g-1)\dim G$.

Let $\pi: \fC \to \fB_{\kG}$ be the universal curve over $\fB_{\kG}$
and let $\cP$ be the universal principal $\Gamma$-bundle on $\fC$. There is an associated vector bundle $\cP (V)$ which we also denote 
by $\cV$.
For a map $S \to \fB_{\kG}$ from an arbitrary algebraic stack $S$ we have the universal curve $\pi_{S} : \fC_{S} \to S$
and the universal bundle $\cP _{\fC_{S}}$ on $\fC_S$ obtained by pullback. 
Abusing notation, we will often write $\pi, \fC, \cP, \cV$ instead of $\pi _S,  \fC_{S}, \cP _{\fC_{S}}$, $\cP_{\fC_{S}}(V)$, respectively.
  \label{pi}  \label{cV}

\begin{Prop}\label{prop:U}   
For every map $S \to \fB_{\kG}$ from an algebraic stack $S$
there is an open substack $\So$ of $S$ satisfying 
the following.

\begin{enumerate} 
\item  On the universal curve $\fC$ over $\So$
there is a $\pi_*$-acyclic 
coherent resolution of $\cV$:
\[ 0   \ra \cV \ra \cA \ra \cB \ra 0  \] for which \label{cAB}
\begin{enumerate}
\item there is a   {\em restriction map} 
\[
{rest}_{\cA}: \cA \ra \cV|_{\sG} := \oplus _i \cV |_{\sG _i} 
\]
 of sheaves of $\cO_{\fC}$-modules which is compatible with the map $\cV \ra \cV |_{\sG}$ and whose kernel is also $\pi_*$-acyclic (this implies
$\pi_*\cA \to \pi_* (\cV|_{\sG})$ is surjective);

\item $\pi_*\cA$ and $\pi_*\cB$ are locally free coherent sheaves.

\end{enumerate}
Furthermore, we can choose $\cA$, $\cB$ to be locally-free sheaves of finite rank. 
\item The formation of $\pi_*\cA$ and $\pi_*\cB$ is functorial under the base change of $\So$. 

\item Denote by $[A \xrightarrow{d_A} B] := [\pi_*\cA \to \pi_*\cB] $ \label{AB} and let $p_A : \tot A \to \So$ be the projection from the total space of the vector bundle
$A$ to $\So$ (here we allow the ranks of $A$, $B$  to be locally constant).
Then $LG(\cX)\ti _{\fB_{\kG}} \So$ is canonically an open substack of the zero locus of the section $p_A^*d_A \circ t_A$ of $p_A^*B$, where $t_A$ is the tautological section of $p_A^*A$.

\item 
Assume $S\to \fB_{\kG}$ is the identity map.  There is an open substack $\U_{g, r, d}$ of $\tot A$ containing $LG(\cX) $ as the zero locus of $\beta := d_A \circ t_A |_{\Ugrd} $.
Moreover $\U_{g, r, d}$ is a DM stack of finite type over $\Spec\/\CC$.
\label{beta}

\item  Assume $S\to \fB_{\kG}$ is the identity map and the GLSM is a convex hybrid model. 
Then, there exists a choice of $\U_{g, r, d}$ which is separated over $\Spec\/\CC$. 
\end{enumerate}
\end{Prop}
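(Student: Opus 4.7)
The plan is to construct $\cA$ as a sufficiently positive locally free extension of $\cV$ on $\fC$ and then invoke Serre vanishing. Working locally on $\So$ (which we may take Noetherian), choose a relatively ample line bundle $\cL$ on $\fC/\So$. By Serre's theorem, for $N$ large there is a surjection $\cL^{-N} \otimes \cO_\fC^{\oplus k} \twoheadrightarrow \cV^\vee$. Dualizing yields a locally free embedding $\cV \hookrightarrow \cA := \cL^N \otimes \cO_\fC^{\oplus k}$ with locally free cokernel $\cB$. For $N$ sufficiently large, Serre vanishing gives $R^1\pi_*\cA = R^1\pi_*\cA(-\sG) = 0$; taking long exact sequences, $R^1\pi_*\cB = 0$ as well (noting $R^{\geq 2}\pi_* = 0$ as the fibers are curves), and $\pi_*\cA$, $\pi_*\cB$ are locally free by cohomology and base change. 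The restriction map $\mathrm{rest}_\cA$ is then built by splitting the sequence $0 \to \cV|_{\sG_i} \to \cA|_{\sG_i} \to \cB|_{\sG_i} \to 0$ on each zero-dimensional gerbe $\sG_i$ and composing with the restriction $\cA \to \cA|_\sG$. Surjectivity of $\pi_*\cA \to \pi_*(\cV|_\sG)$ and $\pi_*$-acyclicity of the kernel follow from the long exact sequence of $0 \to \cA(-\sG) \to \cA \to \cA|_\sG \to 0$ together with the vanishing $R^1\pi_*\cA(-\sG) = 0$.

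Items (2) and (3) then follow essentially formally. Functoriality in (2) is cohomology and base change for the $\pi_*$-acyclic sheaves $\cA$ and $\cB$: both $\pi_*\cA$ and $\pi_*\cB$ commute with arbitrary pullback along $T \to \So$. For (3), an LG quasimap $(C_T, P_T, \kappa_T, u_T)$ over $T$ provides a section $u_T$ of $\cV_T$, which via $\cV \hookrightarrow \cA$ induces a section of $\cA_T$; adjunction with $\pi$ turns this into a morphism $T \to \tot A$. Pullback of the defining equation $d_A \circ t_A = 0$ under this morphism is exactly the condition that the image of $u_T$ in $\cB_T$ vanishes, i.e., that $u_T$ factors through $\cV_T \subset \cA_T$, which is automatic for an LG quasimap. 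The openness in $Z(p_A^*d_A \circ t_A)$ comes from also imposing the stability condition, which is open.

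For (4), the stack $\fB_\kG$ is Artin and not DM in general; the $\nu$-stability conditions of Definition~\ref{def: stab of LG maps} cut out an open substack of $\tot A$ where automorphism groups are finite (ampleness is an open condition, and the base-point length inequality $l(b) \leq 1$ is semicontinuous). This open substack is the desired $\U_{g,r,d}$; it is a DM stack, and it is of finite type since, by Theorem~\ref{FJR proper}, the locus of $\nu$-stable data of type $(g,r,d)$ in $\fB_\kG$ is of finite type and $\U_{g,r,d}$ is an open substack of a finite-rank vector bundle over this base.

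The main obstacle is item (5). For a convex hybrid model we decompose $V = V_1 \oplus V_2$ and $\cV = \cV_1 \oplus \cV_2$. The convexity hypothesis, applied to the map $C \to [V_1^{ss}/G_1]$ induced by the principal $\kG$-bundle, yields $H^1(C, \cV_1) = 0$ on geometric fibers (identifying $f^*T_{[V_1^{ss}/G_1]/BG_1}$ with $\cV_1$ via the Euler sequence), hence $R^1\pi_*\cV_1 = 0$ over the $\nu$-stable locus; the same is true after twisting by $-\sG$ by positivity in the relevant directions. For $\cV_2$, the positive $\CR$-weights combined with $\kappa: P(\CC_\chi) \cong \omega^{\log}$ force each character summand to have positive degree on every component of $C$, so $\cV_2(-\sG)$ is $\pi_*$-acyclic. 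One may thus take $\cA = \cV$ and $\cB = 0$; then $\beta = 0$ and $\U_{g,r,d} = \tot A$ is identified with the moduli space of $\nu$-stable LG quasimaps $LG_{g,r,d}(\cX)$, which is separated by Theorem~\ref{FJR proper}. The delicate point here is controlling the $R^1$-vanishing in all positive genera uniformly, which requires careful bookkeeping of the interplay between convexity, $\nu$-stability, and $\CR$-weights.
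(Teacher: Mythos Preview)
Your treatment of parts (1)--(3) is close in spirit to the paper's, though the paper uses the specific relatively ample line bundle $\cO_\fC(1)=(\omega_\fC^{\log}\otimes\cP(\CC_\nu))^{\bf e}$ and takes $\cA$ to be the subsheaf of $\cA_m:=\pi^*(\pi_*\cO_\fC(m))^\vee\otimes\cV(m)$ on which the restriction to $\sG$ already factors through $\cV|_\sG$, rather than choosing a splitting afterwards.  For (4) the paper does not claim that the stability conditions themselves force finite automorphisms on an open set; instead it uses that $LG(\cX)$ is already known to be DM (Theorem~\ref{FJR proper}), so $\Omega^1_{\Delta_{\tot A}}|_{LG(\cX)}=0$, and then Nakayama's lemma produces an open DM neighborhood.

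The genuine gap is in (5).  Convexity in Definition~2.3 is a \emph{genus-zero} hypothesis: it says $H^1(C,f^*T_{[V_1^{ss}/G_1]/BG_1})=0$ only for genus~$0$ orbicurves $C$.  There is no reason for $R^1\pi_*\cV_1$ to vanish when $g>0$, so you cannot take $\cA=\cV$ and $\cB=0$.  Similarly, positivity of the $\CR$-weights on $V_2$ does not force the line-bundle summands of $\cV_2$ to have positive degree on every component: the degree depends on the full $\Gamma$-character via $d\in\Hom(\widehat G,\QQ)$, not just on the $\CR$-part, so $R^1\pi_*\cV_2(-\sG)$ need not vanish either.  Consequently your identification $\U_{g,r,d}=LG_{g,r,d}(\cX)$ fails, and with it the separatedness argument.

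The paper's route is entirely different: it keeps a nontrivial resolution $[\cA_1\to\cB_1]$ for $\cV_1$ and invokes \cite[Theorem~4.3.4]{MF}, which produces a \emph{separated} open substack $\U_1\subset\tot A_1$ whose $\beta_1$-zero locus is $LG_{g,r,d}([V_1^{ss}/G])$.  Separatedness there comes from the convex-hybrid geometry (the base $[V_1^{ss}/G_1]$ is a proper DM stack and the moduli problem over it behaves well), not from any $R^1$-vanishing.  One then sets $\U_{g,r,d}=\U_1\times_{\fB_\Gamma^\circ}\tot A_2$, which is separated because $\U_1$ is.
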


\begin{proof}

(1) This is proven in \cite[Lemma 3.4.1, Corollary 3.4.2]{MF}. We spell out some of the details. 
Let  $\sG$ denote the disjoint union $ \coprod_i \sG _i$
of $\sG_i$, let 
\[
\cO_{\fC} (1):= (\omega _{\fC}^{\log} \ot \cP ( \CC _{\nu}))^{\bf e}
\]
 where ${\bf e}$ is a (large enough) positive integer defined in \S \ref{subsec: def LG qmaps}, and let 
\[
\cA_m := \pi^*(\pi_*(\cO_{\fC} (m)))^{\vee} \ot \cV_{\fC} (m).
\]
Then there exists a large enough integer $m$ such that  $\cA _m (-\sG )|_C$ 
is $(\pi|_{C})_*$-acyclic for any $\nu$-stable LG quasimap $C = \fC_s$ of type $(g, r, d)$. 
We want to pick $m \gg 0$ so that the whole of $\cA _m (-\sG )$ is $\pi _*$-acyclic.

 By base change (see \cite[Proposition A.85]{BBH-R}), 
 it is enough to show acyclicity when $S = \fB _{\Gamma}$.
Since  $\pi_*$-acyclicity is an open condition on $\fB_{\Gamma}$, 
we can fix a large enough integer $m$ 
such that $\cA _m ( - \sG ) |_ {\fC|_{\fBo_{\kG} }} $ is $\pi_*$-acyclic over some open substack $\fBo_{\Gamma}$ of $\fB_{\Gamma}$ and hence over a general 
$\So:=S\ti _{\fB_{\kG}} \fBo_{\kG} \to \fB_{\kG}$. 
Now we take $\cA$ to be the subsheaf of $\cA_m|_{  \fC|_{\fBo_{\kG} } } $ for which the restriction map $\cA \to \cA |_{\sG}$ factors through the inclusion
$\cV|_{\sG} \to \cA|_{\sG}$.  To verify acyclicity, see  \cite[Lemma 3.4.1, Corollary 3.4.2]{MF}.

(2) follows from  \cite[Proposition A.85]{BBH-R} which is permissible by  the flatness of $\pi$.

(3)  
By base change (\cite[Proposition A.85]{BBH-R} again), it is enough to handle the case when $S \to \fB_{\Gamma}$ is the identity. 
Let $R:= \RR^1 \pi _* (\cV ^{\vee} \ot \omega_{\fC})$ be the zero-th cohomology sheaf $\cH^0 [B^{\vee} \to A^{\vee}]$ of the complex $[B^{\vee} \to A^{\vee}]$ in amplitude $[-1, 0]$. 
Note that, for any scheme $T$ over $\fBo_{\kG}$,  $$\mathbf{Spec}\, (\Sym R) (T) =  
 \Hom ( R|_T, \cO _T) \cong  \Gamma (T, (\pi |_T)_* (\cV |_T )). $$
 Therefore the cone stack
$\mathbf{Spec}\, (\Sym R)$ coincides with $ LG(\cX)$ after imposing stability conditions.  Since the stability conditions are open conditions,
$LG(\cX)$ is an open substack of $\mathbf{Spec}\, (\Sym R)$.
On the other hand, by \cite[Lemma 3.6.2]{MF}, the cone stack
$\mathbf{Spec}\, (\Sym R)$ is canonically isomorphic to  the zero locus $Z(p_A^*d_A \circ t_A ) $ of $p_A^*d_A \circ t_A$.

(4) The base-length stability condition (2) of Definition \ref{def: stab of LG maps} as well as condition (4) in Definition \ref{def:LG quasimap} are open 
conditions and can be imposed on $\tot A$ to obtain an open substack $\U_{g, r, d}$ of $\tot A$ 
on which the zero locus of section $p^*d_A \circ t_A$ (restricted to $\U_{g, r, d}$) is exactly $LG(\cX)$.  
To show that $\U_{g ,r ,d }$ is a DM stack, 
as $\tot A$ is an Artin stack of locally finite type over ${\bf k}$,  it is enough to show that the diagonal morphism $\Delta _{\U_{g, r, d}} : \U_{g, r, d} \to \U_{g, r, d} \ti _{\bf k} \U_{g, r, d}$ is unramified 
for some open substack containing $LG(\cX)$. We know that the representable morphism $\Delta _{LG(\cX)}$ is unramified. That is, $\Delta_{\tot A}|_{LG(\cX)}$ is 
locally of finite type and $\Omega^1 _{\Delta_{\tot A}} |_{LG(\cX)} = 0$.  By Nakayama's lemma, these conditions hold true for some open substack  containing $LG(\cX)$.
Since $LG(\cX)$ is of finite type and $\tot A$ is of finite type, we may take an open substack $\U_{g, r, d}$  containing $LG(\cX)$ such that $\U_{g, r, d}$
is a DM stack of finite type.

(5) We apply \cite[Theorem 4.3.4]{MF}.  This requires a bit of justification.  There they only considered $\infty$-stability. However, the $\infty$-stabilty was used to ensure that 
$\U_{g, r, d}$ is a  global quotient stack with quasi-projective coarse moduli.  We do not need this here. 
Hence, we have the following (with arbitrary $\nu$-stability):
 \begin{itemize}
\item[--] a $\pi_*$-acyclic, coherent, locally-free resolution 
$\cA_1 \to \cB_1$ of $\cV_1 : = \cP (V_1)$ with a  homomorphism $\cA_1 \to \cV_1|_{\sG} $ which is compatible with $\cV_1 \to   \cV_1|_{\sG} $;
\item[--] a separated finite type DM stack $\U_1$ which is 
an open substack of $\tot A_1$ such that
the zero locus of $\beta _1 \in \Gamma(\U_1, p_1^*\pi_*\cB_1)$ is canonically isomorphic to $LG_{g,r, d}([V^{ss}_1/G])$.
\end{itemize}
 Choose a $\pi_*$-acyclic, coherent,
locally free resolution  $\cA_2 \to \cB_2$ of $\cV_2 :=\cP (V_2)$ with 
a  homomorphism $\cA_2 \to \cV_2|_{\sG} $ compatible with $\cV_2 \to   \cV_2|_{\sG} $.
We take $\U_{g, r, d}:= \U_1 \times_{\fBo_{\Gamma}} \tot A_2 $, which is obviously a separated DM stack over $\bf k$.
\end{proof}

\begin{Rmk}
In the proof of Proposition \ref{prop:U} (3), we demonstrated that
$LG_{g, r, d}$ is an open substack of $\Spec \/ \Sym (\RR ^1 \pi _* \cV ^{\vee} \ot \omega _{\fC})$.
Since 
\[
\cH ^0 (\Sym \RR \pi _* \cV ^{\vee}) \cong \Sym (\RR ^1 \pi _* \cV ^{\vee} \ot \omega _{\fC}),
\] 
$LG_{g, r, d}$ is an open substack of $\pi_0$ of the dg manifold 
$$\RR LG_{g, r, d} := \Spec \/ \Sym \RR \pi _* \cV ^{\vee}$$ over $\fBo _{\Gamma}$ in the sense of \cite{Ci-Ka}. 
\end{Rmk}

\begin{Def}\label{def: U} From now on we will assume that
the morphism $S \to \fB _{\Gamma}$ is of relative DM type and of finite type. We call such a morphism 
a {\em DM finite type morphism}.
We denote $\U_{g, r, d} \ti _{\fB _{\kG}} S$ by $\U_S$ (or simply $\U$ when $S \to \fB _{\kG}$ is implicit). 
It is a finite type DM stack over $\bfk$.
\end{Def}

\subsection{Evaluation maps} \label{sec: ev}

Recall that $\cX = [V^{ss}/G]$.  We denote its inertia stack by $I\cX$. \label{inertia}

Let $G/G$ denote the set of conjugacy classes of $G$, $\rC_{G} (h) $ be the centralizer of $h$ in $G$, and
$(V^{ss})^{h}$ denote the fixed locus of $V^{ss}$ under the action of the cyclic group generated by $h$.
As explained in \cite[\S 3.1]{MF}, the surjective restriction map ${rest}_{\cA}: \cA \to \cV|_{\sG}$ in
 Proposition \ref{prop:U} (1) yields  a smooth evaluation map 
\[ \prod _{i=1}^r ev_i : \U \ra (I\cX = \coprod _{(h) \in G/G} [ (V^{ss})^{h} / \rC_{G} (h) ])^r . \]

\section{Construction of a virtual factorization} \label{sec: 3}

Fix a morphism $S\to \fB_{\Gamma}$ as in Definition \ref{def: U}. 
In this section, we construct a factorization for the function $-\sum _i ev^*_i w$ on the space $\U$ from Proposition~\ref{prop:U}, called a virtual factorization for the
moduli space $LG(\cX)\ti_{\fB_{\kG}} S$.
We show that it is well-defined in a suitable sense.
The most interesting case is when $S= \fB_{\Gamma}$ which allows us to construct GLSM invariants. 
However, we will treat the general case as it is required to check that these invariants form a cohomological field theory.

\subsection{The map $\mathfrak a_{\fullS}$ in derived category}\label{sec:derived step}

Let $\text{tdeg} (w)$ denote the total degree of the polynomial $w$.
When there is at least one marking $r\ge 1$, we consider the following natural maps in the bounded derived category of coherent sheaves on $\fullS$
\begin{eqnarray}
\label{eqn:wkappa} 
\Sym^{\le \text{tdeg}(w)} \RR\pi _* \cV  & \xrightarrow{\text{natural map}} & \RR\pi _* \Sym^{\le \text{tdeg}(w)} \cV \\
\nonumber    & \xrightarrow{\tilde{w}} & \RR \pi _* \cP\times _{\Gamma} \CC _{\chi}   \\
\nonumber  &  \xrightarrow{\tilde{\kappa} }& \pi _* \omega_{\fC}^{\mathrm{log}}  \\
\nonumber  &  \xrightarrow{rest}&      \cO ^{\oplus r}_{\fullS}.
\end{eqnarray}
where $\tilde{w}$, $\tilde{\kappa}$ are induced maps from $w$ and $\kappa$ respectively, and $rest$ is induced from the restriction map $\omega_{\fC}^{\mathrm{log}} 
\to \oplus _i \omega_{\fC}^{\mathrm{log}} |_{\sG_i}$. Starting from the mapping cone of \eqref{eqn:wkappa}, we have
\begin{align*}
 \cone (\Sym^{\le \text{tdeg}(w)} \RR\pi _* \cV \to  \cO ^{\oplus r}_{\fullS} ) [-1]
\to \ & \cone (\RR\pi_*\omega_{\fC}^{\mathrm{log}} \to \cO^{\oplus r}_{\fullS} ) [-1]\\
\to \ & \RR\pi _* \omega_{\fC} \\
\to \ & \RR^1 \pi_* \omega_{\fC} [-1] \\
\to \ &  \cO_{\fullS}  [-1] . \end{align*}

The dual of the composition of the above maps 
yields a map 
\begin{eqnarray}\label{eqn:alphaS}  \mathfrak a_{\fullS}: \cO_{\fullS} [1] \ra  \mathrm{Cone} ( \cO ^{\oplus r}_{\fullS} \ra \Sym^{\le \text{tdeg}(w)}((\RR\pi _* \cV) ^{\vee})) ) \end{eqnarray}
 in the derived category of coherent sheaves such that
  the composition
 \begin{eqnarray}\label{eqn:diag}  \diag _{\fullS} : \cO_{\fullS} [1] \ra \mathrm{Cone} ( \cO ^{\oplus r}_{\fullS} \ra \Sym \RR\pi _* \cV ^{\vee})  \ra \cO ^{\oplus r}_{\fullS} [1] 
 \end{eqnarray} is the diagonal map (which 
 is the dual of the sum map).
  However, as cones are not functorial, the map in \eqref{eqn:alphaS} is not uniquely determined as above.  We now recall how to obtain a unique description of it
  following \cite{MF, PV:MF}.
 
Let
\[
F_m :=\text{Cone}(\text{Sym}^m \mathbb R \pi_* \cV \to \cO_{\fullS}^{\oplus r})[-1]
\]
and consider the $m^{th}$ symmetric power of the universal curve
\[
\text{Sym}^m \mathfrak C := [\overbrace{\mathfrak C \times_{\fullS} ... \times_{\fullS} \mathfrak C}^{m-times} / S_m]
\]
where $S_m$ is the symmetric group on $m$ letters acting by permutation.
The $m^{th}$ diagonal map is $S_m$-equivariant, inducing a map of stacks,
\[
\Delta_m : [\mathfrak C / S_m] \to \text{Sym}^m \mathfrak C.
\]

When the degree $m$ component of $w$ is nonzero, this induces a morphism of exact sequences on $\text{Sym}^m \mathfrak C$
\begin{equation}\label{eq:Alpha}
\begin{tikzcd}
0 \ar[r] & \text{ker}  \ar[r] \ar[d, "f_m"] & \cV^{\boxtimes m} \ar[r] \ar[d] &  (\Delta_m)_*\omega_{\fC}^{\mathrm{log}} |_{\sG}  \ar[d] \ar[r] & 0   \\
0 \ar[r] &   (\Delta_m)_* \nolog  \ar[r] &  (\Delta_m)_* \omega_{\fC}^{\mathrm{log}} \ar[r] &   (\Delta_m)_* \omega_{\fC}^{\mathrm{log}} |_{\sG} \ar[r] &  0 \\
\end{tikzcd}
\end{equation}
where $\sG : = \coprod_i \sG _i$   and $f_m$ is the induced map from the $m$-th degree part of $w$.

 Hence, pushing forward by the projection $\pi_m : \text{Sym}^m \mathfrak C \to {\fullS}$  and composing with the Grothendieck trace map we obtain
\begin{align}  
F_m & \xrightarrow{ (\mathbb R(\pi_m)_*f_m) } \mathbb R\pi_*\nolog \notag \\
& \xrightarrow{\text{H}^1} \mathbb R^1\pi_*\nolog[-1]  \notag \\
& \xrightarrow{\text{trace}}  \cO_{\fullS}[-1]  \label{eq: finalalpha}
\end{align}
(see the remark below for further justification).
Dualizing and summing over $m$ we uniquely define \eqref{eqn:alphaS} for $r \geq 1$.
In the special case when $r=0$, we just consider (the dual of) the map  \eqref{eqn:wkappa} with the last line dropped.

\begin{Rmk}
Notice that we can identify 
\[
(\pi_m)_* \cV^{\boxtimes m} =  ((\pi^{\times m})_* \cV^{\boxtimes m})^{S_m} = \Sym^m \pi_* \cV.
\]
  If we right derive the composition we obtain
 \[
 \mathbb R(\pi_m)_* = (\bullet)^{S_m} \circ \mathbb R(\pi^{\times m})_*
 \]
  (since taking invariants under a finite group is exact in characteristic 0).   This gives
 \begin{align*}
 \mathbb R (\pi_m)_* \cV^{\boxtimes m} & =  (\mathbb R \pi_* \cV^{\boxtimes m})^{S_m}  = ([A \to B]^{\boxtimes m})^{S_m} \\
 &  = \Sym^m [A \to B] = \Sym^m \mathbb R \pi_* \cV.
 \end{align*}
 Hence, applying $\mathbb R (\pi_m)_*$ to the top line of \eqref{eq:Alpha} we obtain an isomorphism of exact triangles,
\begin{equation*}
\begin{tikzcd}
 \mathbb R (\pi_m)_* \text{ker}  \ar[r] \ar[d,  "="] & \mathbb R (\pi_m)_*\cV^{\boxtimes m} \ar[r]   \ar[d,  "="] &  \mathbb R (\pi_m)_*(\Delta_m)_*\omega_{\fC}^{\mathrm{log}} |_{\sG}   \ar[d,  "="]  \\
F_m  \ar[r] & \Sym^m \mathbb R \pi_* \cV \ar[r] &  \cO_{\fullS}^{\oplus r}   \\
\end{tikzcd}
\end{equation*}
which gives the identification $F_m =  \mathbb R (\pi_m)_* \text{ker}$ which we use in \eqref{eq: finalalpha}.
\end{Rmk}

\subsection{Cochain map realizations of $p^*\mathfrak a_{\fullS}$}\label{sec:cochain}
Let $\U$, $A, B, p_A, \beta$ be as in Proposition \ref{prop:U}. 
Consider the diagram of complexes of $\Sym A^\vee$-modules
\[ {\small
\begin{tikzcd}
\Sym A^\vee [1] \ar[r, "p_A^*\mathfrak a_{\fullS}"]  \ar[ddr, bend right, dashed, "\mathfrak a_{\tot A}"] & \mathrm{Cone} ( \cO ^{\oplus r}_{\tot A} \to  \Sym [B^\vee \to A^\vee] \ot_{\cO_S} \Sym A^\vee ) \ar[equal]{d}  \\
  & \ \ \ \mathrm{Cone} ( \cO ^{\oplus r}_{\tot A} \to  [... \to B^{\vee} \ot \Sym A^\vee \to \Sym A^\vee ] \ot_{\cO_S} \Sym A^\vee ) \ar[d, "{\text{Id}_{ \cO ^{\oplus r}_{\tot A}[1]} \oplus (m \otimes \text{Id}_{\wedge^*B^{\vee}})}"]  \\
& \ \ \ \ \ \ \ \ \ \ \ \ \ \ \ \ \ \ \ \ \ \ \ \  \mathrm{Cone} ( \cO ^{\oplus r}_{\tot A} \to [... \to B^{\vee} \ot \Sym A^\vee \to \Sym A^\vee ]  ) 
\end{tikzcd} 
}
\] where $m: \Sym A \ot \Sym A \to \Sym A$ is the multiplication map.  
Let $p := p_A |_{\U}$ and denote by $\mathfrak a_{\U}$ or simply by $\mathfrak a$ the restriction of $\mathfrak a_{\tot A}$ to $\U$: \label{au}
\[ \mathfrak a_{\U} \in \mathbb H ^{-1} (\U, \mathrm{Cone} (\cO _{\U}^{\oplus r} \to  \wedge ^{-\bullet} p^*B ^{\vee} )) . \]

Consider the following vector of functions on $\U$
\[
\underline{W}  : = (ev_1 ^* w, ..., ev_r ^* w).
\]
By \cite[(3.13)]{MF}, the cochain complex
\begin{align} \label{eqn: cochain Wbar}
...   \longrightarrow \wedge ^2 p^*B^{\vee}   \longrightarrow  p^*B^{\vee} \oplus \cO _\U^{\oplus r} \xrightarrow{(\iota_{\beta} , \underline{W})} \cO _\U    \longrightarrow 0    \end{align}
is a realization of the object $\mathrm{Cone} (\cO _{\U}^{\oplus r} \to  \wedge ^{-\bullet} p^*B ^{\vee} )$.
We will seek for a cochain map realization of $\mathfrak a_\U$:
\[ \xymatrix{ ... \ar[r] &  0  \ar[r] \ar[d]  &  \cO_\U   \ar[rr]\ar[d]_{(\ka _{alg} , \diag )}              &  &        0        \ar[r] \ar[d] &    ...                                       \\ 
     ... \ar[r]  &     \wedge ^2 p^*B^{\vee}   \ar[r] & p^*B^{\vee} \oplus \cO _\U^{\oplus r} \ar[rr]_{ \ \ \ (\iota_{\beta} , \underline{W}) \ \ }  && \cO _\U \ar[r]   & ... }  \]  
     
   Note that  
$\diag$ above is a diagonal map since \eqref{eqn:diag} is also a diagonal map.
Hence, when a cochain map realization $(\ka_{alg}, \diag )$ of $\mathfrak a_\U$ exists,  the equality
$$\lan \alpha_{alg}, \beta \ran = - \sum_i ev_i ^* w $$ holds. 
Therefore we obtain a Koszul matrix factorization  \label{KosFact} $\{ \ka_{alg}, \kb \}$
 for $(\U, - \sum_i ev_i ^* w  )$ defined by
\[ \{ \ka_{alg}, \kb \} ^0  = \oplus _i \wedge^{2i} p^*B^{\vee},  \ \{\ka_{alg}, \kb \} ^1 = \oplus_i \wedge^{2i+1} p^*B^{\vee} \]
with differential $ \iota _{\beta} + \ka_{alg} \wedge $ (here $\iota _{\beta}$ denotes the contraction map by $\beta$).

\begin{Rmk}\label{rmk:local}
A cochain map realization of $\mathfrak a_\U$ is always possible \'etale locally since every quasi-coherent sheaf on any noetherian affine scheme has no higher cohomology.
\end{Rmk}

\subsection{A cdga resolution by the Thom-Sullivan functor}\label{sec:TS}
In the next two sections, we give our construction of a virtual factorization.  This uses the Godement and Thom-Sullivan functors.  We would like to point out that in the separated case, there is a simpler construction which we provide for the interested reader in \S\ref{sec: Dolbeault}.

Let $\mathbf{dg}_{\cO_\U}$ be the category of complexes of $\cO_\U$-modules and 
let $\mathbf{cdga}_{\cO_\U}$ be the category of commutative differential-graded $\cO_\U$-algebras.  
Given a category $\cC$ we denote by $\Delta\cC$ the category of cosimplicial objects in $\cC$ (i.e., functors from the simplex category to $\cC$).  \label{cosimplicial objects} 
For example, $\Delta\mathbf{dg}_{\cO_\U}$ denotes the category of cosimplicial complexes of $\cO_\U$-modules.

Let  \[ \Kos (\beta) :=  \Sym^{\rank B }  (p^*B^{\vee} \xrightarrow{\iota_{\beta}}  \cO _{\U} ) \] be the Koszul complex \label{Kos} associated to
the section $\beta$ of $ p^*B $ and consider it as an object in $\mathbf{cdga}_{\cO_\U}$.
Since $\U$ is a DM stack, there is a canonical cosimplicial Godement resolution $\GodKos (\beta)$ (resp.\ $\God(\cO_\U^{\oplus r}))$ of  $\Kos (\beta)$
(resp.\ $\cO_\U^{\oplus r}$)
which is an object of the category $\Delta\mathbf{dg}_{\cO_\U}$ (see \cite[Construction 1.31]{Thomason}).

We apply the Thom-Sullivan functor  $\Thb : \Delta\mathbf{dg}_{\cO_\U} \to \mathbf{dg}_{\cO_\U} $ (see Appendix~\ref{sec: TS}) to get a quasi-isomorphic  
 complex (see Proposition~\ref{prop: derham})
\begin{align}\label{eqn:CTG}  \mathrm{Cone} (\Thb\God  (\cO^{\oplus r}) \to \Thb\GodKos(\beta) ) \end{align} 
in which every component is $\Gamma$-acyclic by Proposition~\ref{prop:TSG acyclic}.  
Hence, $\mathfrak a_{\U}$ is realizable at the cochain level.
We write this cochain realization as a pair $\mathfrak a_\U(1) = (\ka , 1_\diag)$   with:
\begin{eqnarray}\label{eqn:an alpha} \ka  \in \Gamma (\U, \mathrm{Th}^{-1}\GodKos(\beta)) ,  \\ 
 1_\diag \in \Gamma (\U, \mathrm{Th}^{0}\God (\cO_{\U}^{\oplus r} ) ) , \nonumber 
 \end{eqnarray}
    which is unique up to homotopy. 
    
\begin{Prop} \label{prop; TS alphabeta=w}
Let $d = \iota _{\beta} + d_{dR}$ be the total differential on the cdga $\Thb\GodKos(\beta)$ 
where $\iota_{\beta}$ is the Godement-Koszul differential, $d_{dR}$ is the Thom-Sullivan differential (which is just the algebraic de Rham differential). Then
\[
d(\ka) = - \sum ev_i^* w \cdot 1_T.
\]
where $1_T$ is the unit of $\Thb\GodKos(\beta)$ and $\cdot$ denotes the $\cO_\U$-module action.
\end{Prop}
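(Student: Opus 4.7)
The plan is to extract the identity as the cocycle condition satisfied by $\mathfrak{a}_\U(1) = (\ka, 1_\diag)$ inside the cone complex \eqref{eqn:CTG}, after making the underlying cochain map explicit. A map $\cO_\U[1] \to \mathrm{Cone}(f)$ of cochain complexes is the same datum as a degree $-1$ cocycle in the cone. With the convention $\mathrm{Cone}(f)^n = B^n \oplus A^{n+1}$ and differential $d_C(b,a) = (d_B b + f(a),\, -d_A a)$, where $A = \Thb\God(\cO_\U^{\oplus r})$ and $B = \Thb\GodKos(\beta)$, the cocycle condition for $(\ka, 1_\diag)$ reduces to the automatic equation $d_A(1_\diag) = 0$ together with
\begin{equation*}
d(\ka) + f(1_\diag) = 0,
\end{equation*}
where $f : \Thb\God(\cO_\U^{\oplus r}) \to \Thb\GodKos(\beta)$ is obtained by applying $\Thb\God(-)$ to the underlying degree-zero cochain map $\cO_\U^{\oplus r} \to \Kos(\beta)$. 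The task therefore reduces to identifying $f(1_\diag)$.

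Next, I identify this underlying cochain map and the element $1_\diag$ by tracing the constructions of \S\ref{sec:derived step} and \S\ref{sec:cochain}. The degree-zero map $\cO_\U^{\oplus r} \to \Kos(\beta)$ sends the $i$-th basis vector $e_i$ to $ev_i^* w \in \cO_\U \subset \Kos(\beta)^0$; this is precisely the component $\underline{W}$ visible in the explicit cochain realization \eqref{eqn: cochain Wbar}, which encodes the superpotential through the composition appearing in \eqref{eqn:wkappa}. Since the composition $\diag_\U$ in \eqref{eqn:diag} is the diagonal map, the component $1_\diag$ is the image of the diagonal constant section $(1,\ldots,1) \in \Gamma(\U, \cO_\U^{\oplus r})$ under the natural quasi-isomorphism $\cO_\U^{\oplus r} \hookrightarrow \Thb\God(\cO_\U^{\oplus r})$ arising from the unit of the Godement adjunction.

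Finally, since both $\God(-)$ and $\Thb(-)$ are $\cO_\U$-linear and unit-preserving (Appendix~\ref{sec: TS}), the image $f(1_\diag)$ is obtained by transporting $\sum_i ev_i^* w \cdot 1_{\Kos(\beta)}$ through the canonical map $\Kos(\beta) \hookrightarrow \Thb\GodKos(\beta)$, yielding $\sum_i ev_i^* w \cdot 1_T$. Substituting into the cocycle equation produces the desired identity $d(\ka) = -\sum_i ev_i^* w \cdot 1_T$.

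The main obstacle I expect is not conceptual but one of bookkeeping: one must rigorously verify that the cochain realization $(\ka, 1_\diag)$ of the derived-category morphism $\mathfrak{a}_\U$ really has $1_\diag$ equal to the image of the diagonal unit, and that the multiplicative and functorial properties of $\Thb\God(-)$ transport the underlying map so as to send $1_\diag$ to $\sum_i ev_i^* w \cdot 1_T$. Once these compatibilities are confirmed using the formalism developed in Appendix~\ref{sec: TS}, the statement is immediate from the cone cocycle equation above.
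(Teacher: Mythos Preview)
Your proposal is correct and follows essentially the same approach as the paper: both arguments extract the identity from the cocycle condition $d'(\alpha,1_\diag)=0$ in the cone \eqref{eqn:CTG}, identify $1_\diag$ with $1_T^{\oplus r}$ via \eqref{eqn:diag}, and then use the $\cO_\U$-linearity of $\Thb\God(-)$ to compute that the cone map sends $1_\diag$ to $\sum_i ev_i^*w\cdot 1_T$. The paper carries out explicitly the bookkeeping you flagged as the remaining obstacle (the identification of $1_\diag$ and the transport of $\underline{W}$ through $\Thb\God$), so your anticipated verification steps are exactly what completes the argument.
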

\begin{proof}    

First  note that $\diag$ is completely unique as it remains unchanged by any homotopy.  Hence, using \eqref{eqn:diag} it is realized by 
\begin{equation} \label{eq: diag}
1_{\diag} = 1_T^{\oplus r}.
\end{equation}

Now since $\cO_\U$ is quasi-isomorphic to $\Thb\God \cO_\U$ which is $\Gamma$-acyclic, there is a homomorphism of cdg $\cO_\U$-algebras at the chain level 
    \[
    \epsilon: \cO_{\U} \to \Thb\God \cO_\U.
    \] 
     As there are no possible homotopies this is unique and in fact, it is determined by $\epsilon(1) = 1_T$.  Moreover, it is the structure homomorphism which makes $\Thb\God \cO_\U$ a $\cO_U$-module.
Hence viewing any $f: \cO_\U \to \cO_\U$ simultaneously as an element of $\cO_\U$, we have
\begin{equation} \label{eq: epsilonout}
\epsilon(f) = \Thb\God(f) (1_T).
\end{equation}

Let $d' = \iota_{\beta} +  d_{dR} + \Thb\God(\underline W)$ be the total differential of the complex \eqref{eqn:CTG} where 
$\Thb\God(\underline W)$ is the term in the differential coming from the fact that it is a cone (see \eqref{eqn: cochain Wbar}).  Then,
\small
\begin{align*}
0 & = d'(\mathfrak a_{\U}(1)) & \text{since }\mathfrak a_{\U} \text{ is a cochain map} \\ 
& = d'(\ka, 1_T^{\oplus r}) &  \text{by \eqref{eq: diag}}   \\
& = (\iota_{\beta} +  d_{dR} + \Thb\God(\underline W))(\ka, 1_T^{\oplus r})& \text{by definition} \\
& = (\iota_{\beta} +  d_{dR})(\ka + 1_T^{\oplus r}) +  \Thb\God(\underline W)(1_T^{\oplus r}) 
&  \text{as }\Thb\God(\underline W)|_{\Thb\GodKos\beta} = 0 \\
& = (\iota_{\beta} +  d_{dR})(\ka) + d_{dR}(1_T^{\oplus r}) +  \Thb\God(\underline W)(1_T^{\oplus r}) 
&\text{as }\iota_\beta|_{\cO_\U^{\oplus r}} = 0 \\
& = (\iota_{\beta} +  d_{dR})(\ka) +  \Thb\God(\underline W)(1_T^{\oplus r})& \text{since }\epsilon\text{ is a cochain map} \\
& = (\iota_{\beta} +  d_{dR})(\ka) +  \epsilon( \sum ev_i^* w) & \text{by \eqref{eq: epsilonout}} \\
& = (\iota_{\beta} +  d_{dR})(\ka) +  \sum ev_i^* w  \cdot 1_T & \text{ since }\epsilon \text{ is the structure map}.
\end{align*}
\normalsize
This completes the proof as $(\iota_{\beta} +  d_{dR})$ is the differential on $\Thb\GodKos(\beta)$.
\end{proof}

\subsection{Virtual factorizations}

\subsubsection{Factorizations associated to cdgas}\label{sec:CDGfact}
\label{tX}
Let $\tX$ be a DM stack and $$(\mathbb{A}= \oplus _{i\in \ZZ}  \mathbb{A}^{i}, d)$$ be a sheaf of cdgas over $\cO_{\tX}$ i.e\ a $\ZZ$-graded associative $\cO_{\tX}$-algebra with 
unit $1$ and a degree $+1$ differential $d$. 
Let $\tW \in \Gamma (\tX, \cO_{\tX})$ and 
$a \in \Gamma (\tX, \mathbb{A}^{-1})$ such that $da = \tW\cdot 1$. Consider $d_a$ which is obtained from summing the usual differential with multiplication by $a$,
$$d_a := d + a \cdot.$$
The graded Leibnitz rule implies  that $d_a ^2 = \tW \cdot$. 
Hence we obtain a $\cO_{\tX}$-module factorization  $(\mathbb{A} , d_a )$ of $\tW$.

\subsubsection{Homotopies}\label{ind:a} 
Let $a '$ be another choice such that $da' = \tW \cdot 1$ and $a' - a = d h$ for some degree $(-2)$ element $h$ of  $\Gamma (\tX, \mathbb{A})$. 
It is straightforward to check that the factorizations 
$(\mathbb{A} , d_a) $ and $(\mathbb{A}, d_{a'})$ are isomorphic under the multiplication map by $\exp (-h)$.

\subsubsection{Homomorphisms}\label{sub:homo of curved diff}
Let $(\mathbb{A}', d')$ be another sheaf of cdgas over $\cO_{\tX}$ and let $\varphi:  \mathbb{A} \to \mathbb{A}'$ be a cdga $\cO_{\tX}$-homomorphism of degree $0$.
Then it clearly induces a degree $0$ homomorphism $(\mathbb{A} , d_a) \to (\mathbb{A}' , d'_{\varphi(a)}) $ of $\cO_{\tX}$-modules factorizations of $\tW$.

\subsubsection{Thom-Sullivan Koszul factorizations}\label{sub:Kos-bar}
Let  $\tX$ be a DM stack and let $\tW$ be a regular function on $\tX$.
Consider  a locally free coherent sheaf $F$ on $\tX$ with $\sigma \in \Gamma (\tX, F)$.
Then we have $$\Thb\GodKos (\sigma) $$ which is a lower bounded $\Gamma$-acyclic cdga over $\cO_{\tX}$ (see Proposition~\ref{prop:TSG acyclic}).
Let $\tau \in \Gamma (\tX, \mathrm{Th}^{-1}\GodKos (\sigma))$ be an element satisfying
\begin{eqnarray} \label{eq:stw}  (\iota_{\sigma}  +d_{dR}  ) (\tau ) = \tW \cdot 1 , \end{eqnarray} 
where $1$ is the unit element of $\Gamma (\tX, \cO_{\tX}) \subset \Gamma (\tX, \mathrm{Th}^{0}\GodKos (\sigma)) $.
 As an application of \S  \ref{sec:CDGfact} we have a factorization 
 \[
 \mathbb K(F, \tau, \sigma) := (\Thb\GodKos (\sigma) , \iota_{\sigma} + d_{dR}+  \tau \wedge ).
 \] 
We view it as an object of the co-derived category of factorizations of $\tW$ whose components are sheaves of $\cO_{\tX}$-modules (see \cite{BDFIK12} for a definition in this generality).  We denote this category by $D(\tX, \tW)$. \label{derived}
 \label{derived category}

\begin{Def}
A  factorization 
of the form $ \mathbb K(F, \tau, \sigma)$ is called the {\em Thom-Sullivan Koszul (TK) factorization} \label{TK} associated to $\tau, \sigma$.  To reduce notation, we often simply write $ \mathbb K( \tau, \sigma)$.
\end{Def}

\begin{Rmk} \label{rmk: flat}
TK factorizations always have $\cO_{\tX}$-flat components by \cite[Lemma 5.2.11]{HS} and the flatness of $\God [i] \cO_{\tX}$ for every $i$.
\end{Rmk}

\subsubsection{Virtual factorizations}\label{sub:virtual factorization}
We will now define virtual factorizations.
For clarity, we first recall the procedure above.

For  fixed $g, r, d$, GLSM data, and $S\to \fB_{\Gamma}$, we construct $([\cA \to \cB], rest : \cA  \to \cV |_{\sG}, \U )$ as in Proposition \ref{prop:U}. 
Then, we define an element $\mathfrak a_\U \in \HH ^{-1} (\U, \Cone (\cO_{\U}^{\oplus r} \to \wedge ^{-\bullet} p^*B^{\vee}  ))$ as in \S \ref{sec:cochain}.
We find a degree $-1$ global section $\ka$ of $\ThbGod (\wedge^{-\bullet} p^*B^{\vee} )$ which together with the
diagonal map $\cO_{\U} \to \cO_{\U}^{\oplus r}$ represents the class $\mathfrak a_{\U}$; see \eqref{eqn:an alpha}.
By Proposition~\ref{prop; TS alphabeta=w}, we may apply the Thom-Sullivan Koszul construction to this data.
This yields a factorization for the superpotential $ - \sum ev_i^* w$ on $\U$: 
\begin{equation}\label{eq:fund factor}
 \mathbb{K}_{S\to \fB_{\kG}} := \mathbb K(p^*B, \alpha, \beta) = (\Thb\GodKos (\beta) , \iota_{\beta} + d_{dR}+  \alpha \wedge ),
 \end{equation}
 which is naturally isomorphic to 
\[ (\Kos (\beta) \ot_{\cO _{\U}} \Thb\God \cO_{\U} , \iota_{\beta} + d_{dR}+  \alpha \wedge )  \] by  Lemma~\ref{lem: projGod}.
We call the factorization $\mathbb{K}_{S\to \fB_{\kG}}$ (also written $\mathbb K_S$), 
a {\em virtual factorization} of $LG(\cX)\ti _{\fB_{\kG}} S$.  
For the identity map $S\xrightarrow{=} \fB_{\kG}$ we write $\mathbb{K}_{g, r, d}$ or $\mathbb{K}_{\fB_{\Gamma}}$ instead of $\mathbb{K}_{S}$.

After fixing the data $([\cA \to \cB], rest_{\cA} : \cA  \to \cV |_{\sG}, \U)$, 
as above the virtual factorization is unique up to isomorphism by \S\ref{ind:a}.  
When we need to include the class of $\mathfrak a_\U$ in the notation, we sometimes write $\mathbb K(\mathfrak a_\U, \beta)  := \mathbb K(p^*B, \alpha, \beta)$.

\begin{Rmk}
While a choice $\mathbb{K}_{g, r, d}$ of virtual factorizations lies in the non-canonical LG space $(\U_{g, r, d},- \sum ev_i^* w)$, 
we will show that nevertheless there is an associated cohomology class in 
$\HH^*(I\cX, (\Omega^\bullet_{I\cX}, - dw))^{\ot r} \ot H^*(\overline{M}_{g,r})$ which is independent of all choices, see Theorem~\ref{thm:cind}.
\label{Mgr}
\label{cotangent}
\end{Rmk}

\begin{Rmk}
The virtual factorization $\mathbb{K}_{g, r, d}$ described above has been called the fundamental factorization in the previous literature.
\end{Rmk}

\subsubsection{Pullbacks}\label{sec:coderived}
With the setup as in \S \ref{sub:Kos-bar}, let $f : \tY \to \tX$ be a morphism of smooth DM stacks.  In this section we argue that the formation of $TK$-factorizations ``commutes'' with pullback along $f$.

Let $\cF$ be a $\cO_{\tY}$-module.  By \cite[(1.25)]{Thomason}, there is a natural $\cO_{\tX}$-module homomorphism
\[
 \God f_*\cF \to f_*\God\cF.
\] 
Let $\cF = f^*\cE$ and use the  adjunction $f^* \dashv f_* $ to obtain a natural transformation 
\[
nat_1^f: f^* \God \to \God f^*.
\]

Now, let $\cF_i$ be a collection of $\cO_\tX$-modules.  For any \'etale open $Y \to \tY$, there is a natural map
 \begin{align}
 (f^{-1} \prod_{i \in I} \cF_i)(Y) \otimes_{f^{-1} \cO_\tX(Y)}  \cO_\tY(Y) & \to \prod_{i \in I} (f^{-1} \cF_i(Y) \otimes_{f^{-1} \cO_\tX(Y)}  \cO_\tY(Y))  \label{eq: nat}\\
(a_i) \ot s & \mapsto (a_i \ot s) \notag.
 \end{align}
For an input sheaf $\mathcal F$, we can consider the case  $\cF_i = \God[i] \mathcal F \otimes_{{\bf k}} \Omega[i]$.  This yields a natural transformation
 \begin{align*}
nat_2^f:  f^*\ThbGod & \to \Thb(f^*\God ).
 \end{align*}
 We let
 \[
 \zzeta_f : = \Thb (nat_1^f) \circ nat_2^f 
 \]
 
Now, given $\tau \in \mathrm{Th}^{-1}\GodKos(\sigma)$ satisfying \eqref{eq:stw}, the natural chain map 
\begin{equation} \label{eq: zzeta} \zzeta_f(\Kos(\sigma)) = \Thb (nat_1^f(\Kos(\sigma)) \circ nat_2^f(\Kos(\sigma)) \end{equation} 
 induces a morphism of factorizations
\begin{equation} 
\widetilde{\zzeta}_f : f^*\mathbb K (\tau, \sigma) \to \mathbb K (\zzeta_f(f^*\tau), f^*\sigma).
\label{eq: pullback TK}
\end{equation}

\begin{Lemma} \label{lem: TK pullback}
  Assume that, locally on $\tX$, there exists $\overline \tau \in \Kos^{-1}(\sigma)$ such that $\tau$ is homotopic to $\Thb(\iota)(\overline \tau)$ 
  where $\Thb(\iota)$ is the map from \eqref{eq: Thi}.
  Then
  \begin{enumerate}
   \item The  natural map \eqref{eq: pullback TK} is a co-quasi-isomorphism.
   
   \item On an atlas $g: X\to \tX$ of $\tX$ where $\bar{\tau}$ exists globally, we have natural co-quasi-isomorphisms
    \[\xymatrix{ 
     \{\overline \tau, \sigma \} \ar[r] \ar@/_1.2pc/[rr] &  g^* \mathbb K (\tau, \sigma)  \ar[r] &  \mathbb K ( \zzeta _g (g^*\tau) , g^*\sigma ) .
    } \] 
  \end{enumerate}
  
  \end{Lemma}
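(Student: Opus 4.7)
The plan is to prove part (2) first, which serves as the local model, and then deduce part (1) by working \'etale-locally on $\tY$ and applying a two-out-of-three argument for co-quasi-isomorphisms. For part (2), I would realize both arrows via the cdga homomorphism $\Thb(\iota): \Kos \to \Thb\GodKos$. The map $\{\bar\tau, g^*\sigma\} \to g^*\mathbb K(\tau, \sigma)$ is induced by $g^*\Thb(\iota)$ per \S\ref{sub:homo of curved diff}, while the composition to $\mathbb K(\zzeta_g(g^*\tau), g^*\sigma)$ factors through an auxiliary $\mathbb K(\Thb(\iota)(\bar\tau), g^*\sigma)$: the first leg is $\Thb(\iota)_*$, which is well-defined as a factorization morphism because $\Thb(\iota)$ is unital and satisfies $(\iota_{g^*\sigma}+d_{dR})\Thb(\iota)(\bar\tau) = \Thb(\iota)(g^*\tW\cdot 1) = g^*\tW\cdot 1_T$, and the second leg is the isomorphism from \S\ref{ind:a} produced by the hypothesized homotopy $\zzeta_g(g^*\tau) \sim \Thb(\iota)(\bar\tau)$. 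Commutativity of the resulting diagram (so that both composites agree in the co-derived category) follows from the naturality identity $\zzeta_g \circ g^*\Thb(\iota) = \Thb(\iota)$, which is formal from the definitions of $nat_1^g$ and $nat_2^g$.

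The key technical claim is that a factorization map induced by the cdga quasi-isomorphism $\Thb(\iota)$ (or by its pullback $g^*\Thb(\iota)$) is itself a co-quasi-isomorphism. To verify this, I would filter both factorizations by wedge-degree in the Koszul direction. The associated graded map is the component-wise quasi-isomorphism $\Thb(\iota): \Kos^{-i}(\sigma) \to \Thb\GodKos^{-i}(\sigma)$, coming from the fact that $\Thb\God$ provides a $\Gamma$-acyclic resolution (Proposition~\ref{prop:TSG acyclic}). Flatness of all components (Remark~\ref{rmk: flat}) ensures this quasi-isomorphism survives the pullback $g^*$, and boundedness of the Koszul filtration (together with the fact that the curved differentials $\alpha\wedge$, $\tau\wedge$, $d_{dR}$ are lower-filtered with respect to $\iota_\sigma$) ensures the spectral sequence argument in the co-derived category $D(X, -g^*\tW)$ converges and promotes the graded quasi-isomorphism to a co-qis.

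For part (1), I exploit that being a co-quasi-isomorphism is \'etale-local on $\tY$. Choose an \'etale cover $h: Y \to \tY$ over which $(fh)^*\bar\tau$ is globally defined; this exists since $\bar\tau$ exists \'etale-locally on $\tX$. Applying part (2) to both the composite atlas $fh: Y \to \tX$ and the cover $h: Y \to \tY$ (with its pulled-back data) produces two co-qis maps out of the common Koszul factorization $\{(fh)^*\bar\tau, (fh)^*\sigma\}$, landing in $h^*f^*\mathbb K(\tau, \sigma)$ and in $h^*\mathbb K(\zzeta_f(f^*\tau), f^*\sigma)$ respectively. Naturality of $\zzeta_\bullet$ ensures $h^*\widetilde\zzeta_f$ intertwines these maps, so two-out-of-three forces $h^*\widetilde\zzeta_f$, and hence $\widetilde\zzeta_f$ by \'etale descent, to be a co-qis. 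The main obstacle I anticipate is the factorization-theoretic step: lifting a cdga quasi-isomorphism to a co-quasi-isomorphism in the co-derived category of curved factorizations with infinite-rank, non-quasi-coherent components. Since $\Thb\GodKos(\sigma)$ is unbounded in the Thom-Sullivan direction, convergence of the wedge-degree spectral sequence requires care; the saving graces are the flatness from Remark~\ref{rmk: flat} and the boundedness of the Koszul direction, which together force convergence and allow the graded quasi-isomorphism to propagate to the full co-qis.
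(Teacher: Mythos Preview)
Your overall architecture—reducing via \S\ref{ind:a} to the factorization map induced by the cdga map $\Thb(\iota)$, and organizing (1) and (2) around a common local diagram—is exactly how the paper proceeds. The paper packages this as one commutative square \eqref{diag: pullbacks are co-quasi} valid locally for arbitrary $f$, from which (1) follows directly and (2) is the special case $f=g$; your order (prove (2) first, then descend to (1) by two-out-of-three) is equivalent.

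The genuine gap is in your ``key technical claim.'' Filtering by Koszul wedge-degree does not give a filtration compatible with the differential: on both $\{\bar\tau,\sigma\}$ and $\mathbb K(\Thb(\iota)(\bar\tau),\sigma)$ the term $\iota_\sigma$ lowers wedge-degree by one while $\bar\tau\wedge$ (resp.\ $\tau\wedge$) raises it, so neither the increasing nor the decreasing wedge-degree filtration is stable. You therefore cannot form the associated graded you describe, and the spectral-sequence step does not get off the ground. Boundedness of the Koszul direction does not help; compatibility already fails.

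The paper avoids this entirely by first applying Lemma~\ref{lem: projGod} to rewrite
\[
\mathbb K(\Thb(\iota)(\bar\tau),\sigma)\;\cong\;\{\bar\tau,\sigma\}\otimes_{\cO}\Thb\God\cO,
\]
so that the map in question becomes $\{\bar\tau,\sigma\}\to\{\bar\tau,\sigma\}\otimes_{\cO}\Thb\God\cO$, i.e.\ tensoring a flat finite-rank factorization with a resolution $\cO\to\Thb\God\cO$ of the structure sheaf (Lemma~\ref{lem: Th is qi}). This is a co-quasi-isomorphism immediately by \eqref{eq: fact res}, which is just the well-definedness of $\otimes^{\mathbb L}$ on the co-derived category. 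The remaining maps in the diagram ($b,c$ from Lemma~\ref{lem: projGod} and $d,e$ from \S\ref{ind:a}) are genuine isomorphisms, so $\widetilde\zzeta_f = e\circ c\circ a\circ b^{-1}\circ d^{-1}$ is a co-quasi-isomorphism without any spectral sequence or convergence argument. If you want to repair your approach, this factoring via Lemma~\ref{lem: projGod} is precisely the missing move: it separates the finite curved Koszul piece from the infinite but \emph{uncurved} Thom--Sullivan--Godement piece, after which the existing tensor machinery applies directly.
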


\begin{proof}
By the assumption on $\tau$, locally we get a commuting diagram in $D(\tY, f^*\tW)$:
\begin{equation}\label{diag: pullbacks are co-quasi}
\begin{tikzcd}
f^*( \{\overline \tau, \sigma \}   \otimes_{\cO_\tX} \ThbGod\cO_{\tX})  \ar[d, "a"]  \ar[r, "b"] & f^*\mathbb K (\Thb(\iota)(\overline \tau), \sigma)  \ar[r, "d"]  \ar[d]&  f^*\mathbb K (\tau, \sigma) \ar[d, "{\widetilde{\zzeta}_f}"]  \\
 \{f^*\overline \tau, f^*\sigma \}  \otimes_{\cO_\tY} \ThbGod\cO_{\tY} \ar[r, "c"] & \mathbb K (\Thb(\iota)(f^* \overline \tau), f^*\sigma)  \ar[r, "e"]  &\mathbb K (\zzeta _f(f^*\tau), f^*\sigma)
\end{tikzcd}
\end{equation}
where
\begin{itemize}
\item $a$ is the co-quasi-isomorphism obtained from plugging;
\[
\cR = f^*\Thb \God \cO_{\tX} \text{ and } \cR = \Thb \God \cO_{\tY};
\]
into \eqref{eq: fact res} and using Lemma~\ref{lem: Th is qi};
\item $b, c$ are the isomorphisms induced by Lemma~\ref{lem: projGod}; and
\item  $d,e$ are the isomorphisms from \S\ref{ind:a}.
\end{itemize}
Hence $\widetilde{\zzeta}_f = e \circ c  \circ a \circ  b^{-1}  \circ  d^{-1}$ is a co-quasi-isomorphism, which proves (1).
Note that (2) is \eqref{diag: pullbacks are co-quasi} with $f = g$.
\end{proof}

\begin{Cor}\label{cor: pullback of vir fact}
Let $\mathbb K(\mathfrak a_\U, \beta)$ be a virtual factorization on $\U$ and $f: \tX \to \U$ be a morphism of DM stacks.  Then $f^*\mathbb K(\mathfrak a_\U, \beta)$ is co-quasi-isomorphic to $\mathbb K(f^*\mathfrak a_\U, f^*\beta)$.
\end{Cor}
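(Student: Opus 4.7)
The plan is to deduce the corollary directly from Lemma \ref{lem: TK pullback}(1), applied to $\tau = \alpha$ and $\sigma = \beta$. The only thing to verify is that the hypothesis of that lemma holds for $\alpha$, namely that \'etale-locally on $\U$ there is a genuine Koszul section $\overline\alpha \in \Kos^{-1}(\beta) = p^*B^\vee$ with $\alpha$ homotopic to $\Thb(\iota)(\overline\alpha)$ inside $\Thb\GodKos(\beta)$. Once this is granted, Lemma~\ref{lem: TK pullback}(1) tells us that the natural comparison map
\[
\widetilde{\zzeta}_f : f^*\mathbb K(\alpha, \beta) \longrightarrow \mathbb K(\zzeta_f(f^*\alpha), f^*\beta)
\]
is a co-quasi-isomorphism in $D(\tX, f^*(-\sum ev_i^*w))$.

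To produce such an $\overline\alpha$, invoke Remark \ref{rmk:local}: on a sufficiently small noetherian affine \'etale open $U \to \U$ the class $\mathfrak a_\U|_U$ is realized at the cochain level by a pair $(\overline\alpha, \diag)$ with $\overline\alpha \in \Gamma(U, p^*B^\vee)$, as explicitly constructed in \S\ref{sec:cochain}. The image $\Thb(\iota)(\overline\alpha)$ is then another cochain-level representative of $\mathfrak a_\U|_U$ inside $\Thb\GodKos(\beta)|_U$, having the same diagonal component $1_T^{\oplus r}$ as $\alpha$ (the diagonal piece is rigid, as used in the proof of Proposition \ref{prop; TS alphabeta=w}). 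Since $\alpha$ and $\Thb(\iota)(\overline\alpha)$ represent the same hypercohomology class with equal diagonal components, their difference is a coboundary of a degree $-2$ element of $\Gamma(U, \Thb\GodKos(\beta))$; this is exactly the homotopy required by the hypothesis of Lemma \ref{lem: TK pullback}.

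It remains to identify the target of $\widetilde{\zzeta}_f$ with $\mathbb K(f^*\mathfrak a_\U, f^*\beta)$. The section $\zzeta_f(f^*\alpha) \in \Gamma(\tX, \mathrm{Th}^{-1}\GodKos(f^*\beta))$ satisfies the defining identity \eqref{eq:stw} with potential $f^*(-\sum ev_i^*w)$, because $\zzeta_f$ is a cdga map commuting with $\iota_\sigma + d_{dR}$ and the identity is preserved under pullback by $f^*$ of Proposition \ref{prop; TS alphabeta=w}. Moreover, $\zzeta_f(f^*\alpha)$ represents the class $f^*\mathfrak a_\U$: indeed $f^*\alpha$ represents $f^*\mathfrak a_\U$ in the cone built from $f^*\Thb\GodKos(\beta)$, and $\zzeta_f$ is a quasi-isomorphism onto the cone built from $\Thb\GodKos(f^*\beta)$ under which the two cone classes agree. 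Hence $\mathbb K(\zzeta_f(f^*\alpha), f^*\beta)$ is one allowable model for $\mathbb K(f^*\mathfrak a_\U, f^*\beta)$, and by \S\ref{ind:a} any other choice of Koszul datum representing $f^*\mathfrak a_\U$ produces a factorization canonically isomorphic to this one. Combining with $\widetilde{\zzeta}_f$ yields the desired co-quasi-isomorphism.

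The main obstacle I anticipate is the first step, namely bookkeeping the compatibility of the two cochain realizations of $\mathfrak a_\U|_U$ so as to land precisely in the hypothesis of Lemma \ref{lem: TK pullback}. The key input is really that the diagonal component of any cochain realization is forced to be $1_T^{\oplus r}$, which pins down the ambiguity to a homotopy in the Koszul direction; once that is observed the remaining steps are formal applications of already-proven lemmas.
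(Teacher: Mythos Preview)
Your proof is correct and follows essentially the same route as the paper: invoke Remark~\ref{rmk:local} to produce a local algebraic representative $\overline\alpha = \alpha_{alg}$, observe that the diagonal component is rigid so that $\alpha$ and $\Thb(\iota)(\overline\alpha)$ differ by a homotopy in $\Thb\GodKos(\beta)$, and then apply Lemma~\ref{lem: TK pullback}(1). Your final paragraph identifying the target with $\mathbb K(f^*\mathfrak a_\U, f^*\beta)$ makes explicit what the paper handles via the notational convention (stated just before the corollary) that $f^*\tau$ abbreviates $\zzeta_f(f^*\tau)$ together with \S\ref{ind:a}.
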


\begin{proof}
By Remark \ref{rmk:local}, for an affine atlas $Y \to \U$ we get a diagram
\begin{equation} 
\begin{tikzcd}
\cO_Y[1] \ar[d, swap, "{(\alpha_{alg}, \diag)}"] \ar[drr, "{\mathfrak a_\U|_{Y}}"]& & \\
\Cone (\cO_Y^{\oplus r} \to \Kos(\beta)) \ar[rr, " \Thb(\iota) \oplus \Thb(\iota)"]& & \Cone (\ThbGod\cO_Y^{\oplus r} \to \Th(\beta)),
\end{tikzcd}
\label{eq: diagram perfect}
\end{equation}
where we abuse notation $\Thb(\iota)$ obviously.
This guarantees any chain level representative $\mathfrak a_\U|_{Y} $ is homotopic to $(\Thb(\iota)\circ \alpha _{alg}, \Thb(\iota)\circ \diag )$.
 Hence, the assumption of Lemma~\ref{lem: TK pullback} is satisfied.
 \end{proof}

This is related to the following proposition.

\begin{Prop} \label{prop: perfect} For an affine atlas $Y \to \U$, let $(\alpha _{alg}, \diag)$ be a realization of 
$\mathfrak a _{\U}|_Y$. The natural map $\{ \alpha _{alg} , \beta |_Y \} \to \mathbb K( \alpha, \beta) |_{Y} $ is a quasi-isomorphism.
In particular, any virtual factorization $\mathbb K( \alpha, \beta)$ is perfect (see Definition~\ref{def: perfect}).
\end{Prop}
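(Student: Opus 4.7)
The plan is to deduce the proposition as a direct application of Lemma~\ref{lem: TK pullback}(2) to the atlas map $g : Y \to \U$, taking $\tau = \alpha$, $\overline{\tau} = \alpha_{alg}$, and $\sigma = \beta$. The only hypothesis of the lemma requiring verification is that $\alpha|_Y$ and $\Thb(\iota)(\alpha_{alg})$ are cochain-homotopic in $\mathrm{Th}^{-1}\GodKos(\beta)|_Y$. This is the one point that genuinely needs argument; once it is dealt with, the conclusion is essentially automatic.

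To verify the homotopy, I would observe that both pairs $(\alpha|_Y, 1_\diag|_Y)$ and $(\Thb(\iota)(\alpha_{alg}), \Thb(\iota)(\diag))$ are cochain-level representatives of the same hypercohomology class $\mathfrak a_\U|_Y \in \HH^{-1}(Y, \Cone(\ThbGod\cO_Y^{\oplus r} \to \ThbGod\Kos(\beta)|_Y))$. The diagonal component is itself completely determined by equation~\eqref{eq: diag}, so the two representatives already agree on the $\cO_Y^{\oplus r}$ summand. The uniqueness-up-to-homotopy of cochain realizations from \S\ref{sec:TS}, valid on the affine $Y$ by the $\Gamma$-acyclicity of the components of $\ThbGod$ applied to quasi-coherent sheaves (Remark~\ref{rmk:local}), then forces the remaining components $\alpha|_Y$ and $\Thb(\iota)(\alpha_{alg})$ to be chain-homotopic as required. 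This is precisely the content of diagram~\eqref{eq: diagram perfect} in the proof of Corollary~\ref{cor: pullback of vir fact}.

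With the hypothesis in hand, Lemma~\ref{lem: TK pullback}(2) supplies the natural co-quasi-isomorphism $\{\alpha_{alg}, \beta|_Y\} \to \mathbb K(\alpha, \beta)|_Y$, which is the first assertion. For the ``in particular'' claim, note that $\{\alpha_{alg}, \beta|_Y\}$ has components $\bigoplus_i \wedge^i p^*B^\vee|_Y$, each a finite-rank locally free sheaf on $Y$; producing such a co-quasi-isomorphism on every affine atlas of $\U$ is exactly the defining property of a perfect factorization (Definition~\ref{def: perfect}). The main obstacle is conceptual rather than technical: once one recognizes that the local existence of $(\alpha_{alg}, \diag)$ automatically supplies the hypothesis of Lemma~\ref{lem: TK pullback}, everything reduces to applying that lemma.
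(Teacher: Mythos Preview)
Your proposal is correct and follows essentially the same approach as the paper. The paper's proof also first invokes diagram~\eqref{eq: diagram perfect} to establish that $\alpha$ is locally homotopic to $\Thb(\iota)(\alpha_{alg})$, and then runs through the chain of co-quasi-isomorphisms $\{\alpha_{alg},\beta|_Y\} \cong \{\alpha_{alg},\beta|_Y\}\otimes_{\cO_Y}\ThbGod\cO_Y \cong \mathbb K(\Thb(\iota)(\alpha_{alg}),\beta|_Y) \cong \mathbb K(\alpha,\beta)|_Y$; this chain is exactly what Lemma~\ref{lem: TK pullback}(2) packages, so your direct appeal to that lemma is a slightly cleaner formulation of the same argument.
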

\begin{proof}
By \eqref{eq: diagram perfect} $\alpha$ is locally homotopic to $\Thb(\iota)( \alpha_{alg})$.  
Therefore,
\begin{align*}
\{ \alpha _{alg}, \beta |_Y \} & \cong \{ \alpha _{alg}, \beta |_Y \} \ot_{\cO_Y} \Thb \God \cO_Y & \text{ by  \eqref{eq: fact res} and Lemma~\ref{lem: Th is qi}} \\
& \cong \mathbb K (\Thb(\iota)(\alpha _{alg}), \beta |_Y  ) & \text{ by Lemma~\ref{lem: projGod}} \\
& \cong \mathbb K (\alpha, \beta)|_Y &   \text{ from Corollary \ref{cor: pullback of vir fact}.} 
\end{align*}
\end{proof}

We now prove a functorial property of $\zzeta_f$.  Abusing notation, we often write $f^*\tau$ instead of $\zzeta _f (f^*\tau)$. \label{notation: pullback tau}

\begin{Lemma}\label{lemma: functor zzeta}  Consider two maps between smooth DM stacks, $f: \tY \to \tX$ and $g: \tZ \to \tY$.  The following functors agree:
\[
\zzeta _g \circ g^* (\zzeta _f ) = \zzeta _{f\circ g}.
\]
\end{Lemma}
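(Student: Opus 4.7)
The plan is to unwind both sides of the claimed identity as compositions of the two natural transformations $nat_1$ and $nat_2$, reduce the problem to verifying that each of these is compatible with composition of morphisms separately, and then reassemble using the functoriality of $\Thb$. Specifically, both $\zzeta_{fg}$ and $\zzeta_g \circ g^*(\zzeta_f)$ are natural transformations $g^*f^* \ThbGod \to \ThbGod g^*f^*$, and expanding
\[
\zzeta_g \circ g^*(\zzeta_f) = \Thb(nat_1^g) \circ nat_2^g \circ g^*\Thb(nat_1^f) \circ g^*(nat_2^f)
\]
the goal is to massage this into the form $\Thb(nat_1^{fg}) \circ nat_2^{fg}$ by moving the inner $g^*$ past $\Thb$ using $nat_2^g$ and collating the resulting $nat_1$ and $nat_2$ components.

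For the compatibility of $nat_1$, I would show that under the identification $(fg)_* = f_* g_*$, the Godement-to-pushforward comparison $\God (fg)_*\cF \to (fg)_*\God\cF$ factors through $f_*\God g_* \cF$ as the composition of the two individual comparisons. This follows from the point-wise nature of the Godement construction recalled in Appendix~\ref{sec: TS} (or the original \cite[Construction 1.31]{Thomason}). Taking adjoints and using the factorization of the adjunction $(fg)^* \dashv (fg)_*$ as $(g^*,g_*)$ composed with $(f^*,f_*)$, yields the identity
\[
nat_1^{fg} = (nat_1^g)\cdot g^*(nat_1^f)
\]
as natural transformations $g^*f^* \God \to \God g^* f^*$.

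For the compatibility of $nat_2$, I would argue directly from the defining formula \eqref{eq: nat}. The map $f^{-1}\prod_i \cF_i \otimes_{f^{-1}\cO_{\tX}} \cO_{\tY} \to \prod_i (f^{-1}\cF_i \otimes_{f^{-1}\cO_{\tX}} \cO_{\tY})$ is strictly associative under iteration, so that $nat_2^{fg}$ coincides with the composition that first applies $nat_2^g$ (to a diagram over $\tY$ pulled back from $\tX$) and then applies $g^*(nat_2^f)$; this is a straightforward diagram chase on global sections over \'etale opens.

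Combining these two functorial identities and using that $\Thb$ itself is a functor on $\Delta\mathbf{dg}_{\cO_\U}$ gives the required equality. The main technical obstacle is purely bookkeeping: there are several places where one must commute $g^*$ past $\Thb$, and each such passage is only available up to $nat_2^g$, so one must carefully check that the $nat_2$-terms introduced in this rearrangement conspire with the $nat_2^g, nat_2^f$ already present to produce exactly $nat_2^{fg}$ (and similarly for the $nat_1$-terms). Once this matching is verified, the identity $\zzeta_g \circ g^*(\zzeta_f) = \zzeta_{fg}$ follows at once.
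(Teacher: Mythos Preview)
Your proposal is correct and follows essentially the same approach as the paper: the paper packages the verification into a single commutative diagram whose three constituent cells are exactly (i) the compatibility $nat_2^{fg} = nat_2^g \circ g^*(nat_2^f)$, (ii) the naturality of $nat_2^g$ allowing the swap of $nat_2^g$ past $g^*\Thb(nat_1^f)$, and (iii) the compatibility $nat_1^{fg} = nat_1^g \circ g^*(nat_1^f)$, which it attributes to the standard cocycle property of the base change map rather than working through adjunction as you do. The content is identical; only the presentation differs.
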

\begin{proof}
We have the following commutative diagram
\[
\begin{tikzcd}
g^*f^* \Thb\God \ar[d, swap, "g^* nat_2^f \God"] \ar[dr, "nat_2^{g \circ f} \God"] \ar[dd, bend right = 100, swap, "{g^*(\zzeta_f)}"] \ar[ddrr, bend left = 80,  "{\zzeta_{g \circ f}}"]& & \\
g^*\Thb f^* \God \ar[d, swap, "g^* \Thb nat_1^f"] \ar[r, "nat_2^g f^* \God "] & \Thb g^*f^* \God \ar[dr, "\Thb nat_1^{g \circ f}"] \ar[d, swap, "\Thb g^* nat_1^f"] & \\
g^* \Thb\God f^* \ar[r, "nat_2^g  \God f^* "] \ar[rr, bend right = 25, swap, "{\zzeta_g}"] & \Thb g^*\God f^* \ar[r, swap, "\Thb nat_1^g f^*"] & \Thb\God g^*f^*  \\
\end{tikzcd}
\]
The upper left triangle commutes since each map is effectively moving a parantheses (which keeps track of the indexing).
The square commutes since $nat_2^g: g^*\Thb \Rightarrow \Thb g^*$ is a natural transformation. 
Commutativity of the bottom right triangle reduces to showing $nat_1^g f^* \circ nat^f_1 = nat_1^{g \circ f}$.  Since $nat_1$ comes from the base change map in topology this reduces to the corresponding standard property of the base change map.
\end{proof}

\subsubsection{Tensor products}  
Let us now study the tensor product of 
two TK factorizations  $ \mathbb K(F_i, \tau_i, \sigma_i)$ on $\tX$ for $\tW_i$, $i=1, 2$.
By Lemma \ref{lem: projGod}  we have cdga homomorphisms 
\begin{eqnarray*} 
\ThbGod\Kos (\sigma _1) \otimes \ThbGod\Kos( \sigma_2)  & \cong & \Kos (\sigma _1) \otimes \Kos (\sigma _2) \otimes  \ThbGod \cO \otimes \ThbGod \cO \\
& \cong & \Kos (\sigma _1 \oplus \sigma _2) \otimes  \ThbGod \cO \otimes \ThbGod \cO \\
& \xrightarrow{qis} & \Kos (\sigma _1 \oplus \sigma _2) \otimes  \ThbGod \cO  \\
& \cong &  \ThbGod\Kos (\sigma _1 \oplus \sigma_2).
\end{eqnarray*}
The left cdga has the degree $-1$ section $\tau _1 \ot 1 + 1 \ot \tau _2$. The above sequence of maps gives rise to a section 
of   ${\rm Th}^{-1}\God\Kos (\sigma _1 \oplus \sigma_2)$ which coincides with 
the composition \[     \cO_{\tX} \xrightarrow{\tau _1 \oplus \tau_2  }
   \mathrm{Th}^{-1}\GodKos (\sigma _1) \oplus \mathrm{Th}^{-1}\GodKos (\sigma_2)  \hookrightarrow  \mathrm{Th}^{-1}\God \Kos (\sigma _1 \oplus \sigma_2).  \]
   This will be denoted  by $\tau _1 \oplus \tau_2 $ as a slight abuse of notation.
Assume that after pulling back to an atlas  $X$ of $\tX$  each of the $\tau _i$ are homotopic respectively to ${\rm Th}^{\bullet} (\iota ) (\bar{\tau}_i)$
for some section $\bar{\tau}_i$ of $F_i^{\vee}|_{X}$. 
Then the map induced from the composite map 
\begin{equation} \label{eqn: inner tensor of TK}
\mathbb K(F_1, \tau_1, \sigma_1) \otimes \mathbb K(F_2, \tau_2, \sigma_2) 
\xrightarrow{cqis} \mathbb K(F_1 \oplus F_2, \tau _1 \oplus \tau_2, \sigma _1 \oplus  \sigma_2) .
\end{equation} is a  co-quasi-isomorphism between factorizations for $\tW_1 + \tW_2$ by Lemma \ref{lem: TK pullback} (2).

Now consider two TK factorizations  $ \mathbb K(F_i, \tau_i, \sigma_i)$ on possibly different DM stacks $\tX_i$ for $i=1, 2$.  Assume the existence of $\bar{\tau}_i$ as above.
Let $p_i : \tX _1 \ti \tX_2 \to \tX_i$ be the projections and set $\tau _1 \boxplus \tau_2  := p_1^* \tau _1 \oplus p_2^*\tau_2$, $\sigma _1 \boxplus  \sigma_2 : =  p_1^* \sigma _1 \oplus  p_2^*\sigma_2$,
and $\tW_1 \boxplus \tW_2 := p_1^*\tW_1 + p_2^*\tW_2 $.
Then by \eqref{eqn: inner tensor of TK} we have 
\begin{eqnarray} \label{eqn: outer tensor of TK}
\mathbb K(F_1, \tau_1, \sigma_1) \boxtimes \mathbb K(F_2, \tau_2, \sigma_2) & := & p_1^* K(F_1, \tau_1, \sigma_1) \otimes p_2^* \mathbb K(F_2, \tau_2, \sigma_2) \nonumber \\
&   \xrightarrow{cqis} & \mathbb K(p_1^*F_1 \oplus p_2^* F_2, p_1^* \tau _1 \oplus p_2^*\tau_2,  p_1^* \sigma _1 \oplus  p_2^*\sigma_2)    \nonumber \\
\end{eqnarray}
as factorizations of $\tW_1 \boxplus \tW_2$ on $\tX_1 \ti \tX_2$.

\subsubsection{Support} \label{ind:support}
Let $LG(Z(dw))$ be the moduli space of $\nu$-stable LG  quasimaps to the critical locus $Z(dw)$ of $w$.
This is a proper DM stack by Theorem~\ref{FJR proper}.
We let $\Z : = LG(Z(dw)) \ti _{\fB_{\Gamma}} S$ \label{Z}  and note that by Proposition \ref{prop: perfect} 
and \cite[Proposition 3.6.1]{MF}, 
$\mathbb K (\alpha, \beta )$ restricted to $\U - \Z$ is locally homotopic to zero and hence coderived quasi-isomorphic to zero, i.e., the virtual factorization $\mathbb K (\alpha, \beta)$ is supported on
the closed substack $\Z$,  which is a proper DM stack whenever $S \to \fB_{\kG}$ is proper.

\subsubsection{The Convex Case}\label{subsubsec: convex} 
For some GLSM input data, it is possible to find a global cochain realization $(\ka _{alg}, 1_{\diag})$  of $\mathfrak a _\U (1)$.
In this case, we obtain a Koszul matrix factorization $\{ \ka _{alg}, \kb \}$ globally, which is obviously
quasi-isomorphic to  the virtual factorization
$\{ \ka_{alg} , \kb \} \ot _{\cO_{\U}} \ThbGod \cO _{\U}$.
When the GLSM input data is convex and $\nu = \infty$ such a Koszul matrix factorization
 $\{ \ka_{alg} , \kb \}$ was constructed in \cite{MF} (for a suitable choice of $\U$).

\subsection{Dolbeault construction:\ the separated case} \label{sec: Dolbeault}
\begin{disclaimer}
This section is not used in the rest of the paper.  We include it only to illustrate that there is a simpler, equivalent construction in the separated case.
\end{disclaimer}

We now consider the case where $\U$ is separated.  With this assumption, we can provide an alternative construction using smooth forms.

Let $\U^{an}$ be the analytification of $\U$ and $\text{\'Et}(\U^{an})$ be the site whose morphisms are local analytic isomorphisms $X \to \U^{an}$ where $X$ is an analytic space; see for example \cite{J.Hall}.

\begin{Lemma}
Let $\cC^\infty_{\U^{an}}$ be the sheaf of smooth functions on $\U^{an}$.  Every sheaf $\cF$ of $\cC^\infty_{\U^{an}}$-modules on $\emph{\'Et}(\U^{an})$ is $\Gamma$-acyclic.
\end{Lemma}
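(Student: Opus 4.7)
The plan is to exploit the existence of smooth partitions of unity on $\U^{an}$ and show that any $\cC^\infty_{\U^{an}}$-module is a fine sheaf, hence $\Gamma$-acyclic. Since $\U$ is assumed separated and of finite type, its analytification $\U^{an}$ is a Hausdorff paracompact analytic Deligne--Mumford stack, and by choosing a suitable étale atlas $p: X \to \U^{an}$ with $X$ a paracompact Hausdorff analytic space, we can transfer partition-of-unity arguments from $X$ to the site $\text{Ét}(\U^{an})$.

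First, I would verify that $\cC^\infty_{\U^{an}}$ admits partitions of unity on $\text{Ét}(\U^{an})$: for any étale covering $\{f_i : Y_i \to Y\}$ of an object $Y$ of the site, one can find smooth functions $\rho_i \in \cC^\infty_{\U^{an}}(Y)$ whose supports are contained in the (local analytic) images of the $f_i$ and which sum to $1$ locally. This reduces to the classical construction on the paracompact Hausdorff analytic space $Y$, using that the morphisms in $\text{Ét}(\U^{an})$ are local analytic isomorphisms so functions pull back and forth between the $Y_i$ and $Y$.

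Second, the partition of unity argument transfers to any sheaf $\cF$ of $\cC^\infty_{\U^{an}}$-modules: multiplication by $\rho_i$ gives globally defined sections from local ones, so $\cF$ is fine (admits partitions of unity in the sense of sheaves of modules). The standard consequence, adapted to the étale site of a Hausdorff paracompact DM analytic stack via a simplicial atlas argument, is that $\check{H}^i(\{Y_i\}, \cF) = 0$ for $i > 0$ for any étale cover, and hence $H^i(\U^{an}, \cF) = 0$ for all $i > 0$.

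The main technical point to be careful about is the passage from the classical theorem on paracompact Hausdorff topological spaces to the étale site of an analytic DM stack. This is handled by using the simplicial Čech spectral sequence for the atlas $p : X \to \U^{an}$: each layer $X^{[n]} = X \times_{\U^{an}} \cdots \times_{\U^{an}} X$ is separated (so Hausdorff), paracompact and of finite type, and $\cF|_{X^{[n]}}$ is again a sheaf of $\cC^\infty$-modules on the analytic space $X^{[n]}$, so the classical fine-sheaf acyclicity applies directly; the spectral sequence then collapses to yield vanishing of higher cohomology of $\cF$ on $\U^{an}$.
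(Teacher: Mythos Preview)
Your argument has a genuine gap at the step where you claim that partitions of unity on the \'etale site let you kill \v{C}ech cohomology. For an \'etale cover $\{f_i: Y_i \to Y\}$ you produce functions $\rho_i \in \cC^\infty(Y)$ supported in the images $f_i(Y_i)$, and then assert that ``multiplication by $\rho_i$ gives globally defined sections from local ones.'' This works for open covers but fails for \'etale covers that are not injective: over a stacky point the atlas map $X \to \U^{an}$ is (locally) a finite quotient $V \to [V/G]$, and a section $s$ of $\cF$ on $V$ cannot be pushed down to $[V/G]$ by multiplying by a bump function on the base---$\rho \cdot s$ is still not $G$-invariant. The contracting homotopy you need here comes from \emph{averaging over} $G$, not from a partition of unity. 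Likewise, in your spectral-sequence paragraph, the vanishing of $H^q(X^{[p]}, \cF)$ for $q>0$ only makes the spectral sequence degenerate at $E_2$; the $E_2$-page is the cohomology of the \v{C}ech complex $\Gamma(X^{[\bullet]}, \cF)$, and you have not explained why \emph{that} vanishes in positive degrees. So the final sentence ``the spectral sequence then collapses to yield vanishing'' is exactly the unproven claim.

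The paper's proof sidesteps this entirely by passing to the coarse moduli space $\pi: \U^{an} \to \underline{\U}^{an}$. The key input is that $\pi_*$ is \emph{exact} on sheaves of abelian groups (cited from \cite{J.Hall}), so $H^i(\U^{an}, \cF) = H^i(\underline{\U}^{an}, \pi_*\cF)$. Now $\underline{\U}^{an}$ is an honest Hausdorff paracompact space (locally a finite quotient of a domain), smooth partitions of unity exist there, and $\pi_*\cF$ is a module over $\pi_*\cC^\infty_{\U^{an}}$, hence fine in the classical sense. This packages the group-averaging and the partition-of-unity steps into the single fact that $\pi_*$ is exact, and then lands you on a genuine topological space where the textbook argument applies without modification. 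Your approach can be repaired by inserting the averaging argument to handle the simplicial \v{C}ech complex, but at that point you are essentially redoing the proof that $\pi_*$ is exact.
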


\begin{proof}
Since $\U$ is a separated DM stack, there is a separated, coarse moduli space $\underline{\U}$ of $\U$. Hence the underlying analytic space $\underline{\U}^{an}$ of
$\U ^{an}$ is Hausdorff and paracompact. We recall the natural map $\pi : \U^{an} \to \underline{\U}^{an}$ induces an exact functor $\pi_* $ from the category of sheaves 
of abelian groups on $\U ^{an}$ to  that on $ \underline{\U}^{an}$ (see, e.g., \cite[Theorem 2.4]{J.Hall}). 
Therefore it is enough to show that $\pi_*\cF$ is a fine sheaf 
on the Hausdorff and paracompact topological space  $ \underline{\U}^{an}$. 

 A standard argument (see, e.g., the proof of \cite[Theorem 1.11]{Warner}) provides the existence of a smooth partition of unity subordinate to any cover of $\underline{\U}^{an}$
since $\underline{\U}^{an}$ is locally a quotient of a complex domain by a finite group.  Since $\pi _* \cF$ is a $\cC^{\infty}_{\U ^{an}}$-module, it follows that it is fine.
 \end{proof}

In what follows we abuse notation, denoting all the analytifications of the algebraic constructions from the previous sections the same way.
Let $\Dol$ denote the Dolbeault complex of smooth forms on $\U^{an}$.  

The previous lemma demonstrates that $\Kos(\beta) \otimes_{\cO_{\U^{an}}} \Dol$ is a $\Gamma$-acyclic cdga 
which is quasi-isomorphic to $\Kos(\beta)$. 
Following the construction in \S\ref{sec:TS}, we obtain an element $\alpha_{Dol} \in  \Gamma(\U^{an},  \bigoplus_{p+q = -1} \Kos^p(\beta) \otimes_{\cO_{\U^{an}}} \sA^{(0,q)}_{\bar \partial})$ and a corresponding Dolbeault-Koszul factorization 
\begin{align*}  \mathrm{D}\Kos(\alpha_{Dol}, \beta) := (  K(\beta ) \ot_{\cO_{\U ^{an}}} \Dol     , \iota _{\beta} + \bar{\partial} + \alpha _{Dol}\wedge ) \in D(\U ^{an}, W). \end{align*}

\begin{Prop}
The Dolbeault-Koszul factorization $\mathrm{D}\Kos(\alpha_{Dol}, \beta)$ is co-quasi-isomorphic to the \lq\lq analytification"
$( K(\beta ) \ot_{\cO_{\U ^{an}}} \ThbGod\cO_{\U^{an}}  , \iota _{\beta} + d_{dR} + \alpha \wedge ) $ of the  virtual factorization $\mathbb K(\alpha, \beta)$.
\end{Prop}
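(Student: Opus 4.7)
The plan is to connect the two constructions via a common, larger $\Gamma$-acyclic cdga resolution, then invoke the homotopy-uniqueness of $\alpha$ together with the functorial behavior of the factorization construction in \S\ref{sub:homo of curved diff}.

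First, I would observe that both $\Dol$ and $\Thb\God\cO_{\U^{an}}$ are $\Gamma$-acyclic commutative differential graded resolutions of $\cO_{\U^{an}}$ (the former by the lemma just proven, the latter by Proposition~\ref{prop:TSG acyclic}). Their tensor product
\[
\mathbb{A} \;:=\; \Dol \otimes_{\cO_{\U^{an}}} \Thb\God\cO_{\U^{an}}
\]
is again a $\Gamma$-acyclic cdga resolution of $\cO_{\U^{an}}$, receiving cdga quasi-isomorphisms from each factor. Tensoring with $\Kos(\beta)$ and passing to global sections, this produces a diagram of cdga quasi-isomorphisms of $\Gamma$-acyclic resolutions of $\Kos(\beta)$:
\[
\Kos(\beta)\otimes\Dol \;\longrightarrow\; \Kos(\beta)\otimes\mathbb{A} \;\longleftarrow\; \Kos(\beta)\otimes\Thb\God\cO_{\U^{an}}.
\]

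Next, the class $\mathfrak a_\U$ (analytified) admits cochain representatives in each of these three resolutions, and by the homotopy-uniqueness asserted in \eqref{eqn:an alpha}, the images of $\alpha_{Dol}$ and (the analytified) $\alpha$ in the central resolution $\Kos(\beta)\otimes\mathbb{A}$ are both cochain realizations of the same class, hence are homotopic. By the construction of \S\ref{ind:a}, the corresponding curved differentials yield isomorphic factorizations, and by \S\ref{sub:homo of curved diff} each of the cdga quasi-isomorphisms above induces a morphism of factorizations for the superpotential $-\sum ev_i^\ast w$. Concretely, we get a zig-zag
\[
\mathrm{D}\Kos(\alpha_{Dol},\beta) \;\longrightarrow\; \bigl(\Kos(\beta)\otimes\mathbb{A},\,\iota_\beta+\bar\partial+d_{dR}+(\text{transported }\alpha)\wedge\bigr) \;\longleftarrow\; \mathbb K(\alpha,\beta)^{an}.
\]

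It then remains to verify that each arrow in the zig-zag is a co-quasi-isomorphism in $D(\U^{an},W)$. The main obstacle is this verification, since a cdga quasi-isomorphism of the underlying resolutions does not automatically yield a co-quasi-isomorphism of factorizations. To handle it, I would argue locally on an affine atlas of $\U^{an}$: by Proposition~\ref{prop: perfect} and the analogous argument using the Dolbeault resolution, each of the three factorizations above is locally co-quasi-isomorphic to the same locally-free Koszul matrix factorization $\{\alpha_{alg},\beta\}$ obtained from a local realization of $\mathfrak a_\U$, via the same mechanism as in the proof of Lemma~\ref{lem: TK pullback} (i.e.\ using \eqref{eq: fact res} with $\cR$ taken to be $\Dol$, $\Thb\God\cO_{\U^{an}}$, and $\mathbb{A}$ respectively, together with Lemma~\ref{lem: Th is qi} or its Dolbeault analogue). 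Since co-quasi-isomorphism is local, this completes the proof.
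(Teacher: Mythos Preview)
Your proposal is correct and follows essentially the same route as the paper: both arguments pass through the common resolution $\Kos(\beta)\otimes_{\cO_{\U^{an}}}\Dol\otimes_{\cO_{\U^{an}}}\ThbGod\cO_{\U^{an}}$, then use that $(\alpha_{Dol},\diag)$ and $(\alpha,\diag)$ represent the same class $\mathfrak a_\U$ so that \S\ref{ind:a} yields an isomorphism of the two factorizations on that common space. The only difference is that where you verify the co-quasi-isomorphism of each leg of the zig-zag by a local argument via Proposition~\ref{prop: perfect}, the paper simply invokes \eqref{eq: fact res} directly (tensoring a flat factorization with a resolution of $\cO_{\U^{an}}$ is a co-quasi-isomorphism), which shortens the argument.
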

\begin{proof}
Since the natural maps $ \cO_{\U ^{an}} \to \Dol$ and $ \cO_{\U ^{an}} \to \ThbGod\cO_{\U^{an}}$ are quasi-isomorphisms,
the natural maps $\mathbb K (\alpha, \beta) \to \mathbb K (\alpha, \beta)  \otimes_{\cO_{\U^{an}}} \Dol$ and
$ \mathrm{D}\Kos(\alpha_{Dol}, \beta) \to \mathrm{D}\Kos(\alpha_{Dol}, \beta) \otimes_{ \cO_{ \U ^{an}}} \ThbGod\cO_{\U^{an}}$ between 
$\cO_{\U^{an}}$-factorizations for $W$ are co-quasi-isomorphisms. On the other hand, since $(\alpha _{Dol}, \diag )$ and $(\alpha , \diag)$ represent the same 
class $\mathfrak a_\U$, 
we have an isomorphism 
$\mathbb K (\alpha, \beta)  \otimes_{\cO_{\U^{an}}} \Dol \cong  \mathrm{D}\Kos(\alpha_{Dol}, \beta) \otimes_{ \cO_{ \U ^{an}}}  \ThbGod\cO_{\U^{an}}$ by \S \ref{ind:a}.
\end{proof}

\subsection{Independence}

In this section, we show that the virtual factorization $\mathbb K_{g,r,d}$ 
is independent of various choices in a suitable sense.
We begin with the following general situation.

Consider a commuting diagram of LG models of {\em smooth} DM stacks $\tX, \tX '$, $\tT$:
\begin{equation} 
   \xymatrix{   (\tX , \tt{W} )  \ar[r]^{\iota}    \ar[rd]_{e} 
   &  (\tX ', \tW ' ) \ar[d]^{e' } \\ 
     &     (\tT, \tt{w}) , }
      \label{LG diagram} 
      \end{equation} 
      i.e., $\tW  = \iota ^* \tW'$, $\tW = e^* {\tt w}$,  $\tW' = e'^* {\tt w}$, and $e = e' \circ \iota$.
      We allow the case where $\tT$ is empty,  i.e., where there is no $\tT, {\tt w}, e, e'$.

Let $F$ and $F'$ be vector bundles on DM  stacks $\tX$, $\tX '$ respectively.
Let $\sigma $, $\sigma '$ be global sections of $F^{\vee}$ and $F'^{\vee}$, respectively.
Let $\tau $, $\tau '$ be degree $-1$ global sections of $\Th (\sigma ) $, $\Th (\sigma ') $, respectively such that $(\iota _{\sigma } + d_{dR}) (\tau ) = \tW$ and
 $(\iota _{\sigma '} + d_{dR}) (\tau ' ) = \tW '$.

         Assume that, locally on $\tX ' $, there exists $\overline \tau ' \in \Kos^{-1}(\sigma ')$ such that 
$\tau '$ is homotopic to $\Thb(\iota)(\overline \tau ')$.
We consider the following two cases for $\iota$.  In these cases, we will demonstrate that there is a natural isomorphism
 $\mathbb{K}(F', \tau '  , \sigma ')  \cong \iota_*\mathbb{K}(F , \tau , \sigma  )$ in $D(\tX ', \tW ' )$. 
 \medskip
 
 {\bf Case (1)} the map $\iota$ is an {\em open} immersion. 
 Suppose that $\iota ^* F' = F, \iota^*\sigma ' = \sigma $, $\iota ^* \tau ' = \tau$, and the support of $\mathbb{K}(F', \tau '  , \sigma ' )$ lies in $\tX$. 
 Then 
 $\mathbb{K}(F', \tau '  , \sigma ' ) \xrightarrow{\cong} \iota _*\iota^* \mathbb{K}(F', \tau '  , \sigma ' ) 
 \xrightarrow{\cong} \iota _*  \mathbb{K}(F, \tau , \sigma  )  $ in $D(\tX ', \tW ')$. The first isomorphism follows from the support condition 
 and the second isomorphism is by Lemma \ref{lem: TK pullback}.

{\bf  Case (2) } the map $\iota$ is a {\em closed} immersion with the following conditions.
There is an epimorphism $\phi: F' \to Q$ with a vector bundle $Q$ on $\tX '$ such that:
\begin{itemize}
\item The smooth DM stack $\tX$ is the zero locus of $\phi\circ \sigma$ via $\iota$; 
\item $\phi \circ \sigma$ is a regular section of $Q$ (i.e., the codimension of $\tX$ in $\tX '$ is $\rank Q$);
\item $F$ fits into a short exact sequence $$0\to F \to F'|_{\tX} \xrightarrow{\phi |_X} Q|_{\tX}\to 0 $$ of vector bundles on $\tX$.
\end{itemize}
Note that the restricted section $\sigma '|_{\tX} : \cO_{\tX} \to F'|_{\tX}$ is factored by a homomorphism   $\cO_{\tX} \to F$.
We require that it is exactly $\sigma $.
    Also we assume that the composition   
     $$\cO_{\tX} \xrightarrow{ \zzeta (\tau'|_{\tX}) } \ThbGod (\wedge^{-\bullet} F'^{\vee}|_{\tX}, \iota _{\sigma '} |_{\tX})  \to \ThbGod (\wedge^{-\bullet} F^{\vee}, \iota _{\sigma } )  $$
     becomes $\tau $; for the definition of $\zzeta$, see \eqref{eq: zzeta}.
         Then by \S \ref{sub:homo of curved diff} we have a natural map 
  \begin{eqnarray}\label{eqn:restriction fact}  \mathbb{K}( F'|_{\tX}, \zzeta (\tau' |_{\tX})  , \sigma' |_{\tX} ) \to \mathbb{K}(F, \tau  , \sigma  ) \end{eqnarray}
         in $D(\tX, \tW )$.

\begin{Prop}\label{prop:PV} 
The natural map $$\psi:  \mathbb{K}(F', \tau '  , \sigma ' ) \to \iota_*\mathbb{K}(F, \tau  , \sigma  ) $$ obtained by the adjunction of \eqref{eq: pullback TK} and 
\eqref{eqn:restriction fact} is an isomorphism in $D( \tX ', \tW ' )$.
\end{Prop}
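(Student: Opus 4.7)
The plan is to reduce the claim to a local computation with ordinary Koszul matrix factorizations, then exploit the fact that the Koszul complex of a regular section resolves the structure sheaf of its zero locus. First I would observe that being a co-quasi-isomorphism in $D(\tX', \tW')$ can be verified \'etale-locally on $\tX'$, so I may pass to a cover on which $\bar\tau' \in \Kos^{-1}(\sigma')$ exists globally with $\Thb(\iota)(\bar\tau')$ homotopic to $\tau'$. After possibly shrinking further, the surjection $\phi : F' \twoheadrightarrow Q$ of vector bundles on $\tX'$ splits, yielding $F' \cong F'' \oplus Q$ with $F''|_\tX = F$. In these coordinates $\sigma' = (\sigma_1, \phi\circ\sigma)$ with $\sigma_1|_\tX = \sigma$, and $\bar\tau' = (\bar\tau_1, \bar\tau_2)$ with $\bar\tau_1|_\tX$ lifting $\bar\tau$. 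Proposition~\ref{prop: perfect} then lets me replace both TK-factorizations by their Koszul matrix factorization models, reducing the claim to showing that the induced map $\{\bar\tau', \sigma'\} \to \iota_*\{\bar\tau, \sigma\}$ is a co-quasi-isomorphism.

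Next I would exploit multiplicativity of Koszul matrix factorizations along $F' = F'' \oplus Q$ to obtain a canonical decomposition
\[ \{\bar\tau', \sigma'\} \;\cong\; \{\bar\tau_1, \sigma_1\} \otimes_{\cO_{\tX'}} \{\bar\tau_2, \phi\circ\sigma\}. \]
Chasing through the construction of $\psi$ from \eqref{eq: pullback TK} and \eqref{eqn:restriction fact}---in particular, the hypothesis that the cdga restriction sends $\zzeta_\iota(\iota^*\tau')$ to $\tau$---I expect $\psi$ to correspond under this decomposition to the tensor product of the natural restriction $\{\bar\tau_1, \sigma_1\} \to \iota_*\{\bar\tau, \sigma\}$ with the projection $\{\bar\tau_2, \phi\circ\sigma\} \to \iota_*\cO_\tX$ onto top Koszul degree. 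Thus it suffices to prove that this projection is a co-quasi-isomorphism of factorizations of $\iota_{\phi\circ\sigma}(\bar\tau_2)$.

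Since $\phi\circ\sigma$ is a regular section of $Q$ with zero locus $\tX$, the Koszul complex $(\wedge^\bullet Q^\vee, \iota_{\phi\circ\sigma})$ is a finite locally free resolution of $\iota_*\cO_\tX$. On $\{\bar\tau_2, \phi\circ\sigma\}$ the total differential $\iota_{\phi\circ\sigma} + \bar\tau_2\wedge$ differs from $\iota_{\phi\circ\sigma}$ by a term that strictly raises Koszul exterior degree. A standard filtration argument, compatible with the $\mathbb{Z}/2$-folding intrinsic to factorizations, will then identify $\{\bar\tau_2, \phi\circ\sigma\}$ with $\iota_*\cO_\tX$ at the level of classical quasi-isomorphism; combined with the projection formula, this yields the required equivalence and matches $\psi$ after diagram chasing the explicit forms of $\zzeta_\iota$ from \eqref{eq: zzeta} and the cdga homomorphism in \S\ref{sub:homo of curved diff}.

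The main obstacle I anticipate is upgrading the classical quasi-isomorphism produced in the last step to a co-quasi-isomorphism in $D(\tX', \tW')$, since these notions genuinely differ in general. Here I would appeal to perfectness: by Proposition~\ref{prop: perfect} together with the flatness of the TK components (Remark~\ref{rmk: flat}), both sides of $\psi$ are perfect factorizations, and for perfect factorizations a local quasi-isomorphism coincides with a co-quasi-isomorphism. The remaining bookkeeping consists of checking that all chosen identifications---the local splitting, the multiplicative decomposition, and the Koszul filtration argument---are compatible with the adjoint definition of $\psi$, which should be a routine but careful diagram chase.
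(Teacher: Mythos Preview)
Your approach matches the paper's: reduce \'etale-locally so that $\tau'$ has an algebraic model $\bar\tau'$, replace the TK factorizations by ordinary Koszul matrix factorizations, and then handle the Koszul case directly. The paper does exactly this, citing \cite[Corollary~2.2.7]{MF} for the local-to-global step and Lemma~\ref{lem: TK pullback}(2) for the passage to Koszul models; it then simply invokes \cite[Proposition~4.3.1]{PV:MF} for the local Koszul statement, whereas you sketch that argument yourself via the tensor decomposition $\{\bar\tau',\sigma'\}\cong\{\bar\tau_1,\sigma_1\}\otimes\{\bar\tau_2,\phi\circ\sigma\}$ and the Koszul resolution of $\iota_*\cO_\tX$. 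Your sketch is essentially how Polishchuk--Vaintrob prove their proposition, so the two routes coincide in substance.

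One small correction: you appeal to Proposition~\ref{prop: perfect} to replace TK factorizations by Koszul matrix factorizations, but that proposition is stated only for virtual factorizations on $\U$. The correct reference in this generality is Lemma~\ref{lem: TK pullback}(2), which is exactly what the paper uses. Your concern about upgrading local quasi-isomorphism to co-quasi-isomorphism is legitimate and is precisely what the citation to \cite[Corollary~2.2.7]{MF} handles; your alternative via perfectness is also fine once you are in the bounded Koszul setting.
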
 

\begin{proof} 
 It is enough to show locally that  the map $\psi $ is a quasi-isomorphism (see, e.g., \cite[Corollary 2.2.7]{MF}). 
Hence, by Lemma \ref{lem: TK pullback} (2)  
we may assume that $\tau '$ is a section of $F'^{\vee}$.
Now $\psi$ is a quasi-isomorphism
due to \cite[Proposition 4.3.1]{PV:MF}. 
\end{proof}

\begin{Def}\label{def:equiv}  
When two TK factorizations $\mathbb{K}(F, \tau  , \sigma ) $ on $(\tX, \tt{W})$
and $\mathbb{K}(F', \tau ' , \sigma ') $ on $(\tX ', \tt{W}' )$ are in the situation of either Case (1) or Case (2) above, we say that they are {\em related} (with respect to $ (\tT, \tt{w})$).
Two TK factorizations $\mathbb K_1$ and $\mathbb K_2$ are said to be {\em transitively related}, written $\mathbb K_1 \sim \mathbb K_2$, 
(with respect to $(\tT, \tt w)$) if they lie in the same equivalence class for the equivalence relation generated by related TK factorizations (with respect to $(\tT, \tt w)$). 
\end{Def}

\begin{Rmk}
Two factorizations on the same $\tX$ but with two different superpotentials can, in fact, be in the same equivalence class.  
For example, it is easy to concoct trivial examples where we rescale the potential.  We will explore less trivial cases where two TK factorizations are related below.
\end{Rmk}

\begin{Def}\label{def:reduction}
The TK factorization $\mathbb{K}(F, \tau , \sigma  )$ is called the \emph{reduction} of $ \mathbb{K}(F ', \tau '  , \sigma ' ) $ by $\phi$.
\end{Def}

\medskip

The following theorem is proven in the proceeding sequence of subsections.

\begin{Thm}\label{thm:ind} 
Any two choices of  virtual factorizations of $LG(\cX)\ti _{\fB_{\kG}} S$
are transitively related with respect to $ ([(IV^{ss})^r/G^r] \ti \overline{M}_{g, r}, - \boxplus_i w \boxplus 0)$.
\end{Thm}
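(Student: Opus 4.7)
The plan is to exhibit any two virtual factorizations as the endpoints of a finite zig-zag of TK factorizations related in the sense of Definition~\ref{def:equiv}. A virtual factorization is determined by three independent pieces of data: the resolution $[\cA \to \cB]$ of $\cV$, the open substack $\U \subseteq \tot A$ containing $LG(\cX) \ti_{\fB_\kG} S$, and the cochain-level representative $(\alpha, 1_{\diag})$ of the derived-category class $\mathfrak a_\U$. I will decouple these and connect them one at a time.

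Given two choices $(\cA_i, \cB_i, \U_i, \alpha_i)$ for $i=1,2$, the bridge is the direct-sum resolution $\cA_{12} := \cA_1 \oplus \cA_2$, $\cB_{12} := \cB_1 \oplus \cB_2$, inside the ambient space $\U_{12} \subseteq \tot A_{12} = \tot A_1 \ti_S \tot A_2$; the two zero sections $\U_i \hookrightarrow \U_{12}$ are regular closed immersions cut out by the pullback of $A_{i'}$ (where $i' \neq i$). The section $\beta_{12} = \beta_1 \oplus \beta_2$ restricts to $\beta_i$, and the natural epimorphism $\phi_i: p^*(B_1 \oplus B_2) \twoheadrightarrow p^*B_{i'}$ presents the zero locus condition required by Case (2) of Definition~\ref{def:equiv}. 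Choosing a chain-level realization $\alpha_{12}$ of $\mathfrak a_{\U_{12}}$ whose restriction is homotopic to $\alpha_i$, Proposition~\ref{prop:PV} gives that the virtual factorization on $\U_i$ is the reduction of the virtual factorization on $\U_{12}$ by $\phi_i$, producing
\[
\mathbb K_1 \ \sim \ \mathbb K_{12} \ \sim \ \mathbb K_2.
\]
Residual homotopy ambiguities in $\alpha_i$ are absorbed via \S\ref{ind:a}, and any mismatch between the chosen opens $\U_i$ is handled by Case (1) of Definition~\ref{def:equiv}, since the virtual factorization is supported on the proper substack $\Z$ by \S\ref{ind:support}.

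The hard part is arranging the chain-level compatibility $\alpha_{12}|_{\U_i} \simeq \alpha_i$. The class $\mathfrak a_\U$ is defined in \S\ref{sec:derived step} via a derived cone involving $\RR\pi_*$-pushforwards and the Grothendieck trace, and its behavior under pullback along $\U_i \hookrightarrow \U_{12}$ matches the base change of the two-term resolutions on the nose only in the derived category; at the cochain level one has to construct $\alpha_{12}$ inductively, extending $\alpha_1$ from $\U_1$ by $\Gamma$-acyclicity of $\Thb\God$ (Proposition~\ref{prop:TSG acyclic}) and then correcting by a homotopy so that it restricts to $\alpha_2$ on $\U_2$. Throughout, the external LG model $([(IV^{ss})^r/G^r] \ti \overline{M}_{g,r}, -\boxplus_i w \boxplus 0)$ plays a purely passive role: the evaluation maps $\prod ev_i$ from \S\ref{sec: ev} and the stabilization morphism factor through each of $\U_1$, $\U_2$, and $\U_{12}$ compatibly, so every step of the zig-zag respects the common target $(\tT, \tt w)$ required by Definition~\ref{def:equiv}, completing the proof.
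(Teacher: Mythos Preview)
Your overall strategy — build a common ambient TK factorization and reduce to each side via Case~(2) of Definition~\ref{def:equiv} — matches the paper's, but the specific bridge you propose does not work.

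The fatal issue is that $[\cA_{12}\to\cB_{12}]:=[\cA_1\oplus\cA_2\to\cB_1\oplus\cB_2]$ is a resolution of $\cV\oplus\cV$, not of $\cV$. Consequently there is no class $\mathfrak a_{\U_{12}}$ in the sense of \S\ref{sec:cochain}, and more concretely Case~(2) fails: the zero section $\U_i\hookrightarrow\U_{12}$ is cut out by the tautological section $t_{A_{i'}}$ of $p^*A_{i'}$, whereas your epimorphism $\phi_i$ composed with $\sigma'=\beta_{12}$ yields $\beta_{i'}=d_{A_{i'}}\circ t_{A_{i'}}$, a section of $p^*B_{i'}$ whose zero locus is $\U_i\times_S Z(\beta_{i'})$, strictly larger than $\U_i$. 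The key condition in Case~(2) that makes the Koszul reduction go through is precisely the identification $F'/F\cong Q$ with $\phi\circ\sigma'$ regular cutting out $\tX$; this requires $B'/B\cong A'/A$, which holds only when both two-term complexes resolve the \emph{same} sheaf (via the snake lemma, as in \S\ref{ind:resol}).

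The paper's bridge is $\cA_3:=\cA_1\oplus\cA_2$ with $\cB_3:=\mathrm{coker}(\cV\xrightarrow{\mathrm{diag}}\cA_3)$, which \emph{is} a resolution of $\cV$. The resulting comparison maps $[\cA_3\to\cB_3]\to[\cA_i\to\cB_i]$ are termwise \emph{surjective}, so one cannot apply \S\ref{ind:resol} directly; the paper factors each through an intermediate resolution $[\cA_{4,i}\to\cB_{4,i}]:=[\cA_i\oplus\cB_3\xrightarrow{d_i\oplus\mathrm{id}}\cB_i\oplus\cB_3]$ that admits termwise \emph{injective} quasi-isomorphisms from both $[\cA_i\to\cB_i]$ and $[\cA_3\to\cB_3]$, and then invokes \S\ref{ind:resol} twice. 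You also omit the dependence on the restriction map $rest_{\cA}:\cA\to\cV|_{\sG}$, which enters the definition of $ev_i$ and of $\mathfrak a_\U$; two resolutions can carry inequivalent restriction maps, and the paper handles this separately in \S\ref{ind:ev} by a further enlargement trick before the final assembly.
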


\subsubsection{Choices of $\ka$}\label{ind:alpha} 
Let $(\ka ' , \diag )$ be another choice  realizing $\mathfrak a_{\U}$. By the uniqueness of $\ka$ up to homotopy,
$\ka ' - \ka = (\iota_{\beta} + d_{dR}) (h )$ for some degree $(-2)$ element $h$ of $\ThbGod (\wedge^{-\bullet} p^*B^{\vee} )$.
Now as seen in \S\ref{ind:a},
$\mathbb K (p^*B, \ka , \kb )$ and $\mathbb K (p^*B, \ka' , \kb )$ are isomorphic under the multiplication map by $\exp (-h)$.

\subsubsection{}\label{ind:resol}
 {\em Choices of resolutions of $\cV$ related by an injective cochain map with compatible evaluation maps.}
Let $[\cA' \to \cB']$ be another resolution choice of $\cV$ in the procedure. Assume that
two quasi-isomorphic complexes $[\cA \to \cB]$ and $[\cA' \to \cB']$ are realizable by a quasi-isomorphic cochain map $[\cA \to \cB] \to [\cA' \to \cB']$ with $\cA \to \cA' $ being a monomorphism. We assume also that  restriction maps $rest: \cA \to \cV |_{\sG}$ and $rest': \cA ' \to \cV |_{\sG}$
are compatible. In this setting, we will argue that $\mathbb{K} (p^*B, \ka, \kb )$ is transitively related to $\mathbb{K} (p'^*B', \ka', \kb')$ in the sense of Definition \ref{def:equiv}.

Consider the exact sequence
\[
0 \to A \to A' \xrightarrow{\delta} \pi_* \cQ \to 0,
\] where $\cQ$ is defined to be the cokernel of $\cA \to \cA '$.
Notice that $\tot A$ is the zero locus in $\tot A'$ of the tautological section followed by the pullback of $\delta$.  This section is obviously regular with smooth zero locus (which can be seen locally).  
Using the snake lemma and Case (1) of  Definition \ref{def:equiv}, 
we may assume that $\U$ is the zero locus in $\U'$ of this regular section. 
Let $\iota : \U \to \U '$ be the closed immersion and 
let $$q: \HH  ^{-1} (\U, \Cone (\cO^{\oplus r}_\U \to \wedge ^{-\bullet}  \iota^*p'^*B'^{\vee}   )) \to \HH^{-1} (\U, \Cone (\cO^{\oplus r}_\U \to \wedge ^{-\bullet} p^*B^{\vee} ))$$ denote
the map induced from the projection $\iota^*p'^*B'^{\vee} = p^* B'^{\vee} \to p^*B^{\vee}$.

\begin{Lemma}\label{functor:alpha} We have
\begin{eqnarray}\label{eq:RestAlpha} q \circ \iota ^* (\mathfrak a_{\U'} ) = \mathfrak a_\U . \end{eqnarray}
\end{Lemma}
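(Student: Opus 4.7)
Our plan is to establish \eqref{eq:RestAlpha} by verifying that the construction of $\mathfrak a_S$ in \S\ref{sec:derived step} is natural with respect to the morphism of resolutions $\phi: [\cA \to \cB] \to [\cA' \to \cB']$, and then to track this naturality through the formation of $\mathfrak a_\U$ and $\mathfrak a_{\U'}$.

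First, we would check naturality of the derived category construction. The chain map $\phi$ induces a quasi-isomorphism of chain-level representatives $[A \to B] \to [A' \to B']$ of $\RR\pi_* \cV$. Each map in the composition \eqref{eqn:wkappa} is either a natural transformation (such as $\Sym^{\leq \text{tdeg}(w)} \RR\pi_* \cV \to \RR\pi_* \Sym^{\leq \text{tdeg}(w)} \cV$) or is induced from intrinsic GLSM data on $S$ (namely $\tilde w$, $\tilde \kappa$, and the restriction to markings); in particular, the final restriction to $\cO_S^{\oplus r}$ is preserved under $\phi$ by the standing compatibility hypothesis on $rest_\cA$ and $rest_{\cA'}$. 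The refined construction via $\Sym^m \fC$, the diagonals $\Delta_m$, and the Grothendieck trace in \eqref{eq:Alpha}--\eqref{eq: finalalpha} is likewise natural in the resolution, since it depends only on $\fC$ and $\cV$. Consequently, the derived category element $\mathfrak a_S$ computed from $[A \to B]$ is identified with the one computed from $[A' \to B']$ under the map on $\Cone$s induced by the dual of $\phi$.

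Next, we would propagate this naturality through the pullback to $\U$. The pullback along $p_A$ (resp.\ $p_{A'}$) and the multiplication-map lifting from $p_A^* \mathfrak a_S$ to $\mathfrak a_{\tot A}$ (resp.\ $p_{A'}^* \mathfrak a_S$ to $\mathfrak a_{\tot A'}$) described in \S\ref{sec:cochain} are both functorial constructions. The closed immersion $\iota: \tot A \hookrightarrow \tot A'$, realizing $\tot A$ as the zero locus of the regular section arising from the snake-lemma argument in the preceding paragraph, intertwines the two liftings with the projection $q$ induced by $B'^\vee \to B^\vee$. Restricting to $\U \subset \tot A$, which equals $\U' \cap \tot A$ by the Case~(1) reduction already invoked, then yields \eqref{eq:RestAlpha}.

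The main obstacle is verifying functoriality of the refined construction \eqref{eq:Alpha}--\eqref{eq: finalalpha} at the cochain level, since mapping cones are not functorial in the triangulated category. We address this by working with explicit $\Cone$ representatives throughout and performing a diagram chase with the chain map induced by $\phi$, being careful with cone differentials and signs. No deep input is needed beyond the naturality of the Grothendieck trace and the intrinsic nature of the GLSM data on $S$, so once the bookkeeping is complete, \eqref{eq:RestAlpha} follows as a composition of naturality squares.
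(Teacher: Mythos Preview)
Your proposal is correct and follows essentially the same approach as the paper: both argue by realizing $\mathfrak a$ at the cochain level so that the non-functoriality of cones is bypassed, and then observe that the construction is natural in the resolution because it is built from the intrinsic data $(\fC,\cV,w,\kappa)$. The paper is simply terser, specifying the injective replacement of $\omega_{\fC}$ in \eqref{eq:Alpha} as the concrete mechanism to obtain cochain-level representatives before applying $p'^*\RR(\pi_m)_*$ and the multiplication map; your ``explicit $\Cone$ representatives'' play the same role.
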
 
\begin{proof}
The class $\mathfrak a_{\U'}$ can be realized as a cochain map in an injective replacement of $\omega _{\fC}$ in 
\eqref{eq:Alpha} followed by $p'^* \RR(\pi_m)_*$ and the multiplication. In this cochain level realization of $\mathfrak a_{\U'}$, it is straightforward
to check \eqref{eq:RestAlpha}.  
\end{proof}

Hence we may assume that $q\circ \iota ^*(\ka ') =  \ka$ by \S\ref{ind:alpha}.
Now, we are  in the situation of Proposition~\ref{prop:PV}, which completes the proof.

\begin{example} \label{ex:rest}
 For $[\cA \xrightarrow{d_{\cA}} \cB ]$ in Proposition \ref{prop:U}, 
we consider  $\cA ' := \cA ' \oplus \cV |_{\sG} , \cB ':=\cB \oplus \cV |_{\sG}$ with $d_{\cA '} = d_{\cA}\oplus  \mathrm{id} $.
We may take, for example, (a) $rest ' = rest + \mathrm{id}$ or (b) $rest ' = rest$.
\end{example}

\subsubsection{Choices of evaluation maps}\label{ind:ev}

We use an argument parallel to \cite[\S 5.2.3]{MF}.
By the construction in Proposition \ref{prop:U}, the evaluation map $ev$ originates from a cochain map realization 
$$[\cA \rightarrow \cB ] \ra [\cV |_{\sG} \ra 0 ]  $$
of $\cV \to  \cV |_{\sG} $.   Given two choices ${rest}_{\cA}, {rest'}_{\cA} : \cA \to \cV|_{\sG}$,
consider two virtual factorizations $\mathbb K (\ka , \kb )$ for $-\sum_i w \circ ev_i$ and $\mathbb K ( \ka' , \kb )$ for $ -\sum_i w \circ ev'_i$.
We claim that these two TK factorizations are transitively related.

To prove it, first note that the difference ${rest}_{\cA} - {rest'}_{\cA}$ descends to a map $g: \cB \to \cV|_{\sG}$.
Set $Q = \pi _* \cV |_{\sG}$.  Pushing forward $g$ via $\pi$
we obtain a map $f  \in \Hom (B, Q)$ such that $f \circ d_A = ev_A- ev_A'$ where $ev_A = \pi_* rest _{\cA}$, $ev'_{A} = \pi _* rest' _{\cA}$.

Consider the sheaf $\mathcal{Q} = \cV |_{\sG} $  on the universal curve.
We replace $[\cA\xrightarrow{d_{\cA}} \cB]$ by $$[\cA\oplus \mathcal{Q} \xrightarrow{(d_{\cA}, \mathrm{id}_{\mathcal{Q}})} \cB\oplus \mathcal{Q}]$$
a new resolution of $\cV$ (via $\cV \ra \cA \xrightarrow{(\mathrm{id}, 0)} \cA \oplus \mathcal{Q}$) and replace $rest_{\cA}$ (resp.  $rest_{\cA}'$) 
by $rest _{\cA} +\mathrm{id} _{\cQ}$
(resp.  $rest_{\cA}'$) as in Example \ref{ex:rest}.
Hence $ev_A, ev_A', g, f$ are replaced by $ev_A + \mathrm{id}_Q$, $ev_A'$, $g+\mathrm{id}_{\cQ}$, $f+\mathrm{id}_Q$, respectively.
Note that $g+\mathrm{id}_{\cQ}$, $f+\mathrm{id}_Q$ are surjective.

Thus, using this construction of $\cQ$ and \S\ref{ind:resol}, we may assume that $g$ and $f\circ d_A$ are surjective. Note also that 
$p^*f\circ \beta $ is a regular section with smooth zero locus. 
Let $\cK_\cB$, $\cK_{\cA}$ be the kernel of $\cB \to \cQ$, $\cA \to \cQ$, respectively.  We obtain another $\pi_*$-acyclic resolution
$[\cK_{\cA} \to \cK_{\cB}]$ of $\cV$ satisfying item (1) of Proposition \ref{prop:U}
and $rest_{\cA} | _{\cK _{\cA} } = rest'_{\cA} | _{\cK _{\cA} }$. Thus again by  \S\ref{ind:resol}
we conclude that $\mathbb K( {\ka}, {\kb})$ and  $\mathbb K( \ka' , \kb )$ are transitively related.

\subsubsection{Proof of Theorem \ref{thm:ind}}\label{ind:result}
Consider two resolutions $[\cA_i \xrightarrow{d_i} \cB_i]$ for $i=1, 2$ together with restriction map $rest _{\cA_i}: \cA_i  \to \cV|_{\sG}$ 
satisfying Proposition \ref{prop:U}, respectively; and virtual factorizations $\mathbb K_i :=\mathbb K(p^*B_i, \ka_i , \kb_i )$ on $\U_i$, $i=1, 2$.

Define $\cB_3$ as the cokernel of the diagonal map $\cV \to \cA_3 := \cA_1 \oplus \cA_2$.
We obtain a $\pi_*$-acyclic resolution $[\cA_3 \xrightarrow{d_3} \cB_3]$ with quasi-isomorphic cochain maps 
$(\phi_{i, 0} , \phi _{i, 1}), : [\cA_3 \to \cB_3 ] \to [\cA_i \to \cB_i]$, and term-wise surjections $\phi _{i, j}$. 
Furthermore $\pi _*\cA_3$, $\pi _* \cB_3$ are locally free.
We have two $rest _{\cA_3}^i: \cA_3 \to \cV |_{\sG}$ each from $rest_{\cA_i}$ such that
$\pi _* (rest _{\cA_3}^i)$ is also surjection.
We apply the Thom-Sullivan-Koszul construction for this resolution with $rest_{\cA_3}^i$
to obtain two factorizations $\mathbb K_3^i$, $i=1, 2$, which are transitively related
\begin{equation}\label{eq: two evs}  \mathbb K_3^1 \sim \mathbb K_3 ^2 \end{equation} by
\S \ref{ind:ev}.

On the other hand, the map $(\phi_{i, 0} , \phi _{i, 1})$ is factored into 
$$[\cA_3 \xrightarrow{d_3} \cB_3 ] \xrightarrow{(\phi _{i,0} \oplus d_3, \phi _{i,1}\oplus \mathrm{id}) } 
[\cA_i \oplus \cB_3 \xrightarrow{d_i\oplus \mathrm{id}} \cB_i \oplus \cB_3] \xrightarrow{proj}  [\cA_i \to \cB_i] .$$
Note that the first map is term-wise injective and the second map has the obvious homotopy inverse map which is also term-wise injective.
Let $\cA_{4,i}:=\cA_i \oplus \cB_3$, $\cB_{4,i}:= \cB_i \oplus \cB_3$. We have maps $rest_{\cA_{4, i}}: \cA_{4, i} \to \cV |_{\sG}$ coming from $rest _{\cA_i}$.
Then $[\cA_{4, i} \to \cB_{4, i}]$ with $rest_{\cA_{4, i}}$  satisfy the condition of Proposition \ref{prop:U} (1). 
Hence we obtain virtual factorizations $\mathbb K _{4, i}$, fitting into a sequence of
transitive relations
\begin{equation} \label{eq: mono relations}  \mathbb K _{i} \sim  \mathbb K _{4, i} \sim \mathbb K_{3}^i \end{equation}
by \S \ref{ind:resol}. Combining \eqref{eq: two evs} and \eqref{eq: mono relations}, we conclude that $\mathbb K_1 \sim \mathbb K_2$.

\subsection{Gluings and Forgetting Tails}

In this section, $\fB_1$ will be a moduli stack for which we allow 3 possible cases which are specified below.   We set $\fB_2 := \fB _{\Gamma}^{g, r, d}$.  

Let $S$ be an algebraic stack equipped with a DM finite type  morphism to the algebraic stack $\fB_1 \times \fB_2$. 
Over $S$, we have two families of curves  $\pi_i : \fC_i\to S$ together with $\Gamma$-bundles $\cP _i$, pullbacked from the 
corresponding data on $\fB_i$ for or $i=1, 2$. 
We are also given some additional data depending on the case.  Here are the three cases we consider.

\begin{itemize}
\item  Case (1)
\begin{itemize}
\item  Fix $r_i$, $d_i$ such that $r_1 + r_2 = r$ and $d_1 + d_2 = d$.
\item $\fB_1 := \prod _{i=1}^2 \fB_{\Gamma}^{g_i, r_i+1, d_i}$.            
\item          In this case, $\fC_1$ is the disjoint union of the universal curves from the universal curve on each factor $\fB_{\Gamma}^{g_i, r_i+1, d_i}$.
          We  rename the markings of $\fC_1$ so that the last two markings $\sG_{r+1}$, $\sG_{r+2}$ correspond to last markings $\sG_{r_1+1}$, $\sG_{r_2+1}$ of the universal curves over 
          $\fB_{\Gamma}^{g_1, r_1+1, d_i}$ and $\fB_{\Gamma}^{g_2, r_2+1, d_i}$ respectively.
          
\end{itemize}

 \item         Case (2)
\begin{itemize}
\item $\fB_1 := \fB^{g-1, r+2, d}_{\Gamma}$  
\item We are given a canonical isomorphism $\sG_{r+1} \cong \sG_{r+2}$ inverting the band  and a morphism $\epsilon : \fC _1 \to \fC_2 $ over $S$ such that 
         $\epsilon$ is the pushout of the diagram $[\sG_{r+1} \cong \sG_{r+2} \rightrightarrows  \fC _1]$.
         (This forces $\fC_2$ to be the quotient of the curve $\fC_1$ coming from identifying the markings $\sG_{r+1}$, $\sG_{r+2}$ through the inversion of the band).
 \item We are given a canonical isomorphism $\cV _1 |_{\sG_{r+1}} \cong \cV _1 |_{\sG_{r+2}}$. Here $\cV_1 := \cP_1 (V)$.
\item We assume that $\cV_2 : = \cP_2(V)$  fits into an exact sequence 
 \begin{equation} \label{eq: exact for glueing case}
 0 \to \cV_2 \to \epsilon _* \cV_1 \xrightarrow{\epsilon_*( rest _{r+1} - \zeta \circ rest _{r+2}) } \epsilon_* \cV_1|_{\sG_{r+1}} \cong  \epsilon_* \cV_1|_{\sG_{r+2}} \to 0 , 
 \end{equation}
 where $rest _i$ denotes the restriction map at the $i$-th marking and 
 $\zeta : = \exp (\pi \sqrt{-1} / d_w) \in \CR$ acts on $V$ (and hence on $\cV_1$)  via $\CR \to \Gamma \subset GL(V)$.
 \end{itemize}

 \item Case (3)
 \begin{itemize}
 \item  $\fB_1 =   \fB^{g, r+1, d}_{\Gamma}$.
 \item We are given a map  $\epsilon : \fC_1 \to \fC_2$ preserving the first $r$ markings and their types $\cP_1$, $\cP_2$ at the markings.
 \item We are given  a $\Gamma$-representation decomposition $V= V^f \oplus V^m$. Denote $\cP_1 ( V^f ) $, $\cP_1 ( V^m)$, 
 by $\cV_1 ^{f}, \cV_1^{m}$, respectively. Here $\cP_1$ is the universal  principal $\Gamma$-bundle on $\fC _1$.
 We assume that the natural map 
 \begin{equation}\label{eq: van} \pi _* (\Sym (\cV_{1}^f|_{\sG_{r+1}}) )  \xrightarrow{\text{via sum}, w, \kappa} \cO _S 
 \end{equation}  vanishes. 
 \item We assume that $\cV_2$ fits into an exact sequence 
  \begin{equation} \label{eq: exact for forgetting tails case} 0 \to \cV_2 \to \epsilon _* \cV_1 \xrightarrow{\epsilon _*(pr\circ rest_{r+1})} \epsilon _* \cV_{1}^{m}|_{\sG_{r+1}} \to 0 . \end{equation}
  \end{itemize}
 \end{itemize}

Using $S\to \fB_i$ 
we apply the Thom-Sullivan Koszul construction to obtain perfect TK factorizations 
$\mathbb K (p_i^* B_i, \ka _i, \kb _i)$ on open substacks $\U_i$ of the $\So$-stack $\tot A_i$. 
Here $p_i$ denotes the restriction of the projection map $p_{A_i} |_{\U _i} \to \So$,
$[\cA_i \to \cB_i]$ are the resolutions of $\cV_i$ satisfying Proposition \ref{prop:U},
and $A_i := \pi _{i *} \cA_i$, $B_i := \pi_{i*}\cB_i$.
Provided with the above conditions, we claim that they are related in a suitable sense.

Before stating their relation, we need some notation. First in order to unify  \eqref{eq: exact for glueing case} and
 \eqref{eq: exact for forgetting tails case}, we set $V^f= 0$, $V^m = V$ in Cases (1) and (2) 
 and set $rest^{\mathfrak{m}} := rest _{r+1} - \zeta\circ rest _{r+2}$  in Cases (1) and (2) and $rest^{\mathfrak{m}} :=  pr\circ rest_{r+1}$ in Case (3).  
Hence in all cases we have the exact sequence
 \[  \xymatrix{ 0 \ar[r]   &  \cV_2 \ar[r]    &   \epsilon _* \cV _1 \ar[r]^{\epsilon _* rest^{\mathfrak{m}} \ \ \  \ }          & \epsilon _* \cV_1 ^m |_{\sG_{r+1}}  \ar[r] & 0 .} \]
Let $\cA_0$ be the kernel of $\cA_1 \xrightarrow{rest^{\mathfrak m}_{\cA_1}}  \cV_{1}^m |_{\sG_{r+1}}$, where $rest^{\mathfrak m}_{\cA_1}$ is the extension
of $rest^{\mathfrak{m}}$ corresponding to the given extensions of restriction maps.
 Denote by $\iota$ the closed inclusion $\U_0:=\tot A_0 \cap \U_1 \to \U_1$, where 
 $A_0 := \pi _{1 *} \cA_0$ and $\U_1$ is a suitable open substack of $\tot (A_1 := \pi _{1 *} \cA_1)$. 

\begin{Thm}\label{thm: cat glue and tail}
 In cases (1), (2), and (3), the TK factorizations, $ \mathbb K (\iota^*\ka _1, \iota^* \kb _1)$ and $ \mathbb K ( \ka _2, \kb _2)$, are  transitively related with respect to 
  $( (I\cX)^{r}, -\boxplus w) \ti (\prod _i \overline{M}_{g_i, r_i+1} , 0 )$, 
    $( (I\cX)^{r}, -\boxplus w) \ti \overline{M}_{g-1, r+2} , 0 )$, 
$ ((I\cX)^{r} , -\boxplus w ) \ti (\overline{M}_{g, r+1}, 0)$, respectively.
\end{Thm}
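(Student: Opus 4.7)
My plan is to reduce each of the three cases to a single application of Proposition~\ref{prop:PV}, after first using the independence result Theorem~\ref{thm:ind} to replace $\mathbb K(\ka_2,\kb_2)$ by a TK factorization built from a resolution of $\cV_2$ that is compatible with the given resolution $[\cA_1\to\cB_1]$ of $\cV_1$. The point is that the exact sequences \eqref{eq: exact for glueing case} and \eqref{eq: exact for forgetting tails case}, when combined with the definition $\cA_0=\ker(\cA_1\xrightarrow{rest^{\mathfrak m}_{\cA_1}}\cV_1^m|_{\sG_{r+1}})$, yield via the snake lemma a $\pi_1$-acyclic complex $[\cA_0\to\cB_0]$ which, after pushforward along $\epsilon$, furnishes a $\pi_2$-acyclic resolution of $\cV_2$; in particular one can arrange $\pi_{2*}\epsilon_*\cA_0\cong\pi_{1*}\cA_0=A_0$ by Leray. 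Thus we may take $\cA_2:=\epsilon_*\cA_0$ (possibly after a finite-rank truncation as in Proposition~\ref{prop:U}(1)), obtaining an identification $\tot A_2\cong \tot A_0$ under which $\U_2$ matches the open substack $\U_0=\tot A_0\cap\U_1$.

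Under this choice, the closed immersion $\iota:\U_0\hookrightarrow\U_1$ places us precisely in Case (2) of Definition~\ref{def:equiv}, with $F'=p_1^*B_1$, $Q$ the pullback of the locally free sheaf $\pi_{1*}(\cV_1^m|_{\sG_{r+1}})$, and $\phi:F'\to Q$ induced by $rest^{\mathfrak m}$ at the level of $\cB_1\to\cV_1^m|_{\sG_{r+1}}$; by construction the zero locus of $\phi\circ\iota^*\kb_1$ is exactly $\U_0$, and this is a regular section of smooth codimension equal to $\mathrm{rank}\,Q$. The hypotheses of Proposition~\ref{prop:PV} then require two compatibilities: that $\iota^*\kb_1$ factors through $\kb_2$ along $F\hookrightarrow F'|_{\U_0}$, which is immediate from the exact sequence; and that the composition $\cO_{\U_0}\xrightarrow{\zzeta(\iota^*\ka_1)}\ThbGod(\wedge^{-\bullet}F'^{\vee}|_{\U_0},\iota_{\kb_2})\to\ThbGod(\wedge^{-\bullet}F^{\vee},\iota_{\kb_2})$ coincides with $\ka_2$ up to the freedom discussed in \S\ref{ind:alpha}.

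The second compatibility reduces, exactly as in Lemma~\ref{functor:alpha}, to a functoriality statement: the natural projection induces an equality $q\circ\iota^*(\mathfrak a_{\U_1})=\mathfrak a_{\U_0}=\mathfrak a_{\U_2}$ in $\HH^{-1}(\U_0,\cdot)$, so the section $\alpha_2$ can then be realized (by choosing the realization in \S\ref{ind:alpha}) as the claimed image of $\iota^*\ka_1$. Tracing through the construction of $\mathfrak a_S$ in \eqref{eqn:wkappa} and \eqref{eq:Alpha}, this functoriality follows in Case (1) from the fact that the Grothendieck trace on $\fC_1=\fC_1^{(1)}\sqcup\fC_1^{(2)}$ is the sum of the traces on the components and that the superpotential contribution splits as a box-sum; in Case (2) from the gluing identity $\epsilon^*\omega_{\fC_2}^{\mathrm{log}}\cong\omega_{\fC_1}^{\mathrm{log}}$ combined with the $\zeta$-shift encoded in \eqref{eq: exact for glueing case}, which precisely accounts for the difference of the sums $\sum_{i\le r}ev_i^*w$ over the two sets of markings; and in Case (3) from the hypothesis \eqref{eq: van}, which ensures that the superpotential contribution coming from the forgotten marking $\sG_{r+1}$ vanishes at the level of \eqref{eqn:wkappa}, so that $\mathfrak a_{\U_1}$ descends along $\epsilon$ to $\mathfrak a_{\U_2}$.

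The main obstacle I expect is Case (3): tracking the map $\mathfrak a_S$ through the contraction $\epsilon$ is delicate because the $r$-tuple of residue maps in \eqref{eqn:wkappa} is not preserved on the nose (one component is discarded), and one must verify that the associated Koszul-cone correction dies against $\phi$ precisely because of \eqref{eq: van}. Once this functoriality is established, the conclusion that $\iota_*\mathbb K(\iota^*\ka_1,\iota^*\kb_1)\cong\mathbb K(\ka_2,\kb_2)$ in $D(\U_1,-\sum ev_i^*w)$ follows from Proposition~\ref{prop:PV}, yielding the asserted transitive relation with respect to the indicated LG target.
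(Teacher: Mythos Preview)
Your overall strategy---choose the resolution of $\cV_2$ compatibly with $[\cA_1\to\cB_1]$, identify $\tot A_2\cong\tot A_0$, compare the two TK factorizations on $\U_0$, and finish with Theorem~\ref{thm:ind}---is exactly the paper's. The error is in how you try to execute the comparison.

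You attempt to place the closed immersion $\iota:\U_0\hookrightarrow\U_1$ into Case~(2) of Definition~\ref{def:equiv} with $F'=p_1^*B_1$ and a nontrivial quotient $\phi:F'\to Q=\pi_{1*}(\cV_1^m|_{\sG_{r+1}})$ ``induced by $rest^{\mathfrak m}$ at the level of $\cB_1\to\cV_1^m|_{\sG_{r+1}}$''. But no such map $\cB_1\to\cV_1^m|_{\sG_{r+1}}$ exists: the extension $rest^{\mathfrak m}_{\cA_1}:\cA_1\to\cV_1^m|_{\sG_{r+1}}$ restricts to the \emph{nonzero} map $rest^{\mathfrak m}$ on $\cV_1\subset\cA_1$, so it cannot factor through $\cB_1=\cA_1/\cV_1$. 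Correspondingly, $\U_0$ is \emph{not} the zero locus of any section of the form $\phi\circ\beta_1$; it is cut out inside $\tot A_1$ by the tautological section composed with $A_1\to\pi_{1*}(\cV_1^m|_{\sG_{r+1}})$, which is a map out of $A_1$, not $B_1$. So your application of Proposition~\ref{prop:PV} does not get off the ground.

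The paper's point is that this whole step is unnecessary. The snake lemma applied to the diagram
\[
\xymatrix{
0\ar[r] & \cV_2\ar[r]\ar[d] & \epsilon_*\cV_1\ar[r]^-{\epsilon_*rest^{\mathfrak m}}\ar[d] & \epsilon_*\cV_1^m|_{\sG_{r+1}}\ar[r]\ar[d]^{=} & 0\\
0\ar[r] & \cA_2\ar[r] & \epsilon_*\cA_1\ar[r]^-{\epsilon_*rest^{\mathfrak m}_{\cA_1}} & \epsilon_*\cV_1^m|_{\sG_{r+1}}\ar[r] & 0
}
\]
gives $\cB_2:=\cA_2/\cV_2\xrightarrow{\ \cong\ }\epsilon_*\cB_1$, hence $B_2=\pi_{2*}\cB_2=\pi_{1*}\cB_1=B_1$. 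Thus on $\U_0\cong\U_2$ the two TK factorizations use the \emph{same} bundle $F=\iota^*p_1^*B_1=p_2^*B_2$; one checks $\iota^*\beta_1=\beta_2$ and (by the functoriality argument you correctly sketch, using $w(\zeta\cdot x)=-w(x)$ in Cases~(1),(2) and \eqref{eq: van} in Case~(3)) $\iota^*\mathfrak a_{\U_1}=\mathfrak a_{\U_2}$. So $\mathbb K(\iota^*\alpha_1,\iota^*\beta_1)\cong\mathbb K(\alpha_2,\beta_2)$ directly by \S\ref{ind:a}, with no need for a nontrivial reduction. Theorem~\ref{thm:ind} then handles arbitrary choices of $\mathbb K(\alpha_2,\beta_2)$.
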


\begin{proof} 
We begin by choosing $[\cA_1 \to \cB_1]$ and $rest _{\cA_1, i} : \cA_1 \to \cV_1|_{\sG_i}$ for all $i$
satisfying Proposition \ref{prop:U} for $S \to \fB_1$.  Using this data, we construct a resolution $[\cA_2 \to \cB_2]$ of $\cV_2$ as follows.

We take $\cA_2 = \epsilon _* \cA_0$,
 which is not necessarily locally free. However $\pi _* \cA_2$ is locally free.
Let $\cB_2$ be the cokernel of the map $\cV _2 \to \cA_2$ induced from $\epsilon_* \cV_1 \to \epsilon _* \cA_1$ to get the
commuting diagrams of exact sequences 
\[     \xymatrix{ 0 \ar[r]   &  \cV_2 \ar[r] \ar[d]    & \ar[d]  \epsilon _* \cV _1 \ar[r]^{\epsilon _* rest^{\mathfrak{m}} \ \ \  \ }          & \ar[d]^{=} \epsilon _* \cV_1 ^m |_{\sG_{r+1}}  \ar[r] & 0 \\
                        0 \ar[r]   & \cA_2 \ar[r]  \ar[d]    & \ar[d]  \epsilon _* \cA_1  \ar[r]^{\epsilon _* rest^{\mathfrak m}_{\cA_1} \ \  } & \ar[d] \epsilon _* \cV_1 ^m |_{\sG_{r+1}}   \ar[r] & 0 \\
                        0 \ar[r]   & \cB_2  \ar[r]^{\cong}& \epsilon_* \cB_1                           \ar[r]                                            & 0           &  } \]  
The exact sequence $0 \to \cV_2 \to \cA_2 \to \cB_2 \to 0$ provides a resolution of $\cV_2$. Also, for $i=1, ..., r$,
we have restriction maps $rest _{\cA_2, i}: \cA_2 \to \cV_2|_{\sG_i}$  induced from $res _{\cA_1, i} : \cA_1 \to \cV_1 |_{\sG_i}$. The resolution $[\cA_2 \to \cB_2]$ together with the restriction maps  
satisfies Proposition \ref{prop:U} (1) for $S \to \fB_2$.

Now consider the virtual factorization $\mathbb K (\mathfrak a _{\U_1}, \kb _1)$ from $[\cA_1 \to \cB_1]$ with $rest_{\cA_1, i}: \cA_1 \to \cV_1|_{\sG_i}$, $i=1, ..., r+2$
in Case (1) and (2)
or $i=1, ..., r+1$ in Case (3)
and the virtual factorization $ \mathbb K ( \mathfrak a_{\U _2}, \kb _2)$ from $[\cA_2 \to \cB_2]$ with $rest_{\cA_2, i}: \cA_2 \to \cV_2|_{\sG_i}$, $i=1, ..., r$.
Note that $A_2 = A_0 \subset A_1$, $B_2 = B_1$, and $d_{A_1}|_{A_2} = d_{A_2}$ where $d_{A_i}: A_i \to B_i$ are the differentials.
These, together with compatibility of restriction maps and $w(\zeta \cdot x ) = - w (x)$ for Case (1) and (2) and the vanishing of \eqref{eq: van}, show
that  $\iota ^* \mathfrak a_{\U_1} =  \mathfrak a_{\U _2}$ (up to restrictions to appropriate open substacks)
and $\iota ^* \kb _1 = \kb _2$. Hence by \S  \ref{ind:a}  we have an isomorphism 
$\mathbb K (\iota ^* p_1^* B_1, \iota ^* \mathfrak a _{\U_1}, \iota ^* \kb _1) \cong  \mathbb K ( p_2^*B_2, \mathfrak a_{\U _2}, \kb _2)$.  
The result follows from Theorem~\ref{thm:ind}.
 \end{proof}

\section{GLSM cohomological invariants}  \label{sec: 4}

In this section we will define GLSM cohomological invariants 
\[ \Omega _{g, r, d} : \State ^{\ot r} \to \overline{M}_{g, r} \]
using the virtual factorizations.
Here $\State$ is a complex vector space with a nondegenerate pairing associated to the GLSM input data;  see \eqref{eq: state}.
Throughout this section unless otherwise specified we consider the case where $S\to \fB_{\kG}$ is the identity map, which
ensures that $\Z$ is proper over $\Spec\/\CC$; see \S \ref{ind:support} for the notation $\Z$.

\subsection{State space and pairing}\label{subsection:State}

\subsubsection{Integration maps and Pairings} \label{subsec: pairings}
Let $\tZ_1$, $\tZ_2$ be closed DM substacks of a smooth DM stack $\tX$ and suppose that $\tZ_1 \cap \tZ_2$ is proper over $\Spec\/\CC$.
For $a \in  \HH ^* _{\tZ_1} (\tX, (\Omega ^{\bullet}_{\tX}, d\tW ))$, $b \in  \HH ^* _{\tZ_2} (\tX , (\Omega^{\bullet}_{\tX} , -d\tW ))$, we define
 \[ \lan a, b\ran := \int _{\tX} a \wedge b . \]
Here the wedge product is defined in Appendix \ref{sec: wedge product} 
and the integration map $\int _{\tX }$ is defined in Appendix  \eqref{eqn: def of integration map} by passing from the algebraic Hodge cohomology 
$\HH ^* _{\tZ_1\cap \tZ_2} (\tX, (\Omega ^{\bullet}_{\tX}, 0))$ to the $(\dim \tX, \dim \tX)$-degree part of its holomorphic Hodge cohomology.

\subsubsection{Nondegeneracy}\label{Serre:GAGA}
For a separated quasicompact DM ${\bf k}$-stack $\cY$ with the structure map $p: \cY \to \Spec \,{\bf k} $,
there exists a functor $p^! : D^+(Qcoh ({\bf k})) \to D^+(Qcoh (\cY)) $ right adjoint to $\RR p_*$ by \cite[Theorem 1.16]{Nironi}.  
Furthermore if $\cY$ is smooth and compactifiable with dualizing sheaf $\omega_{\cY}$ then by \cite[Theorem 2.11, Theorem 2.22]{Nironi}, we have an isomorphism $p^! {\bf k} \cong \omega _{\cY}$.

In the case where $\cY = [V^{ss}/G]$, there exists an open immersion into a smooth proper DM stack.  Indeed, 
$[V^{ss}/G] \subset [\PP (V\oplus \CC) /G]$ and we may take a sequence of blow-ups of $\PP (V\oplus \CC)^{ss}$ to remove the strictly semistable locus;
see \cite{Kirwan}.

For $\cY= \cX_{h}:= [(V^{ss})^h/\rC_G(h)]$, let $n_{\cX_h}$ be the dimension of $\cX_h$. Abusing notation, we denote the restriction  $w|_{\cX_h}$ under the 
natural map $\cX_h \to \cX$ also by $w$.
As above, there is a smooth proper DM stack $\overline{\cX}_h$ admitting an open inclusion $j: \cX_h \to \overline{\cX}_h$.
Then
we have
\begin{align*} 
       & \RR\Hom(\RR\Gamma(\Omega^{\bullet}_{\cX_h}, -dw), {\bf k}) &  \\
=    &  \RR\Hom(\RR\Gamma \RR j_* (\Omega^{\bullet}_{\cX_h}, -dw), {\bf k} ) &  \\
 \cong &  \RR\Hom(\RR j_* (\Omega^{\bullet}_{\cX_h}, -dw),\omega_{\overline{\cX}_h}[n_{ \cX_h} ] ) & \text{ by Serre duality } \\
=  & \RR\Gamma \RR j_* \RR\cH om((\Omega^{\bullet}_{\cX_h}, -dw),\omega_{\cX_h}[n_{\cX_h}] ) 
&\text{since $\RR j_*(\Omega^{\bullet}_{\cX_h}, -dw) |_{\overline{\cX}_h-{\cX_h}} = 0 $}   \\
=  &\RR\Gamma ((\wedge ^{-\bullet} \cT_{\cX_h}, \iota _{dw} ) \otimes \omega_{\cX_h} [n_{\cX_h]}) &  \\
=   & \RR\Gamma(\Omega^{\bullet}_{\cX_h}, dw)  [2n_{\cX_h}]   . &
\end{align*}
Here we considered $(\Omega^{\bullet}_{\cX_h}, \pm dw)$ as a bounded complex and the vanishing 
$\RR j_*(\Omega^{\bullet}_{\cX_h}, -dw) |_{\overline{\cX}_h-{\cX_h}} = 0$ follows from the fact that
the support of $(\Omega^{\bullet}_{\cX_h}, -dw)$ lies on $Z(dw)$, which is proper over $\Spec\/\bfk$ by Corollary \ref{cor: zdw}.

This shows that
the pairing defined by the composition of natural maps
\begin{eqnarray*}  \HH^*({\cX_h}, \Omega^{\bullet}_{\cX_h}, dw) \ot \HH^{\star}({\cX_h}, \Omega^{\bullet}_{\cX_h}, -dw)  & \xrightarrow{} 
& \HH^{*+\star} _{Z(dw)} ({\cX_h}, (\Omega ^{\bullet}_{\cX_h}, 0)) \\
           & \xrightarrow{\mathrm{proj}}  & H^{n_{\cX_h}}_{Z(dw)} ({\cX_h}, \omega_{\cX_h} )  \\
& \xrightarrow{}&  H^{n_{\cX_h}} (\overline{\cX}_h, \omega_{\overline{\cX}_h} )   \xrightarrow{\tr}   {\bf k} \end{eqnarray*}
is nondegenerate. 
Here the third arrow requires the properness of $Z(dw)$.

By applying GAGA \cite{Toen} to $\overline{\cX}_h$, there is a canonical isomorphism
\begin{align*}
 \HH ^* ({\cX_h}, ( \Omega^{\bullet}_{\cX_h}, dw) ) & \cong \HH ^* ({\overline{\cX}_h}, \RR j_*  ( \Omega^{\bullet}_{\cX_h}, dw) ) \\
& \cong \HH ^*( {\overline{\cX}_h}^{an},  \RR j_* (\Omega^{\bullet}_{{\cX_h}^{an}}, dw)) \\
& \cong \HH ^*( {\cX_h}^{an}, (\Omega^{\bullet}_{{\cX_h}^{an}}, dw)) \\
\end{align*}
To simplify notation, we use this isomorphism implicitly in what follows.

\begin{Lemma}\label{lemma: nond pairing} The pairing 
\begin{align*}
\HH^*({\cX_h}, \Omega^{\bullet}_{\cX_h}, dw) \ot \HH^{\star}({\cX_h}, \Omega^{\bullet}_{\cX_h}, -dw) & \to  
 \bf{k} \\
a \otimes b & \mapsto \int _{\cX_h} a \wedge b  
\end{align*}
 is nondegenerate.
\end{Lemma}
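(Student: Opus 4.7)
The plan is to reduce the lemma to the nondegeneracy already derived in \S\ref{Serre:GAGA} via Serre duality. That derivation exhibited a specific nondegenerate pairing as the composition of four natural maps (wedge to local cohomology, projection to the top Hodge piece, extension by zero along $j \colon \cX_h \hookrightarrow \overline{\cX}_h$, and the algebraic trace). What remains is to identify this composition with $(a,b) \mapsto \int_{\cX_h} a \wedge b$.

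First, I would observe that both $\HH^*(\cX_h, (\Omega^\bullet_{\cX_h}, dw))$ and $\HH^*(\cX_h, (\Omega^\bullet_{\cX_h}, -dw))$ have cohomology supported on $Z(dw)$, which is proper over $\Spec \CC$ by Corollary~\ref{cor: zdw}. The inputs therefore lift naturally to local cohomology classes in $\HH^*_{Z(dw)}(\cX_h, (\Omega^\bullet_{\cX_h}, \pm dw))$. The wedge product of Appendix~\ref{sec: wedge product}, applied with the two differentials canceling, produces exactly the first arrow of the Serre duality composition.

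Second, I would verify that the integration map $\int_{\cX_h}$ from \eqref{eqn: def of integration map}, defined by projecting to the top Hodge piece of the holomorphic Hodge cohomology and integrating via a Mayer-Vietoris resolution on a separated cover, agrees with the composition of the remaining three arrows: projection to $H^{n_{\cX_h}}_{Z(dw)}(\cX_h, \omega_{\cX_h})$, pushforward along $j$ (legitimate since $Z(dw)$ is proper inside $\cX_h$), and the algebraic trace on $\overline{\cX}_h$. Once that identification is in hand, the nondegeneracy established in \S\ref{Serre:GAGA} gives the claim.

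The principal obstacle is precisely this last compatibility, since the integration map in the Appendix is defined analytically via a \v{C}ech-Mayer-Vietoris resolution on a separated cover while the Serre trace on $\overline{\cX}_h$ is defined algebraically. I would handle this by applying GAGA on the smooth proper compactification $\overline{\cX}_h$ to identify the algebraic trace with the analytic integration of an $(n_{\cX_h}, n_{\cX_h})$-form, and then using the naturality of the trace under open inclusion with proper support to move from $\overline{\cX}_h$ back to $\cX_h$. Both the analytic integration and the composed Serre map are characterized by the same duality with $H^0(\overline{\cX}_h, \cO)$, so checking agreement on a single top form suffices.
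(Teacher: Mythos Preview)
Your approach is essentially the same as the paper's: reduce to the Serre-duality nondegeneracy established just before the lemma, and then identify the composed map (projection to top Hodge piece, extension along $j$, algebraic trace) with the analytic integration $\int_{\cX_h}$. The paper compresses your last compatibility step into a single citation to \cite[Theorem 2.1]{Conrad: more}, noting that the two maps agree up to a nonzero constant (which is all that is needed for nondegeneracy); your outline via GAGA on $\overline{\cX}_h$ and naturality under the open inclusion is precisely the content of that reference.
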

\begin{proof}  This follows from the fact that the composition of the last three maps above coincides with $\int _{\cX_h}$ up to a nonzero constant multiplication; 
see \cite[Theorem 2.1]{Conrad: more}.
\end{proof}

\subsubsection{Critical locus of restricted functions}

Let $H$ be an abelian group acting diagonally on $\CC ^n$ and let $D$ be an analytic open domain $D$ of $\CC ^n$ which is 
$H$-invariant. Let $f$ be an complex holomorphic function on $D$ which is $H$-invariant. Denote by $D^H$ the $H$-fixed locus 
of $D$. 

\begin{Lemma}
$Z(df) \cap D^H = Z(d(f|_{D^H}))$
\end{Lemma}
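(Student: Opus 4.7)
The plan is to exploit the diagonal structure of the $H$-action to reduce the statement to a coordinate calculation. Since $H$ acts diagonally, there exist characters $\chi_i : H \to \CC^{\times}$ such that $h \cdot (z_1, \ldots, z_n) = (\chi_1(h) z_1, \ldots, \chi_n(h) z_n)$. Partition the coordinate indices as $I = \{i : \chi_i \equiv 1\}$ and $J = \{j : \chi_j \not\equiv 1\}$. A point $z \in D$ is fixed by $H$ if and only if $z_j = 0$ for every $j \in J$, so $D^H$ is cut out inside $D$ by these coordinate equations and $f|_{D^H}$ depends only on the coordinates $(z_i)_{i\in I}$.

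With this setup, the inclusion $Z(d(f|_{D^H})) \supseteq Z(df) \cap D^H$ is immediate, because vanishing of all partial derivatives of $f$ at a point $p \in D^H$ in particular implies vanishing of the partials in the $z_i$ directions for $i \in I$, and these compute the differential of $f|_{D^H}$.

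The nontrivial direction is the reverse inclusion, and the key step is to show that on $D^H$ the partials $\partial f/\partial z_j$ for $j \in J$ vanish automatically as a consequence of $H$-invariance. Differentiating the identity $f(h \cdot z) = f(z)$ in the variable $z_j$ via the chain rule yields the transformation law $(\partial f/\partial z_j)(h \cdot z) = \chi_j(h)^{-1} (\partial f/\partial z_j)(z)$, so $\partial f/\partial z_j$ is $\chi_j^{-1}$-semi-invariant. For $p \in D^H$ the identity $h \cdot p = p$ then forces $(1 - \chi_j(h)^{-1})(\partial f/\partial z_j)(p) = 0$ for every $h \in H$. Since $j \in J$, there exists some $h \in H$ with $\chi_j(h) \neq 1$, which forces $(\partial f/\partial z_j)(p) = 0$. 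Combined with the hypothesis that the $I$-directional partials vanish at $p$, this shows $p \in Z(df)$, completing the reverse inclusion.

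There is no serious obstacle here; the argument is a short application of invariance and the chain rule. The only subtle point worth flagging is distinguishing between \emph{$\chi_j$ nontrivial on $H$} and \emph{$\chi_j(h) \neq 1$ for a specific $h$}; taking $D^H$ to be the fixed locus of the \emph{full} group is precisely what guarantees, for each $j \in J$, the existence of an $h \in H$ acting nontrivially in that coordinate direction, which is what drives the semi-invariance argument.
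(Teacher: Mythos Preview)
Your proof is correct and takes a genuinely different route from the paper's. The paper expands $f$ as a (convergent) power series and argues monomial-by-monomial: for each coordinate $x_k$ in the moving directions it shows that $\partial_k$ of any $H$-invariant monomial lies in the ideal generated by the moving coordinates, hence $\partial_k f$ vanishes identically on $D^H$. Your argument instead observes directly that $\partial f/\partial z_j$ is $\chi_j^{-1}$-semi-invariant and therefore must vanish at any $H$-fixed point when $\chi_j$ is nontrivial. Your approach is cleaner and more robust: it avoids the local power-series expansion (which, strictly speaking, the paper should justify at each point of $D^H$ rather than globally), and it would work verbatim for, say, $C^\infty$ invariant functions. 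The paper's computation does yield the marginally stronger statement that each moving-direction partial lies in the ideal of $D^H$ (vanishing to at least first order), but this extra information is not used for the lemma as stated.
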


\begin{proof}
Write $f = \sum_i c_i \prod_j x_j ^{e_{ij}}$ as a sum of (possibly infinite) monomials, with nonzero coefficients $c_i$.
Without loss of generality we may assume that the $H$-fixed locus $(\CC^n)^H$ is defined by the zero set of the coordinates $\{ x_i : i \ge k_0\}$ for some $k_0$.

For $k\ge k_0$  consider the expression $\frac{\partial}{\partial _k}   \prod _j x_j ^{e_{ij}}$.  We claim it is always contained in the ideal  
generated by $x_m $, $m=k_0, ... , n$. 
In fact this expression vanishes when $e_{ik}=0$ and is contained in the ideal generated by $x_k$ when $e_{ik} > 1$.
If $e_{ik} = 1$, then $e_{im}\ne 0$ for some $m \ge k_0$ since $\prod _j x_j ^{e_{ij}}$ is $H$-invariant. Therefore $\frac{\partial}{\partial _k}   \prod _j x_j ^{e_{ij}}$ lies in the ideal generated by $x_m$.
Hence
\[
\langle \partial_k f,  x_i \ | \ i \geq k_0 \text{ and } k < k_0  \rangle =  \langle \partial_k f,  x_i \ | \ i \geq k_0 \rangle.
\]
This is just the algebraic expression for the statement of the lemma.
\end{proof}

\begin{Cor}\label{cor: zdw}
For any separated DM stack $\cY$ with a regular function $f$, 
$Z(d (f|_{I\cY}))$ maps into $Z(df)$ under the natural finite map $I\cY \to \cY$.
\end{Cor}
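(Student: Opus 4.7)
The plan is to reduce the statement to the preceding lemma by a local computation on an étale slice near a geometric point of $I\cY$. First I would fix a geometric point $(y,g)$ of $I\cY$, so $g$ is an automorphism of $y$ in some finite group $H := \Aut(y)$. By the local structure theory of separated DM stacks (for example, a Luna-type slice theorem, or simply the fact that around any geometric point a separated DM stack admits an étale neighborhood of the form $[U/H]$ with $U$ an affine scheme and $y$ corresponding to a fixed point), we may replace $\cY$ by such an étale chart. The component of $I\cY$ passing through $(y,g)$ then corresponds étale-locally to $[U^{\langle g\rangle}/C_H(g)]$, and the natural map $I\cY \to \cY$ is induced by the closed inclusion $U^{\langle g\rangle} \hookrightarrow U$.

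Next I would pass to an analytic neighborhood $D$ of $y$ in $U$. Since $\langle g\rangle$ is finite cyclic and hence in particular abelian, and we work in characteristic zero, the equivariant linearization theorem (proven by averaging) gives a $\langle g\rangle$-equivariant analytic isomorphism of $D$ with a $\langle g\rangle$-invariant open neighborhood of the origin in $T_yU \cong \CC^n$ on which $\langle g\rangle$ acts linearly; because $\langle g\rangle$ is cyclic this action can be diagonalized. In these coordinates the pullback of $f$ is a holomorphic $\langle g\rangle$-invariant function, and $D^{\langle g\rangle}$ is exactly the fixed locus $\CC^n \cap D$ for the diagonal abelian action.

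Applying the lemma with $H=\langle g\rangle$ yields
\[
Z(d(f|_{D^{\langle g\rangle}})) = Z(df)\cap D^{\langle g\rangle} \subseteq Z(df).
\]
Since $Z(df)$ and $Z(d(f|_{I\cY}))$ are closed algebraic substacks whose geometric points coincide with the analytic critical sets just computed, and since this holds at every geometric point of $I\cY$, every geometric point of $Z(d(f|_{I\cY}))$ maps into $Z(df)$ under the finite morphism $I\cY \to \cY$. This is the desired conclusion.

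The only real obstacle is verifying the existence of the linearizable étale/analytic chart $[U/H]$ around $y$ in a separated DM stack; once that is in place, the diagonalization of the cyclic action and the direct application of the lemma are routine, and the passage between algebraic and analytic critical loci is standard since they are cut out by the same partial derivatives.
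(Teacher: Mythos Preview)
Your proposal is correct and follows exactly the approach the paper intends: the corollary is stated without its own proof, as an immediate consequence of the preceding lemma, and your argument spells out precisely the reduction the authors leave implicit—pass to an \'etale chart $[U/H]$, linearize and diagonalize the cyclic $\langle g\rangle$-action analytically, and apply the lemma to the $\langle g\rangle$-invariant pullback of $f$. The only comment is that your caveat about the existence of the \'etale slice is not a genuine obstacle here: for a separated DM stack over $\bfk$ in characteristic zero, the existence of an \'etale neighborhood of the form $[U/H]$ with $H=\Aut(y)$ and $y$ a fixed point is standard, so the argument goes through without further work.
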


\subsubsection{State space} \label{sec: state space def}
There is a natural forgetful map from the inertia stack to the target $I\cX \ra \cX$ (recall $\cX := [V^{ss}/G ] $).  
Pulling back $w$ we get a regular function on $I\cX$ which,  by abusing notation, we also denote by $w$.

Consider $\zeta := \exp (\pi \sqrt{-1} /\bfdeg ) \in \CC ^\ti _R$. 
If we let $q_j:= c_j / \bfdeg $, then $\zeta$ is $(\exp \pi \sqrt{-1} q_j)_j$, via $\CR \hookrightarrow GL(V)$,   so that $\zeta ^2 = J$.
By the $\CC^\ti_R$-weight of $w$, we see that $w (\zeta \cdot x) = - w (x)$.
Consider the automorphism of the inertia stack of $\cX$ defined by 
\label{inv}
\begin{align*}
\inv : I\cX& \to I\cX  \\
 (x, h) & \mapsto (\zeta \cdot x, h^{-1}) . 
\end{align*}
We define the {\em state space} for the GLSM input data as the $\CC$-vector space
 \begin{equation} \label{eq: state}
 \State        
 :=  \HH ^* (I\cX , (\Omega ^{\bullet}_{I\cX}, dw ))
 \end{equation}
with pairing $\eta$ given by
\begin{align} \label{def pairing} \eta :  \State \ot \State & \to \CC \notag \\
 \gamma_1 \ot \gamma _2 & \mapsto 
\int _{I\cX} \gamma _1 \wedge \inv^* \gamma _2 . 
\end{align} 

\begin{example}\label{ex: LG/CY 2}
We continue our running example (see Example~\ref{ex: LG/CY}).  Then $I\cX_+ = \cX_+ = \cO_{\PP^4}(-5)$. There is a commutative diagram
\[
\begin{tikzcd}
&  & \cO \ar[d, "{(taut, f)}"] \ar[ddl, dashed, swap, "s"]&  & \\
&  &  \cO(-5) \oplus \cO(5) \ar[d, "{(\partial_i f, - 1)}"] \ar[dl, dashed]&  & \\
0 \ar[r] & \Omega^1  \ar[r] & \cO(-1)^{\oplus 5} \oplus \cO(5) \ar[r, "{(x_i, taut)}"] & \cO \ar[r] & 0 \\
\end{tikzcd}
\]
where the dashed arrows follow from Euler's homogeneous function theorem.
This leads to an exact sequence
\[
\begin{tikzcd}
&  & \cO \ar[d, "s"] \ar[dl, swap, "{(taut, f)}"] \ar[dr, "0"]&  & \\
0 \ar[r] &  \cO(-5) \oplus \cO(5) \ar[r] & \Omega^1 \ar[r] & \text{coker} \ar[r] & 0 \\
\end{tikzcd}
\]
and $\text{coker}|_{Z(f)} = \Omega^1_{Z(f)}$ by the conormal bundle sequence.  Let $i : Z(f) \to \cO_{\PP^4}(-5)$ be the inclusion along the zero section.  The identifications
\[
\Omega^i_{ \cO_{\PP^4}(-5)} = \Omega^{5-i}_{ \cO_{\PP^4}(-5)} \ \ \ \ \text{ and } \ \ \ \ \Omega^i_{Z(f)} = \Omega^{3-i}_{Z(f)} 
\]
yield a natural morphism of complexes
\[
(\Omega ^{\bullet}_{\cO_{\PP^4}(-5)}, dw )[2] \to i_* (\Omega_{Z(f)}^{\bullet}, 0).
\]
One can check locally that this natural map is a quasi-isomorphism 
since the section $(taut, f)$ is regular.
Taking cohomology gives an isomorphism
\[
\HH ^* (\cO_{\PP^4}(-5), (\Omega ^{\bullet}_{\cO_{\PP^4}(-5)}, dw )) \cong \HH ^{*-2} (Z(f), \Omega_{Z(f)}^{\bullet}).
\]
Hence,
\begin{align*}
\State_+ & = \HH ^* (\cO_{\PP^4}(-5), (\Omega ^{\bullet}_{\cO_{\PP^4}(-5)}, dw )) \\
& = \HH ^{*-2} (Z(f), \Omega_{Z(f)}^{\bullet}) \\
& = \bigoplus_{p,q} \text{H}^{p,q}(Z(f)).
\end{align*} 
Furthermore, the pairing can be identified with the cup product pairing.  

On the other hand, $I\cX_- = \coprod_{i=1}^{4} B\ZZ_5 \coprod [\mathbb A^5 / \ZZ_5]$ and
\begin{align*}
\State_- & = \bigoplus_{i=1}^{4} \text{H}^* (B\ZZ_5, \CC ) \oplus \text{H}^* ([\mathbb A^5 / \ZZ_5], (\Omega ^{\bullet}_{[\mathbb A^5 / \ZZ_5]}, df ))  \\
& = \CC^4 \oplus (\CC[x_1, ..., x_5] / df)^{\ZZ_5}  \hspace{80pt} \text{ since }df \text{ is regular} \\
& = \CC^4 \oplus \{\text{polynomials in the Jacobian of degree 0,5,10,15}\} \\
& = \CC^4 \oplus (\CC \oplus \CC^{101} \oplus \CC^{101} \oplus \CC)
\end{align*}
and the pairing is the Grothendieck residue pairing up to a normalization constant (see \cite[Proposition 2.1]{CIR} and  \cite[Theorem 2.1]{Conrad: more}). 
\end{example}

\begin{Rmk}
The calculation
\[
\State_+  = \bigoplus_{p,q} \text{H}^{p,q}(Z(f))
\]
in the above example works for any geometric phase of a GLSM with only minor modification.
\end{Rmk}

We investigate the pairing using a decomposition of $I\cX$ and check its nondegeneracy as follows.
Let 
\begin{equation}\label{eqn: def of X_h} \cX_h:= [(V^{ss})^h/\rC_G(h)] . \end{equation}
 We have $I\cX = \coprod _{[h]\in G/G} \cX_h$.
Note that $(V^{ss})^h = (V^{ss})^{h^{-1}}$ and $\rC_G(h) = \rC_G(h^{-1})$.  Hence $\cX_h = \cX_{h^{-1}}$.
We define $\State_{h} := \HH^* (\cX_h , ( \Omega ^{\bullet}_{\cX_h},    dw ) )$, i.e., $\State_{h}$ is the summand of the state subspace supported on the component $\cX_h $ with conjugacy class
$h$.  Let $\State_{h}^- :=  \HH^* (\cX_h , ( \Omega ^{\bullet}_{\cX_h},   - dw ) )$.
 Note that $\State _h \cong \State ^{-}_{h^{-1}} $ by $\inv$.
 The pairing
 is decomposed into
 \[ \State_h \ot \State_h = \State _h \ot \State _{h^{-1}} \xrightarrow[\simeq]{\mathrm{id}\ot \inv^*} \State_h \ot \State _{h}^- \to \HH^*_{Z_h(dw)} (\cX_h, (\Omega ^{\bullet}_{\cX_h}, 0))  \xrightarrow{\int_{\cX_h}} \bfk , \]
where $Z_h(dw)$ is the critical locus of $w|_{\cX_h}$.
In Lemma \ref{lemma: nond pairing} we saw that the pairing 
$\State_h \ot \State _{h}^- \to  \bfk$
 is nondegenerate.

\subsection{Virtual class} \label{subsec: vir class}
Let  $\tX$ be a DM stack and let $\tW$ be a regular function on $\tX$ and $\mathbb K(F, \tau, \sigma)$ be a perfect TK factorization on $(\tX, \tW)$. 
 Assume that $\mathbb K(F, \tau, \sigma)$ is supported on a closed substack $\tZ$.  

We may define a cohomology class of a perfect TK factorization using the localized Chern character map \eqref{eq: local Chern} 
and the wedge product \S\ref{sec: wedge product} as follows.
Using  \eqref{eq: local Chern}, we can define $\ch^{\tX}_{\tZ} (\mathbb K(F, \tau, \sigma)) \in  \HH^{even}_{\tZ} (\tX, ( \Omega ^{\bullet}_{\tX},  d\tW) )$. 
The bigrading on $\HH^* (\tX, ( \Omega ^{\bullet}_{\tX}, 0) )$ allows us to define Chern classes as polynomials in the homogeneous pieces 
of $\ch (F)$ and in turn Chern roots in a universal extension. 
Here $\ch (F)$ is the Chern character of $F$ regarded as the matrix factorization for $0$-function.
Now define $\tda (F)$ using the usual characteristic polynomial in the Chern roots. 
In particular, $\tda (F)$ is an element of $ \HH^{even} (\tX, ( \Omega ^{\bullet}_{\tX}, 0) )$.

\begin{Def}\label{def:tdch}
The \emph{Todd-Chern class} of a perfect TK factorization $\mathbb K(F, \tau, \sigma)$ is the element
\begin{eqnarray*} \label{eqn: todd chern} \tdch ^{\tX}_{\tZ} \mathbb K(F, \tau, \sigma) := (\frac{i}{2\pi })^{\rank F} \tda (F)  \wedge \ch ^{\tX}_{\tZ} (\mathbb K(F, \tau, \sigma))
 \end{eqnarray*}
 in $\HH^{even} _{\tZ}(\tX, (\Omega ^{\bullet}_{\tX}, d\tW) )$. 
 When $\tZ = \tX$, the notation is abbreviated as 
 \[
 \tdch  \mathbb K(F, \tau, \sigma) :=  \tdch ^{\tX}_{\tX} \mathbb K(F, \tau, \sigma).
 \]
When $\tZ$ is proper over $k$, we define the \emph{properly supported Todd-Chern class} as the image of $\tdch^{\tX}_{\tZ}$ in properly supported cohomology i.e.
\[
 \tdchc  \mathbb K(F, \tau, \sigma) := \mathrm{nat} (     \tdch ^{\tX}_{\tZ} \mathbb K(F, \tau, \sigma)   )  \in \HH^{even} _{c}(\tX, (\Omega ^{\bullet}_{\tX}, d\tW) ) .
 \]
 where $\mathrm{nat}: \HH^{even} _{\tZ}(\tX, (\Omega ^{\bullet}_{\tX}, d\tW) ) \to \HH^{even} _{c}(\tX, (\Omega ^{\bullet}_{\tX}, d\tW) )$ is the natural map.
\end{Def}

\begin{Rmk} The definition above is motivated by Remark \ref{rmk: top c1 and cat c1} and the Borel-Serre identity 
\[ \sum_p (-1)^p \ch (\wedge ^p F^{\vee}) = c_{\rank F}(F) \cdot \td (F) ^{-1}. \]
\end{Rmk} 

Recall from \S\ref{ind:support} that $\Z$ denotes the moduli space of $\epsilon$-stable LG quasimaps to $Z(dw)$, that $\Z$ is a proper DM stack, and that $\mathbb K_{g,r,d}$ is supported on $\Z$.

We now define a cohomology class associated to our data.  
For this, let ${\bf r}_i$ be the locally constant function on $\Ugrd$ defined as
the order of the automorphism group of the $i$-th marking on each connected component; see \cite[\S 2.1]{Tseng}.

\begin{Def}\label{def:virclass} Let $W := \sum _i ev_i^* w$.
We define the \emph{virtual fundamental class} of the LG-moduli space $LG_{g,r,d}(\cX)$ to be
\[ [\Ugrd]_W^{\vir} : = (\prod _{i=1}^r {\bf r}_i) \tdch^{\Ugrd}_{\Z} \mathbb K_{g,r,d} \ \ \  \in  \ \HH^{even} _{\Z}(\Ugrd, (\Omega ^{\bullet}_{\Ugrd}, -d W) ).  \]
\end{Def}

\subsection{GLSM invariants}\label{sec glsm inv}

There is a forgetful map $fgt: \Ugrd \ra \overline{M}_{g,r}$. \label{fgt}
Using the pairing and the natural pullback map $fgt ^*$ we define a pushforward 
\[ fgt _* : \HH^* _\Z(\Ugrd, (\Omega^{\bullet} _{\Ugrd}, 0)) \ra \HH^* (\overline{M}_{g,r}^{an}, (\Omega^{\bullet} _{\overline{M}^{an}_{g,r}}, 0)) \]
by the requirement that
\[ \lan fgt _* a, b \ran = \lan a , fgt ^* b \ran \] where the left pairing is nondegenerate by Serre duality for smooth proper DM stacks.

For all $g,r, d$ such that $2g-2 + r > 0$, the virtual class $[\Ugrd]^{\vir}_W$ defines a homomorphism,
\begin{eqnarray*} 
\Omega _{g, r , d}: \HH ^* (I\cX, (\Omega ^{\bullet}_{I\cX }, dw))^{\ot r}  & \xrightarrow[\cong]{\text{K\" unneth}} & \HH ^* ((I\cX)^r, (\Omega ^{\bullet}_{(I\cX)^r}, dw^{ \boxplus r} ))  \\
& \xrightarrow {ev^*} & \HH^*(\Ugrd, (\Omega^{\bullet}_{\Ugrd}, dW )) \\
& \xrightarrow{ \wedge [\Ugrd]^{\vir}_W } & \HH^* _{\Z}(\Ugrd, (\Omega^{\bullet} _{\Ugrd}, 0))  \\
& \xrightarrow{fgt_*} & \HH^* (\overline{M}_{g,r}^{an} , (\Omega^{\bullet} _{\overline{M}_{g,r}^{an}}, 0))  \\
& \cong & H^*(\overline{M}_{g,r}^{an}, \CC) .
\end{eqnarray*}
 Here the last isomorphism is possible since $\overline{M}_{g,r}$ is a compact K\"ahler orbifold. 
 This isomorphism is independent of the choice of K\"ahler structure (see, e.g., the proof of \cite[Corollary 3.2.12]{Huyb}).
From now on we will write $H^*(\overline{M}_{g,r}^{an}, \CC)$ using the abbreviated notation $H^*(\overline{M}_{g,r})$.

 Our convention is that when $r =0$ with $g\ge 2$, 
 \begin{align*}
 \Omega _{g, 0 , d} : \bf k & \to H^*(\overline{M}_{g,0}) \\
 1 & \mapsto  fgt_* [\Ugrd]^{\vir}_W.
 \end{align*}

\subsection{Independence}
Let
\[
(ev, fgt)_* : \HH^* _\Z(\Ugrd, (\Omega^{\bullet} _{\Ugrd}, W)) \to \HH ^* (I\cX, (\Omega ^{\bullet}_{I\cX }, -dw))^{\ot r}  \ot H^*(\overline{M}_{g,r} )
\]
be defined by 
\begin{align*}
 \lan (ev, fgt)_* a, \gamma \ot \delta \ran = \lan a , (ev, fgt)^* (\gamma \ot \delta) \ran \\
  \text{ for } a \in \HH^* _\Z(\Ugrd, (\Omega^{\bullet} _{\Ugrd}, W)) , \ \gamma \in  \HH ^* (I\cX, (\Omega ^{\bullet}_{I\cX }, dw))^{\ot r}  , \  \delta \in H^*(\overline{M}_{g,r} ) ,
\end{align*}
where the left pairing is nondegenerate by Serre duality for DM stacks and \S\ref{Serre:GAGA}.
Then note that $\lan (ev, fgt)_* [\Ugrd]_W^{\vir} , \gamma \ot \delta \ran = \lan \Omega _{g, r , d} (\gamma ), \delta \ran $.

\begin{Thm}\label{thm:cind}
The class
$$(ev, fgt)_* [\Ugrd]_W^{\vir} \in \HH ^* (I\cX, (\Omega ^{\bullet}_{I\cX }, -dw))^{\ot r}  \ot H^*(\overline{M}_{g,r})
$$
 is independent of all choices in the construction of  $[\Ugrd]^{\vir}_{W}$.
Equivalently, the GLSM invariants $\Omega_{g,r,d}$ are  independent of all choices in the construction of  $[\U]^{\vir}_{W}$.
\end{Thm}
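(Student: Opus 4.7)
The plan is to reduce the statement to invariance of $(ev,fgt)_*[\Ugrd]^{\vir}_W$ under the two elementary moves of Definition~\ref{def:equiv}, and then to check each move separately. By Theorem~\ref{thm:ind}, any two choices of virtual factorization are transitively related with respect to $((I\cX)^r \times \overline M_{g,r},\ -\boxplus w \boxplus 0)$. Since the definition of $\Omega_{g,r,d}$ factors through $(ev,fgt)_*[\Ugrd]^{\vir}_W$ (and the pairing on the target is nondegenerate by Serre duality combined with Lemma~\ref{lemma: nond pairing}), it is enough to show that this pushforward is unchanged by passing between related TK factorizations.

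For Case~(1) of Definition~\ref{def:equiv}, $\iota:\U\to\U'$ is an open immersion, and both virtual factorizations $\mathbb K$ and $\mathbb K'$ are supported on the common proper DM substack $\Z\subset\U$. Excision for local cohomology yields an isomorphism $\iota_*:\HH^{*}_\Z(\U,(\Omega^\bullet_\U,-dW))\xrightarrow{\cong}\HH^{*}_\Z(\U',(\Omega^\bullet_{\U'},-dW))$, which is compatible with the localized Chern character map by its construction in Appendix~\ref{sec: B} (naturality under open pullback). Since $\iota^*F'=F$ and $\iota^*\mathbb K'=\mathbb K$, and the orders ${\bf r}_i$ match componentwise, we get $\iota_*[\U]^{\vir}_W=[\U']^{\vir}_W$. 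The equalities $ev'\circ\iota=ev$ and $fgt'\circ\iota=fgt$ then immediately give equality of the pushforwards to $(I\cX)^r\times\overline M_{g,r}$.

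For Case~(2), $\iota:\U\to\U'$ is a regular closed immersion cut out by a regular section of a quotient bundle $Q$, with $F$ sitting in $0\to F\to F'|_\U\to Q|_\U\to 0$, and Proposition~\ref{prop:PV} provides an isomorphism $\mathbb K'\cong \iota_*\mathbb K$ in $D(\U',W')$. The strategy is to combine the multiplicativity of the Todd class on the above exact sequence, $\td(F'|_\U)=\td(F)\cdot\td(Q|_\U)$, with a Grothendieck--Riemann--Roch formula for the localized Chern character of matrix factorizations under a regular closed immersion. The Koszul resolution $\bigwedge^\bullet Q^\vee \twoheadrightarrow \iota_*\cO_\U$, combined with the Borel--Serre identity $\sum_p(-1)^p\ch(\wedge^p Q^\vee)=c_{\rk Q}(Q)\cdot\td(Q)^{-1}$ (compare the remark following Definition~\ref{def:tdch}), reconciles the rank shift $\rk F'=\rk F+\rk Q$ and the normalization prefactor $(i/2\pi)^{\rk F'}$ to give $\iota_*\tdch^{\U}_\Z\mathbb K=\tdch^{\U'}_\Z\iota_*\mathbb K=\tdch^{\U'}_\Z\mathbb K'$. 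Again using $(ev',fgt')\circ\iota=(ev,fgt)$ and the equality of the orbifold weights ${\bf r}_i$ (which depend only on the LG-quasimap data, not on $\U$), the pushforwards agree.

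The main obstacle is the Grothendieck--Riemann--Roch compatibility required in Case~(2): one needs an excess-intersection formula for the localized Chern character along a regular closed immersion in the matrix factorization setting, keeping careful track of supports in the twisted Hodge complex $(\Omega^\bullet_{\U'},-dW')$ and the Todd contribution of $Q|_\U$ playing the role of the conormal bundle. Once this formula is in place, combined with the elementary Case~(1) argument, transitive relatedness via Theorem~\ref{thm:ind} propagates the invariance through an arbitrary chain of equivalences and completes the proof.
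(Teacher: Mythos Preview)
Your overall strategy matches the paper's: reduce via Theorem~\ref{thm:ind} to invariance under the two elementary moves of Definition~\ref{def:equiv}, then verify each. Case~(1) is handled the same way. The difference, and the gap you yourself flag, is in Case~(2).

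You assert a class-level identity $\iota_*\tdch^{\U}_{\Z}\mathbb K=\tdch^{\U'}_{\Z}\iota_*\mathbb K$ and propose to derive it from the Koszul resolution of $\iota_*\cO_\U$ together with the Borel--Serre identity. But a direct Grothendieck--Riemann--Roch formula for the \emph{localized} Chern character of matrix factorizations along a regular closed immersion, valued in $\HH^*_\Z(\U',(\Omega^\bullet_{\U'},-dW'))$, is not available in the paper and is not obvious: one must compare the Atiyah classes of $\mathbb K'$ and $\iota_*\mathbb K$ keeping track of the twist by $dW'$ and of supports, and the Borel--Serre argument alone does not give this. The paper does \emph{not} prove the class-level identity. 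Instead it proves only the integral-level statement (Proposition~\ref{prop:GRR}): for every test class $a$ pulled back from $(\tT,{\tt w})$ one has $\int_{\U'}a\wedge\tdch^{\U'}_{\Z}\mathbb K'=\int_{\U}\iota^*a\wedge\tdch^{\U}_{\Z}\mathbb K$. This is exactly what is needed, since $(ev,fgt)_*$ is defined by pairing against such pullbacks.

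The key technique the paper uses to obtain Proposition~\ref{prop:GRR} is deformation to the normal cone (Lemma~\ref{lemma: deform fact}): one builds a TK factorization $\mathbb K(F_\tM,\tau_\tM,\sigma_\tM)$ on the deformation space $\tM$ interpolating between $\mathbb K'$ at $t=0$ and $p_{Q|_\U}^*\mathbb K\otimes\mathbb K(p_{Q|_\U}^*Q,0,\mathrm{taut}_Q)$ at $t=\infty$. Deformation invariance of the integral (Lemma~\ref{lemma:def inv}), functoriality of $\tdch$ (Lemma~\ref{lemma: functor tdch}), multiplicativity (Lemma~\ref{lemma: at mult}), and the normalization $\int_{\tot Q|_z}\tdch\{0,\mathrm{taut}_{Q|_z}\}=1$ then give the result. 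This bypasses any need for a GRR theorem in the factorization setting; the Borel--Serre identity appears only as motivation for the definition of $\tdch$, not as a step in the proof. So your outline is on the right track, but the missing ingredient is supplied by a deformation argument rather than a direct GRR computation.
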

\begin{proof}
This is immediate from  Theorem~\ref{thm:ind} and Corollary~\ref{cor: trans related} below.
 \end{proof}

To prove Theorem \ref{thm:cind} we will use a general deformation result for a related pair of virtual factorizations.

\begin{Prop}\label{prop:GRR} Consider two related perfect TK factorizations $\mathbb{K}(F, \tau  , \sigma ) $, $\mathbb{K}(F', \tau '  , \sigma ') $
as in Definition \ref{def:equiv}. 
Let $\tZ_1, \tZ _2$ be two closed subloci of $\tX$ such that their intersection $\tZ_1 \cap \tZ_2$ is proper over $\Spec\/\CC$. 
Suppose that $\mathbb K(F, \tau, \sigma)$  is supported on $\tt{Z}_2$, then for all
 $a \in \HH^*_{\tZ_1} (\tX ', (\Omega ^{\bullet}_{\tX '}, - d \tt{W}' ))$ we have
  \begin{eqnarray}\label{GRR1} 
  \int _{\tX '} a \wedge  \tdch^{\tX '}_{\tZ_2} \mathbb{K}(F', \tau ' , \sigma ')     
  & = &  \int _{\tX } \iota ^* a  \wedge \td \ch^{\tX}_{\tZ_2} \mathbb{K}(F, \tau , \sigma  ) \end{eqnarray}
  where $\iota ^*a$ is considered as an element of $\HH^*_{\tZ_1} (\tX, (\Omega ^{\bullet}_{\tX}, - d \tt{W} ))$.
\end{Prop}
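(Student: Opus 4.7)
The plan is to treat the two cases of Definition~\ref{def:equiv} separately, since any transitive relation reduces to a chain of such relations and both sides of \eqref{GRR1} chain through equality.

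In Case (1), $\iota : \tX \to \tX'$ is an open immersion with $\iota^*F' = F$, $\iota^*\sigma' = \sigma$, $\iota^*\tau' = \tau$, and the support $\tZ_2$ of $\mathbb K(F',\tau',\sigma')$ already lies inside $\tX$. The localized Chern character constructed via Atiyah classes in Appendix~\ref{sec: B} is natural under flat pullback, so $\iota^*\ch^{\tX'}_{\tZ_2}\mathbb K(F',\tau',\sigma') = \ch^{\tX}_{\tZ_2}\mathbb K(F,\tau,\sigma)$, and similarly $\iota^*\tda(F') = \tda(F)$ since $\iota^*F' = F$. Hence $\iota^*\tdch^{\tX'}_{\tZ_2}\mathbb K' = \tdch^{\tX}_{\tZ_2}\mathbb K$. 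Because the integrand is supported in $\tZ_1\cap\tZ_2\subseteq\tX$, excision for the trace built in Appendix~\ref{sec: trace} from a separated \'etale cover containing $\tX$ reduces the integral on $\tX'$ to the integral on $\tX$, yielding \eqref{GRR1}.

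In Case (2), $\iota$ is a closed immersion of codimension $\rank Q$ with $\tX$ the regular zero locus of $\phi\circ\sigma$, so the normal bundle is $N_{\tX/\tX'} = Q|_{\tX}$, and the short exact sequence $0\to F\to F'|_{\tX}\to Q|_{\tX}\to 0$ gives the multiplicativity $\tda(F'|_{\tX}) = \tda(F)\cdot\tda(Q|_{\tX})$ together with $\rank F' = \rank F + \rank Q$. Proposition~\ref{prop:PV} furnishes a coderived isomorphism $\mathbb K(F',\tau',\sigma')\simeq \iota_*\mathbb K(F,\tau,\sigma)$ in $D(\tX',\tW')$. The main computational input is a Grothendieck--Riemann--Roch identity for the localized Chern character under closed pushforward,
\[
\ch^{\tX'}_{\tZ_2}\bigl(\iota_*\mathbb K(F,\tau,\sigma)\bigr) \;=\; \iota_*\!\left(\ch^{\tX}_{\tZ_2}\mathbb K(F,\tau,\sigma) \cdot \td(Q|_{\tX})^{-1}\right),
\]
which I would establish by resolving $\iota_*\cO_{\tX}$ by the Koszul complex of the regular section $\phi\circ\sigma$, expressing $\iota_*\mathbb K$ as the inner tensor product \eqref{eqn: inner tensor of TK} of $\mathbb K(F,\tau,\sigma)$ with this Koszul resolution, and applying the Borel--Serre identity $\sum_p(-1)^p\ch(\wedge^p Q^\vee) = c_{\rank Q}(Q)\cdot\td(Q)^{-1}$ recorded in Definition~\ref{def:tdch}. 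Combining this with the multiplicativity of $\tda$, the rank identity, and the projection formula $\int_{\tX'} a\wedge\iota_*(b) = \int_{\tX}\iota^*a\wedge b$, the $(i/2\pi)^{\rank Q}\tda(Q)\td(Q)^{-1}$ factors collapse precisely because the $(i/2\pi)^{\rank F}\tda(F)$ normalization in $\tdch$ was designed for this cancellation, giving \eqref{GRR1}.

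The principal obstacle is establishing the GRR identity for the Atiyah-class localized Chern character under the closed pushforward $\iota_*$, since this is not immediate from classical GRR for coherent sheaves. The delicate point is that the Atiyah class of $\iota_*\mathbb K$ on $\tX'$ splits, after the Koszul resolution, into the Atiyah class of $\mathbb K$ on $\tX$ plus a term supported on $N_{\tX/\tX'}$, and tracking this splitting through the definition of $\ch$ via traces of Atiyah-class powers produces the Todd correction $\td(N)^{-1}$ after summing the alternating contributions from the Koszul resolution. This is the matrix-factorization analogue of the standard proof of GRR for closed immersions, and I would expect it to be packaged either in Appendix~\ref{sec: B} or as a consequence of the multiplicativity properties of $\tdch$ under the tensor-product constructions of \S\ref{sec:CDGfact}.
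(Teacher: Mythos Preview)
Your treatment of Case~(1) is fine and matches the paper's one-line argument. The substantive divergence is in Case~(2).

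The paper does \emph{not} prove the pushforward GRR identity
\[
\ch^{\tX'}_{\tZ_2}\bigl(\iota_*\mathbb K(F,\tau,\sigma)\bigr) \;=\; \iota_*\!\left(\ch^{\tX}_{\tZ_2}\mathbb K(F,\tau,\sigma) \cdot \td(Q|_{\tX})^{-1}\right)
\]
anywhere, and this is the gap in your proposal. Appendix~\ref{sec: B} establishes functoriality of $\ch$ under \emph{pullback} (Lemma~\ref{lemma: functor tdch}) and multiplicativity under tensor product (Lemma~\ref{lemma: at mult}, Corollary~\ref{cor: multi of ch}), but there is no pushforward formula for the Atiyah-class Chern character. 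Your sketch of how to obtain it --- resolving $\iota_*\cO_{\tX}$ by a Koszul complex and tracking the Atiyah class through the tensor decomposition --- hides a real difficulty: the tensor factor $\mathbb K(F,\tau,\sigma)$ lives on $\tX$, not on $\tX'$, so expressing $\iota_*\mathbb K$ as an inner tensor product on $\tX'$ and then invoking multiplicativity of $\ch$ on $\tX'$ is not straightforward. The Borel--Serre identity alone does not bridge this.

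The paper's route is precisely designed to avoid this obstacle. It constructs (Lemma~\ref{lemma: deform fact}) a perfect TK factorization $\mathbb K(F_{\tM},\tau_{\tM},\sigma_{\tM})$ on the deformation space $\tM$ to the normal cone of $\tX$ in $\tX'$, whose fibre at $t=0$ recovers $\mathbb K(F',\tau',\sigma')$ and whose fibre at $t=\infty$ is $p_{Q|_{\tX}}^*\mathbb K(F,\tau,\sigma)\otimes\mathbb K(p_{Q|_{\tX}}^*Q,0,taut_Q)$. Deformation invariance of the trace (Lemma~\ref{lemma:def inv}) then equates the two integrals. At $t=\infty$ one is on the total space of the normal bundle, where the tensor decomposition \emph{does} make sense on a single space; multiplicativity (Lemma~\ref{lemma: at mult}), the projection formula for vector bundles (Lemma~\ref{lemma: projection for Q}), and the normalization $\int_{\tot Q|_z}\tdch^{\tot Q|_z}_z\{0,taut_{Q|_z}\}=1$ finish the computation. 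In effect, deformation to the normal cone replaces the class-level GRR you need by a fibrewise integral identity that only uses the tools actually available in the appendix.
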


\begin{proof} If $\iota$ is an open immersion, then the statement is trivial.  Consider the case where $\iota$ is a closed immersion.
By Lemma \ref{lemma: deform fact}, 
there exists a perfect TK factorization $\mathbb K (F_{\tM}, \tau _{\tM}, \sigma _{\tM})$, supported on $\tZ_2 \ti \PP ^1$ inside the deformation to the normal cone  $\tM $ of $\tX$ in $\tX '$,   whose restriction to
$0$ and $\infty$ are co-quasi-isomorphic to 
$\mathbb{K} (F', \tau ', \sigma ')$ and
 $(p_{Q|_{\tX}}^*\mathbb{K}  (F, \tau , \sigma  ) ) \ot \mathbb{K} (p_{Q|_{\tX}}^*Q, 0, taut_Q )$, respectively; see Lemma \ref{lemma: deform fact}
 for the notation $p_{Q|_{\tX}}$, $taut _{Q}$, $\iota _t$.

Let $\pi _{\tM} :  \tM \to \tX '$ be the projection
and consider the integral
\begin{equation}\label{eq: deform cone}  \int_{\tM_t } \iota _{t}^*\left(  (\pi^*_{\tM } a ) \wedge 
\tdch ^{\tM}_{\tZ_2 \times \PP ^1 } \mathbb K (F_{\tM}, \tau _{\tM}, \sigma _{\tM})  \right) ,  \end{equation}
which is independent of $t$ by Lemma \ref{lemma:def inv}.

At $t=0$  \eqref{eq: deform cone} 
becomes the left hand side of \eqref{GRR1} by the functoriality of $\tdch$ (Lemma \ref{lemma: functor tdch}).

On the other hand, at $t=\infty$, \eqref{eq: deform cone} becomes 
the right side of \eqref{GRR1}.  This uses the functoriality of $\tdch$ (Lemma  \ref{lemma: functor tdch}), 
Lemma \ref{lemma: at mult}, 
and 
Lemma \ref{lemma: projection for Q}, together with the standard fact that for every point closed $z$ of  ${\tX}$
$$\int _{[\tot Q|_z] } \td\ch^{\tot Q|_{z}}_{z} \{ 0, taut_{Q|_z} \} = 1. $$ 
\end{proof}

As a corollary, we have a cohomological projection formula for matrix factorizations.
\begin{Cor}\label{cor: mf projection for}
Let $\tX '$ be a smooth DM stack of finite type, containing a smooth closed substack $\tX$ as the  zero locus of a regular section
of a vector bundle  $Q$ on $\tX '$. Let $\tW '$ be a regular function on $\tX$ which vanishes on $\tX$ and let $\iota : \tX  \to \tX'$ be the inclusion.
 For $a \in \HH^*_{\tZ} (\tX', (\Omega ^{\bullet}_{\tX'}, - d \tW ' ))$ with $\tZ$ a proper closed substack of $\tX$,
  \begin{eqnarray*}
  \int _{\tX '} a \wedge  \tdch (\iota _*\cO_{\tX} )  & = &  \int _{\tX} \iota ^* a, \end{eqnarray*}
  where $\iota _*\cO_{\tX}$ is taken as a Koszul factorization for $(\tX ' , \tW ')$ using $Q$ and the regular section.
\end{Cor}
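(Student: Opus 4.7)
The plan is to deduce the corollary from Proposition \ref{prop:GRR} by choosing the related pair of factorizations so that the reduction collapses to the trivial factorization.

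First, I would realize $\iota_*\cO_\tX$ as a TK factorization $\mathbb K(Q, \tau', \sigma')$ for $(\tX',\tW')$. Here $\sigma'$ denotes the given regular section of $Q$ with $Z(\sigma') = \tX$, and $\tau' \in \Gamma(\tX', \mathrm{Th}^{-1}\GodKos(\sigma'))$ is a lift of a local section of $Q^\vee$ obeying $\langle \tau',\sigma'\rangle = \tW'$; such a section exists locally because $\tW'$ vanishes on $\tX = Z(\sigma')$ and $\sigma'$ is regular, and the TK globalization is produced as in \S\ref{sub:virtual factorization}. By Proposition \ref{prop: perfect}, this TK factorization is perfect and locally co-quasi-isomorphic to the usual Koszul factorization $\{\tau',\sigma'\}$, so their Todd--Chern classes in $\HH^{even}_\tX(\tX', (\Omega^\bullet_{\tX'}, d\tW'))$ coincide.

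Next, I would apply Proposition \ref{prop:GRR} to the related pair in Case (2) of Definition \ref{def:equiv} with $F' = Q$ and $\phi = \mathrm{id}_Q$. Then the defining short exact sequence $0 \to F \to Q|_\tX \to Q|_\tX \to 0$ forces $F = 0$, hence also $\sigma = 0$ and $\tau = 0$. The reduced TK factorization on $\tX$ is thus the trivial factorization $(\cO_\tX, 0)$ for $\tW = \iota^*\tW' = 0$, which is tautologically supported on $\tZ_2 := \tX$. Taking $\tZ_1 := \tZ$, the intersection $\tZ_1 \cap \tZ_2 = \tZ$ is proper over $\Spec \CC$ by hypothesis, so Proposition \ref{prop:GRR} yields
\[
\int_{\tX'} a \wedge \tdch^{\tX'}_\tX\bigl(\mathbb K(Q,\tau',\sigma')\bigr) \;=\; \int_\tX \iota^*a \wedge \tdch^{\tX}_\tX(\cO_\tX).
\]

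Finally, I would evaluate the right-hand side. Since $\rank F = 0$, Definition \ref{def:tdch} gives $\tdch^\tX_\tX(\cO_\tX) = \tda(0)\wedge\ch^\tX_\tX(\cO_\tX) = 1$, so the right side reduces to $\int_\tX \iota^*a$, as desired. The main technical point is the first step: checking that the classical Koszul factorization appearing in the statement agrees with its TK avatar on the level of Todd--Chern classes so that Proposition \ref{prop:GRR} applies. This is essentially built into Proposition \ref{prop: perfect} together with the homotopy-invariance of \S\ref{ind:a}, but it is the one place where the TK machinery is genuinely needed.
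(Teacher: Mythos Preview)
Your proposal is correct and follows essentially the same route as the paper: the paper's one-line proof simply observes that the Koszul factorization of $\iota_*\cO_{\tX}$ is transitively related (via Case (2) of Definition~\ref{def:equiv} with $\phi=\mathrm{id}_Q$) to the trivial factorization $\cO_{\tX}$ on $\tX$, whose Todd--Chern class is $1$, and then invokes Proposition~\ref{prop:GRR}. You have unpacked exactly this, including the explicit identification $F=0$, $\sigma=\tau=0$ and the computation $\tdch(\cO_\tX)=1$; your caveat about matching the classical Koszul factorization with its TK avatar is handled by Proposition~\ref{prop: perfect} and \S\ref{ind:a} as you say, and the paper does not dwell on it.
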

\begin{proof}
In this case the Koszul factorization associated to $\iota _*\cO_{\tX}$ is  transitively related to $\cO_{\tX}$ which has trivial Todd-Chern class.
\end{proof}

\begin{Cor}

Consider maps $e_i: (\tX_i, \tW_i) \to  (\tT, {\tt w})$ between LG models, $i=1, 2$.
Let $\cE_1, \cE_2$ be two perfect TK factorizations supported on closed proper substack $\tZ \subseteq \tX _1 \subseteq \tX_2$. 
Suppose that  $\cE_1, \cE_2$ are transitively related with respect to $ (\tT, {\tt w})$.
Then, for all $a \in \HH^* (\tT, (\Omega ^{\bullet}_{\tT}, - d \tt{w}))$,
  \begin{eqnarray}
  \int _{\tX_1} e_1^* a \wedge   \tdch^{\tX _1}_{\tZ} \cE_1    & = &  \int _{\tX_2} e_2 ^* a  \wedge  \tdch^{\tX_2}_{\tZ} \cE_2.
   \end{eqnarray}
   \label{cor: trans related}
\end{Cor}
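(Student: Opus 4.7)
The plan is to chain together instances of Proposition \ref{prop:GRR}.  By Definition \ref{def:equiv}, the hypothesis $\cE_1 \sim \cE_2$ furnishes a finite sequence of TK factorizations
\[
\cE_1 = \cF_0,\ \cF_1,\ \ldots,\ \cF_n = \cE_2
\]
on smooth DM stacks $\tY_0,\ldots,\tY_n$, each carrying a structure map $g_i\colon \tY_i \to (\tT,{\tt w})$ making $\cF_i$ a factorization of $g_i^*{\tt w}$, with every consecutive pair $(\cF_i,\cF_{i+1})$ directly related in the sense of Case (1) or Case (2) of Definition \ref{def:equiv} via an open or closed immersion $\iota_{i+1}$ between $\tY_i$ and $\tY_{i+1}$ (in one of the two directions) that is compatible with $g_i,g_{i+1}$.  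Since both Case (1) and Case (2) preserve the support of the TK factorization, each $\cF_i$ remains supported on the proper closed substack $\tZ$.  Hence it suffices to prove the single-step identity
\[
\int_{\tY_{i+1}} g_{i+1}^* a \wedge \tdch^{\tY_{i+1}}_{\tZ}\cF_{i+1}\ =\ \int_{\tY_i} g_i^* a \wedge \tdch^{\tY_i}_{\tZ}\cF_i
\]
for each $i$; concatenation then recovers the claim since $g_0^* a = e_1^* a$ and $g_n^* a = e_2^* a$.

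For a single step, assume without loss of generality that $\iota\colon \tY_i \hookrightarrow \tY_{i+1}$ is the immersion witnessing the relation, with $g_{i+1}\circ \iota = g_i$ (the opposite direction is handled identically, since Proposition \ref{prop:GRR} gives an equality of integrals).  Then the pullback compatibility $\iota^*(g_{i+1}^* a) = g_i^* a$ is automatic.  I would invoke Proposition \ref{prop:GRR} with $\tX = \tY_i$, $\tX' = \tY_{i+1}$, factorizations $\mathbb{K}(F,\tau,\sigma) = \cF_i$ and $\mathbb{K}(F',\tau',\sigma') = \cF_{i+1}$, support data $\tZ_2 = \tZ$ and $\tZ_1 = \tY_i$ (so $\tZ_1 \cap \tZ_2 = \tZ$ is proper), and input class $g_{i+1}^* a$; the conclusion is precisely the displayed single-step identity.

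The one technical point is that $g_{i+1}^* a$ need not be supported on $\tZ_1 = \tY_i$.  However, the integrand $g_{i+1}^* a \wedge \tdch^{\tY_{i+1}}_{\tZ}\cF_{i+1}$ has support on $\tZ$, which is proper, so the integral is well-defined; and the deformation-to-the-normal-cone argument proving Proposition \ref{prop:GRR} invokes the support hypothesis on $a$ only through propriety of the intersection with $\tZ\times\PP^1$ inside the deformation $\tM$, a condition that persists once the wedge with $\tdch$ already enforces it.  The remaining work is purely bookkeeping: tracking which of $\tY_i,\tY_{i+1}$ plays the role of the smaller stack at each link, which is harmless since Proposition \ref{prop:GRR} treats the open and closed cases uniformly.
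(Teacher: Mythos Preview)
Your proposal is correct and matches the paper's intended reasoning: the paper gives no explicit proof of this corollary, treating it as immediate from Proposition~\ref{prop:GRR} applied step-by-step along a chain witnessing the transitive relation. Your observation about the support hypothesis on $a$ is apt---taking $\tZ_1$ to be the entire ambient stack (so that $\tZ_1\cap\tZ_2=\tZ$ is proper) is exactly what is needed, and the deformation argument in Proposition~\ref{prop:GRR} indeed only uses properness of the support of the full integrand, which the $\tdch$ factor already guarantees.
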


Consider the setup of Case (2) of Definition \ref{def:equiv} with $\mathbb K(F, \tau, \sigma)$  supported on a closed 
substack $\tZ$ of $\tX$.  Denote by $\tM $ the deformation to the normal cone of $\tX$ in $\tX '$, by $\pi _{\tM} :  \tM \to \tX '$ the projection, by $\iota _t :  \tM_t \to \tM$  the inclusion of the fiber $\tM_t$ at $t\in \PP ^1$, by
 $p_{Q|_{\tX}}$ the projection $Q|_{\tX} \to \tX$, and by $taut _Q$ the tautological section.
\begin{Lemma}\label{lemma: deform fact} 
There exists a perfect TK factorization  $\mathbb K (F_{\tM}, \tau _{\tM}, \sigma _{\tM})$ on the LG model $(  \tM,   \pi _{\tM}^* \tW ')$
 supported on the closed substack $\tZ \ti \PP ^1$ of $\tM$
such that 
\[ \begin{array}{lll} \iota _0^* F_{\tM} \cong F' & \text{ under which } & 
 \begin{array}{l}  
  \iota _0^* \tau _{\tM} = \tau ' , \\   \iota _0^* \sigma _{\tM} = \sigma ' ; \end{array} \end{array} \]
 and 
 \[ \begin{array}{lll} 
 \iota _{\infty}^* F_{\tM} \cong p_{Q|_{\tX}}^* (F \oplus Q|_{\tX}) &  \text{ under which } & 
 \begin{array}{l}  
  \iota _0^* \tau _{\tM} =  p_{Q|_{\tX}}^*\tau  \oplus 0  ,  \\
 \iota _{\infty}^* \sigma _{\tM} =  p_{Q|_{\tX}}^*\sigma \oplus taut _{Q|_{\tX}} ; \end{array}
\end{array} \]
i.e.,  (by \eqref{eqn:  inner tensor of TK} and Corollary \ref{cor: pullback of vir fact}) there exists a perfect TK factorization $\mathbb K (F_{\tM}, \tau _{\tM}, \sigma _{\tM})$ which deforms
$\mathbb{K} (F', \tau ', \sigma ')$ to $(p_{Q|_{\tX}}^*\mathbb{K}  (F, \tau , \sigma  ) ) \ot \mathbb{K} (p_{Q|_{\tX}}^*Q, 0, taut_Q )$.

\end{Lemma}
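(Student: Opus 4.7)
The plan is to build $(F_\tM, \tau_\tM, \sigma_\tM)$ directly on the deformation-to-normal-cone $\tM$ by a single elementary modification of $\pi_\tM^*(F', \tau', \sigma')$ along the special fiber $D_\infty$, and to check the claimed behavior at $t=0$ and $t=\infty$. Recall that $\tM$ is smooth and flat over $\PP^1$: off $t=\infty$ the map $\pi_\tM$ restricts to $\mathrm{id}_{\tX'}$, while the special fiber $D_\infty = \tot(Q|_\tX)$, with inclusion $\iota_\infty: D_\infty \hookrightarrow \tM$, satisfies $\pi_\tM \circ \iota_\infty = \iota \circ p_{Q|_{\tX}}$ and has trivial normal bundle canonically trivialized by the uniformizer at $\infty \in \PP^1$. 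Since $\iota^*\tW' = \tW$, the function $\pi_\tM^*\tW'$ restricts to $\tW'$ on the generic fiber and to $p_{Q|_{\tX}}^*\tW$ on $D_\infty$, matching both target factorizations.

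To construct $F_\tM$, note that $\phi$ is globally surjective on $\tX'$, so $F'' := \ker\phi$ is a vector bundle on $\tX'$ with $F''|_\tX = F$, fitting into $0 \to F'' \to F' \xrightarrow{\phi} Q \to 0$. Set
\[
F_\tM := \ker\bigl(\pi_\tM^*F' \twoheadrightarrow \pi_\tM^*Q \twoheadrightarrow (\iota_\infty)_*\,p_{Q|_{\tX}}^*Q|_{\tX}\bigr),
\]
where the second map is restriction to $D_\infty$. Local freeness follows because the induced map $\pi_\tM^*F'|_{D_\infty} \to p_{Q|_{\tX}}^*Q|_\tX$ is the pullback of the split surjection $F'|_\tX \to Q|_\tX$. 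Off $D_\infty$ we have $F_\tM = \pi_\tM^*F' = F'$ on each generic fiber. On $D_\infty$, the standard elementary-modification computation yields an exact sequence $0 \to p_{Q|_{\tX}}^*Q|_\tX \to F_\tM|_{D_\infty} \to p_{Q|_{\tX}}^*F \to 0$, and the trivialization of $\cN_{D_\infty/\tM}$ induces a canonical splitting $F_\tM|_{D_\infty} \cong p_{Q|_{\tX}}^*F \oplus p_{Q|_{\tX}}^*Q|_\tX$.

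Next, $\pi_\tM^*\sigma'$ factors through $F_\tM$, because its image in $\pi_\tM^*Q$ is $\pi_\tM^*(\phi\circ\sigma')$, which vanishes on $D_\infty$ (as $D_\infty$ maps into $\tX = Z(\phi\circ\sigma')$ via $\pi_\tM$); call the induced section $\sigma_\tM$. Reading off the splitting at $D_\infty$: the $p_{Q|_{\tX}}^*F$-component of $\sigma_\tM|_{D_\infty}$ is the unique lift of $\sigma'|_\tX$ to $F = \ker\phi|_\tX$, namely $p_{Q|_{\tX}}^*\sigma$, while the $p_{Q|_{\tX}}^*Q|_\tX$-component is the class of $\phi\circ\sigma'$ modulo the squared ideal of $\tX$; by regularity of $\phi\circ\sigma'$ and the identification $\cN_{\tX/\tX'} \cong Q|_\tX$, this is exactly $taut_{Q|_{\tX}}$. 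For $\tau_\tM$, the inclusion $F_\tM \hookrightarrow \pi_\tM^*F'$ dualizes to a surjective cdga morphism $\pi_\tM^*\Thb\GodKos(\sigma') \twoheadrightarrow \Thb\GodKos(\sigma_\tM)$ carrying $\pi_\tM^*\sigma'$ to $\sigma_\tM$; define $\tau_\tM$ as the image of $\zzeta_{\pi_\tM}(\pi_\tM^*\tau')$ under this morphism. By \S\ref{sub:homo of curved diff} we have $(\iota_{\sigma_\tM} + d_{dR})(\tau_\tM) = \pi_\tM^*\tW'\cdot 1$, and on the generic fiber $\tau_\tM$ restricts to $\tau'$. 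Support on $\tZ \times \PP^1$ and perfectness follow fiberwise from the known supports at $t=0,\infty$ together with Proposition~\ref{prop: perfect}.

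The principal obstacle is verifying that $\tau_\tM|_{D_\infty}$ decomposes as $p_{Q|_{\tX}}^*\tau \oplus 0$ under the splitting of $F_\tM^\vee|_{D_\infty}$. This amounts to matching the splitting produced by the elementary modification and the conormal trivialization of $D_\infty$ with the algebraic reduction map $\Thb\GodKos(F'^\vee|_\tX) \to \Thb\GodKos(F^\vee)$ appearing in the hypothesis $\tau = \zzeta(\tau'|_\tX)$ of Case (2) of Definition~\ref{def:equiv}. Concretely, one must show that the quotient $F_\tM^\vee|_{D_\infty} \twoheadrightarrow p_{Q|_{\tX}}^*F^\vee$ arising from the splitting agrees with $p_{Q|_{\tX}}^*$ applied to the dual of $F \hookrightarrow F'|_\tX$, so that the $p_{Q|_{\tX}}^*F^\vee$-component of $\tau_\tM|_{D_\infty}$ equals $p_{Q|_{\tX}}^*\tau$, while the $p_{Q|_{\tX}}^*Q|_\tX^\vee$-component vanishes because $\tau'$ contributes nothing in this direction after reduction to $\tX$.
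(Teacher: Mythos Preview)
Your elementary-modification approach on $\tM$ is genuinely different from the paper's, which works on $\tX'\times\PP^1$: the paper defines $K=\ker\bigl(\pi_{\PP^1}^*(F'\oplus Q)\to Q\boxtimes\cO_{\PP^1}(1)\bigr)$ via $(v,q)\mapsto t_0\phi(v)-t_1q$, so that $K|_{t=\infty}=F''\oplus Q$ is split \emph{by construction}; it then identifies $\tM$ with the closure of the graph of $t\mapsto(\sigma',t\phi\sigma')$ inside $\pi_{\PP^1}^*(F'\oplus Q)$, sets $F_\tM=K|_\tM$, lets $\sigma_\tM$ be induced by the tautological section, and lets $\tau_\tM$ be the image of $(\tau',0)$ under $\pi_{\PP^1}^*(F'^\vee\oplus Q^\vee)\to K^\vee$. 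This makes both fiber identifications and the $\tau_\tM$-decomposition at $\infty$ immediate, with no splitting to verify.

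Your argument has two gaps. First, the assertion that ``the trivialization of $\cN_{D_\infty/\tM}$ induces a canonical splitting'' of $0\to p_{Q|_\tX}^*Q|_\tX\to F_\tM|_{D_\infty}\to p_{Q|_\tX}^*F\to 0$ is unjustified: trivializing the normal line bundle merely identifies the subobject; it does not split the extension. A splitting does exist, but it comes instead from the inclusion $\pi_\tM^*F''\hookrightarrow F_\tM$ (with $F''=\ker\phi$ on $\tX'$), since the composite $\pi_\tM^*F''|_{D_\infty}\to F_\tM|_{D_\infty}\to p_{Q|_\tX}^*F$ is the identity. Second, dualizing $F_\tM\hookrightarrow\pi_\tM^*F'$ gives an \emph{injection} $\pi_\tM^*F'^\vee\hookrightarrow F_\tM^\vee$ with torsion cokernel supported on $D_\infty$, not a surjection; so your ``surjective cdga morphism'' is misdescribed. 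This does not destroy the construction of $\tau_\tM$ (the map still exists and intertwines the Koszul differentials), and with the correct splitting one checks that $\pi_\tM^*F'^\vee|_{D_\infty}\to F_\tM^\vee|_{D_\infty}$ has image exactly the $p_{Q|_\tX}^*F^\vee$ summand, on which it is $p_{Q|_\tX}^*$ of the dual of $F\hookrightarrow F'|_\tX$; this delivers the needed $\tau$-compatibility. So your route can be completed, but the paper's kernel-bundle construction avoids both issues from the outset.
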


\begin{proof}
Let  $\pi _{\PP^1}: {\tX '} \times \PP^1 \ra {\tX '}$ be the projection.
On ${\tX '} \times \PP^1$, we consider a short exact sequence of vector bundles
 \[ \begin{array}{ccccc}  K  & \ra  &  \pi _{\PP^1}^* (F' \oplus Q)      & \ra  &          Q \boxtimes \cO _{\PP ^1} (1)
 \\                               &          &      (v, q)               & \mapsto &     t_0 \phi(v) - t_1 q    
                                \end{array} , \]
                                where $K$ is defined to be the kernel and $t_0, t_1$ are homogeneous coordinates of $\PP ^1$.
                                Let $t= t_0/ t_1$.
                                Note that $K|_{t}$ for $t=\infty$ is isomorphic to $F \oplus Q$
                                and for $t\ne \infty$ is isomorphic to $F'$ by  via the map $v \mapsto (v, t \phi (v))$.
Consider the section $(\sigma ', t \phi \circ \sigma ')$ which is the composition of 
\[ X'\ti \mathbb{A}^1 \xrightarrow{(t\phi \circ \sigma , t)} Q \times \PP ^1 \xrightarrow[\text{closed immersion}]{( \sigma ' \circ p_Q, \mathrm{id}_Q, \mathrm{id}_{\PP ^1})} 
(F'\oplus Q ) \times \PP ^1  \] where $p_Q$ denotes the projection $Q \to X'$.
The closure of the image of the section $(\sigma ', t \phi \circ \sigma ')$ in $\pi _{\PP^1}^* (F' \oplus Q) $ is isomorphic to $\tM$.
In particular,  the fiber of $\tM$ at $t=\infty$  is canonically isomorphic to 
the normal vector bundle $Q|_{\tX}$ of $\tX$ to ${\tX '}$ and the generic fiber  $\tM|_{t\ne \infty}$ is canonically isomorphic to $\tX '$. 
Therefore we have a commutative diagram
 \[ \xymatrix{  
    ( F' \oplus 0 )\ti \{ 0\}            \ar[r]               & (F' \oplus Q ) \ti \PP ^1 & \ar[l] (F \oplus Q |_{\tX}) \ti \{\infty\}  \\
      \tX ' \times \{ 0 \} \ar[r] \ar[d]  \ar[u]_{\sigma '} & \tM\  \ar[u] \ar[d]  &  \ar[l]
  \tM|_{\infty} = Q|_{\tX}  \ar[u]^{(\sigma \circ p_{Q|_{\tX}}, \mathrm{id}_{Q|_{\tX}} ) } \ar[d]  \\
                            {\tX '}\ti   \{ 0 \} \ar[r]  &    {\tX '} \times \PP ^1 \ar[d]_{\pi _{\PP^1}} & \ar[l] \  {\tX} \ti \{ \infty \}  \\
                                                          &      {\tX '} & & 
                                                             } \]

We now construct the sections $\sigma_{\tM}$ of $ K |_{\tM}$ 
and $\tau _{\tM}$ of $\mathrm{Th}^{-1}\God ( \wedge ^{-\bullet} K^\vee |_{\tM} ) $ as follows.
Consider the tautological section  of the vector bundle $(F' \oplus Q)|_{(F' \oplus Q)\ti \PP ^1} $ on the total space $(F' \oplus Q)\ti \PP ^1 $.
Its restriction to $\tM$ factors through a section of $K|_{\tM } $, which we denote by $\sigma_{\tM}$.
The section $ (\pi _{\PP^1}^*  (\tau ', 0)) |_{\tM } $ 
of $\mathrm{Th}^{-1}\God ( \wedge ^{\bullet}  (F'^{\vee}\oplus Q^{\vee}) |_{\tM})$ naturally yields a section of  
$\mathrm{Th}^{-1}\God ( \wedge ^{-\bullet} K^\vee |_{\tM} ) $ via the map
$ \pi _{\PP^1}^* (F'^{\vee} \oplus Q^{\vee}) \to  K^\vee$. Denote this section by $\tau _{\tM}$.
Take $F_{\tM} = K|_{\tM}$. It is straightforward to check the required properties using Lemma \ref{lemma: functor zzeta}.
\end{proof}

\section{The cohomological field theory} \label{section: cohFT}

In \S\ref{sec: state space def}, we defined a $\ZZ_2$-graded vector space  $\State$ together with a nondegenerate pairing  \eqref{def pairing}.    In \S\ref{section:tails}, we will also define a distinguished element $\mathbbm{1} \in \State$.
Furthermore, in \S\ref{sec glsm inv} we defined a homomorphism 
\[  \Omega _{g, r, d} : \State ^{\ot r} \to H^* (\overline{M}_{g, r}) \] 
 for $2g - 2 + r > 0$ and all $d \in \Hom (\widehat{G}, \QQ )$.
Consider the formal power series ring $\CC[[q]]$ and define the following $\CC[[q]]$-linear maps,
 \[ \Omega _{g, r} = \sum_{d} q^d \Omega _{g, r, d}  : \State [[q]] \to H^* (\overline{M}_{g, r}) [[q]].  \]

\begin{Rmk} Let $t$ be another formal parameter.
We may replace $\tdch \mathbf K (\ka, \kb)$ by $\sum t^i \tdch _i \mathbf K (\ka, \kb)$ where 
$\tdch _i \mathbf K (\ka, \kb)$ is the degree $i$-th part of $\tdch \mathbf K (\ka, \kb) \in \oplus_j \HH^j_{\tZ} (\Ugrd, (\Omega ^{\bullet}_{\Ugrd}, -dW))$.
Therefore, we can also defined refined GLSM invariants $\Omega _{g, r} : \State [[q, t]] \to H^* (\overline{M}_{g, r}) [[q, t]]$.
\end{Rmk}

The goal of this section will be to verify that, under mild conditions, 
the data $(\State, \eta, \mathbbm{1}, \{ \Omega _{g, r} \})$ forms a cohomological field theory with unit in the sense of \cite{Pan18}.  

We proceed subsection by subsection, where we describe and verify the $S_r$-covariance, tree gluing, loop gluing, forgetting tails, and the metric axiom respectively.
In fact, the $S_r$-covariance, tree gluing, and loop gluing axioms always hold.

Recall that $d_w$ is the pairing of $\mathbb C^\times_R$ and $\chi$ and define
$c_i$ to be the weights of the $\mathbb C^\times_R$-action i.e.,
   \[ \CR \to GL(V); \  \lambda \mapsto (\lambda ^{c_1} , ..., \lambda ^{c_{\dim V}}) \label{pg: ci} . \]
For the rest of the axioms, we will assume that $\nu$ is large enough, $d_w \ge c_i \ge 0$,
and that the fixed locus of $\CR$ on $\cX$ is  expressible
as $[(V^{ss})^{\CR}/G]$ i.e.,
\begin{equation} \label{eq: fixedloci}
\cX^{\CR} = [(V^{ss})^{\CR}/G] \tag{$\dagger$}.
\end{equation}

We will also consider the following map.  Recall that 
$\zeta := \exp (\pi \sqrt{-1} /\bfdeg ) \in \CC ^\ti _R$.
We define the \emph{twisted diagonal map} by
\[ \Delta_{\zeta} = ( \Delta_{\zeta, 1}, \Delta_{\zeta, 2} ): I\cX  \to (I\cX )^2 , \ (x, h) \mapsto ((x, h) , (\zeta \cdot x, h^{-1})) . \] 
The ``Poincar\'e dual" class of the twisted diagonal is the element
\[ \eta _{\Delta} \in \HH^*(I\cX, (\Omega_{I\cX}, -dw )) \ot \HH^*(I\cX, (\Omega_{I\cX}, -dw )) \]
defined by the pairing \eqref{def pairing}  considered as an element  of $\State ^{\vee}\ot \State^{\vee}$.
Hence, by definition 
\begin{eqnarray}\label{eq:def eta delta} \int _{I\cX \ti I\cX} a \ot b \wedge \eta _{\Delta} = \eta ( a, b ) . \end{eqnarray}
This property is preserved under smooth pullback by Corollary \ref{cor: base change Poincare}.
Finally, we fix a basis $\{ T^h_j \}_j$ of $\State_h$ and denote the dual basis with respect to the pairing $\eta |_{\State_h}$ by $\{ T_h^j \}_j$.

For fixed $h_i \in G$, $i=1, ..., r$,
we denote by $\U(h_1, ..., h_r)$ the open and closed substack of $\U_{g,r,d} := \U$ such that the $i$-th marking lands on $\cX_{h_i}$.
Then, we denote by
$
\mathbb{K}_{g, r, d}(h_1, ..., h_r)
$
the restriction of $\mathbb{K}_{g, r, d}$ to $\U(h_1, ..., h_r)$.

Finally, we also sometimes write $\fB_{\Gamma}^{g, r, d}$ instead of $\fB_{\Gamma}$ when specification is required; see \S\ref{sec:const U}.

\subsection{$S_r$-covariance axiom}
The symmetric group $S_r$ acts on $\State ^{\ot r}$ by permutation and on $H^* (\overline{M}_{g, r})$ by permuting the markings.
Let $\sigma_i \in S_r$ be the permutation which exchanges $i, i+1$.  
The $S_r$-covariance axiom is the equality
\[
(-1)^{|\gamma_i| |\gamma_{i+1}|}  \Omega_{g,r,d}(\gamma_1 \ot \cdots \ot \gamma_r) =  \sigma_i^*\Omega_{g,r,d}(\gamma_1 \ot \cdots \ot \gamma_{i+1} \ot \gamma_i \ot \cdots \ot \gamma_r)
\]
for all $i$, where $\gamma_j$ are homogeneous of degree $|\gamma_j|$ with respect to the $\ZZ_2$-grading on $\State$.

\subsubsection{Proof}\label{proof:covariance}
Let $K$ denote the K\"unneth isomorphism  \eqref{kunneth}.
We have the following facts:
\begin{enumerate}
\item the action of $S_r$ on $\U_{g,r,d}$ induced by permuting the markings makes the evaluation and forgetful maps equivariant;
\item the virtual fundamental class $[\Ugrd]_W^{\vir} $ does not depend on the ordering of the markings, i.e., $\sigma_i^*[\Ugrd]_W^{\vir} = [\Ugrd]_W^{\vir}$;
\item the K\"unneth isomorphism introduces the sign $(-1)^{|\gamma_i| |\gamma_{i+1}|}$ when compairing $\State^{\ot r}$ to $\HH^*((I\cX)^r, \Omega^\bullet_{(I\cX)^r}, dw^{\boxplus r})$.
\end{enumerate}
Hence,
\begin{align*}
& \ (-1)^{|\gamma_i| |\gamma_{i+1}|}  \Omega_{g,r,d}(\gamma_1 \ot \cdots \ot \gamma_r)  &\\
=  & \  (-1)^{|\gamma_i| |\gamma_{i+1}|}  \Omega_{g,r,d}\sigma_i^*(\gamma_1 \ot \cdots \ot \gamma_{i+1} \ot \gamma_i \ot \cdots \ot \gamma_r) & \text{by definition}  \\
 = & \ fgt_*(ev^* \sigma_i^* K ( \gamma_1 \ot \cdots \ot \gamma_{i+1} \ot \gamma_i \ot \cdots \ot \gamma_r) \wedge [\Ugrd]_W^{\vir} ) & \text{by (3)} \\
 = & \ fgt_*(\sigma_i^* ev^*  K ( \gamma_1 \ot \cdots \ot \gamma_{i+1} \ot \gamma_i \ot \cdots \ot \gamma_r) \wedge [\Ugrd]_W^{\vir} ) & \text{by (1)} \\
 = & \ fgt_*(\sigma_i^* ev^*  K ( \gamma_1 \ot \cdots \ot \gamma_{i+1} \ot \gamma_i \ot \cdots \ot \gamma_r) \wedge \sigma_i^* [\Ugrd]_W^{\vir} ) & \text{by (2)} \\
 = & \ \sigma_i^* fgt_*( ev^*  K ( \gamma_1 \ot \cdots \ot \gamma_{i+1} \ot \gamma_i \ot \cdots \ot \gamma_r) \wedge  [\Ugrd]_W^{\vir} )& \text{by (1)} \\
 =  & \  \sigma_i^*\Omega_{g,r,d}(\gamma_1 \ot \cdots \ot \gamma_{i+1} \ot \gamma_i \ot \cdots \ot \gamma_r) & \text{by definition}  \\
\end{align*}

\subsection{Tree gluing axiom}  For $i = 1, 2$, fix $g_i, r_i, g, r, d$ such that $g_1+g_2 =g$, $r_1 + r_2 = r$.
Let $\rho _t :   \overline{M}_{g_1, r_1+1} \times \overline{M}_{g_2, r_2+1}   \to \overline{M}_{g,r}  $ be the gluing map, i.e.,
 the map corresponding to gluing along the final marking of each curve.
We now prove the tree gluing axiom,
\begin{eqnarray}\label{eqn:tree}
 & &  \rho _t ^* \Omega _{g,r, d } (\gamma _1, ..., \gamma _r) =
 \\ \notag    & &    \sum _{ h,  j, d_1 + d_2 = d }  \Omega _{g_1, r_1 +1, d_1} (\gamma _1, ..., \gamma _{r_1}, T^h_j) \ot
 \Omega _{g_2, r_2 +1, d_2} (\gamma _1, ..., \gamma _{r_2}, T_h^j ),
\end{eqnarray}
of our cohomological field theory.

\subsubsection{Setup for the proof}\label{proof:tree}
 Fix $d_1, d_2$ such that $d_1 + d_2 =d$. Choose virtual factorizations
$(\U_{g_i, r_i+1, d_i}, \mathbb{K}_{g_i, r_i+1, d_i}) $  and $(\U_{g,r, d}, \mathbb{K}_{g, r, d})$ as in \S\ref{sub:virtual factorization}.

Let $ \prod '_{i} \fB_{\Gamma}^{g_i, r_i+1, d_i} $ denote the stack parameterizing 
triples 
\begin{equation}\label{eq: disjoint pair}
((C^1, P^1, \kappa ^1), (C^2, P^2, \kappa ^2), (\phi, \tilde{\phi}_{\Gamma}))
\end{equation}
 of objects in $\fB_{\Gamma}^{g_1, r_1+1, d_1}, \fB_{\Gamma}^{g_2, r_2+1, d_2}$ together with a pair of isomorphisms $\phi : \sG_{r_1+1} \to \sG_{r_2+1}$ inverting the band,  
 $\tilde{\phi}_{\Gamma}: P^1|_{\sG_{r_1+1}} \to P^2|_{\sG_{r_2+1}}$ as principal bundles on $\sG_{r_1+2} = \phi(\sG_{r_1+1})$.  By an argument similar  to \cite[\S 5.1]{AGV}, 
  the stack  $ \prod '_{i} \fB_{\Gamma}^{g_i, r_i+1, d_i} $ is a smooth algebraic stack which is locally of finite type over $\Spec\/\CC$.

We use the following shorthand, $\U_{d_i} := \U_{g_i, r_i+1, d_i}$ and $\U :=\U_{g, r, d}$.  Let $\U_{d_1}\ti \U_{d_2} \to (I\cX)^2$ be the product $ev_{r_1+1}\ti ev_{r_2+1}$ of evaluation maps,
let $$\rho _{t, d_1, d_2} :    \prod'_i \fB_{\Gamma}^{g_i, r_i+1, d_i}  \to \fB_{\Gamma}^{g, r, d}$$ be the gluing map, which is a DM type morphism, 
and let $LG_{d_i} :=LG_{g_i, r_i+1, d_i} (\cX)$, $LG_{g, r, d} := LG_{g, r, d}(\cX )$. Then
we have the following commuting diagram 
\begin{equation} \label{eq: fiber ev}\xymatrix{    
& LG_{d_1}\ti LG_{d_2} \ar@{^{(}->}[r] & \U_{d_1}\ti \U_{d_2}  \ar[r] & (I\cX)^2 \\
&  \Delta^*_{\zeta} (LG_{d_1} \ti LG_{d_2})  \ar@{^{(}->}[r]  \ar[ld]^{\cong} \ar[u]  &  \Delta ^*_{\zeta} (\U_{d_1}\ti \U_{d_2})   \ar[u]_{\tilde{\Delta}_{\zeta}}  \ar[r]  
                                                    & I\cX \ar[u]_{\Delta _{\zeta}} \\
\rho^*_{t, d_1, d_2}LG_{g, r, d} \ar@{^{(}->}[r]  &      \rho^*_{t, d_1, d_2}  \U \ar[r]_{\mathrm{bl}} \ar[d]     \ar@{..}[ru]_{\sim}  \ar@/_1.5pc/[rr]_{  \tilde{\rho}_{t, d_1, d_2} \ \ \ \ \ \ \ \ \ \  }
&               \rho_t^* \U  \ar[r]_{\tilde{\rho}_t} \ar[d]    &  \ar[d] \U  \ar@/^1.5pc/[dd]^{fgt}\\
 &  \prod'_i \fB_{\Gamma}^{g_i, r_i+1, d_i}  \ar[r]  \ar@/_1.5pc/[rr]_{  \rho_{t, d_1, d_2} \ \ \ \ \ \ \ \ \ \  }
  &            \rho_t^*\fB_{\Gamma}^{g, r, d}   \ar[r] \ar[d] & \fB_{\Gamma}^{g, r, d} \ar[d] \\
 &   &  \prod _i  \overline{M}_{g_i, r_i+1}  \ar[r]_{\ \   \ \rho_{t}} & \overline{M}_{g,r}      }
\end{equation}
where all squares are defined to be fiber products and
 \[
 \rho ^*_{t, d_1, d_2}LG_{g, r, d}  := \Pi '_i\fB_{\Gamma}^{g_i, r_i+1, d_i} \ti _{\fB_{\Gamma}^{g, r, d}} LG_{g, r, d}.
\]

We will show in Lemma~\ref{lem: tree related} 
that the  stacks $ \Delta^*_{\zeta} (LG_{d_1} \ti LG_{d_2}),   \rho^*_{t, d_1, d_2}LG_{g, r, d}$ are isomorphic  and 
that the stacks  $\Delta ^*_{\zeta} (\U_{d_1}\ti \U_{d_2})$ and $\rho^*_{t, d_1, d_2}  \U  $ carry transitively related factorizations  (indicated by the symbol $\sim$) with respect to 
\[
( (I\cX)^{r_1+r_2}, -\boxplus w) \ti (\prod _i \overline{M}_{g_i, r_i+1} , 0 ).
\]

Consider a generic point of $\rho_t^* \U$. Its domain curve has two irreducible components and the principal $\Gamma$-bundle restricted to each component carries a degree.
Hence we can decompose 
\begin{equation} \label{eq: U decomp}
\rho_t ^*\U  = \bigcup _{d_1+d_2 = d} (\rho_t ^*\U)_{d_1, d_2}
\end{equation}
 as a union of closed substacks.

\subsubsection{Proof of \eqref{eqn:tree}}
Now, let $\gamma := \gamma_1 \ot ... \ot \gamma _r \in \State ^{\ot r}$ and 
$\delta \in \ot _i H^*(\overline{M}_{g_i, r_i+1})$.  Abusing notation for evaluation and forgetful maps from various domains, we claim  the following sequence of equalities
\footnotesize
\begin{align*} 
   \nonumber     &   \langle \rho _t ^* \Omega _{g,r, d } (\gamma _1, ..., \gamma _r), \delta  \rangle \\
 \stackrel{(1)}{=}  &
\int _{\rho _t^* \U}  \prod {\bf r}_i\cdot \tilde{\rho}_t^* \left(\tdchc \mathbb{K}_{g, r, d}  \wedge ev^*( \gamma )\wedge fgt ^*(\delta)  \right) \\ 
   \stackrel{(2)}{=}  &  \sum _{d_1 + d_2 =d}  \int _{(\rho _t^* \U)_{d_1, d_2} } \prod {\bf r}_i\cdot 
    \tilde{\rho}_t ^* \left( \tdchc  \mathbb{K}_{g, r, d}   \wedge ev^*( \gamma)\wedge fgt ^*(\delta) \right)  |_{(\rho _t^* \U)_{d_1, d_2}}   \\
     \stackrel{(3)}{=}  &  \sum _{d_1 + d_2 = d}   \int _{\rho ^*_{t, d_1, d_2 } \U }  \prod {\bf r}_i\cdot  {\bf r}^2 \cdot \tilde{\rho}^*_{t, d_1, d_2 } (\tdchc  \mathbb{K}_{g, r, d} )    \wedge ev^*( \gamma)\wedge fgt ^*(\delta) \\
    \stackrel{(4)}{=}    &  \sum _{d_1 + d_2 = d}    \int _{\Delta ^*_{\zeta} (\U_{d_1}\ti \U_{d_2})}   \prod {\bf r}_i\cdot  {\bf r}^2 \cdot \tilde{\Delta}^*_{\zeta} \tdchc ( \boxtimes_i \mathbb{K}_{g_i, r_i+1, d_i} )   \wedge ev^*( \gamma)\wedge fgt ^*(\delta) \\
    \stackrel{(5)}{=}  & \sum _{d_1 + d_2 = d}   \int _{\prod _i \U_{g_i, r_i+1, d_i}  } \prod {\bf r}_i\cdot {\bf r}^2 \cdot \tdchc ( \boxtimes_i \mathbb{K}_{g_i, r_i+1, d_i} ) \wedge ev^*( \gamma)\wedge fgt ^*(\delta) \wedge ev^* \eta _{\Delta} \\
  \stackrel{(6)}{=}   &    \sum _{ h,  j, d_1 + d_2 = d }  \langle \Omega _{g_1, r_1 +1, d_1} (\gamma _1, ..., \gamma _{r_1}, T^h_j) \ot
 \Omega _{g_2, r_2 +1, d_2} (\gamma _1, ..., \gamma _{r_2}, T_h^j ) , \delta \rangle.
   \end{align*}
   \normalsize
  
   In the first equality, 
  note that $\rho _t^* \U$ and $(\rho _t^* \U)_{d_1, d_2}$ 
  could be singular. However we have the topological trace map, given in \S \ref{sub: top def of trace}, which we are using for the definition 
   of $\int _{\rho _t^* \U}$ and the integrands are considered as elements in $H^*_{ \rho _t^* \Z} (    \rho _t^* \U     )$ and 
   $H^*_{ (\rho _t^* \Z )_{d_1, d_2}}  ( (\rho _t^* \U)_{d_1, d_2}) $,  respectively. 
   The topological trace map is compatible with the integration map in the smooth case; see \eqref{eqn: com of integration maps}. 
   The map $fgt$ in the figure is a flat map (see \cite[Proposition 3]{Beh}) and hence we can apply the base change  Lemma \ref{lemma: base change of Gysin maps}
   to obtain the first equality.   

The second equality follows from the cycle level decomposition induced by \eqref{eq: U decomp}.
  The third equality is due to the projection formula \eqref{eqn:ProjFor} 
  and  the fact that the map $ \mathrm{bl}:  \rho_{t, d_1, d_2}^*\U   \xrightarrow{} (\rho_t^*\U)_{d_1, d_2}$ has degree $1/{\bf r}^2$.
    The fourth equality follows from Lemma  \ref{lemma: functor tdch}, 
    Lemma~\ref{lem: tree related} (2), Corollary \ref{cor: pullback of vir fact}, and Corollary~\ref{cor: trans related}.
   The fifth equality is by \eqref{eq:def eta delta} and Corollary \ref{cor: base change Poincare}.  The final line  is by Corollary \ref{cor: multi of ch}, Lemma \ref{lemma: functor tdch}, and Fubini's theorem.

\subsubsection{A lemma}

\begin{Lemma} \label{lem: tree related} The following hold.
\begin{enumerate}
\item
The stacks $ \Delta^*_{\zeta} (LG_{d_1} \ti LG_{d_2}) $ and $ \rho ^*_{t, d_1, d_2}LG_{g, r, d}$ are canonically isomorphic.
\item
The perfect TK factorizations corresponding to
 \[
 \tilde{\Delta}_{\zeta}^* ( \mathbb K _{g_1, r_1+1, d_1} \boxtimes \mathbb K _{g_2, r_2 +1, d_2} )  \text{ and } 
 \tilde{\rho}_{t, d_1, d_2} ^* \mathbb{K}_{g, r, d}
 \]
  are transitively related with respect to 
  \[
  ((I\cX)^{r_1+r_2}, -\boxplus w) \ti (\prod _i \overline{M}_{g_i, r_i+1} , 0 ).
  \]
\end{enumerate}
\end{Lemma}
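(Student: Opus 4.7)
The plan is to reduce both parts to Case (1) of Theorem \ref{thm: cat glue and tail}, after first establishing the underlying moduli-level isomorphism needed for Part (1).

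For Part (1), I would proceed by normalization and descent. A point of $\rho^*_{t, d_1, d_2} LG_{g, r, d}$ is an LG quasimap $(C, P, \kappa, u)$ whose underlying curve is a node-separated union $C = C_1 \cup_{\sG} C_2$ with $C_i$ of type $(g_i, r_i+1)$ and $\deg(P|_{C_i}) = d_i$. Restriction to each component yields a pair of LG quasimaps. The subtlety is that $\kappa : P(\CC_\chi) \cong \omega_C^{\log}$ restricts to each $C_i$ as an isomorphism $P|_{C_i}(\CC_\chi) \cong \omega_{C_i}^{\log}(\sG)$, where the extra divisor is the node viewed as an additional marking; since the residue pairing of $\omega_C^{\log}$ at a node identifies the two fibers with opposite sign, and since $\chi(\zeta) = -1$ while $\zeta^2 = J$ acts trivially on the components of $I\cX$, the matching of evaluations at $\sG_{r_1+1}$ and $\sG_{r_2+1}$ is precisely the twisted diagonal $\Delta_\zeta$. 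Conversely, given matched LG quasimap data, standard descent (pushout of the two gerbe markings) reconstructs an LG quasimap on the glued curve, and these constructions are mutually inverse and functorial in $T$.

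For Part (2), I plan to apply Theorem \ref{thm: cat glue and tail} Case (1) to $S := \Pi'_i \fB^{g_i, r_i+1, d_i}_\Gamma$, equipped with the DM finite type morphism to $\fB_1 \ti \fB_2 = (\prod_i \fB^{g_i, r_i+1, d_i}_\Gamma) \ti \fB^{g, r, d}_\Gamma$ (the forgetful map on the first factor, the gluing map $\rho_{t, d_1, d_2}$ on the second). On $S$ the universal curve $\fC_1$ is a disjoint union and $\fC_2$ is its glued quotient, with the canonical pushout map $\epsilon : \fC_1 \to \fC_2$; the identification $\cV_1|_{\sG_{r+1}} \cong \cV_1|_{\sG_{r+2}}$ is induced by the universal gerbe matching, and the exact sequence \eqref{eq: exact for glueing case} is just the usual normalization sequence, with the sign $\zeta$ inherited from Part (1). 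After choosing compatible resolutions $[\cA_i \to \cB_i]$, Theorem \ref{thm: cat glue and tail} delivers a transitive relation between $\mathbb K(\iota^*\alpha_1, \iota^*\beta_1)$ and $\mathbb K(\alpha_2, \beta_2)$ with respect to $((I\cX)^{r}, -\boxplus w) \ti (\prod_i \overline{M}_{g_i, r_i+1}, 0)$.

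To conclude, I would identify these abstract factorizations with those in the statement. Because $\fB_1$ is a product and $\fC_1$ is a disjoint union, the resolution $[\cA_1 \to \cB_1]$ splits as a direct sum of pullbacks from the two factors, and hence by \eqref{eqn: outer tensor of TK} and Corollary \ref{cor: pullback of vir fact}, $\mathbb K_1$ on $\U_1$ is canonically co-quasi-isomorphic to the pullback of $\mathbb K_{g_1, r_1+1, d_1} \boxtimes \mathbb K_{g_2, r_2+1, d_2}$. Under the isomorphism from Part (1), the closed locus $\U_0 \subset \U_1$ is identified with $\Delta_\zeta^*(\U_{d_1} \ti \U_{d_2})$, so $\mathbb K(\iota^*\alpha_1, \iota^*\beta_1)$ matches $\tilde{\Delta}_\zeta^*(\mathbb K_{g_1, r_1+1, d_1} \boxtimes \mathbb K_{g_2, r_2+1, d_2})$, while $\mathbb K(\alpha_2, \beta_2) = \tilde{\rho}_{t, d_1, d_2}^* \mathbb K_{g, r, d}$ again by Corollary \ref{cor: pullback of vir fact}. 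The main obstacle will lie in Part (1): carefully tracking the sign from the residue pairing at the node to verify that the descent data for $(P, \kappa, u)$ matches the $\zeta$-twist appearing both in the exact sequence \eqref{eq: exact for glueing case} and in the definition of $\Delta_\zeta$; once that identification is pinned down, the rest is a formal application of the general machinery.
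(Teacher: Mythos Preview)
Your proposal is correct and follows essentially the same approach as the paper: Part (1) via normalization/pushout with careful tracking of the $\zeta$-twist (the paper cites \cite[Proposition 5.2.2]{AGV} as the model), and Part (2) by applying Theorem~\ref{thm: cat glue and tail} Case (1) with $S = \prod'_i \fB_\Gamma^{g_i, r_i+1, d_i}$, then identifying the resulting factorizations with the stated pullbacks via \eqref{eqn: outer tensor of TK} and Corollary~\ref{cor: pullback of vir fact}.

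One point where the paper is slightly more explicit than your outline: you say ``under the isomorphism from Part (1), the closed locus $\U_0 \subset \U_1$ is identified with $\Delta_\zeta^*(\U_{d_1} \times \U_{d_2})$,'' but Part (1) only concerns the $LG$-loci, not the ambient $\U$'s. The paper handles this by choosing the resolution $[\cA_1 \to \cB_1]$ over $\fB_1$ once and pulling it back along $q: S \to \fB_1$, then observing that the kernel $A_{S,0}$ of $q^*(\pi_*(rest_{r+1}) - \zeta\,\pi_*(rest_{r+2}))$ agrees with $A_{\fB_1,0}$ under the map $\tot(q^*A_{\fB_1}) \to \tot A_{\fB_1}$; since the identification $\cV_1|_{\sG_{r+1}} \cong \cV_1|_{\sG_{r+2}}$ needed to form this kernel is precisely the matching datum carried by $S$, this makes the comparison $\tilde\Delta_\zeta^*\mathbb K_{\fB_1} \sim \iota^*\mathbb K_{S\to\fB_1}$ go through via Corollary~\ref{cor: pullback of vir fact}. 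This is exactly the ``main obstacle'' you anticipated, and the paper resolves it by working at the level of the vector bundles $A$ rather than appealing back to Part (1).
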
 

\begin{proof} 
(1) The argument is very similar to \cite[Proposition 5.2.2]{AGV}.

 Let $T\to  \Delta^*_{\zeta} (LG_{d_1} \ti LG_{d_2}) $ be a map from a  test scheme $T$. The induced map $T \to \prod _i LG_{g_i, r_i+1, d_i}$ 
 amounts to choosing a pair of families of stable LG quasimaps $((C_1, P_1, \kappa _1, u _1)$ over $T$ (allowing the notion of disconnected stable maps) 
 where $(C_1, P_1, \kappa _1)$ is the disjoint union of the corresponding part of the pair \eqref{eq: disjoint pair} (so $C_1 = C^1 \coprod C^2$, $P_1 = P^1 \coprod P^2$).
 After  reordering the markings so that the last two markings are the special ones, let $\sG_j$ be the  $j$-th gerbe marking of $C_1$ for $j=1, ..., r+2$.
For $i=1, 2$, let $\bar{P}_{r+i}$ be the canonical $G$-bundle reduction 
$1^*P_1|_{\sG_{r+i}}$ of $P_1|_{\sG_{r+i}}$ by the canonical section $1$ of  the $\CC^{\ti}$-bundle 
$$P_1 / G  |_{\sG_{r+i}} \stackrel{\kappa_1}{\cong} \omega ^{\ti}_{C_1}  |_{\sG_{r+i}} = \CC^{\ti} \ti _{\Spec \CC} \sG_{r+i}$$ on $\sG_{r+i}$.
Here  $\omega ^{\ti}_{C_1}$ indicates the principal bundle attached to the line bundle $\omega_{C_1}$ by taking nonvanishing sections.

Unwinding the definition of the fiber product, we see that the map $T$ to $ \Delta^*_{\zeta} (LG_{d_1} \ti LG_{d_2})$  exactly amounts to a pair 
\begin{equation}
(C_1, P_1, \kappa _1, u_1) \ti (\phi , \tilde{\phi}_G)
\label{eq: fiber data}
\end{equation}
where
\begin{itemize}
\item $\phi: \sG_{r+1} \xrightarrow{\cong} \sG_{r+2}$ inverts the band;
\item $\tilde{\phi}_G:  \bar{P}_{r+1} \xrightarrow{\cong} \phi^* \bar{P}_{r+2}$ such that
 $$\zeta \tilde{\phi}_G (u_1|_{\sG_{r+1}} )  = \phi^*( u_1|_{\sG_{r+2}} ) $$ as elements in 
 $ \Gamma (\sG_{r+1}, \phi^* \bar{P}_{r+2}(V))$. 
\end{itemize}

Note that  $P _1|_{\sG_{r+1}} \cong \bar{P}_{r+1}  \ti _G \Gamma  
\cong \bar{P}_{r+2} \ti _G \Gamma  \cong P _{1}|_{\sG_{r+2}} $ via $\kappa _1$ and $\tilde{\phi}_G$ (see \S \ref{pg: P(Y)} for the notation).
Denote by $\kappa _1\diamond \tilde{\phi}_G \diamond \kappa _1$ the resulting isomorphism $P _1|_{\sG_{r+1}} \cong  P _1|_{\sG_{r+2}} $
and let $\tilde{\phi}_{\Gamma} :=\zeta ^{-1} \circ \kappa _1\diamond \tilde{\phi}_G \diamond \kappa _1$.
This gives a diagram
\begin{equation}  \label{diag: gluing}      \xymatrix{ P_1|_{\sG_{r+1}} 
\cong_{ \tilde{\phi}_{\Gamma} } P_1|_{\sG_{r+2}} \ar@<.5ex>[r] \ar@<-.5ex>[r] \ar[d] & P_1  \ar[r]  \ar[d]&  P_2 \ar[d] \\
\sG_{r+1} \cong_{\phi} \sG_{r+2} \ar@<.5ex>[r] \ar@<-.5ex>[r]
& C_1  \ar[r]^{\epsilon}   \ar[d]& C_{2} \ar[dl] \\
         &   S & }  \end{equation}
where $C_{2}, P_{2}$ are pushouts, i.e.,   
$C_{2}, P_{2}$ are the curve (resp.\ principal $\Gamma$-bundle) obtained from gluing the two curves (resp.\ two principal bundles) along the last two markings via $\Delta _{\zeta}$.
 Furthermore, there is also a pushout diagram 
\begin{equation*}
 \xymatrix{ \omega^{\log}_{C_1}|_{\sG_{r+1}} = \omega^{\log}_{C_1}|_{\sG_{r+2}} \ar@<.5ex>[r] \ar@<-.5ex>[r]  & \omega^{\log}_{C_1} 
 \ar[r]  &  \omega^{\log}_{C_{2}} }
 \end{equation*}
 as total spaces of vector bundles.
     Hence, we may glue $\kappa_1$ at $\sG_{r+i}$, yielding 
         \[
         \kappa_{2} : P_{2} (\CC_{\chi}) \cong \omega ^{\log}_{C_{2}}.
         \]     
         Finally we glue $u_1$ along $\sG_{r+1} = \sG_{r+2}$ to obtain  a unique global section $u_{2}$ of $\cV_{2} := P_2 (V)$ whose
         restriction to $C_i$ coincides with $u_i$ for $i=1, 2$.        
    Hence from $T\to  \Delta _{\zeta}^* (LG_{d_1} \ti LG_{d_2} )$  
    we obtain the pair 
    \begin{equation}\label{eq:obj rho LG} 
    \left( (C_1, P_1, \kappa _1) \ti (\phi , \tilde{\phi}_{\Gamma} )\right) \ti
    (C_{2}, P_{2}, \kappa_{2}, u_{2} ), 
    \end{equation} 
    which exactly amounts to a map $T \to  \rho ^*_{t, d_1, d_2}LG_{g, r, d}$. 
    
      Conversely starting from a map $T \to  \rho ^*_{t, d_1, d_2}LG_{g, r, d}$, i.e., \eqref{eq:obj rho LG} with the diagram \eqref{diag: gluing}, 
    one can simply take the data 
    $$(C_1, P_1, \kappa_1, u_1:=\epsilon  ^* u_2) \ti (\phi , \tilde{\phi}_G := 1^* (\zeta  \circ \tilde{\phi}_{\Gamma}) ) ,$$
     i.e., a map
       $T \to  \Delta^*_{\zeta} (LG_{d_1} \ti LG_{d_2}) $ (see \eqref{eq: fiber data}).
      
      It is straightforward to check that these constructions give rise to equivalences of groupoids yielding an isomorphism of stacks 
      \[
      \Delta^*_{\zeta} (LG_{d_1} \ti LG_{d_2}) \cong  \rho ^*_{t, d_1, d_2}LG_{g, r, d}.
      \]
      
      (2) By Theorem \ref{thm: cat glue and tail} applied to the case where $S := \prod' \fB_{\Gamma}^{g_i, r_i+1, d_i}$ 
      with the projection $q: S \to \fB_1 := \prod \fB_{\Gamma}^{g_i, r_i+1, d_i} $ and the gluing map $\rho_{t, d_1, d_2}: S \to \fB_2:=\fB ^{g, r, d}_{\Gamma}$, 
     the following factorizations are transitively related $ \iota ^* \mathbb{K}_{S \to \fB_1} \sim   \mathbb{K}_{S \to \fB_2}$ (see the notation from \S \ref{sub:virtual factorization}
      and Theorem \ref{thm: cat glue and tail}). 
      Hence it is enough to 
      show that  
       \begin{equation}\label{eqn: outer tensor app} 
      \mathbb{K}_{\fB_1} \underset{cqis}{\simeq}  \mathbb{K}_{\fB_{\Gamma}^{g_1, r_1+1, d_1}} \boxtimes \mathbb{K}_{\fB_{\Gamma}^{g_2, r_2+1, d_2}} ;
      \end{equation} 
      \begin{equation}\label{eqn: comparison constr-pullback and pullback-constr} 
      \tilde{\Delta} ^*_{\zeta } \mathbb{K}_{\fB_1} 
      \sim  \iota ^* \mathbb{K}_{S \to \fB_1} ;
      \end{equation} 
      and 
       \begin{equation}\label{eqn: comparison constr-pullback and pullback-constr 2} 
       \mathbb{K}_{S\to \fB_2} 
      \sim   \tilde{\rho}_{t, d_1, d_2} ^* \mathbb{K}_{\fB_2} .
      \end{equation}

      The equation \eqref{eqn: outer tensor app}  is  from \eqref{eqn: outer tensor of TK}.
      The equation \eqref{eqn: comparison constr-pullback and pullback-constr 2} is from Corollary \ref{cor: pullback of vir fact}.
      To show 
      \eqref{eqn: comparison constr-pullback and pullback-constr}, 
      we consider Proposition \ref{prop:U} (1) over $\fB_1$ to get $\cA_{\fB_1} \to \cB_{\fB_1}$ and restriction maps
      $\pi_* (rest_{r+i}) : A_{\fB_1} \to \pi_* (\cV|_{\sG_{r+i}})$. 
      We use those data to get virtual factorizations $\mathbb{K}_{\fB_1}$,  $\mathbb{K}_{S \to \fB_1}$ over $S$ and  $S\to \fB_1$, respectively. Note that
       the total space $A_{\fB _1, 0}$  of the kernel of $\pi_* (rest_{r+1}) - \zeta \pi_* (rest_{r+2})$ is isomorphic to the total space $A_{S, 0}$
       of the kernel of $q^* (    \pi_* (rest_{r+1}) - \zeta \pi_* (rest_{r+2})     )$ under the map $ \tot (q^* A_{\fB_1}) \to \tot A_{\fB_1}$. 
       We have fiber products
                                \[ \xymatrix{  
       \fB _1 & \ar[l] A_{\fB_1}           
       &   A_{\fB_1, 0} \ar[l]_{\tilde{\Delta}_{\zeta}}        \\
                             S \ar[u]^{q} & \ar[l] A_{S} \ar[u] & \ar[l]^{\iota  }  A_{S, 0}  \ar[u]_{\cong} . } \]
      Letting 
      $B_S : = q^* B_{\fB_1}$,  under the isomorphism $ A_{S, 0}  \cong A_{\fB_1, 0} $  we have
        $\tilde{\Delta} ^*_{\zeta } \mathfrak a _{A_{\fB_1}} \cong \iota ^* \mathfrak a _{A_S} $, $\tilde{\Delta} ^*_{\zeta } \beta _{A_{\fB_1}} \cong \iota ^* \beta _{A_{S}}$,
        which prove \eqref{eqn: comparison constr-pullback and pullback-constr} by Corollary \ref{cor: pullback of vir fact}.
    \end{proof}

\subsection{Loop gluing axiom}
Let $\rho _l :   \overline{M}_{g-1, r+2}  \to \overline{M}_{g,r}  $ be the gluing map of the last two markings.
In this subsection we show the following loop gluing axiom:
\begin{eqnarray}\label{eqn:loop}
   \rho _l ^* \Omega _{g,r, d } (\gamma _1, ..., \gamma _r) =   \sum _{[h]\in G/G, j}   \Omega _{g-1, r +2, d} (\gamma _1, ..., \gamma _{r_1}, T^h_j, T_h^j ) .
\end{eqnarray}

\subsubsection{Proof}

Starting with 
$(\U_{g, r, d}, \mathbb{K}_{g, r, d}) $ and $(\U_{g-1,r+2, d}, \mathbb{K}_{g-2, r+2, d})$ satisfying Lemma \ref{prop:U} 
let us
consider the following diagram
\begin{equation} \label{eq: loop fiber ev}\xymatrix{    
& LG_{g-1, r + 2, d} (\cX) \ar@{^{(}->}[r] & \U_{g-1, r+2, d}  \ar[r]  & (I\cX)^2 \\
 & \Delta _{\zeta}^* LG_{g-1, r + 2, d} (\cX) \ar@{^{(}->}[r]  \ar[u] \ar[ld]^{\cong} &   \Delta _{\zeta}^* \U_{g-1, r+2, d} \ar[u]_{\tilde{\Delta} _{\zeta}}  \ar[r]  & I\cX \ar[u]_{\Delta _{\zeta}} \\
 \rho_{l, \tilde{\fB} }^{* }LG_{g, r, d} (\cX) \ar@{^{(}->}[r] 
  &      \rho_{l, \tilde{\fB} }^{* } \U_{g, r, d}  \ar[r]_{\mathrm{bl}} \ar[d]     \ar@{..}[ru]_{\sim} \ar@/_1.5pc/[rr]_{  \tilde{\rho}_{l, \tilde{\fB} } \ \ \ \ \ \ \ \ \ \ } 
                                &              \rho _l ^* \U_{g, r, d}  \ar[r]_{\tilde{\rho}_{l}}\ar[d]    &  \ar[d] \U_{g,r, d}   \\
   & \tilde{\fB}_{\Gamma}^{g-1, r+2, d} \ar[r] \ar@/_1.5pc/[rr]_{  \rho_{l, \tilde{\fB} } \ \ \ \ \ \ \ \ \ \ } &            \rho_l^*\fB_{\Gamma}^{g, r, d}  \ar[r]\ar[d] & \fB_{\Gamma}^{g, r, d} \ar[d] \\
   &  & \overline{M}_{g-1, r+2}  \ar[r]_{\ \    \rho_{l}} & \overline{M}_{g,r}      }
\end{equation}
where:
\begin{itemize}
\item all squares are defined to be fiber products;
\item $\tilde{\fB}_{\Gamma}^{g-1, r+2, d}$ is the smooth stack parameterizing 
$(C, P, \kappa)$ objects of $\fB_{\Gamma}^{g-1, r+2, d}$ together with pairs $(\phi, \tilde{\phi}_{\Gamma})$ where $\phi : \sG_{r+1} \to \sG_{r+2}$ is an isomorphism which inverts the band and
 $\tilde{\phi}_{\Gamma}: P|_{\sG_{r+1}} \to P|_{\sG_{r+2}}$ is an isomorphism of principal bundles on $\sG_{r+2} = \phi(\sG_{r+1})$;
\item  $\rho_{l, \tilde{\fB} }$ denotes the DM type morphism $\tilde{\fB}_{\Gamma}^{g-1, r+2, d} \to  \fB_{\Gamma}^{g, r, d} $ which glues the principal bundles along the last two markings;
\item and  $\rho_{l, \tilde{\fB} }^{* }LG_{g, r, d} (\cX) :=  LG_{g, r, d} (\cX)  \ti _{\fB_{\Gamma}^{g, r, d} } \tilde{\fB}_{\Gamma}^{g-1, r+2, d}$.
\end{itemize}

Since the rest of proof for  \eqref{eqn:loop}  is parallel to the proof in \S\ref{proof:tree}, we omit this.

\subsection{Forgetting tails axiom}\label{section:tails}
For $2g-2+r >0$, let $\rho_f: \overline{M}_{g, r+1} \to \overline{M}_{g, r}$ be the map which forgets the last marking.
We will define a unit $\mathbbm{1} \in \State$
and prove the forgetting tails axiom
\begin{eqnarray}\label{axiom:tails}  \rho_f ^* \Omega _{g , r, d} (\gamma _1, ..., \gamma _r) = \Omega _{g, r+1, d} (\gamma _1, ..., \gamma _r, \one )  \end{eqnarray}
for $\infty$-stability (to make sure that the forgetting tail map $\U_{g, r+1, d} \to \U_{g, ,r, d}$ exists).  
Furthermore, as in the introduction to the section, we assume that 
$$d_w\ge c_i \ge 0$$ for every $i$ and that the fixed locus of $\CR$ on $\cX$ is expressible
as $[(V^{ss})^{\CR}/G]$, i.e.,
\begin{equation}
\cX^{\CR} = [(V^{ss})^{\CR}/G] \tag{$\dagger$}.
\end{equation}

\begin{example} \label{ex: Rchargechoice}
Returning to Example~\ref{ex: LG/CY},
the $\CC^{\ti}_R$-fixed loci of $\cX$ are
 $\cX_+ ^{\CC^{\ti}_{R,\pm}} = \PP^4$ and  $\cX_- ^{\CC^{\ti}_{R,\pm}} = B\ZZ_5$.
However $V^{\CC^{\ti}_{R,-}} \cap V^{ss} (\nu_+)$ and  $V^{\CC^{\ti}_{R,+}} \cap V^{ss} (\nu_-)$ are empty
while $V^{\CC^{\ti}_{R,+}} \cap V^{ss} (\nu_+)$ and  $V^{\CC^{\ti}_{R,-}} \cap V^{ss} (\nu_-)$ are nonempty.  Hence $\mathbb{X}_{\pm}$ satisfies \eqref{eq: fixedloci} with $\CC^{\ti}_{R, \pm}$ but does not satisfy \eqref{eq: fixedloci} with the reversed R-charge.  This is consistent with the choices which make these examples into a convex hybrid models.
\end{example}

 \begin{Rmk}
On the other hand, the GLSM invariants themselves are independent of the choice of R-charge in the following sense.  Namely,
two choices of $\CR$ provide two splitting maps 
\[
\begin{tikzcd}
0 \ar[r] &  \widehat{\CC^{\ti}} \ot \QQ \ar[r, "{\chi ^{\vee}}"] & \widehat{\kG}\ot \QQ  \ar[r, "p"] \ar[l, dashed, bend right, swap, "{s_1, s_2}"]& \widehat{G}\ot \QQ \ar[r] & 0.
\end{tikzcd}
\]
Let $\fB_{\kG, s_i}^{g, r, d_i}$ denote the corresponding moduli space with the corresponding R-charge and degree $d_i$ also depending on $i$. Then for a geometric point $P$ of  $\fB_{\kG, s_i}^{g, r, d_i}$
and arbitrary $\delta \in \widehat{\kG}\ot \QQ $, we can calculate the degree as follows 
\[
\deg P (\CC_{\delta})= d_i (p(\delta)) + s_i (\delta ) (2g - 2 + r).
\] 
Hence if 
 \begin{equation}\label{eq: degree change} d_2 (p(\delta )) = d _1 (p(\delta )) + (s_1 (\delta ) - s_2 (\delta ) ) (2g - 2 +r ), \forall \delta \in \widehat{\Gamma}\ot \QQ, \end{equation}then
  $\fB_{\kG, s_1}^{g, r, d_1} = \fB_{\kG, s_2}^{g, r, d_2}$ and vice versa.
Therefore the corresponding GLSM invariants satisfy
\[
\Omega^{s_1}_{g,r,d_1} = \Omega^{s_2}_{g,r,d_2}.
\]
\end{Rmk}

\subsubsection{Unit} Recall that $\cX_J : = [(V^{ss})^J / G ]$; see  \eqref{eqn: def of X_h}.  Let $\cXo \subseteq \cX_J$ denote the fixed locus of $\CR$, i.e.,
$\cXo = [(V^{ss})^{\CR} / G ]$.  
The unit $\mathbbm{1}$ of our cohomological field theory is essentially the Poincar\'e dual of the class of $\cXo$. The precise definition is as follows.  
The inclusion map $\iota : (\cXo, 0) \to (\cX_J, w_J := w|_{\cX_J} )$ is an LG map since $w_J |_{\cXo} = 0$ (because $w_J$ is homogeneous of positive degree with respect to the $\CR$-action).
  We define $$S_{\mathbf{1}} :=  \iota_* \cO_{\cX _0}$$ 
  as a factorization of $(\cX_J, w_J)$.

Note that by Euler's Homogeneous Function Theorem
         \begin{equation}
         \label{eq: Euler}
          w_J = \frac{1}{d_w} \sum _{\{  i \, | \, \frac{ c_i }{  d_w } \in \ZZ, \, c_i \ne 0 \} } c_i x_i \partial_i w_J 
          =   \sum _{\{  i \, | \, \frac{ c_i }{  d_w } \in \ZZ, \, c_i \ne 0 \} } x_i \partial_i w_J .
          \end{equation}
Letting  $V_J$ be the fixed locus of $V$ under the action of $\lan J\ran$,
  we have a decomposition 
$V_J= V^{\CR} \oplus V_J^m$, where $V_J^m$  is the moving part of the $\CR$-space $V_J$ (i.e., the sum of the nontrivial eigenspaces).

Consider the section $ dw_J$ of $\Omega^1_{\cX _J } =  [(V^{ss}_J \ti V^{\vee}_J) / G] $. 
By \eqref{eq: Euler}, it factors through a section $s$ of $[(V^{ss}_J \ti (V_J^m)^{\vee}) / G]$.
Also by~\eqref{eq: Euler},  the tautological section $q \circ \text{taut}$ of $[(V^{ss}_J \ti V_J^m)/G]$ on $\cX_J$ pairs with $s$ to $w_J$. 
Therefore we get a locally-free coherent resolution of $S_{\mathbf 1}$ by the Koszul factorization $\{ s, q \circ \text{taut} \}$, i.e.,
\[
S_{\mathbf 1} \cong \{ s, q \circ \text{taut} \}.
\]
Now using this expression we define the unit of our cohomological field theory as:
\begin{eqnarray}\label{eqn: one} \mathbbm{1} : = \td \ch ( S_{\mathbf{1}} ) \in \State _{J} \subset \State . \end{eqnarray}

\subsubsection{Setup for the proof} 
For this proof we abbreviate the notation $LG_{g, r, d} (\cX)$ and simply use $LG_{g, r, d}$.  
Let $\U_{g,r+1, d} (J)$ (resp. $LG_{g, r+1, d}(J)$) denote the open and closed substack of $\U_{g,r+1, d}$ (resp. 
$LG_{g, r+1, d}$)  such that the last marking has type $J$.
We begin by choosing
$(\U_{g, r, d} ,  \mathbb{K}_{g, r, d} ) $ and $(\U_{g,r+1, d} ( J), \mathbb{K}_{g-2, r+2, d} (J) ) $ satisfying Proposition~\ref{prop:U}. 

In this section we will construct the following diagram
\begin{equation} \label{eq: forgetting tails fiber ev}\xymatrix{    
& LG _{g, r+1, d}(J) \ar@{^{(}->}[r] & \U_{g, r+1, d} (J)  \ar[r]_{\ \ \ ev_{r+1}}  & \cX_J \\
 & LG_{g, r+1, d}(J)_{0} \ar@{^{(}->}[r]  \ar[u] \ar[ld]^{\cong}_{\rho_{f, \fB, LG}} &   \U_{g, r+1, d}(J)_0 \ar[u]_{\tilde{\iota} }  \ar[r]  & \cX_0 \ar[u]_{ \iota   } \\
 \rho_{f, \fB }^{* }LG_{g, r, d} \ar@{^{(}->}[r]  &      \rho_{f, \fB }^{* }\U_{g, r, d}  \ar[r]_{\mathrm{bl}} \ar[d]     \ar@{..}[ru]_{\sim} \ar@/_1.5pc/[rr]_{\tilde{\rho} _{f, \fB} \ \ \ \ \ \ \ \ \ \ }
                                &              \rho _f ^* \U_{g, r, d}  \ar[r]_{\tilde{\rho}_{f}}\ar[d]    &  \ar[d] \U_{g,r, d}   \\
   &  \fB_{\Gamma}^{g, r+1, d} (J) \ar[r] \ar@/_1.5pc/[rr]_{\rho_{f, \fB } \ \ \ \ \ \ \ \ \ } &            \rho_f^*\fB_{\Gamma}^{g, r, d}  \ar[r]\ar[d] & \fB_{\Gamma}^{g, r, d} \ar[d] \\
   &  & \overline{M}_{g, r+1}  \ar[r]_{\ \    \rho_{f}} & \overline{M}_{g,r}      }
\end{equation}
where:
\begin{itemize} 
\item all squares are defined to be fiber products;
\item $\fB_{\Gamma}^{g, r+1, d} (J) $ is the smooth stack parameterizing 
objects $(C, P, \kappa)$  of $\fB_{\Gamma}^{g, r+1, d}$ such that the last marking is of type $J$;
\item  the DM type morphism $\rho_{f, \fB } :  \fB_{\Gamma}^{g, r+1, d} (J) \to \fB_{\Gamma}^{g, r, d}$ will be constructed in the lemma below using Hecke modifications;
\item and $\rho_{f, \fB }^{* }LG_{g, r, d} := LG_{g, r, d} \ti _{\fB_{\Gamma}^{g, r, d}} \fB_{\Gamma}^{g, r+1, d} (J)$.
\end{itemize}

\subsubsection{Proof of \eqref{axiom:tails}}\label{sub: proof of axiom tails}
For any $\delta \in H^*(\overline{M}_{g,r+1})$ we claim that there is  a sequence of equalities,
 \begin{align*}
&\  \langle  \Omega _{g, r+1, d} (\gamma _1, ..., \gamma _r, \one ), \delta \rangle \\
 \stackrel{(1)}{=} & \ \int _{ \U_{g, r+1, d} (J) } \prod_{i=1}^{r+1}  {\bf r}_i 
 (fgt, \prod _{i=1}^r ev_i)^* (\gamma\ot \delta) \wedge \tdchc \mathbb{K}_{g, r+1, d} (J)  \wedge ev_{r+1}^*\tdch S_{\mathbf{1}}  \\
   \stackrel{(2)}{=}  & \  \int _{ \U_{g, r+1, d}(J)_0 }   \prod_{i=1}^{r+1}  {\bf r}_i  (fgt, \prod _{i=1}^r ev_i)^* (\gamma\ot\delta) \wedge
    \tilde{\iota}^* \tdchc \mathbb{K}_{g, r+1, d} (J) \\
   \stackrel{(3)}{=}  & \ d_w \int _{\rho ^*_{f, \fB} \U_{g, r, d} }   \prod_{i=1}^{r}  {\bf r}_i  (fgt, \prod _{i=1}^r ev_i)^* (\gamma \ot \delta) 
   \wedge  \tilde{\rho}_{f, \fB}^* \tdchc \mathbb{K}_{g, r, d}  \\
     \stackrel{(4)}{=}   & \  \int _{\rho ^*_{f} \U_{g, r, d} }   \prod_{i=1}^{r}  {\bf r}_i  (fgt, \prod _{i=1}^r ev_i)^* (\gamma \ot \delta) \wedge \tilde{\rho}_f^*  \tdchc\mathbb{K}_{g, r, d}  \\
   \stackrel{(5)}{=}   & \  \langle \rho_f ^* \Omega _{g , r, d} (\gamma _1, ..., \gamma _r), \delta \rangle. \\
 \end{align*}
 Line one is by definition.   Line two is by Corollary \ref{cor: mf projection for} and the smoothness of $ev_{r+1}$ (see \S \ref{sec: ev}). 
   Line three is by Lemma \ref{lem:tail} (3) and Lemma \ref{lemma: functor tdch},
   and Corollary~\ref{cor: trans related}.
 Line four uses that the degree of the map $\mathrm{bl}$ is $1/d_w$ (Lemma \ref{lem:tail} (1)). 
 Line five is by  the base change identity $fgt^* (\rho _f)_* = (\tilde{\rho}_f)_*fgt^*$ proven in Lemma \ref{lemma: integration along the fiber} (here 
 $\rho _f$ is a smooth map of relative dimension $1$ and the right $fgt$ is the forgetful map $\rho_f^* \fU _{g, r, d} \to \overline{M}_{g, r+1}$).

\subsubsection{A lemma}\label{sec:Hecke}
\begin{Lemma}\label{lem:tail}  Suppose either $2g +r - 2 > 0$ or $d \neq 0$.  Let $d_w\ge c_i \ge0$ and $(\dagger)$ holds. 
The following statements hold.
\begin{enumerate}
\item There is a natural forgetful map 
$$\rho_{f, \fB } : \fB_{\Gamma}^{g, r+1, d} (J) \to  \fB_{\Gamma}^{g, r, d} $$
satisfying that the induced map  $\fB_{\Gamma}^{g, r+1, d} (J) \to \rho^*_f \fB_{\Gamma}^{g, r, d}$  is of degree $1/d_w$.
\item There is an isomorphism 
$$ \rho _{f, \fB, LG}: LG_{g, r+1, d} (J)_0 \to \rho_{f, \fB}^* LG_{g, r, d}.$$
\item  The  perfect TK factorizations associated to
\begin{equation*}\label{eqn:tail vir pull}    \tilde{\iota}^*  \mathbb{K}_{g, r+1, d} (J) 
 \text{ and }  \tilde{\rho} _{f, \fB} ^* \mathbb{K}_{g, r, d}  \end{equation*}
are transitively related 
with respect to $ ( (I\cX)^{r} , \boxplus w ) \ti (\overline{M}_{g, r+1}, 0)$.
\end{enumerate}
\end{Lemma}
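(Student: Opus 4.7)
My strategy is to construct $\rho_{f, \fB}$ via a Hecke modification at the $J$-type marking, identify $LG_{g, r+1, d}(J)_0$ with the pullback $\rho_{f, \fB}^* LG_{g, r, d}$, and then invoke Theorem~\ref{thm: cat glue and tail} (Case~(3)) to compare the TK factorizations. I will treat the three parts in order.

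For part (1), given a family $(C, P, \kappa) \in \fB_{\Gamma}^{g, r+1, d}(J)$, the $J$-gerbe structure at $\sG_{r+1}$ means that the canonical $\mu_{d_w}$-stabilizer of $\sG_{r+1}$ acts on $P|_{\sG_{r+1}}$ through the subgroup $\lan J\ran \subseteq \CR \subseteq \Gamma$. The plan is: first pass to the coarse moduli at $\sG_{r+1}$ to obtain an $r$-pointed orbicurve $C'$ in which the image of $\sG_{r+1}$ is a smooth (non-orbifold) point $p'$, and then perform a Hecke modification of $P$ at $p'$ using the $\lan J\ran$-action to produce a principal $\Gamma$-bundle $P'$ on $C'$. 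Since $\Ker(\chi|_{\CR}) = \lan J\ran$, this $\lan J\ran$-twist is invisible under $\chi$, and a local computation shows that under the natural identification $\omega_{C}^{\log}|_{C \setminus \sG_{r+1}} \cong \omega_{C'}^{\log}|_{C' \setminus p'}$ the isomorphism $\kappa$ extends uniquely to $\kappa' : P'(\CC_{\chi}) \cong \omega_{C'}^{\log}$. Any rational tails that become unstable after forgetting $\sG_{r+1}$ are contracted; the hypothesis $2g-2+r > 0$ or $d \neq 0$ ensures the resulting object lies in $\fB_{\Gamma}^{g, r, d}$. For the degree claim, the generator $J$ of the $\mu_{d_w}$-stabilizer at $\sG_{r+1}$ is an automorphism of the source object that maps to the identity in the target (since $J$ acts trivially on $(C', P', \kappa')$ via the Hecke construction), giving an extra $\ZZ/d_w$ worth of automorphisms in each fiber.

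For part (2), under assumption $(\dagger)$, a section $u$ of $P(V)$ with $[u(\sG_{r+1})] \in \cX_0 = [(V^{ss})^{\CR}/G]$ factors through $(V^{ss})^{\CR}$ at $\sG_{r+1}$, which (since $d_w \geq c_i \geq 0$) is exactly the $\lan J\ran$-fixed subspace of $V$. Hence $u$ is $\lan J\ran$-invariant near $\sG_{r+1}$ and descends through the Hecke modification to a section $u'$ of $P'(V)$ on $C'$. Stability of $(C', P', \kappa', u')$ follows from the stability of $(C, P, \kappa, u)$ together with the destabilization step. The construction is reversible: any LG quasimap to $\cX$ on $C'$ pulls back to an LG quasimap on the gerbe-marked curve $C$ whose section at $\sG_{r+1}$ automatically lands in $(V^{ss})^{\CR}$. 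This gives the claimed isomorphism.

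For part (3), I will invoke Theorem~\ref{thm: cat glue and tail} in Case~(3) with $S := \fB_{\Gamma}^{g, r+1, d}(J)$, $\fB_1 := \fB_{\Gamma}^{g, r+1, d}$, $\fB_2 := \fB_{\Gamma}^{g, r, d}$, first map the inclusion $S \hookrightarrow \fB_1$ and second map $\rho_{f, \fB} : S \to \fB_2$ from part~(1). The decomposition $V = V^f \oplus V^m$ is taken with $V^f := V^{\CR}$ and $V^m$ the direct sum of strictly positive $\CR$-weight subspaces (well-defined since $c_i \geq 0$). The morphism $\epsilon: \fC_1 \to \fC_2$ together with the exact sequence \eqref{eq: exact for forgetting tails case} both arise directly from the Hecke modification, with $\epsilon_* \cV_1 \to \epsilon_* \cV_1^m|_{\sG_{r+1}}$ encoding the moving-direction part of the twist. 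The vanishing of \eqref{eq: van} is automatic: since $w$ is $\chi$-semi-invariant of strictly positive $\CR$-weight $d_w > 0$ while every monomial in the coordinates of $V^{\CR}$ has $\CR$-weight zero, the composite map must vanish identically. Theorem~\ref{thm: cat glue and tail} then yields the transitive relation of TK factorizations with respect to the required target $((I\cX)^r, -\boxplus w) \times (\overline{M}_{g, r+1}, 0)$.

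The main technical obstacle will be implementing the Hecke modification rigorously in families (as opposed to at a single geometric point) and verifying the degree-$1/d_w$ claim on the stacky level by tracking the $\lan J\ran$-automorphism through the construction; once part (1) is established, parts (2) and (3) reduce to direct application of $(\dagger)$, $\CR$-weight considerations, and the machinery already developed in Theorem~\ref{thm: cat glue and tail}.
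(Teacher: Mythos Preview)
Your strategy matches the paper's closely: Hecke modification plus stabilization for part~(1), descent of the section for part~(2), and an appeal to Theorem~\ref{thm: cat glue and tail} Case~(3) for part~(3). Two points need correction.

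In part~(2), the assertion that $(V^{ss})^{\CR}$ ``is exactly the $\langle J\rangle$-fixed subspace of $V$'' is false: under $0\le c_i\le d_w$ one has $V^{\langle J\rangle}=V_J=\{c_i\in\{0,d_w\}\}$, which strictly contains $V^{\CR}=\{c_i=0\}$ whenever some $c_i=d_w$. More importantly, $\langle J\rangle$-invariance of $u|_{\sG_{r+1}}$ is automatic at a $J$-type gerbe marking (any section there lands in $V_J$) and is \emph{not} what makes $u$ descend. The correct mechanism, as in the paper's local computation around~\eqref{eq:exact:hecke}, is that the cokernel of $\cV_2'\hookrightarrow\epsilon'_*\cV_1$ is exactly $\epsilon'_*(P_1(V_J^m)|_{\sG_{r+1}})$, so descent requires the $V_J^m$-component of $u|_{\sG_{r+1}}$ to vanish; the hypothesis $u|_{\sG_{r+1}}\in V^{\CR}$ ensures precisely this. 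Your conclusion stands, but the justification should be rewritten.

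In part~(3), your decomposition $V^f=V^{\CR}$, $V^m=\{c_i>0\}$ differs from the paper's choice $V^m=V_J^m=\{c_i=d_w\}$, $V^f=V^{\CR}\oplus V'$ (with $V'$ the $\langle J\rangle$-moving part). These yield the same objects after pushforward: coordinates with $0<c_i<d_w$ carry nontrivial $\mu_{d_w}$-monodromy on $\sG_{r+1}$, so they contribute zero to both $\epsilon_*(\cV_1^m|_{\sG_{r+1}})$ and $\pi_*(\cV_1^m|_{\sG_{r+1}})$, and hence the exact sequence~\eqref{eq: exact for forgetting tails case} and $A_0$ agree for either choice. Your verification of the vanishing~\eqref{eq: van} via the $\CR$-weight argument is clean; the paper's choice of $V^f$ also works, but only after one observes that $\pi_*$ kills the $V'$ summand. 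A minor point: in part~(1) the paper's Hecke modification uses the full $\CR$-action via the line bundle $\cO_{C_1}(\tfrac{1}{d_w}[\underline{\sG}_{r+1}])$, not merely the finite group $\langle J\rangle$; you should make this precise when implementing the construction in families.
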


\begin{proof}
 (1) For every $T$-point $(C_1, P_1, \kappa_1)$ of $\fB_{\Gamma}^{g, r+1, d} (J)$ we will construct a $T$-point $(C_2 , P_2, \kappa _2)$ of $\fB_{\Gamma}^{g, r, d}$.  This construction will commute with the base change of $T$, yielding the map  $\rho_{f, \fB }$.

Let $\epsilon ' : C_1 \ra C_2'$ be the natural map where $C_2'$ is the partial coarse moduli space of $C_1$ obtained by 
forgetting the marking $\sG_{r+1}$ and then removing the stacky structure at $\sG_{r+1}$.

 Consider the natural exact sequence of coherent sheaves
\begin{equation} \label{eq:exact:hecke}
 0 \to  \mathcal{V}_2' \to \epsilon '_* (\cV_1:=P_1(V) ) \to \epsilon_* ( P_1(V_J^m ) |_{\sG_{r+1}}) \to 0
 \end{equation}
where $\mathcal{V}_2'$ is defined to be the kernel 
(with the convention that if $c_i < d_w$ for every $i$, i.e., $V_J^m = \{0\}$, then $\mathcal V '_2$ is just  $\epsilon '_* \cV_1$.)
Locally near the gerby marking $\sG _{r+1}$ we have
\begin{equation}\label{eq: local desc of V} \cV_1 \cong \oplus_i \cO _{C_1} ( \frac{c_i}{d_w}  [\underline{\sG _{r+1}} ] )  \end{equation}
and hence near $\underline{\sG_{r+1}} \subset C_2'$ the exact sequence \eqref{eq:exact:hecke} is isomorphic to
   \[ 
0 \to  \oplus _{i} \cO _{C_2'} \to  \oplus_i \cO _{C_2'} ( \lfloor \frac{c_i}{d_w} \rfloor [\underline{\sG _{r+1}} ] ) \to
                   \oplus_{i : 1 \le \frac{c_i}{d_w} \in \ZZ}  \cO _{C_2'} ( \lfloor \frac{c_i}{d_w} \rfloor [\underline{\sG _{r+1}} ] ) |_{\underline{\sG}_{r+1}} \to 0.  \] 
Thus  $\mathcal V_2'$ is a vector bundle. In fact, $\mathcal V_2'$ is the associated vector bundle of
a principal $\Gamma$-bundle $P_2'$ constructed by a Hecke modification as follows.

 \medskip
{\em Hecke Modification.} Consider the $\CC ^{\ti}$-bundle $L$ obtained by
removing the zero section of  $\cO _{C_1}(\frac{1}{d_w} [ \underline{\sG} _{r+1} ])$.
Then $P_1 \ti_{C_1} L $ is a $\Gamma \ti \CC^{\ti}$-bundle on $C_1$. Now using the multiplication map
$\Gamma \ti \CC^{\ti} = \Gamma \ti \CR \to \Gamma$, we construct a new $\Gamma$-bundle $$P_1' :=  (P_1 \ti_{C_1} L) \ti _{\Gamma \ti \CC^{\ti}} \Gamma$$ on $C_1$.
(Using the inverse multiplication map $\Gamma \ti \CC^{\ti} \to \Gamma ; (h_1, h_2) \mapsto h_1 h_2^{-1}$, we may reverse the 
construction, i.e., recover $P_1$ starting from $(P_1', L)$. This reversing will be used in the proof of Lemma. \ref{lemma:LGM0}.) 
By a local computation, we see that there exists a unique pair $(P_2', t) $ of a $\Gamma$-bundle $P_2'$ on $C_2'$ 
and an isomorphism  $t: \epsilon'^* P_2' \cong P_1'$. 
This yields a pair $(P_2', s)$ of a $\Gamma$-bundle $P_2'$ on $C_2'$ and an isomorphism 
$s: P_2'|_{C_2' \setminus \sG_{r+1} } \to P_1 |_{C_1 \setminus \sG_{r+1}}$ making
 $ \mathcal{V}_2' \cong P_2'\ti_{\Gamma} V $ compatible with \eqref{eq:exact:hecke}.

 \medskip

 Since $\epsilon'$ is a degree $1$ map, 
the degree of $P_2'$ is equal to  the degree of $(\epsilon')^* P_2'$. On the other hand, we have $(\epsilon')^* P_2' (\CC^{\ti}_{\delta})  
= P_1 (\CC^{\ti}_{\delta}) $
for every $\delta \in \widehat{G}\ot_{\ZZ} {\QQ} = \mathrm{Ker} (\widehat{\Gamma} \to \widehat{\CR}) \ot _{\ZZ}\QQ$.
Thus the degree of $P_2'$ is equal to  the degree $d$ of $P_1$.

Using $s$, we have a canonical isomorphism
\[ \bar{s}_{\chi}: (\epsilon ')^* P_2' (\CC_{\chi})  \cong P_1(\CC_{\chi}) \ot \cO (-[\underline{\sG_{r+1}}]) \notag  \]
which is isomorphic to $  (\epsilon ')^* \omega _{C_2'}^{\log}$ via $\kappa_1$.
Denote by $\kappa _2'$  the isomorphism $\epsilon'_* (\kappa_1 \circ \bar{s}_{\chi}) : P_2' (\CC_{\chi}) \to  \omega _{C_2'}^{\log}$.

\medskip

{\em Stabilization. } We will also require the use of stabilization for nonstable trees (see \cite{AV}). For a moment let $T$ be a geometric point. 
Then the stabilization is 
a sequence of contractions of the genus zero components of $C_2'$ which carry only two special points
and whose restriction to $P_2' (\CC_{\nu})$ has degree zero.
When one of the two special points is a marking, say $\sG_i$, then 
the node attached to the contracted component becomes the new marking $\sG_i$.

By Lemma \ref{lemma:metric} (2),
the new marking $\sG_i$ has the same type (orbifold structure) as the old $\sG_i$.
The stabilization procedure produces a sequence of these contractions until the new contracted curve $C_2$ with degree $P_2' (\CC_{\nu})$
becomes $\nu$-stable for $\nu >\!\!> 0$. 

Let $\epsilon '' : C_2 ' \to C_2$ be the contraction described above and let $E$ be the exceptional locus of $\epsilon ''$. 
Then the natural map $E \to V/\!\!/_{\theta}G$ induced from $[u]_2' |_{E} \to [V/\Gamma]$ is
a constant map to a point $p$ of $V/\!\!/_{\theta}G$ (see Remark \ref{rmk: degree and triviality} (3)).
The triple $(E, P_2'|_{E} , \epsilon '_*u_1|_{E})$ amounts to a map $E\to B\mu_a$ where $B\mu_a$ is the reduction of the inverse image of $p$.
In other words, $E \to [V/G]$ factors as  $E \to B\mu_a \to [V/G]$. This implies that
the map $[\epsilon '_*u_1]:  C \to [V/\Gamma]$ factors as $C_2' \to C_2 \to [V/\Gamma]$ for some unique map  $C_2 \to [V/\Gamma]$.
Hence we obtain the data $(C_2, P_2, u_2)$ such that $(\epsilon '')^* P_2 \cong P_2'$.

This whole procedure can be upgraded to a $T$-family version  as 
follows. We view $(C_2', P_2', \epsilon'_* u_1, \kappa ')$ as a representable map 
\[ P_2' (\CC ^{\ti}_{\chi}) \underset{\kappa_2'}{\cong} (\omega^{\log} _{C_2'})^{\ti} \to [V^{s}(\theta) /G] . \]
Note that $P_2' (\CC ^{\ti}_{\chi})$ restricted to the contracted locus is a trivial $\CC ^{\ti}$-bundle on the locus. 
Hence, locally $P_2' (\CC ^{\ti}_{\chi})$ is the trivial $\CC^{\ti}$-bundle $C_2' \ti \CC ^{\ti}$. Now we can handle the contraction
as in the proof of  \cite[Proposition 9.1.1]{AV} using an \'etale presentation of $[V^{s}(\theta) /G]$.

\medskip

Let $\epsilon = \epsilon '' \circ \epsilon '$. 
Then by the pushforward of $\epsilon$ we obtain the isomorphism
\[
 \kappa_2:=\epsilon_* (\kappa_1 \circ \bar{s}_{\chi}):  P_2 (\CC_{\chi})   \xrightarrow{\cong} \omega _{C_2}^{\log}.
 \] 
Hence we finally obtain the data $(C_2, P_2, \kappa _2)$, i.e., we get a $T$-object in $\fB_{\kG}^{g, r, d}$, as desired.
The assertion on the degree of $1/d_w$ follows from counting automorphisms at the generic point.

(2) Let $(C_1, P_1, \kappa _1, u_1)$ be an object in $LG_{g, r+1, d}(J)_0$. 
Then  $\epsilon _* u_1 : \cO _{C_2} \to \epsilon_*\cV_1$ is an element of $\Gamma(C_2, \cV_2)$.
Hence we obtain an object of $\rho_{f, \fB}^* LG_{g, r, d}$:
$$((C_1, P_1, \kappa ),  (C_2, P_2, \kappa _2, u_2=\epsilon _* u_1), (\mathrm{id}: C_2 = C_2, \mathrm{id}: P_2 = P_2 ) ) $$
It is easy to see that the stability condition of $(C_2, P_2, \kappa _2, u_2 )$ holds for $\nu >\!\!> 0$. 
Also, we may check the functoriality of the assignment so that it yields a map $LG_{g, r+1, d}(J)_0 \to \rho_{f, \fB}^* LG_{g, r, d}$.
It is also straightforward to construct its inverse map.

(3) This follows from by Theorem \ref{thm: cat glue and tail} applied to the case where 
$S := \fB_{\Gamma}^{g, r+1, d}(J)$ with the identity map $S \to \fB_1 := S $ and $\rho _{f, \fB}: S \to \fB_2:=\fB _{\Gamma}^{g, r, d}$
(there $\iota$, $V^m$, $V^f$ are  here $\tilde{\iota}$, $V^m_J$, $V^{\CR} \oplus V'$, respectively where $V'$ is the moving part of the $\lan J \ran$-space $V$).
\end{proof}

\begin{Rmk}\label{rmk: degree and triviality} Let  $(C, P, \kappa, u)$  be an LG quasimap over  $T=\Spec \CC $.

(1) \cite[Propostion 5.1.1]{FJR:GLSM}  
For each component $C'$ of $C$ we have a non-base point $p \in C'$.
By the base point condition of the LG quasimap, there is a positive integer $m$ and a $\Gamma$-invariant polynomial function $h: V\to \CC_{m\nu}$ such that $h(p)\ne 0$.
Thus we have the induced section $h(u)$ of the line bundle $P(\CC_{m\nu})$ on $C$, which is nonzero on the component $C'$. 
Hence  $\deg P(\CC _{\nu})|_{C'} \ge 0$ and $\deg P(\CC _{\nu})|_{C'} = 0$ exactly when $P(\CC_{\nu})|_{C'}$ is trivial.

(2) \cite[Propostion 5.1.1]{FJR:GLSM}  Suppose that $\deg P(\CC _{\nu}) = 0$. Then there is no base point of $u$ at all. Hence the map 
$C \to [V/\Gamma]$ factored through $[V^{ss}(\nu )/\Gamma]$. Using the natural map from $[V^{ss}/\Gamma ]$ to the GIT quotient $V/\!\!/_{\nu}\Gamma$,
we obtain a map $[u]_{git}: C \to V/\!\!/_{\nu}\Gamma$. Note that $P(\CC _{\nu})$ is the pullback of the  ample line bundle of the GIT quotient 
over the affine quotient $\Spec \CC [V]^{\Gamma}$. Hence $[u]_{git}$ is a constant map.

(3) Suppose that $P(\CC_{\chi})$ is trivial. Then $P$ has a reduction $P'$ to  $G$, and hence we may view $u$ 
as a section of $P'(V)$ so that we obtain a map $[u]_G: C\to [V/G]$. Suppose furthermore that $\deg P(\CC_{\nu}) = 0$. Then
the map $[u]_G$ lands on $[V^{s}(\theta)/G]$ and hence we get a map $C \to  V/\!\!/_{\theta}G$, which is a constant map since  
$P'(\CC_{\theta}) = P(\CC_{\nu})$ is trivial.  This implies that the map $[u]_G$ factored through a point $B\mu _a$ of $[V^{s}(\theta)/G]$ for some integer $a$.
In turn, this implies that $P'$ has a reduction $P''$ to $\mu _a$.
\end{Rmk}

\subsection{Metric axiom}
In this section, as in \S \ref{section:tails} we assume again that $\nu$ is large enough,
$d_w \ge c_i \ge 0$, $\forall i$, and $(\dagger)$ holds.
The metric axiom is as follows
\begin{eqnarray}\label{eq:metric axiom}  \Omega _{0, 3} (\gamma _1, \gamma _2, \one ) = \eta (\gamma _1, \gamma _2 ) ; \end{eqnarray} 
see \eqref{def pairing} for the definition of the pairing $\eta$.
The proof will follow from a further analysis of what proven in \S \ref{sub: proof of axiom tails} for the special case when $g=0, r=2$.
Thus we will reuse the notation in \S \ref{section:tails}. Only the difference is that
we cannot use $\overline{M}_{0,2}$, $LG_{0,2, 0}$ (and hence $\U_{0,2, 0}$) which do not exist since
the ample stability condition cannot be satisfied at all.

\begin{Rmk}
Note that the metric axiom uniquely determines a unit if one exists.
\end{Rmk}

\subsubsection{Proof of \eqref{eq:metric axiom}}
By Lemma  \ref{lemma:metric} (2), it is enough to consider the case where  type is $(h, h^{-1}, J)$ .
 Consider the commuting diagram\footnote{This commutes using the twisted diagonal map $\Delta_\zeta$ since the log differentials at $0$ and $\infty$ are identified with opposite sign.}
\begin{equation}
 \xymatrix{    LG_{0, 3, 0} (\cX)(h, h^{-1}, J) _0 \ar@{^{(}->}[r]^{i} \ar[rd]_{\Delta _{\zeta}  \circ ev_1 }  &    \U_{0, 3, 0} (h, h^{-1}, J)_0 \ar[d]^{(ev_1 , ev_2 )}  \\
                    & \cX _h \times \cX_{h^{-1}}  .  } 
                    \label{eq: inversion diagram}
                    \end{equation}
            For $\gamma _1 \in \State_h,  \gamma _2 \in \State _{h^{-1}} $ we have the following sequence of equalities,
             \begin{align*}
          & \ \ \   \  \Omega_{0, 3} (h, h^{-1}, J) (\gamma _1, \gamma _2, \one) \\
           & \stackrel{(1)}{=}    \Omega_{0, 3, 0} (h, h^{-1}, J) (\gamma _1, \gamma _2, \one) \\ 
             & \stackrel{(2)}{=}    \int _{\U_{0,3, 0 } (h, h^{-1}, J)_0 } \td\ch (\tilde{\iota}^* \mathbb{K}(h, h^{-1}, J)) \wedge ev_1^*(\gamma _1) \wedge ev_2^* (\gamma _2)  \\
             & \stackrel{(3)}{=}    \int _{LG_{0, 3, 0} (\cX) (h, h^{-1}, J)_0} ev_1^*(\gamma _1) \wedge ev_2^*( \gamma_2 )  \\
             & \stackrel{(4)}{=}    \eta (\gamma _1, \gamma _2)  .   \end{align*} 
Line one is   by the vanishing    by Lemma \ref{lemma:metric} (1).                
Line two is by line (2) of \S \ref{sub: proof of axiom tails}.
Line three is by Lemma \ref{lemma:metric} (4) and Corollary \ref{cor: mf projection for}.
Line four is  by Lemma \ref{lemma:LGM0},  \eqref{def pairing}, and \eqref{eq: inversion diagram}.

\subsubsection{Two lemmas}
Let  $\gamma_i \in \sH_{h_i}$ for $i=1, 2$
and let $\cV_0$ be the kernel of the map $\cV \to \cP (V_{J}^m) |_{\sG_3}$ on the universal curve for $LG_{0, 3, d} (h_1, h_2, J)$.

\begin{Lemma}  \label{lemma:metric}  The following statements hold.
\begin{enumerate} 
\item If $d \in \Hom (\widehat{G}, \QQ)$ is nonzero, then $\Omega _{0, 3, d} (\gamma _1, \gamma _2, \one ) = 0$.
\item If $LG(\cX)_{0,3,0}(h_1, h_2, J)_0$ is nonempty, then  
$h_1h_2=1$.
\item  If $LG(\cX)_{0,3,0}(h_1, h_2, J)_0$ is nonempty, then $\RR^1\pi _* \cV_0 = 0$. 
\item Let $d=0$. Then $\tilde{\iota} ^*\beta$ is a regular section yielding the smooth zero locus $LG _{0, 3, 0}(h, h^{-1}, J)_0$.
\end{enumerate}
\end{Lemma}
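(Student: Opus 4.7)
The plan is to establish the four parts in order, with (2) feeding into (3) and then into (4), while (1) is largely independent. All four parts leverage the non-existence of a stable moduli $\overline{M}_{0,2}$ together with the hypothesis $(\dagger)$, which guarantees that $\cXo = [(V^{ss})^\CR/G]$ is the full $\CR$-fixed locus of $\cX$, sitting inside $\cX_J$.

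For part (1), I will adapt the Hecke modification of Lemma~\ref{lem:tail} to the degenerate $(g,r) = (0,2)$ setting. Any object of $LG_{0,3,d}(h_1, h_2, J)_0$ has its third evaluation in $(V^{ss})^\CR$, so the $V_J^m$-part of $u$ vanishes at $\sG_3$; the Hecke modification at $\sG_3$ then trivializes the stacky structure there and produces a $\Gamma$-bundle on a rational orbicurve with only two orbifold markings.  The $\infty$-stability hypothesis forces $\omega_C^{\rmlog} \otimes \cP(\CC_\nu)^{\otimes \mathbf{e}}$ to have positive degree on each component, which, together with Remark~\ref{rmk: degree and triviality}, yields $d = 0$. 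For part (2), the same Hecke modification performed in the case $d = 0$ replaces the type-$J$ marking at $\sG_3$ by a non-stacky point, and the monodromy relation in $\pi_1$ of the resulting twice-punctured rational orbicurve forces $h_1 h_2 = 1$ in $G$.

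For part (3), the setup from (2) describes $\cV = \cP(V)$ on each geometric fiber $C$ as a direct sum of line bundles $L_i$ indexed by the $\langle h \rangle$-eigenspaces of $V$, with fractional degrees in $[0, 1]$ thanks to the weight assumption $0 \leq c_i \leq d_w$ and $(\dagger)$. Applying $\pi_*$ to the short exact sequence $0 \to \cV_0 \to \cV \to \cP(V_J^m)|_{\sG_3} \to 0$ yields a long exact sequence ending in $\RR^1\pi_*\cV_0 \to \RR^1\pi_*\cV \to 0$; a direct Riemann-Roch computation on the three-pointed orbifold $\PP^1$, using that $\cP(V_J^m)|_{\sG_3}$ is a skyscraper supported at a single stacky point, then gives $\RR^1\pi_*\cV_0 = 0$. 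Part (4) is deduced from (3) via standard tangent-obstruction calculus: the derivative of $\tilde\iota^*\beta$ at any point of its zero locus has cokernel identified with $\RR^1\pi_*\cV_0$, so its vanishing gives both the regularity of $\tilde\iota^*\beta$ and the smoothness of the zero locus $LG_{0,3,0}(h, h^{-1}, J)_0$.  The main obstacle will be the monodromy bookkeeping in (2), in particular tracking how the $\kappa$-isomorphism $\cP(\CC_\chi) \cong \omega_C^{\rmlog}$ interacts with the Hecke modification to produce the clean identity $h_1 h_2 = 1$ instead of a shifted variant.
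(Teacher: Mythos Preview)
Your approach to (2)--(4) parallels the paper's, with one caveat in (2): your $\pi_1$ argument implicitly assumes the $G$-bundle on the twice-punctured curve is flat, which fails for non-finite reductive $G$. The paper uses $d=0$ and the section $u_2$ to show the induced map $C_2 \to \cX$ is constant on coarse moduli, hence factors through some $B\mu_r$; the resulting $\mu_r$-reduction of $\bar P_2$ is what makes the monodromy relation $h_1h_2=1$ valid. For (3), the paper pushes $\cV_0$ forward to $C_2$ and obtains a direct sum of degree-zero line bundles $\cO_{C_2}(\tfrac{1}{r}[\underline{\sG_1}] - \tfrac{1}{r}[\underline{\sG_2}])$, which is cleaner than the Riemann--Roch computation you sketch on $C_1$, though yours could also be made to work.

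The genuine gap is in (1). You claim that after Hecke modification the $\infty$-stability condition together with Remark~\ref{rmk: degree and triviality} forces $d=0$, effectively asserting that $LG_{0,3,d}(h_1,h_2,J)_0$ is empty for $d\neq 0$. This is false: Remark~\ref{rmk: degree and triviality} only gives $\deg P_2(\CC_\nu) \geq 0$ on each component, which constrains a single character value and says nothing about the full homomorphism $d \in \Hom(\widehat{G},\QQ)$. In geometric phases such as the CY phase of Example~\ref{ex: LG/CY}, positive-degree LG quasimaps with third evaluation in $\cX_0 = \PP^4$ are plentiful. The paper's argument for (1) is entirely different: it runs the forgetting-tails machinery of \S\ref{section:tails} down to $\fB_\Gamma^{0,2,d}$ and $\U_{0,2,d}$ (which exist even though $\overline{M}_{0,2}$ does not), and observes via line (3) of \S\ref{sub: proof of axiom tails} that $\langle\Omega_{0,3,d}(\gamma_1,\gamma_2,\one),1\rangle$ is an integral over $\rho_{f,\fB}^*\U_{0,2,d}$ of a class pulled back along $\tilde\rho_{f,\fB}$, a map of strictly positive relative dimension; the integral therefore vanishes by a degree count, not by emptiness of the moduli.
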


\begin{proof}
(1) We use diagram \eqref{eq: forgetting tails fiber ev} without the bottom line since the forgetful map $\rho_f : \overline{M}_{0,3} \to \overline{M}_{0,2}$ does not exist.  
By line (3) of the display in \S \ref{sub: proof of axiom tails}, we have
\[ \lan \Omega _{0, 3, d} (\gamma _1, \gamma _2, \one ), 1\ran =  \ d_w \int _{\rho ^*_{f, \fB} \U_{0, 2, d} }   \prod_{i=1}^{2}  {\bf r}_i   \prod _{i=1}^r( ev_i)^* (\gamma) 
   \wedge  \tilde{\rho}_{f, \fB}^* \tdchc \mathbb{K}_{0, 2, d} .\]
 The right hand side vanishes since its integrand is a pullback of a class in $\U_{0, 2, d}$
  under $\tilde{\rho}^*_{f, \fB}$ whose relative dimension is strictly positive.

(2) When $G$ is abelian, this is a special case of the selection rule  \cite[Proposition 2.2.8]{FJR}, \cite[Equation (3.14)]{PV:MF}.
For general $G$, we argue as follows. 

Let $((C_1, \sG_1, \sG_2, \sG_3), P_1, \kappa _1, u_1)$ be a $\bfk$-point of $LG_{0, 3, 0} (\cX)(h_1, h_2, J)_0$.
Perform the modification procedure described in  \S\ref{sec:Hecke} without the stabilization to obtain a new tuple $((C_2, \sG_1, \sG_2), P_2, \kappa_2, \epsilon _* u_1)$.  This tuple gives the data of a  point in the moduli stack $LG_{0,3,0}(\cX )(h_1, h_2, J)_0$ {\em except that it need not obey the ample stability requirement}.

Now, since there is a map $P_2 (\CC_{\chi})  \xrightarrow{\kappa _2} \omega _{C_2}^{\log}  \cong \cO_{C_2}$, 
$P_2$ has a reduction to a $G$-principal bundle $\bar{P}_2$.
Furthermore, since $\epsilon _* u_1 \in \Gamma (C_2 , P_2(V) \cong \bar{P}_2(V))$, the triple  
$((C_2, \sG_1, \sG_2), \bar{P}_2, \epsilon _* u_1)$ yields a map to $\cX$.  As this is a constant map
at the coarse moduli space level, it  factors through a point $B\mu _r$ of $\cX$ for some $r\ge 1$. This in turn gives a further reduction of $\bar{P}_2$ to
a $\mu_r$-bundle $\bar{P}_{2, \mu_r}$. This shows that $h_1h_2 = 1$ and $C_1$ is irreducible. 

(3) It is enough to prove the statement for every $\bfk$-point of $LG(\cX)_{0,3,0}(h_1, h_2, J)_0$. 
Note first that 
$$\epsilon _* \cV_0   = P_2  (V) \cong \bar{P}_{2, \mu_r} (V) \cong \bigoplus \cO_{C_2}(\frac{[\underline{\sG_1}] } {r} - \frac{[\underline{\sG_2}]}{r}).$$
Thus we have $H^1(C_1, \cV_0)=H^1(C_2, \cV_2) =0$.

(4) By $\RR^1\pi _* \cV_0= 0$  proven in (3), we have a commuting diagram of short exact sequences
\begin{eqnarray}\label{diag:metric} \xymatrix{ 0 \ar[r] & \pi_* \cV \ar[r] & \pi_*\cA \ar[r]  & \pi_*\cB  \ar[r] & 0 \\
0 \ar[r] & \pi_* \cV_0 \ar[r] \ar[u] & \pi_*\cA _0 \ar[r] \ar[u] & \pi_*\cB  \ar[r] \ar[u]_{=} & 0 , } \end{eqnarray}
where $\cA _0 $ is the kernel of the map $\cA \to \cP (V^{J}_{m}) |_{\sG_3}$. The diagram clearly shows the proof.
\end{proof}

\begin{Lemma}\label{lemma:LGM0}
The evaluation map $ev_1: LG_{0, 3, 0}(\cX)(h, h^{-1}, J)_0 \to \cX_h $ is an isomorphism of stacks.
\end{Lemma}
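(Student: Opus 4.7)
The plan is to exhibit an explicit quasi-inverse to $ev_1$ using the Hecke modification machinery of \S\ref{sec:Hecke} and verify mutual invertibility.

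By Lemma~\ref{lemma:metric}(2)--(3), every $T$-point of $LG_{0,3,0}(\cX)(h,h^{-1},J)_0$ has irreducible genus-$0$ domain $C$ with three orbifold markings of types $(h,h^{-1},J)$. Apply the Hecke modification of \S\ref{sec:Hecke} at the third marking (omitting stabilization, since the 2-marked result is auxiliary) to produce $(C_2, P_2, \kappa_2, \epsilon_*u)$ with $\deg P_2 = 0$ and markings of types $(h, h^{-1})$, exactly as in the proof of Lemma~\ref{lemma:metric}(2). By Remark~\ref{rmk: degree and triviality}(3), $P_2$ admits a reduction to a $\mu_a$-bundle $\bar P_{2,\mu_a}$, and $\epsilon_* u$ is the associated constant section with value $v \in (V^{ss})^h$. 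Diagram~\eqref{eq: inversion diagram} then identifies $ev_1$ with $[v] \in \cX_h$.

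To construct the inverse $\Phi: \cX_h \to LG_{0,3,0}(\cX)(h,h^{-1},J)_0$, given $[v] \in \cX_h$ with $v \in (V^{ss})^h$, I would first build a two-pointed datum $(C_2, P_2, \kappa_2, u_2)$: take $C_2$ to be the genus-$0$ orbicurve with two markings of types $(h, h^{-1})$, take $P_2$ to be the principal $\Gamma$-bundle induced from the $\mu_a$-reduction determined by $v$, $\kappa_2$ the canonical trivialization (using $\deg\omega^{\log}_{C_2}=0$), and $u_2$ the constant section with value $v$. Then I would apply the reverse Hecke modification at a chosen smooth non-marked point of $C_2$, using the inverse multiplication map $\Gamma\times \CC^{\ti} \to \Gamma$, $(h_1, h_2)\mapsto h_1 h_2^{-1}$ (the reversal described parenthetically in \S\ref{sec:Hecke}), producing a 3-pointed object $(C, P, \kappa, u)$ of type $(h, h^{-1}, J)$. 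Stability for $\nu\gg 0$ is automatic on the resulting rigid three-pointed orbicurve, and the condition $ev_3 \in \cX_0$ is built in by the $\CR$-twist in the Hecke formula under hypothesis $(\dagger)$.

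Finally, verify that $\Phi$ and $ev_1$ are mutually inverse as morphisms of groupoids, functorially in $T$; this reduces to the reversibility of the Hecke modification. Automorphism groups match: the 3-pointed genus-$0$ orbicurve $C$ has no nontrivial automorphisms, so $\Aut(C, P, \kappa, u)$ equals the stabilizer in $\rC_G(h)$ of the $\mu_a$-reduction of $P$ together with the constant value $v$, which is $\mathrm{Stab}_{\rC_G(h)}(v)$---exactly the automorphism group of $[v]$ in $\cX_h = [(V^{ss})^h/\rC_G(h)]$. The main technical obstacle is the careful bookkeeping of the reverse Hecke modification at the $J$-marking, specifically verifying that for every $v \in (V^{ss})^h$ the resulting quasimap satisfies $ev_3 \in \cX_0$ rather than only landing in the a priori larger $\cX_J$. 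This surjectivity onto all of $\cX_h$ uses the explicit $\CR$-twist in the Hecke formula together with condition $(\dagger)$ and the hypothesis $d_w \ge c_i \ge 0$.
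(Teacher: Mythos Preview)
Your approach is essentially the same as the paper's: construct an explicit inverse to $ev_1$ by passing through the $2$-pointed ``constant'' data and applying the reverse Hecke modification at a third point. The paper packages the intermediate $2$-pointed step via the identification $\cX_h \cong \Hom^{rep}_0([\PP^1/\mu_{{\bf r}_1}], \cX)$ (from \cite{AGV}, \cite{CCK}), fixes the third point explicitly at $[1,1]\in\PP^1$, and builds $C_1$ as the $d_w$-th root stack $C_2(D^{1/d_w})$; your direct construction of the inverse from $[v]\in\cX_h$ is the same content in slightly different language, and your automorphism-matching paragraph is the groupoid-level verification the paper leaves implicit.
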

\begin{proof}
 We construct an inverse morphism  $\varphi: \cX_h \to LG(\cX)_0 $ as follows.
Let $\mu_r \subset \CC^{\ti}$ denote the group of $r$-th roots of  the unity and ${\bf r}_1$ be the order of $h$.   Now, let $\mu_r$ act on $\PP ^1$ by
  $[\xi x,  y]$ for $\xi \in \mu _r \subset \CC ^{\ti}$.

Let us recall that there is an isomorphism of stacks,
 \[
\cX_h \cong \Hom^{rep}_0([\PP ^1/\mu _{{\bf r}_1}], \cX)
 \]
where $\Hom^{rep}_0([\PP ^1/\mu _r], \cX)$ parameterizes
  representable 1-morphisms from $ [\PP ^1/\mu _r] $ to $\cX$ such that the induced map on coarse moduli is constant.
   Indeed, in general, the cyclotomic inertia stack $I\cX$ is
 equivalent to the stack $\coprod _{r=1}^{\infty} \Hom^{rep} (B\mu _r, \cX)$
 parameterizing representable 1-morphisms from $B\mu _r$ to $\cX$ (see \cite{AGV})
and the stack $\Hom^{rep} (B\mu _r, \cX)$ parametrizing representable 1-morphisms from $B\mu_r$ to $\cX$ is equivalent to the stack 
$\Hom^{rep}_0([\PP ^1/\mu _r], \cX)$ (see \cite[Lemma 3.3]{CCK}).
  
Hence it remains to show that   $LG(\cX)_0$ is equivalent to $\Hom^{rep}_0([\PP ^1/\mu _r], \cX)$.
  Fix $C_2 = [\PP ^1/\mu _r]$ with two markings at $0$ and $\infty$. We will define a 2-morphism $\varphi$
           \begin{eqnarray*} \Hom^{rep}_0 (C_2 , \cX) (h, h^{-1}) & \to &  LG_{0, 3, 0} (\cX) (h, h^{-1}, J) 
           \\ ( C_2 , \bar{P}_2, u_2)   & \mapsto & (C_1, P_1, \kappa _1,  u_1 ) \end{eqnarray*}
            as follows.
            First note that every object $(C_2, \bar{P}_2, u_2)$ of  $\Hom_0 (C_2, \cX) (h, h^{-1})$ over $\Spec \CC$
            satisfies that  $\bar{P}_2 = [(\PP ^1 \ti G ) / \mu _{{\bf r}_1}]$ and $u_2$ is a section of $\bar{P}_2 (V) = [(\PP ^1 \ti V )/  \mu _{{\bf r}_1} ]$
            landing on the stable locus $[(\PP ^1 \ti V^{ss} )/  \mu _{{\bf r}_1}]$, where $\mu _{{\bf r}_1}$ acts on $G$ via  $e^{2\pi i / {{\bf r}_1}} \mapsto h$.
            In particular $\bar{P}_2  (\CC _{\chi}) = \cO _{C_2}$.
            We define its associated object $(C_1, P_1, \kappa _1,  u_1 ) $ as:
             \begin{itemize}
             \item $C_1$ is the ${\bf r}_1$-th root stack $C_2 (D^{1/d_w})$ of $[\PP^1/\mu_{{\bf r}_1}]$
              associated to the Cartier divisor $D=[1,1] \in C_2$.
             Here we are using the convention that the three markings of 
              type $h, h^{-1}, J$ are respectively placed over $0=[0, 1]$, $\infty=[1,0]$, $[1,1]$ in $\PP^1$; 
              \item If  $\epsilon : C_1 \to C_2$ denotes the natural map,  $P_1$ is the \lq\lq reversed" Hecke modification of $\epsilon ^*(P_2:= \bar{P}_2 \ti _G \Gamma )$
              as in the proof of Lemma \ref{lem:tail}.
               \item $u_2 \in \Gamma (C_2, \cV_2:=P_2(V)) 
               \subset \Gamma (C_2, \epsilon _* (\cV_1:=P_1(V))$ which can be regarded as an element $u_1$ of $\Gamma ( C_1, \cV _1)$
               whose evaluation at $\sG_3$ lands on $\pi _* (P_1 \ti_{\Gamma} V^{\CR}|_{\sG_3})$ as seen in the proof of Lemma \ref{lemma:metric} (3); 
              \item  $\kappa _1$ denotes the composition  of 
              $ P_1 (\CC _{\chi})  = \epsilon^*\bar{P}_2 (\CC_{\chi}) ( [\underline{\sG}] ) = \cO_{C_1} ([\underline{\sG}] ) = \omega _{C_1}^{\log}$.
                 \end{itemize} 
                  The obvious family version of the above assignment gives rise to a morphism $\varphi$.
                  It is straightforward to check that $\varphi$ is an inverse of $ev_1$.
\end{proof}

\subsection{Grading and Homogeneity}\label{Homog}

In this section we consider an open and closed component of $\Ugrd$ where the type of the $r$ markings
are the fixed $r$ conjugacy classes $\ug = (h_1, ..., h_r) \in G^r$.
If  the cyclic group $\lan h_i \ran$ has order $l$, then
the age of $h_i$ is by definition the sum of the exponents $k/l$ of the eigenvalues $\exp (2\pi \sqrt{-1} k/l)$ of the $\lan h_i \ran$-representation space $V$,
where $k \in \{ 0, 1, ..., l-1 \}$.

The virtual dimension $\virdim$ of $\Ugrd(\underline{h})$ is by definition
$\dim \fB_{\Gamma} + \chi (\mathbb R\pi _* \cV)$, whose Riemann-Roch formula  (see  \cite[Lemma 6.1.7]{FJR:GLSM})      
 is given by
  \[
  \dim \fB_{\Gamma} + \chi (\mathbb R\pi _* \cV) = \int _d c_1(X) + (\hat{c} - 3 ) (1-g ) + r -   \sum_i  (\age (h_i)  - q)
   \]
   with the notation $\int _d c_1(X) := d (\mathrm{det } V)$,
$\hat{c} = \dim V - \dim G - 2 q$, and $q := \sum_j  c_j / \bfdeg $.
The number $\hat{c}$ is called the central charge of the GLSM.

We assign a $\ZZ$-grading on the state space $\sH$ as folllows.
For each $i$-th cohomology class $\gamma$ in $\sH$ supported in a component of the inertia stack $I\cX$,
we let $\age(\gamma)$ be the age of the component.
Now we put
\[ \deg \gamma :=  i + 2(\age (\gamma) - q) .\]

\begin{example} Continuing the running example \ref{ex: LG/CY 2}, the age grading splits
$\sH_-$  into one dimensional subspaces of degree $0, 2, 4$ and  $6$,
and a $204$ dimensional subspace of degree $3$.   On the other hand, the age grading on $\sH_+$ and homological grading agree since the corresponding quotient stack is a variety in this case.  However, there is a shift of 2 in comparing this grading with the  usual homological grading on the singular cohomology of $Z(f)$ (see Example~\ref{ex: Rchargechoice}).
\end{example}

 We also equip $H^*( \overline{M}_{g, r})$ with the usual $\ZZ$-grading. 
It is clear that the pairing from \S\ref{subsec: pairings} is supersymmetric
with respect to both the usual $\ZZ$-grading and the age-shifted grading.

Let  
\begin{align*} 
\underline{\LGvir} \in \HH ^{*} _{\Z} ( \Ugrd ^{an}, (\Omega ^{\bullet}_{\Ugrd ^{an}}, -dW )))  
\end{align*}
 be the $2\rank B$-degree part of the analytification of the class $\LGvir$.
We define a new collection of GLSM invariants        
\[
\underline{\Omega} _{g, r, d} := fgt_* \circ \underline{\LGvir}  \wedge \circ ev^*.
\]
 \begin{Thm}\label{thm: homog}
If $\ka$ has an algebraic model, then the GLSM invariants $\underline{\Omega}_{g, r, d}$ form a homogeneous cohomological field theory.  In particular, for every convex hybrid GLSM 
with conditions $\nu \gg 0$, $d_w \ge c_i \ge 0$, $\forall i$, and $(\dagger)$, 
the GLSM invariants $\underline{\Omega}_{g, r, d}$ form a cohomological field theory with a flat identity $\one$ which is homogeneous of degree $-2( \int _d c_1(X) + (1-g ) \hat{c})$. 
\end{Thm}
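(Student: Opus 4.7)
The plan is to reduce the convex hybrid statement to the first clause, then prove homogeneity via a Hodge-theoretic degree count, with the cohomological field theory axioms inherited from those already proved for $\Omega_{g,r,d}$ by restricting to the top-bidegree component.

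For the convex hybrid case, Proposition~\ref{prop:U}(5) permits the choice of a separated $\U$, while \S\ref{subsubsec: convex} produces a global cochain realization $(\ka_{alg}, 1_{\diag})$ of $\mathfrak a_{\U}(1)$. This yields a finite-rank locally free Koszul matrix factorization $\{\ka_{alg}, \kb\}$ coderived quasi-isomorphic to $\mathbb K_{g,r,d}$---that is, an algebraic model for $\ka$. Granted such a model, $\tdch \mathbb K_{g,r,d}$ decomposes along the Hodge bigrading of $\HH^*(\U,\Omega^\bullet_\U)$, and $\underline{\LGvir}$ picks out the component of Hodge bidegree $(\rank B,\rank B)$, the analogue in Hodge cohomology of a virtual fundamental class (supported in cohomological degree equal to twice the virtual codimension $\rank B = \dim \U - \virdim$). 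The CohFT axioms for $\underline{\Omega}_{g,r,d}$ then follow from those for $\Omega_{g,r,d}$: each proof of \eqref{eqn:tree}, \eqref{eqn:loop}, \eqref{axiom:tails}, \eqref{eq:metric axiom} uses operations compatible with the Hodge bigrading (pullbacks of $\tdch$, the deformation to the normal cone of Proposition~\ref{prop:GRR}, K\"unneth splittings, wedge with $ev^*\eta_\Delta$, and $fgt_*$), and the boxtimes multiplicativity $\tdch(\mathbb K_1 \boxtimes \mathbb K_2) = \tdch(\mathbb K_1) \boxtimes \tdch(\mathbb K_2)$ restricts to top-bidegree components since $\rank(B_1 \oplus B_2) = \rank B_1 + \rank B_2$; for the gluing and forgetting-tails axioms one additionally tracks ranks across the exact sequences of \S\ref{sec:Hecke} and Theorem~\ref{thm: cat glue and tail}.

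For homogeneity, I would perform the following degree count. The class $\underline{\LGvir}$ sits in Hodge cohomological degree $2\rank B$; an element $\gamma_i \in \State_{h_i}$ has Hodge cohomological degree $|\gamma_i|_{\State} - 2(\age(h_i) - q)$, preserved by $ev_i^*$; and $fgt_*$ shifts Hodge cohomological degree by $-2(\dim \U - \dim \overline{M}_{g,r})$. Substituting $\dim \U = \dim \fB_{\Gamma} + \rank A$, $\dim\fB_\Gamma = 3g-3+(g-1)\dim G$, $\virdim = \dim \fB_{\Gamma} + \rank A - \rank B$, $\dim \overline{M}_{g,r} = 3g-3+r$, and the Riemann--Roch expression
\[
\virdim = \int_d c_1(X) + (\hat{c}-3)(1-g) + r - \sum_i (\age(h_i) - q),
\]
the total shift collapses to $\sum_i |\gamma_i|_{\State} - 2\bigl(\int_d c_1(X) + (1-g)\hat{c}\bigr)$, as required. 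The flatness of $\one$ is immediate since $\age(J) = q$ places $\one$ in $\State_J$ at state-space degree $0$ after the metric-axiom verification.

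The main obstacle will be the bigrading compatibility of Proposition~\ref{prop:GRR}: one must confirm that the deformation factorization $\mathbb K(F_{\tM}, \tau_{\tM}, \sigma_{\tM})$ respects the Hodge bidegree splitting so that its top-bidegree Todd-Chern class specializes correctly at $0$ and $\infty$. This reduces to the observation that $\rank F_{\tM}$ equals the common rank of its limits $F'$ and $F \oplus Q|_{\tX}$, which is built into the construction of Lemma~\ref{lemma: deform fact}.
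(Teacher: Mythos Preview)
Your degree count for homogeneity and the reduction of the convex hybrid case to an algebraic model are correct and more explicit than the paper's. However, the core of the CohFT argument has a genuine gap and a conceptual confusion.

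First, there is no Hodge bigrading on $\HH^*_{\Z}(\U,(\Omega^\bullet_\U,-dW))$: the differential $-dW\wedge$ raises form-degree, so only the total degree survives. The class $\underline{\LGvir}$ is simply the degree-$2\rank B$ component in this single grading, not a $(p,q)$-component.

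Second, and more seriously, you assert that the multiplicativity $\tdch(\mathbb K_1\boxtimes\mathbb K_2)=\tdch(\mathbb K_1)\boxtimes\tdch(\mathbb K_2)$ ``restricts to top-bidegree components since $\rank(B_1\oplus B_2)=\rank B_1+\rank B_2$.'' Rank additivity alone does not give this: the degree-$2\rank B$ part of a product is a sum over all $i+j=2\rank B$, and a priori there could be a term in degree $2\rank B_1-2$ from $\mathbb K_1$ wedged with one in degree $2\rank B_2+2$ from $\mathbb K_2$. What rules this out is Lemma~\ref{lem minimal}, proved via the splitting principle in \S\ref{subsec: splitting prin}: when $\alpha$ has an algebraic model, the Koszul factorization $\{\alpha_{alg},\beta\}$ on $p^*B$ has $\ch^{\U}_{\Z}$ concentrated in degrees $\ge 2\rank B$. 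Thus $2\rank B$ is the \emph{minimal} (not top) degree, and the minimal part of a product of such classes is the product of minimal parts. This is exactly the content of the paper's one-line argument, and it is the reason the hypothesis ``$\alpha$ has an algebraic model'' appears.

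Finally, the paper observes that only the tree gluing axiom needs this argument; the remaining axioms are automatic for any homogeneous component, since they compare a single $\tdch$ to its pullback or to a related factorization of the same (shifted) rank. Your concern about Proposition~\ref{prop:GRR} and the deformation to the normal cone is therefore unnecessary: no product of two independent Chern characters appears there.
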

\begin{proof}
We only need to check the tree gluing axiom as the rest are automatic for any homogeneous component of $\Omega _{g, r, d}$.  Now, by Lemma \ref{lem minimal} $\underline{\LGvir}$ has degree at least $2\rank B$.  Hence, the tree gluing axiom for $\{ \underline{\Omega} _{g, r, d}\}_{2g-2+r > 0}$ follows from the tree gluing axiom for $\{\Omega _{g, r, d}\}_{2g-2+r > 0}$.  
The fact that a convex hybrid model has an algebraic model is the main construction of \cite{MF}.
\end{proof}

\subsection{Sum of singularities} \label{sec: sum of sing}
Let $(V_i, \Gamma _i, \chi _i, w_i)$ be affine LG models for $i=1,2$.  Their sum is the affine LG model $(V:=V_1\oplus V_2, \Gamma , \chi, w_1 + w_2)$ where
$\Gamma$ is the fiber product $\Gamma _1 \ti _{\CC^{\ti}} \Gamma _2$ by $\chi _1, \chi_2$
and $\chi$ is the induced character of $\Gamma$.
Furthermore assume that $\nu=\nu _1 |_{\Gamma} = \nu _2 |_{\Gamma}$.

\begin{Rmk}
Starting from any two GLSMs, we can form their sum as above.  However, requiring that $\nu=\nu _1 |_{\Gamma} = \nu _2 |_{\Gamma}$ forces them to be affine LG models since $\nu|_{G = G_1 \times G_2} =1$ and hence $G_1, G_2$ must be finite.
\end{Rmk}

In this case, $\fB _{\Gamma}^{g, r}$,  $\fB_{\Gamma _i}^{g, r}$ are the moduli stacks of $\Gamma$-spin curves, $\Gamma_i$-spin curves, respectively.
Furthermore, since $V^{ss}=V$, $V_1^{ss}=V_1$, $V_2^{ss}=V_2$, their LG moduli spaces and their virtual factorizations 
are independent of the variation of $\ke$.

By the K\"unneth isomorphism \eqref{kunneth} we have the relation
$\sH \cong \sH _1 \ot \sH_2$ for the state spaces $\sH$, $\sH_1$, $\sH_2$ of these three GLSMs.
The virtual factorizations are also related using the isomorphism $\delta: \fB _{\Gamma}^{g, r} \to \fB_{\Gamma _1}^{g, r}\ti_{\overline{M}_{g, r}} \fB_{\Gamma _2}^{g, r}$.
Namely, consider the natural fiber product diagram
\[ \xymatrix{      \delta ^* (\U _1 \ti \U _2 ) \ar[r]^{\cong}_{\tilde{\delta}}  \ar[d] & \U_1 \ti_{\overline{M}_{g, r}} \U_2 \ar[d] \ar[r] & \U_1 \ti \U_2  \ar[d] \\
 \fB _{\Gamma} \ar[r]^{\cong \ \ \ \ \ }_{\delta \ \ \ \ \ } &  \fB_{\Gamma _1} \ti_{\overline{M}_{g, r}}  \fB_{\Gamma _2} \ar[r]  \ar[d]
                                                       &  \fB_{\Gamma _1} \ti  \fB_{\Gamma _2}     \ar[d]    \\
                                                       & \overline{M}_{g, r} \ar[r] & \overline{M}_{g, r} \ti \overline{M}_{g, r}  }      \]

In analogy to  \cite[Theorem 5.8.1]{PV:MF} (see also  \cite[Theorem 4.1.8. (8)]{FJR}
and Theorem 4.2.2) we have the following.
\begin{Thm}\label{thm: sum of sing} 
\begin{enumerate}
\item The virtual factorizations $\mathbb K ( \delta^*  \mathfrak a _1, \tilde{\delta}^*  \kb _1)\boxtimes \mathbb K (\delta^*  \mathfrak a _2, \tilde{\delta}^* \kb _2)$
 and $\mathbb K (\mathfrak a, \kb )$ are transitively related with respect to $((I\cX)^r \ti \overline{M}_{g, r}, \boxplus _{i=1}^r w\boxplus 0)$. 
\item Under the K\"unneth isomorphism $\sH ^{\ot r} = \sH _1^{\ot r} \ot \sH_2 ^{\ot r}$, 
 \[ \Omega _{g, r, \Gamma}  = \Omega _{g, r, \Gamma _1} \wedge \Omega _{g, r,\Gamma _2}, \]
 where  $\Omega _{g, r, \Gamma}$, $\Omega _{g, r, \Gamma _i}$ denote the cohomological invariants for the corresponding GLSMs.
\end{enumerate}
\end{Thm}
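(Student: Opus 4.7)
For part (1), I first reduce to a specific choice of resolutions. Because each $(V_i,\Gamma_i,\chi_i,w_i)$ is an affine LG model with $V_i^{ss}=V_i$ and finite kernel $G_i$, the isomorphism $\delta$ identifies the universal curves and induces a canonical decomposition $\cV \cong \tilde\delta^*(\cV_1 \oplus \cV_2)$. I choose $\pi_*$-acyclic locally free resolutions $[\cA_i \to \cB_i]$ of $\cV_i$ as in Proposition~\ref{prop:U} together with compatible restriction maps $\cA_i \to \cV_i|_{\sG}$, and take the direct sum $[\cA_1 \oplus \cA_2 \to \cB_1 \oplus \cB_2]$ as a resolution of $\cV$. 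By Theorem~\ref{thm:ind}, the resulting virtual factorization represents the same transitive-relation class as any other, so this choice may be assumed without loss of generality.

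With this setup one has $\beta = \tilde\delta^*\beta_1 \oplus \tilde\delta^*\beta_2$ on $\U = \U_1 \times_{\overline M_{g,r}} \U_2$. The crux is to verify that $\delta^*\mathfrak a_\U$ splits as $\delta^*\mathfrak a_{\U_1} + \delta^*\mathfrak a_{\U_2}$ in
\[
\HH^{-1}\bigl(\U,\, \Cone(\cO_\U^{\oplus r} \to \wedge^{-\bullet} p^*(B_1 \oplus B_2)^\vee)\bigr).
\]
This is where the structure of a sum of singularities enters: since $w = w_1 + w_2$ with $w_i \in (\CC_\chi \otimes \Sym V_i^\vee)^\Gamma$, the map $\tilde w$ of \S\ref{sec:derived step} decouples across $\Sym^m \mathbb R\pi_*\cV_i$, and the mixed terms $\Sym^{m_1}\mathbb R\pi_* \cV_1 \otimes \Sym^{m_2}\mathbb R\pi_* \cV_2$ with $m_1, m_2 > 0$ contribute nothing. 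Applying the outer tensor formula \eqref{eqn: outer tensor of TK} then gives
\[
\mathbb K(\delta^*\mathfrak a_1, \tilde\delta^*\beta_1) \boxtimes \mathbb K(\delta^*\mathfrak a_2, \tilde\delta^*\beta_2) \;\simeq\; \mathbb K\bigl(\delta^*\mathfrak a_1 \oplus \delta^*\mathfrak a_2,\, \tilde\delta^*\beta_1 \oplus \tilde\delta^*\beta_2\bigr),
\]
and the right-hand side is identified with $\mathbb K(\mathfrak a,\beta)$ by the additivity above, yielding (1).

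For part (2), I combine (1) with multiplicativity of the Todd-Chern character under external tensor products (Corollary~\ref{cor: multi of ch}) and base change through the fibered diagram in the theorem's setup. Unpacking the definition $\Omega_{g,r,\Gamma}(\gamma) = fgt_*(ev^*\gamma \wedge \LGvir)$ with $\LGvir = (\prod {\bf r}_i)\,\tdchc \mathbb K(\mathfrak a,\beta)$, pullback through $\tilde\delta$ together with (1) identifies $\tilde\delta^*\LGvir$ with the external product of the two individual virtual classes (the automorphism orders ${\bf r}_i$ being multiplicative across the factors). The K\"unneth factorization of $ev^*$, Fubini relative to the common $\overline M_{g,r}$, and the projection formula then deliver $\Omega_{g,r,\Gamma} = \Omega_{g,r,\Gamma_1} \wedge \Omega_{g,r,\Gamma_2}$.

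The main obstacle is pinning down the additivity $\delta^*\mathfrak a_\U = \delta^*\mathfrak a_{\U_1} + \delta^*\mathfrak a_{\U_2}$ at the cochain level. Morally it is immediate from $w = w_1 + w_2$, but making it precise requires tracking the Grothendieck trace step through each diagonal $\Delta_m \colon [\fC/S_m] \to \Sym^m \fC$ and checking separately that no contribution arises from cross terms with $m_1, m_2 > 0$. Once this bookkeeping is complete, the remaining steps reduce to routine applications of the TK formalism developed in \S\ref{sec: 3} together with the cohomological machinery of \S\ref{sec: 4}.
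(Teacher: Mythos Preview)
Your proposal is correct and follows essentially the same route as the paper: choose direct-sum resolutions $[\cA_1\oplus\cA_2\to\cB_1\oplus\cB_2]$ using $\cP(V)=\delta^*(\cP_1(V_1)\boxplus\cP_2(V_2))$, deduce $\mathfrak a=\delta^*(\mathfrak a_1\boxplus\mathfrak a_2)$ and $\beta=\delta^*(\beta_1\boxplus\beta_2)$, then apply \eqref{eqn: outer tensor of TK} and Theorem~\ref{thm:ind}. The paper treats the additivity of $\mathfrak a$ you flag as the ``main obstacle'' as immediate from $w=w_1+w_2$ (your cross-term analysis is the right justification but is not spelled out there), and for part~(2) the paper is terser, citing only base change for integration along the fibers (Lemma~\ref{lemma: integration along the fiber}(3)), which packages the multiplicativity and Fubini steps you unpack.
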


\begin{proof}
(1) Consider three universal curves $\fC\xrightarrow{\pi} \fB _{\Gamma} $, $\fC_i \xrightarrow{\pi_i}  \fB _{\Gamma_i} $;
and the universal bundle $\cP$, $\cP_i$ on $\fC$, $\fC _i$, respectively. 
We have natural maps $\fC \to \fC_i |_{ \fB_{\Gamma _1} \ti_{\overline{M}_{g, r}}  \fB_{\Gamma _2} }$.
First note that $$ \cP (V) = \delta ^* ( \cP _1 (V_1) \boxplus  \cP _2 (V_2)).$$
Hence starting from $[\cA_i \to \cB_i]$ with restriction maps $rest_i$ satisfying Proposition \ref{prop:U} for $\cP _i (V_i)$ on $\fC_i$, their sum 
$$[\cA \to \cB ]:=[\cA_1\to \cB_1]|_{\fC} \oplus [\cA_2 \to \cB_2]|_{\fC}$$ with 
$rest _1|_{\fC} \oplus rest _2|_{\fC}$,
satisfies Proposition \ref{prop:U} for $\cP  (V)$ on $\fC$. Hence we have $\mathfrak a = \delta ^* (\mathfrak a_1\boxplus \mathfrak a_2)$ and can take
$A = (A_1 \boxplus A_2) |_{\fB_{\Gamma}}$, 
$B = (B_1 \boxplus B_2) |_{\fB_{\Gamma}}$
and $\beta = \delta ^* (\beta _1 \boxplus \beta _2)$.
The result follows from \eqref{eqn: outer tensor of TK} and Theorem \ref{thm:ind}.

(2) This is immediate by (1) and the fact that the integration map along the fibers respects arbitrary base change (see Lemma \ref{lemma: integration along the fiber} (3)).
\end{proof}

\appendix
\section{Non-Hausdorff trace maps} \label{sec: trace}

Let $\tX$ be a DM stack of finite type over $\Spec\/\bfk$ which is not necessarily separated. Suppose that every component of $\tX$ is of real dimension $m$.
For a fixed closed substack $\tZ$, we may consider the local cohomology 
\[ H^m_{\tZ} (\tX) := H^m_{\tZ ^{an}} (\tX ^{an}, \ubfk) 
 , \]
where $\tX^{an}, \tZ^{an}$ are  analytic DM stacks associated to $\tX$, $\tZ$, respectively and
$\ubfk$ is the constant sheaf on $\tX^{an}$ associated to $\bfk$.
Suppose that $\tZ$ is proper over $\bfk$ so that $\tZ^{an}$ is compact and Hausdorff. 
In this appendix, we define a trace map for such a pair $(\tX, \tZ)$                                                                                                                                                                                                                                                                                                                                                                                                                                                                                                                                                                                                                                                                                                                                                                                                                                                                                                                                                                                                                                                                                                                                                                                                                                                                                                                                                                                                                                                                                                                                                                                                                                                                                                                                                                                                                                                                                                                                                                                                                                                                                                                                                                                                                                                                                                                                                                                                                                                                                                                                                                                                                                                                                                                                                                                                                                                                                                                                                                                                                                                                                                                                                                                                                                                                                                                                                                                                                                                                                                                                                                                                                                                                                                                                                                                                                                                                                                                                                                                                                                                                                                                                                                                                                                                                                                                                                                                                                                                                                                                                                                                                                                                                                                                                                                                                                                 
\begin{eqnarray}\label{eqn:def int general}  \int _{\tX} : H^m_{\tZ} (\tX) \to \bfk . \end{eqnarray}

When $\tX^{an}$ is a Hausdorff complex analytic space, we have the cap product operation, see for example \cite[\S III.1 \& IX.3]{Iversen},
as well as the fundamental class $[\tX^{an}] \in H^{BM}_{2m}(\tX^{an}) $, see for example \cite[X.3]{Iversen}.
Thus, in this ``Hausdorff analytic space case'', we can define the trace map by the cap product $[\tX^{an}]\cap$ followed by the augmentation map $\deg$
\begin{eqnarray}\label{eqn:def int Haus}  H^m_{\tZ} (\tX) \xrightarrow{[\tX^{an}]\cap } H^{BM}_0(\tZ ^{an})  \xrightarrow{\mathrm{deg}} \bfk \end{eqnarray}
where $H^{BM}_0(\tZ ^{an} )$ is the zero-th Borel-Moore homology with coefficients in $\bfk$.
Here note that the augmentation map is well-defined since $H^{BM}_0(\tZ ^{an}  )$ is canonically isomorphic to the singular homology 
$H_0(\tZ ^{an}  )$ due to the compactness of $\tZ^{an}$.
For simplicity we write $[\tX]\cap$ instead of $[\tX^{an}]\cap$.

In the following sections, we will define \eqref{eqn:def int general} for a non-Hausdorff space starting from \eqref{eqn:def int Haus}, using the Mayer-Vietoris sequence, and by considering a generically finite, proper surjective map $X \to \tX$ from a scheme $X$.

\subsection{A topological definition}\label{sub: top def of trace}  We first handle the following analytic case. Let
$X$ be a complex analytic space, possibly non-Hausdorff, of real dimension $m$. 
Let $Z$ be an analytic closed subset of $X$ which is compact. 
We construct a trace map 
\[  \int _{X} : H^m_{Z} (X) \to \bfk . \]

Since $Z$ is compact, there is a finite collection  $\{ X_i \}_{i\in I}$ of open subsets of $X$ such that each $X_i$ is 
Hausdorff  and the union $\bigcup _{i\in I} X_i $ contains $Z$. We may further assume that the closure  of $X_i$ in $X$ 
is a bounded closed semi-algebraic subset of $\RR ^{m'}$ for some positive integer $m'$.
Since $Z$ is a closed analytic subset of $X$, for each point $p$ of $Z$, we may choose an open neighborhood $V_p$ 
of $Z$ such that the closure $\overline{V}_{p}$ in $Z$ is a compact semi-algebraic set and 
$\overline{V}_p \subset  Z \cap X_i$ for some $i$.
Since $Z$ is compact, there is a finite set $A$ such that  $\bigcup_{a \in A} V_{p_a} = Z$.
Let $Z_i = \bigcup_{\overline{V}_{p_a}  \subset X_i}  \overline{V}_{p_a}$. This yields a collection of compact semi-algebraic subsets $Z_i$ of $X$ such 
that $Z_i \subset X_i$ for all $i$ and $\bigcup Z_i = Z$.

The following vanishing property 
 should be well-known. However as we do not know an adequate reference, we provide a proof.
 \begin{Lemma}
 Let $N$ be a subset of $\RR ^{m'}$ whose closure is a bounded closed semi-algebraic subset with real dimension $m$
 and let $V$ be a compact subset of $N$. Then we have
  \begin{eqnarray*}\label{eq:van} H^l_{V} (N)=0 \text{ for } l > m . \end{eqnarray*}
 In particular we have
 \begin{eqnarray}\label{eq:van} H^l_{Z_{i_0}\cap ... \cap Z_{i_q}} (X_{i_0}\cap ... \cap X_{i_q} )=0 \text{ for } l > m . \end{eqnarray}
 \end{Lemma}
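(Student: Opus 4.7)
The plan is to reduce the claim to a standard cellular cohomology computation via the triangulation theorem for semi-algebraic sets. First, I observe that $V$ is closed in the Hausdorff space $\overline{N}$ (being compact inside a Hausdorff space). I then apply Lojasiewicz's triangulation theorem: the bounded closed semi-algebraic set $\overline{N}$ of real dimension $m$ admits a finite semi-algebraic triangulation by a simplicial complex $K$ of dimension at most $m$, and this triangulation can be chosen so that $V$ corresponds to a closed subcomplex of $|K|$.

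Next, I reduce the computation to the closure via excision. Since $V$ is closed in $\overline{N}$, the complement $\overline{N} \setminus V$ is open in $\overline{N}$ and contains $\overline{N} \setminus N$, so excision for local cohomology yields
\[
H^l_V(N, \ubfk) \;\cong\; H^l_V(\overline{N}, \ubfk).
\]
Since $\overline{N}$ is homeomorphic to a finite CW complex (hence locally contractible), sheaf local cohomology with constant coefficients agrees with relative singular cohomology:
\[
H^l_V(\overline{N}, \ubfk) \;\cong\; H^l(\overline{N}, \overline{N} \setminus V; \ubfk).
\]
The right-hand side vanishes in degrees $l > m$ by the standard cellular argument: using the chosen triangulation, the relative cellular cochain complex of the CW pair $(|K|, |K| \setminus V)$ is concentrated in degrees at most $m$.

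The specific vanishing \eqref{eq:van} then follows by applying the general statement with $N = X_{i_0} \cap \cdots \cap X_{i_q}$ (open in each Hausdorff $X_{i_j}$, with closure contained in the bounded closed semi-algebraic set $\overline{X_{i_0}}$ of real dimension $m$) and $V = Z_{i_0} \cap \cdots \cap Z_{i_q}$ (compact, since it is a finite intersection of compact sets, and contained in $N$). The main obstacle is the compatible triangulation step: one needs the refined form of Lojasiewicz's theorem that triangulates a semi-algebraic set compatibly with a prescribed finite collection of semi-algebraic subsets. This is classical but nontrivial, and is the only substantive input into the argument; once such a triangulation is fixed, the remainder of the proof is a routine application of excision and cellular cohomology.
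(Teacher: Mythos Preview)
Your argument has a genuine gap at the compatible triangulation step. The lemma, as stated, assumes only that $V$ is a \emph{compact} subset of $N$; it does not assume $V$ is semi-algebraic. The refined form of Lojasiewicz's theorem you invoke triangulates a semi-algebraic set compatibly with a prescribed finite family of \emph{semi-algebraic} subsets, so it does not apply to an arbitrary compact $V$. Consequently the pair $(|K|, |K|\setminus V)$ need not be a CW pair, and the ``relative cellular cochain complex'' you appeal to is not available. Your approach does go through for the ``in particular'' clause, since there $V = Z_{i_0}\cap\cdots\cap Z_{i_q}$ is an intersection of compact semi-algebraic sets and hence semi-algebraic; but it does not establish the general statement.

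The paper's proof circumvents this by never triangulating $V$ itself. After the same excision step reducing to $N=\overline{N}$, it passes via the long exact sequence to showing $H^l(N)=H^l(M)=0$ for $l>m$ and surjectivity of $H^m(N)\to H^m(M)$, where $M=N\setminus V$. The open set $M$ is exhausted by an increasing sequence of compact \emph{semi-algebraic} subsets $M_r$ (such an exhaustion exists because $M$ is open in the semi-algebraic set $N$, e.g.\ using closed semi-algebraic balls, regardless of the nature of $V$). Each $M_r$ can then be made a subcomplex of some triangulation of $N$, giving $H^l(M_r)=0$ for $l>m$ and injectivity of $H_m(M_r)\to H_m(N)$; a colimit argument finishes. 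If you want to salvage your approach for the general lemma, you would need an alternative reason why $H^l(\overline{N},\overline{N}\setminus V)=0$ for $l>m$ that does not rely on $V$ being a subcomplex --- for instance a covering-dimension argument --- but as written the proof is incomplete.
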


\begin{proof}
By excision (see \cite[III, Exercise 2.3]{Hart}), $H^l _{V} (N) = H^l _V (\overline{N})$ and thus
we may assume that $N = \overline{N}$.
Let $M = N-V$, then  by the relative cohomology sequence \cite[III, Exercise 2.3]{Hart}
it is enough to prove that $H^l (N) = H^l (M) = 0$ for $l > m$ and the surjectivity of the restriction map
$H^m (N) \to H^m (M)$. For this we will use triangulations of compact semi-algebraic sets and we will view $H^m (N)$, $H^m (M)$
as singular cohomology.
Choose a sequence of compact semi-algebraic subsets $M_r$ of $M$, for $r=1, 2, ... $, such that
$M_r \subset M_{r+1}$ for every $r$ and $\cup_{r=1}^{\infty} M_r = M$.
By the triangulation theorem \cite[Theorem 1]{Loj}, the semi-algebraic sets $M_r$, $N$ are geometric realizations of 
finite simplicial complexes. 
Hence the singular cohomology with coefficients in $\bfk$ vanishes:
$H^{l}(N  )=  H^l(M_r)= 0 $ for $l > m$ and for every $r$.
It remains to show the surjectivity, which amounts to the injectivity 
of $H_m (M)  \to H_m(N)$ by the universal coefficient theorem for cohomology.
Notice that for all $r$, $H_m(M_r)  \to H_m(N)$ is injective 
since the triangulation theorem allows us to arrange $M_r$ to be a subsimplicial complex of $N$ for some triangulation of $N$.
Furthermore, by \cite[Proposition 3.33]{Hatcher}, 
\[
H_m (M) = \lim_{\longrightarrow} H_m(M_r).
\]
  Hence since filtered colimits are exact, the natural map between singular homology with coefficients in $\bfk$
 $$H_m (M) = \lim _{\longrightarrow} H_m(M_r)  \to H_m(N) $$ is injective.
\end{proof}

Fix a well-ordering of the index set $I$. 
By the Mayer-Vietoris sequence
and the vanishing \eqref{eq:van}, we have the following diagram where the top line is  exact
\[ \xymatrix{ \oplus _{i > j } H^m_{Z_{i}\cap Z_{j}}(X_{i}\cap X_{j}) 
\ar[r]_{\ \ \ \ \ \  \delta} \ar@/_1pc/[rd]_{ 0 }  & \oplus _i  H^m_{Z_i} (X_i )    \ar[r]_{\ \iota _*} \ar[d]^{\sum_{i\in I} \int _{X_i} } 
& H^m_{Z} (X ) \ar[r]  \ar@/^1pc/@{-->}[ld]^{\int _{X}} &  0  . \\
                       &  \bfk  &  &  } \]
 Here again we use the canonical excision isomorphism $H_{{\tt A}} ({\tt C }) \cong H_{{\tt A}} ({\tt B})$ 
 for  any closed subset ${\tt A}$ in an open subset $\tt B$ of a topological space $\tt C$.
The vertical arrow in the above diagram is defined as the sum of the trace maps $\int _{X_i}$, i.e., the composition of maps
$$ H^m_{Z_i} (X _i ) \xrightarrow{[X_i]\cap }  H^{BM}_0 (Z_i)  \xrightarrow{\mathrm{deg}} \bfk ,$$ using the fact that $X_i$ 
is a complex analytic space 
with fundamental class $[X_i] \in H^{BM}_{m}(X _i)$ given by a locally finite orientable triangulation of $X_i$. 
The projection formula shows that $(\sum_{i\in I} \int _{X_i}) \circ \delta$ is zero.
The dashed arrow which we call the \emph{trace map} (following the terminology in \cite{Iversen}) 
and denote by $\int_{X}$ can be defined as 
\begin{equation}\label{eq: def top trace map} \int _{X} \gamma := \sum _{i\in I} \int _{X_i} \gamma _i \end{equation}
 by choosing any lift
 $\iota _* \sum _{i\in I} \gamma _i  = \gamma $. Right exactness of the top line of the diagram shows that this is independent of the choice of lift.
 For a general $\gamma \in H^*_{Z} (X)$, we define $\int_{X} \gamma$ as the trace of the degree $m$-part of $\gamma$.
                       
 \begin{Lemma}
 The trace map $\int _{X}$ is well-defined, i.e., it is independent of choice of cover.
 \end{Lemma}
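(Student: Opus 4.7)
The plan is to show that if $\{X_i, Z_i\}_{i \in I}$ and $\{X'_j, Z'_j\}_{j \in J}$ are two admissible choices (Hausdorff open covers of the form used in the construction, together with compact semi-algebraic subsets whose union contains $Z$), then both yield the same value of $\int_X \gamma$. First I would reduce to a comparison with the common refinement $\{X_i \cap X'_j,\, Z_i \cap Z'_j\}_{(i,j) \in I \times J}$, which is itself admissible: each $X_i \cap X'_j$ is Hausdorff (being an open subspace of $X_i$), the closures of $Z_i \cap Z'_j$ remain compact semi-algebraic, and $\bigcup_{i,j}(Z_i \cap Z'_j) = Z$ since $\bigcup_i Z_i = \bigcup_j Z'_j = Z$. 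By symmetry, it then suffices to prove that the trace computed from the cover $\{X_i, Z_i\}$ coincides with the trace computed from the common refinement.

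Next, given a class $\gamma \in H^m_Z(X)$ and a Mayer--Vietoris lift $\gamma = \sum_i \iota_{X_i,*}\gamma_i$ with $\gamma_i \in H^m_{Z_i}(X_i)$, I would refine each $\gamma_i$ as follows. The collection $\{X_i \cap X'_j\}_{j \in J}$ is an open cover of $X_i$ and the compacts $\{Z_i \cap Z'_j\}_{j \in J}$ cover $Z_i$, so the same Mayer--Vietoris / vanishing argument used to define the trace (applied now inside the Hausdorff space $X_i$) produces a lift $\gamma_i = \sum_j \iota_{X_i \cap X'_j,*}\,\gamma_{i,j}$ with $\gamma_{i,j} \in H^m_{Z_i \cap Z'_j}(X_i \cap X'_j)$. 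Assembling these, $\gamma = \sum_{i,j} \iota_*\gamma_{i,j}$ is a lift with respect to the common refinement, and agreement of the two definitions of $\int_X \gamma$ reduces to the local identity
\[
\int_{X_i} \gamma_i \;=\; \sum_{j}\, \int_{X_i \cap X'_j} \gamma_{i,j}
\]
for each fixed $i$.

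This identity lives entirely inside the Hausdorff space $X_i$, where both sides are defined by \eqref{eqn:def int Haus}. I would verify it via the projection formula for cap products applied to the open inclusions $\iota_{ij}\colon X_i \cap X'_j \hookrightarrow X_i$: the fundamental class satisfies $\iota_{ij}^{\,!}[X_i] = [X_i \cap X'_j]$ in Borel--Moore homology, so
\[
[X_i] \cap \iota_{ij,*}\gamma_{i,j} \;=\; \iota_{ij,*}\bigl([X_i \cap X'_j]\cap \gamma_{i,j}\bigr) \quad \in H_0^{BM}(Z_i \cap Z'_j).
\]
Summing over $j$ and applying the augmentation $\deg$, which is functorial with respect to pushforward along closed inclusions of compact spaces, yields the desired equality. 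An analogous check shows the trace map on any single Hausdorff $X_i$ does not depend on which admissible subcover of $Z_i$ one uses to compute it, so the Mayer--Vietoris-based trace on $X_i$ reduces to the unambiguous $\deg \circ ([X_i] \cap -)$.

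The main obstacle is the last step: one must pin down the projection formula and the compatibility of fundamental classes under open restriction in the precise setting of Borel--Moore homology with supports in non-smooth, merely semi-algebraic compacta. Once this local Hausdorff identity is in place, well-definedness of \eqref{eq: def top trace map} follows by the telescoping computation above, and together with the vanishing argument already established in the preceding lemma, this completes the proof.
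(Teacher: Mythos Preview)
Your proposal is correct and follows essentially the same approach as the paper: reduce to a common refinement and then verify the local identity $\int_{X_i}\gamma_i = \sum_j \int_{X_i\cap X'_j}\gamma_{i,j}$ inside each Hausdorff $X_i$ using the projection formula for cap products in Borel--Moore homology (cf.\ \cite[IX.3]{Iversen}). The paper organizes the bookkeeping slightly differently---it chooses a lift $(\gamma'_j)$ for the refined cover directly and tracks the discrepancies $\delta_i := (\sum_{j\in J_i}\gamma'_j)-\gamma_i$, using $\sum_i\delta_i=0$ and independence of lift---whereas you refine each $\gamma_i$ separately; but the underlying mechanism is the same.
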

  \begin{proof}                     
  Let $\{(X _i ,Z_i)\}_{i\in I}$ and $\{ ( X'_j, Z'_j )\}_{j\in J} $ be two such covers.  
Taking their intersection we get a common refinement.  Hence, it is enough to verify independence on a refinement.
That is, we may assume  there is a partition $\{ J_i \} _{i\in I}$ of $J$ such that for every $i\in I$, $j\in J_i$, $X ' _j \subset X  _i$,  $Z ' _j \subset Z _i$, $\bigcup _{j\in J_i} Z ' _j = Z_i$.

  For each $j\in J$, choose $\kg _j ' \in H^m_{Z_j ' } (X _j ')$ with $\sum_{j\in J} \kg _j' = \kg$ in $H^m_{Z} (X)$.
Let $\delta _i := (\sum_{j\in J_i} \kg '_j) - \kg _i \in H^m_{Z_i}(X_i)$ and note that $\sum_{i\in I} \delta _i = 0$ in $H^m_{Z}(X)$.
Hence we have,
\begin{align*}
\sum_{j\in J}  \int _{X'_j} \kg _j '  & = \sum _{i \in I} \sum_{j \in J_i} \int _{X'_j}  \kg _j ' \\
& =  \sum _{i \in I} \int _{X_i} (\kg _i + \delta _i) \\
& =  \sum _{i \in I} \int _{X_i} \kg _i + \sum _{i \in I} \int_{X} \delta _i \\
& = \sum _{i \in I}\int _{X_i} \kg _i.
\end{align*}
\end{proof}

We now define the trace map for the pair $(\tX, \tZ)$,                                                                                                                                                                                                                                                                                                                                                                                                                                                                                                                                                                                                                                                                                                                                                                                                                                                                                                                                                                                                                                                                                                                                                                                                                                                                                                                                                                                                                                                                                                                                                                                                                                                                                                                                                                                                                                                                                                                                                                                                                                                                                                                                                                                                                                                                                                                                                                                                                                                                                                                                                                                                                                                                                                                                                                                                                                                                                                                                                                                                                                                                                                                                                                                                                                                                                                                                                                                                                                                                                                                                                                                                                                                                                                                                                                                                                                                                                                                                                                                                                                                                                                                                                                                                                                                                                                                                                                                                                                                                                                                                                                                                                                                                                                                                                                                                                                   where $\tX$ is a DM stack finite type over $\bfk$ which may not be separated 
and the closed substack $\tZ$ of $\tX$ is a proper DM stack. 
Choose a (possibly non-separated) scheme $\tX$ of finite type over $\bfk$ together with a proper, generically finite, surjective map $f: X \to \tX$;
see \cite[Theorem 2.7]{EHKV}. 
If $\tX$ is irreducible, define 
 \begin{eqnarray}\label{eq:def trace1} \int _{\tX} \gamma := \frac{1}{\deg f} \int _{X} f^*\gamma.  \end{eqnarray}
If $\tX$ is a finite union of irreducible components $\tX^{(l)}$, define  
 \begin{eqnarray}\label{eq:def trace} \int _{\tX} \gamma := \sum _{l,  2\dim \tX_l = \deg \gamma } \int _{\tX^{(l)}} \gamma |_{\tX^{(l)}} , \end{eqnarray}
which holds true for complex analytic Hausdorff  (and hence possibly non-Hausdorff) spaces.
 
 \begin{Cor}\label{cor: TraceU}
   The quantity $\int _{X} \gamma $ is independent of the choice of proper, generically finite, surjective covering $f: X \to \tX$.
 \end{Cor}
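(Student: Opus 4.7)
The plan is to reduce the independence statement to a projection formula for the topological trace map defined in \S\ref{sub: top def of trace}, via a common refinement of two covers. First, by the definition \eqref{eq:def trace}, I may treat each irreducible component of $\tX$ separately, so I will assume $\tX$ is irreducible throughout.

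Given two covers $f_i : X_i \to \tX$ ($i = 1, 2$) as in the statement, I will produce a third cover dominating both. Form the fibre product $X_1 \times_{\tX} X_2$, which is an algebraic space of finite type over $\bfk$, proper, generically finite, and surjective over $\tX$ (finiteness over the generic point follows from finiteness of each $f_i$ there). Applying \cite[Theorem 2.7]{EHKV} again, choose a scheme $Y$ with a proper, generically finite, surjective morphism $Y \to X_1 \times_{\tX} X_2$. Restricting to a suitable irreducible component dominating $\tX$, the composite projections $g_i : Y \to X_i$ are themselves proper, generically finite, and surjective, with $f_1 \circ g_1 = f_2 \circ g_2$ and $\deg g_i \cdot \deg f_i = \deg(f_i \circ g_i)$. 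Hence it suffices to prove, for any proper, generically finite, surjective morphism $g : Y \to X$ of finite-type schemes over $\bfk$ (not assumed separated), the topological projection formula
\[
\int_Y g^{an,*} \gamma \;=\; (\deg g) \int_X \gamma, \qquad \gamma \in H^m_Z(X),
\]
where $Z$ is a compact analytic closed subset of $X$ and we integrate against $g^{an,-1}(Z)$, which is still compact by properness of $g^{an}$.

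To establish this projection formula, I will unwind the Mayer--Vietoris definition \eqref{eq: def top trace map} on both sides. Choose a cover $\{X_i\}_{i \in I}$ of a neighbourhood of $Z$ by Hausdorff opens with compact semi-algebraic pieces $Z_i \subset X_i$ such that $\bigcup Z_i = Z$, as in \S\ref{sub: top def of trace}. The preimages $g^{-1}(X_i)$ cover a neighbourhood of $g^{-1}(Z)$, but are not necessarily Hausdorff, so I will further refine each $g^{-1}(X_i)$ by Hausdorff opens $\{Y_{i,k}\}$ with compact semi-algebraic pieces $Y_{i,k}$ whose union covers $g^{-1}(Z_i)$. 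Writing $\gamma = \sum_i \iota_{i,*} \gamma_i$ with $\gamma_i \in H^m_{Z_i}(X_i)$, one has $g^* \gamma = \sum_{i,k} (\iota_{i,k})_* (g|_{Y_{i,k}})^* \gamma_i|_{Y_{i,k} \cap g^{-1}(X_i)}$ up to the usual Mayer--Vietoris ambiguity (which is precisely what the right-hand side of \eqref{eq: def top trace map} quotients out). On each Hausdorff piece we are reduced to the classical projection formula
\[
\int_{Y_{i,k}} (g|_{Y_{i,k}})^{an,*} \gamma_i \;=\; \bigl(\deg (g|_{Y_{i,k}})\bigr) \int_{X_i} \gamma_i,
\]
which follows from $g_*[Y_{i,k}] = \deg(g|_{Y_{i,k}}) \cdot [X_i \cap g(Y_{i,k})]$ in Borel--Moore homology combined with the projection formula $g_*(g^* a \cap b) = a \cap g_* b$ for the cap product with the fundamental class on paracompact Hausdorff analytic spaces, together with the degree theorem that sums of local degrees $\sum_k \deg(g|_{Y_{i,k}})$ over sheets of $g$ above $X_i$ recover $\deg g$.

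The main obstacle will be bookkeeping in the previous paragraph: ensuring that the refinements $\{Y_{i,k}\}$ can be chosen so that the pullback compatibilities between the Mayer--Vietoris chain groups $\bigoplus H^m_{Z_{i} \cap Z_j}(X_i \cap X_j)$ downstairs and their analogues upstairs intertwine $g^*$ with the Mayer--Vietoris coboundary, so that the independence of lift built into \eqref{eq: def top trace map} is preserved. Once this is done, the whole argument collapses to summing the Hausdorff projection formula over the pieces. I expect no further subtlety since all stacks involved are finite type and $Z$ is proper, so the covers are finite and the standard semi-algebraic triangulation results from \cite[Theorem 1]{Loj} apply uniformly.
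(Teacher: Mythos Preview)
Your overall architecture matches the paper's: reduce to irreducible $\tX$, pass to a common refinement via the fibre product, and then prove a projection formula $\int_Y g^*\gamma = (\deg g)\int_X \gamma$ for a proper, generically finite, surjective $g:Y\to X$ between finite-type (possibly non-separated) $\bfk$-schemes. The paper does exactly this, citing \cite[IX.3]{Iversen} for the Hausdorff projection formula on each piece.

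The one substantive oversight is your claim that the preimages $g^{-1}(X_i)$ ``are not necessarily Hausdorff.'' In fact they are: a proper morphism is by definition separated, so $g^{-1}(X_i)\to X_i$ is separated (base change), and since $X_i$ was chosen separated over $\bfk$, the composite $g^{-1}(X_i)\to \Spec\bfk$ is separated; hence $g^{-1}(X_i)^{an}$ is Hausdorff. This is precisely what the paper exploits: it sets $X_i':=g^{-1}(X_i)$, $Z_i':=g^{-1}(Z_i)$, and applies the classical projection formula directly to each proper map $g_i:X_i'\to X_i$ between Hausdorff analytic spaces to get $\int_{X_i'} g_i^*\gamma_i=(\deg g)\int_{X_i}\gamma_i$. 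Summing over $i$ and using $g^*\iota_*=\iota'_*\sum g_i^*$ finishes the argument immediately.

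Once you incorporate this observation, your entire ``main obstacle'' paragraph --- the further refinement into $Y_{i,k}$, the local-degree summation over sheets, and the delicate Mayer--Vietoris bookkeeping --- becomes unnecessary. That part of your sketch was also the weakest: the $Y_{i,k}$ are arbitrary Hausdorff opens rather than genuine sheets of a covering, so the assertion that $\sum_k \deg(g|_{Y_{i,k}})=\deg g$ would not hold as stated and would require substantial repair. The paper's route avoids this entirely.

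A minor remark: your use of \cite[Theorem~2.7]{EHKV} to replace the algebraic space $X_1\times_{\tX}X_2$ by a scheme is a careful touch that the paper leaves implicit; but your subsequent ``restricting to a suitable irreducible component'' is unnecessary and potentially loses surjectivity onto the $X_i$ --- just keep the full cover.
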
                    
\begin{proof} It is enough to show it for an irreducible stack $\tX$. 
Let $f: X \to \tX, f' : X' \to \tX$ be proper, generically finite, surjective maps where $X, X'$ are schemes of finite type over $\bfk$.   
Let $Z = f^{-1} (\tZ)$ and $Z ' = (f')^{-1} (\tZ)$.
We may compare $X, X'$ to their fiber product $X \times_{\tX} X'$.  Hence, without loss of generality we may assume that
there exists a proper, generically finite, surjective map $g: X' \to X$ such that $f' = f \circ g$  and that $g^{-1}(Z) = Z'$. 

We take $X_i' := g^{-1}(X_i)$ and $Z_i' = g^{-1}(Z_i)$. Let $g_i: X_i' \to X_i$ be the induced proper map. 
Recall we have $\gamma _i \in H^m_{Z_i} (X_i)$ such that $\iota _* \sum_i \gamma _i  = \gamma$.
By the projection formula \cite[IX.3]{Iversen},
\[
\int _{X_i'} g_i^* \gamma_i =  \deg g \int _{X_i} \gamma_i
\]
 for each $i$.
 And by the commutativity $g^* \iota _* = \iota '_* \sum g_i^*$,  we have
$\sum_i g_i^*\gamma _i = (f')^*\gamma$ in $H^m_{Z'}(X ')$. 
Hence 
\begin{align*}
\frac{1}{\deg f'} \int_{X'} (f')^*\gamma & = \frac{1}{\deg f'} \sum _i  \int _{X'_i} g_i^*   \gamma_i    \\
& = \frac{1}{\deg f} \sum_i \int_{X_i} \gamma_i\\
& = \frac{1}{\deg f} \int _{X}f^*\gamma.
\end{align*}
\end{proof}

 \begin{Cor}\label{cor: ProjFor}
    Let $f: (\tX ', \tZ') \to (\tX , \tZ)$ be a proper map of degree $\deg f \in \QQ$. Then
    \begin{eqnarray}\label{eqn:ProjFor}   \int _{\tX'}  f^* \gamma = \deg f  \int _{\tX} \gamma . \end{eqnarray} 
 \end{Cor}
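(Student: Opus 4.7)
The plan is to reduce the statement to the scheme-level topological projection formula cited from \cite[IX.3]{Iversen} by exploiting the freedom to choose covers, which is exactly what Corollary \ref{cor: TraceU} provides.

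First I would reduce to the case where both $\tX$ and $\tX'$ are irreducible. Using the decomposition \eqref{eq:def trace}, it suffices to treat each irreducible component of $\tX$ separately, and for a fixed component $\tX^{(l)}\subset \tX$, to analyze the contributions from the irreducible components of $f^{-1}(\tX^{(l)})\subset \tX'$. Any component of $\tX'$ that does not map dominantly to some component of $\tX$ has strictly smaller dimension than its target, so after restriction the degree of $\gamma$ exceeds twice the dimension and the corresponding term in \eqref{eq:def trace} vanishes. Thus only components of $\tX'$ mapping dominantly onto components of $\tX$ contribute, and for these $\deg f$ decomposes as the sum of the generic degrees of the component-wise restrictions.

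Assuming now that $f\colon \tX'\to \tX$ is a dominant proper map of irreducible DM stacks of the same dimension $m/2$, I would choose a proper, generically finite, surjective map $g\colon Y\to \tX'$ from a scheme $Y$ of finite type over $\bfk$ (which exists by \cite[Theorem 2.7]{EHKV}, cited in the appendix). Then $f\circ g\colon Y\to \tX$ is also proper, generically finite, and surjective, with $\deg(f\circ g) = \deg f\cdot \deg g$. By Corollary \ref{cor: TraceU}, I may use $Y$ to compute $\int_{\tX}$, and of course to compute $\int_{\tX'}$ using $g$. Hence
\[
\int_{\tX'} f^*\gamma \;=\; \frac{1}{\deg g}\int_Y g^*f^*\gamma \;=\; \frac{1}{\deg g}\int_Y (f\circ g)^*\gamma \;=\; \frac{\deg f}{\deg(f\circ g)}\int_Y (f\circ g)^*\gamma \;=\; \deg f\cdot \int_{\tX}\gamma ,
\]
which is the desired identity.

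The only subtle point in the above is justifying the first equality, i.e.\ that the scheme-level trace $\int_Y$ defined via \eqref{eq: def top trace map} satisfies the projection formula under the proper map $g$. This reduces, via a compatible choice of finite Hausdorff covers $\{X_i\}$ of $Y$ and the pullback covers $\{g^{-1}(X_i)\}$ (as in the proof of Corollary \ref{cor: TraceU}), to the standard projection formula for the cap product with fundamental classes on Hausdorff analytic spaces, which is \cite[IX.3]{Iversen}. This is the main obstacle in the sense that it requires verifying the construction \eqref{eq: def top trace map} is natural with respect to proper morphisms of (possibly non-Hausdorff) complex analytic spaces; but once the cover is chosen compatibly, it becomes the sum of classical projection formulas on the pieces, exactly as in the computation already carried out in Corollary \ref{cor: TraceU}.
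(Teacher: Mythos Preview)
Your argument is correct and follows essentially the same route as the paper: reduce to the definition \eqref{eq:def trace1} via a scheme cover and invoke the independence of cover (Corollary~\ref{cor: TraceU}). The paper phrases this via a commuting square of proper surjective finite-degree maps
\[
\xymatrix{ \tX' \ar[r]^{f} & \tX \\ X' \ar[u] \ar[r]_{h} & X \ar[u]_{g} }
\]
and simply says ``the result follows by definition''; your choice $X'=X=Y$ with $h=\mathrm{id}_Y$ and the two vertical maps $g$ and $f\circ g$ is a clean special case of this.

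One small remark: your final paragraph is unnecessary. The first equality in your chain is literally the definition \eqref{eq:def trace1} of $\int_{\tX'}$ applied to the class $f^*\gamma$ using the scheme cover $g\colon Y\to\tX'$; no projection formula is required there. The scheme-level projection formula from \cite[IX.3]{Iversen} already did its work inside the proof of Corollary~\ref{cor: TraceU}, which you correctly invoke for the last equality.
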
       
       
\begin{proof}
As there is a commuting diagram of proper maps 
\[ \xymatrix{ \tX' \ar[r]_{f} & \tX \\
                     X' \ar[u] \ar[r]_{h} & X \ar[u]_{g} } \]
such that all maps are surjective and of finite degree,
the result follows by definition.
\end{proof}

\subsubsection{}       
Let $\tX \to \PP^1$ be an algebraic map from a DM stack $\tX$ of dimension $m/2$. Assume that
the map is {\em dominant} in each irreducible component of $\tX$.
Let $\tZ$ be  a proper closed substack of $X$ (here by proper substack we mean that $\tZ$ itself is proper over $\bfk$).

Given two closed points $p_0, p_1 \in \PP ^1$ we can consider the corresponding fibers $\tX^0$, $\tX^1$ of $\tX \to \PP ^1$. 
 Let $i_0 : \tX^0 \to \tX$, $i_1 : \tX^1 \to \tX$ be the inclusions. The fibers $\tX ^j$, $j=0, 1$, if nonempty, are DM stacks of real dimension $m -2$.
 Let $\tZ ^j = \tZ \times _{\tX} \tX ^j$, which are also proper closed substacks of $\tX^j$.  For $\kg \in H^{m-2}_{\tZ} (\tX)$, we have
  $i_0^* \gamma  \in H^{m-2}_{\tZ ^0} (\tX ^0 )$ and $i_1^* \gamma  \in H^{m-2}_{\tZ ^1} (\tX ^1 )$.

\begin{Lemma}\label{lemma:def inv} For $\kg \in H^{m-2}_{\tZ} (\tX)$,
$\int _{\tX ^0} i_0^* \gamma = \int _{\tX^1} i^*_1 \gamma $.
\end{Lemma}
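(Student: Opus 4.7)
The plan is to reduce to a Hausdorff situation via the Mayer--Vietoris decomposition underlying the trace map of \S\ref{sub: top def of trace}, and then to invoke a Borel--Moore fundamental-class identity showing that both fiber classes equal the pullback of the hyperplane class from $\PP^1$ capped with $[\tX]$; since this pullback is manifestly independent of the point, so are the two integrals.

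First, I would choose a finite cover of $\tZ^{an}$ by Hausdorff open substacks $\tX_{\alpha}^{an} \subset \tX^{an}$ together with compact semi-algebraic subsets $\tZ_{\alpha} \subset \tX_{\alpha}^{an}$ whose union contains $\tZ^{an}$, as in the construction of \eqref{eq: def top trace map}. Since $\tZ^j$ is a closed substack of the proper $\tZ$, it too is proper, so the intersections $\tZ_{\alpha}^j := \tZ_{\alpha} \cap \tX^j$ are compact semi-algebraic subsets of the Hausdorff opens $\tX_{\alpha}^j := \tX_{\alpha} \cap \tX^j$ with $\bigcup_{\alpha}\tZ_{\alpha}^j \supset \tZ^{j,an}$. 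Choose a Mayer--Vietoris decomposition $\gamma = \sum_\alpha \iota_* \gamma_\alpha$ with $\gamma_\alpha \in H^{m-2}_{\tZ_\alpha}(\tX_\alpha)$; naturality of restriction yields $i_j^*\gamma = \sum_\alpha \iota_*(i_j^*\gamma_\alpha)$ with $i_j^*\gamma_\alpha \in H^{m-2}_{\tZ_\alpha^j}(\tX_\alpha^j)$. By the very definition of the trace map, the lemma reduces to the Hausdorff claim $\int_{\tX_\alpha^0} i_0^*\gamma_\alpha = \int_{\tX_\alpha^1} i_1^*\gamma_\alpha$ for each $\alpha$.

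In this Hausdorff case, the trace map is cap product with the Borel--Moore fundamental class followed by the augmentation, so by the projection formula
\[ \int_{\tX_\alpha^j} i_j^*\gamma_\alpha = \deg\bigl( i_j^*\gamma_\alpha \cap [\tX_\alpha^j] \bigr) = \deg\bigl( \gamma_\alpha \cap (i_j)_*[\tX_\alpha^j] \bigr), \]
both computed in $H^{BM}_0(\tZ_\alpha^j)$. It thus suffices to establish the Borel--Moore identity
\[ (i_0)_*[\tX_\alpha^0] = (i_1)_*[\tX_\alpha^1] \quad\text{in } H^{BM}_{m-2}(\tX_\alpha^{an}). \]
Because $f\colon \tX \to \PP^1$ is dominant on each irreducible component of $\tX$, the same holds on each irreducible component of the open $\tX_\alpha$. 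On each such component $\tX_\alpha^{(l)}$, the fiber $f^{-1}(p_j)\cap \tX_\alpha^{(l)}$ is a Cartier divisor whose Borel--Moore class equals $f^* c_1(\cO_{\PP^1}(1))\cap [\tX_\alpha^{(l)}]$; summing over components,
\[ (i_j)_*[\tX_\alpha^j] = f^*c_1(\cO_{\PP^1}(1))\cap [\tX_\alpha^{an}], \]
which is visibly independent of $j$.

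The primary subtlety is justifying the Cartier-divisor identity when $\tX_\alpha^{an}$ fails to be smooth or is genuinely stacky: for a dominant morphism $g\colon V \to \PP^1$ from an irreducible complex-analytic variety, $g^{-1}(p)$ is a Cartier divisor with associated invertible sheaf $g^*\cO_{\PP^1}(1)$, so the identity holds in Borel--Moore homology independently of smoothness, and it extends to DM stacks by passing to the coarse moduli space (rational denominators being absorbed into the $\QQ$-valued covering degrees already present in \eqref{eq:def trace1}). A secondary, purely bookkeeping point is the compatibility of the Mayer--Vietoris decomposition of $\gamma$ under $i_j^*$, which is automatic from naturality of the Mayer--Vietoris sequence.
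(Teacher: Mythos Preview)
Your core strategy matches the paper's: reduce to a Hausdorff setting via the Mayer--Vietoris cover underlying \eqref{eq: def top trace map}, then use the projection formula and the equality $(i_0)_*[\text{fiber}_0]=(i_1)_*[\text{fiber}_1]$ in Borel--Moore homology. The paper phrases this last identity as ``$X_i^0$ and $X_i^1$ are rationally equivalent as algebraic cycles in $X_i$'', which is the same fact you are invoking when you write both classes as $f^*c_1(\cO_{\PP^1}(1))\cap[X_i]$.

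There is, however, a genuine mismatch in your reduction step. In this paper the trace map for a DM stack $\tX$ is \emph{not} defined by running the Mayer--Vietoris argument of \S\ref{sub: top def of trace} directly on $\tX^{an}$; that construction is carried out only for complex analytic \emph{spaces}. For a DM stack the trace is defined by \eqref{eq:def trace1}: choose a proper, generically finite, surjective map $f\colon X\to\tX$ from a scheme (via \cite{EHKV}) and set $\int_{\tX}\gamma=\tfrac{1}{\deg f}\int_X f^*\gamma$. So the correct first move---and this is exactly what the paper does---is to pull everything back to such a scheme $X$, noting that the induced map $X^j\to\tX^j$ on fibers is again proper, generically finite, surjective of the same degree. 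After that reduction your argument runs verbatim on the scheme $X$. Your closing remark about ``passing to the coarse moduli space'' is not the right fix: the definition uses a scheme cover, not the coarse space, and the semi-algebraic machinery of \S\ref{sub: top def of trace} is set up for spaces rather than stacks.
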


\begin{proof}
By taking a generically finite covering $X \to \tX$, we may assume that $\tX$, $\tZ$ are algebraic schemes $X$,  $Z := \tZ \ti _{\tX} X$. 
Choose any finite open covering $\{(X _i ,Z_i)\}_{i\in I}$ associated to a collection of separated algebraic schemes $X_i$,
and define $X^j_i = X^j \cap X_i$, $j=0, 1$. 
For each $i\in I$, we have fundamental classes $[X ^j_i] \in H^{BM}_{m-2} (X ^j_i)$, $j=0, 1$ and they 
yield  the same class $(i_0)_*[X_i^0] = (i_1)_*[X_i^1] \in H^{BM}_{m-2} (X _i)$ since
$X^0_i$, $X ^1_i$ are rationally equivalent as algebraic cycles in $X_i$.
Then we have
\begin{align*}
\int _{X ^0} i_0^* \gamma & = \sum _{i\in I} \int _{X ^0_i} i_0^* \gamma_i & \\
& =  \sum _{i\in I} \deg ( (i_0)_*[X ^0_i]  \cap \gamma_i )  & \text{ by the projection formula \cite[IX.3]{Iversen}} \\
& =  \sum _{i\in I} \deg ( (i_1)_*[X ^1_i]  \cap \gamma_i )  & \text{ by } (i_0)_*[X_i^0] = (i_1)_*[X_i^1]   \\
& = \sum _{i\in I} \int _{X ^1_i} i_1^* \gamma_i  & \\
& = \int _{X ^1} i_1^* \gamma, &
\end{align*}
where $\deg$ are  the degree maps $H_0^{BM}(Z_i ) \to \bfk$ for $i\in I$.
\end{proof}

\subsubsection{}

Consider a map $\tX \to \tM$ and let $\tZ$ be a proper closed substack of a DM stack $\tX$ of finite type.
Here by a proper substack, we mean that $\tZ$ itself is proper over $\bfk$.
We say a Thom class   exists if there is a class $\tau \in H^d_\tX(\tM)$ uniquely characterized by the property $\tau |_{\tM _i} \in H^d_{\tX_i} (\tM_i)$ is the usual Thom class 
for every separated open $\tM_i$ and $\tX_i = \tX \ti _{\tM} \tM_i$; see \cite{Iversen}.

We are now able to define Gysin maps in the following two cases.

Case 1. If $\tM$ is a proper smooth DM stack, 
we define 
\begin{equation} \label{eqn: def of trace push}
f_*:  H^{*}_{\tZ}(\tX) \to H^* (\tM)
 \end{equation}
by the requirement: 
\begin{equation*}   \text{for } a\in H^{*}_{\tZ}(\tX),   \int _{\tM} f_* a \cup c = \int _{\tX} a \cup f^* c,  \text{ for all } c \in H^*(\tM) . \end{equation*}

Case 2. If $f$ is a closed regular immersion of real codimension $d$ and a Thom class  $\tau \in H^d_\tX(\tM)$ exists, then 
we define the Gysin map as
\begin{equation} \label{eqn: def of Gysin}
 f_* : H^{*}(\tX) \to H^{*+d}_{\tX}  (\tM) ; a \mapsto \tau \cup a.
\end{equation}

\begin{Lemma} \label{lem: Gysin props}
The Gysin map for closed regular immersions enjoys the following properties:
\begin{itemize}
\item The Gysin map is an isomorphism.
\item The Gysin map  satisfies flat base change.
\item Let $\tV$ be a proper closed substack of $\tM$. Then for $a \in H^{*}(\tX)$,  $b \in H^*_{\tV}(\tM)$ 
$$ \int _{\tM} r (f_* a) \cup b = \int _{\tX} a \cup f^* b ,$$
where $r$ is the restriction map $H^*_{\tX} (\tM) \to H^*(\tM)$.
\end{itemize}
\end{Lemma}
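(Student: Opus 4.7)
The plan is to reduce each of the three statements to the classical Hausdorff setting by restricting to a finite cover of $\tM$ by separated open substacks, invoking the standard results, and gluing via Mayer--Vietoris, in the same spirit as the construction of $\int_{\tX}$ in \S\ref{sub: top def of trace}. Throughout, I would fix a finite cover $\{\tM_i\}_{i \in I}$ of an open neighborhood of $\tX$ (and $\tV$ for part (3)) in $\tM$ by separated substacks, set $\tX_i := \tX \cap \tM_i$, and use the defining property of the Thom class: $\tau|_{\tM_i} \in H^d_{\tX_i}(\tM_i)$ is the classical Thom class of the closed regular immersion $\tX_i \hookrightarrow \tM_i$.

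For the isomorphism statement, I would compare the Mayer--Vietoris spectral sequences
\[
\bigoplus H^{*}(\tX_{i_0} \cap \cdots \cap \tX_{i_k}) \Rightarrow H^{*}(\tX)
\]
and
\[
\bigoplus H^{*+d}_{\tX_{i_0} \cap \cdots \cap \tX_{i_k}}(\tM_{i_0} \cap \cdots \cap \tM_{i_k}) \Rightarrow H^{*+d}_{\tX}(\tM).
\]
Cupping with restrictions of $\tau$ defines a morphism between them. On $E_1$ this is the classical Thom isomorphism on each separated piece (compatibility with restriction follows from the naturality of the cup product), so by the comparison theorem the abutment map is an isomorphism.

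For flat base change, I would use the uniqueness of the Thom class. Given flat $g : \tY \to \tM$, the pullback $f_\tY : \tX_\tY := \tX \times_\tM \tY \hookrightarrow \tY$ is again a closed regular immersion of codimension $d$. On each separated open $g^{-1}(\tM_i)$ the class $g^*(\tau|_{\tM_i})$ agrees with the classical Thom class (flat pullback commutes with the cup product and preserves fundamental classes on Hausdorff spaces), so uniqueness on $\tY$ gives $g^*\tau = \tau_{\tX_\tY / \tY}$. Then $g^* f_*a = g^*\tau \cup g|_{\tX_\tY}^* a = f_{\tY,*}(g|_{\tX_\tY}^* a)$, which is the base change identity.

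For property (3), I would pass to Borel--Moore homology on each separated piece $\tM_i$. Shrinking the cover if needed so that it also refines a cover $\{\tV_i\}$ of $\tV$ as in \S\ref{sub: top def of trace}, the key identity is $[\tM_i] \cap \tau|_{\tM_i} = (\iota_i)_*[\tX_i]$, where $\iota_i: \tX_i \hookrightarrow \tM_i$; this is the defining property of the Thom class in the Hausdorff setting. The classical projection formula for the proper map $\iota_i$ then yields
\[
\int_{\tM_i}\bigl(r(f_*a)|_{\tM_i}\bigr) \cup b_i \;=\; \int_{\tX_i} a|_{\tX_i} \cup (f^*b)|_{\tX_i}
\]
for any representative $b_i \in H^*_{\tV_i}(\tM_i)$ of a Mayer--Vietoris lift of $b$. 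Summing over $i$ using \eqref{eq: def top trace map} on both sides (and Corollary~\ref{cor: TraceU} for independence of the lift) gives the desired equality.

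The main obstacle is the bookkeeping for (1): verifying that the family of classical Thom isomorphisms on the intersections $\tM_{i_0} \cap \cdots \cap \tM_{i_k}$ assembles into a genuine morphism of Mayer--Vietoris spectral sequences. This is a check that the cup product with $\tau$ commutes with the \v{C}ech differentials, which follows from the naturality of cup product with respect to restriction, but must be spelled out carefully in the DM stack setting.
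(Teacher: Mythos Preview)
Your approach is essentially the same as the paper's: reduce all three statements to the separated case via Mayer--Vietoris and invoke classical results there. The paper's proof is terser but adds one verification you take for granted in part (3): since $\tM_i$ and $\tX_i$ need not be smooth, the identity $[\tM_i]\cap\tau|_{\tM_i}=[\tX_i]$ is not immediate; the paper establishes it by passing to the nonsingular loci (where it is classical) and using the restriction isomorphism $H^{BM}_{2\dim\tX_i}(\tX_i)\cong H^{BM}_{2\dim\tX_i}(\tX_i^{ns})$.
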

\begin{proof}
In the separated case, the first two properties are well-known.  These properties in the general case follow from the Mayer-Vietoris sequences for separated open coverings.
The third property in the separated case follows the Mayer-Vietoris sequence for closed separated semi-algebraic coverings of $\tV$ and from the fact that $[\tM] \cap \tau = [\tX]$ . We verify this last equality.
 
 Let $\tM^{ns}$, $\tX^{ns}$ be the nonsingular loci of $\tM$, $\tX$, respectively. 
Then $[\tM^{ns}] \cap \tau = [\tX^{ns}]$ and hence  $[X] \cap \tau = [\tM]$ because $H^{BM}_{2\dim \tX} (\tX) \cong H^{BM}_{2\dim \tX} (\tX^{ns})$ under the restriction map.
  \end{proof}

Consider a fiber product 
 \[ \xymatrix{  \tY \ar[r]^{j} \ar[d]_{g}   &  \tX \ar[d]^{f}  \\
          \tN  \ar[r]_{i} &  \tM }. \] 
         of DM stacks.
Suppose that: 
$\tM$ is a proper smooth DM stack; 
$i$ is a closed regular immersion; 
and $f$ is flat (and hence $j$ is also a regular immersion whose normal cone is isomorphic to the pullback of the normal cone of $i$).

\begin{Lemma}\label{lemma: base change of Gysin maps} 
         For any $a \in H^{*}_{\tZ}(\tX)$, $b\in H^* (\tN)$ with $|a|+|b| = 2\dim \tY$,  the following equality holds:
         \begin{eqnarray*}\label{eq:base change of integration} \int _{\tN}     b \cup i^*  (f_* a)  = \int _{\tY}       g^* b \cup j^ * a  .  \end{eqnarray*}
         Here $|a|, |b|$ mean the degree of $a$, $b$, respectively and $f_*$ is defined by \eqref{eqn: def of trace push}.
       \end{Lemma}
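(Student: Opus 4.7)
The plan is to shuttle the integral across the fiber square in four moves, using the two Gysin maps $i_*, j_*$ (from Case 2 of \eqref{eqn: def of Gysin}) to connect the trace map on $\tN$ (resp.\ $\tY$) to the trace map on $\tM$ (resp.\ $\tX$), and then using the defining adjunction \eqref{eqn: def of trace push} for $f_*$ in the middle. Note first that since $\tM$ is proper and $i$ is a closed immersion, $\tN$ is itself proper over $\bfk$, so all integrals below make sense: the support conditions $|a|+|b|=2\dim\tY$ together with properness of $\tZ$ and of $\tN$ guarantee that each integrand lies in a compactly supported cohomology of the right dimension.

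The first step is to rewrite the left-hand side using the Gysin map $i_*: H^*(\tN)\xrightarrow{\cong} H^{*+d}_{\tN}(\tM)$. By the projection formula of Lemma~\ref{lem: Gysin props} (item 3), applied with the roles $f\rightsquigarrow i$, $a\rightsquigarrow b$, $b\rightsquigarrow f_*a$ (the latter sits in $H^*(\tM)$, but since $\tM$ is proper we may take $\tV=\tM$), one gets
\[ \int_{\tN} b\cup i^*(f_*a) \;=\; \int_{\tM} r(i_*b)\cup f_*a. \]
The second step is to apply the defining property \eqref{eqn: def of trace push} of $f_*$ with $c=r(i_*b)\in H^*(\tM)$, yielding
\[ \int_{\tM} f_*a\cup r(i_*b) \;=\; \int_{\tX} a\cup f^*r\bigl(i_*b\bigr). \]

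The third step is the key base-change input. By flat base change for the Gysin map (Lemma~\ref{lem: Gysin props}, item 2) applied to the cartesian square, one has the equality
\[ f^*\circ i_* \;=\; j_*\circ g^* \colon H^*(\tN)\longrightarrow H^{*+d}_{\tY}(\tX), \]
and the restriction maps $r:H^*_{\tN}(\tM)\to H^*(\tM)$ and $r':H^*_{\tY}(\tX)\to H^*(\tX)$ commute with $f^*$ in the obvious sense, so $f^*r(i_*b)=r'(j_*g^*b)$. Substituting gives $\int_{\tX} a\cup r'(j_*g^*b)$.

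The final step applies the projection formula (Lemma~\ref{lem: Gysin props}, item 3) a second time, now to the closed regular immersion $j$, with $b\rightsquigarrow g^*b$, $a\rightsquigarrow a$, and $\tV=\tZ$ (which is proper by hypothesis). This produces
\[ \int_{\tX} r'(j_*g^*b)\cup a \;=\; \int_{\tY} g^*b\cup j^*a, \]
which (after the harmless degree-wise commutation of the cup product, made innocuous by the hypothesis $|a|+|b|=2\dim\tY$ which forces both factors to be of controlled parity) is the desired right-hand side. The only subtle point in executing this plan is verifying that the projection formula of Lemma~\ref{lem: Gysin props} (item 3) extends from $b\in H^*_{\tV}(\tM)$ with $\tV$ proper to $b=f_*a\in H^*(\tM)$; this is immediate from the properness of $\tM$ itself (take $\tV=\tM$), so the plan proceeds without obstruction.
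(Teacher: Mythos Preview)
Your proof is correct and follows essentially the same four-step chain as the paper's own argument: projection formula for $i$ (Lemma~\ref{lem: Gysin props}\,(3)), defining adjunction \eqref{eqn: def of trace push} for $f_*$, the compatibility $f^*r = r'f^*$ together with flat base change $f^*i_* = j_*g^*$ (Lemma~\ref{lem: Gysin props}\,(2)), and the projection formula for $j$. The paper additionally remarks that the Thom class for $i$ exists because $\tM$ is separated (being proper smooth), and flags that the two instances of $f^*$ in the commutation $f^*r_1 = r_2 f^*$ are the absolute and relative pullbacks respectively---points you handle implicitly but correctly.
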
 
\begin{proof} 
Let $r_1: H^* _{\tN} (\tM) \to H^* (\tM)$, $r_2: H^* _{\tY} (\tX) \to H^* (\tX)$ be the restriction maps
and let $d$ be the real codimension of $\tN$ in $\tM$. Using the Thom class $\tau$ of
the regular immersion $i$, which exists since $\tM$ is separated,
and the Thom class $f^*\tau$ of the regular immersion $j$, we have
Gysin maps  $i_* : H^*  (\tN) \to H^{*+d}_{\tN}(\tM) $ and $j_*: H^* (\tY) \to H^{*+d} _{\tY} (\tX)$.
Now we have a sequence of equalities by Lemma~\ref{lem: Gysin props}:   
\begin{eqnarray*} & & \int _{\tN}   b \cup i^*  (f_* a)   =   \int _{\tM}     r_1(i_* b) \cup f_* a 
   \underset{ \eqref{eqn: def of trace push} }{=}   \int _{\tX}       f^* r_1(i_* b)   \cup  a  \\
                              &   \underset{ \because f^*r_1= r_2f^*}{=}  & \int _{\tX}       r_2 (f^* i_* b)  \cup a    
                                                              =  \int _{\tX}       r_2 (j_* g^* b) \cup a 
                                  \underset{ \eqref{eqn: def of trace push} }{=}   \int _{\tY}       g^*b  \cup j^* a   . \end{eqnarray*} 
                                When we use the equality $f^*r_1= r_2f^*$, the right $f^*$ is the relative pullback while the left $f^*$ is the \lq\lq absolute" pullback.
\end{proof}

\begin{Rmk}
For non-separated \'etale groupoids which admit an open covering whose elements are paracompact, 
Hausdorff, locally compact, and of uniformly bounded cohomological dimension, 
Crainic and Moerdijk \cite{CM2} constructed  compactly supported cohomology, 
adjoint operations $f_!$, $f^!$, a cap product, Verdier duality, etc.
\end{Rmk}

\subsection{The smooth case} \label{sec: smooth case} 
In this section, we assume that $\tX$ is a smooth {\em complex analytic} DM stack, which is
possibly not separated over $\bfk$.  We set $m$ to be the real dimension of $\tX$.

In this setting, we may express the trace map as a sum of integrals of compactly supported differential forms as follows.
Fix a closed substack $\tZ$ of $\tX$. Let $\Gamma _{\tZ}$ (resp. $\Gamma_{c}$) be global 
section functor with support on $\tZ$ (resp. with compact support). 
Let $\{ \tX_i \}_i$ be a finite open covering of $\tX$ where each $\tX_i$ is  separated.  
Consider the Mayer-Vietoris functor for $\cO_{\tX}$-modules
\[ \label{eq: MV}
\mathcal{MV}(\cF) = \cdots \to \oplus _{i <  j} \iota_{(\tX_i \times_{\tX} \tX_j)!} \cF \to \oplus _i  \iota_{\tX_i!} \cF.
\]
Let $\dR$ be the sheafify $C^{\infty}$ de Rham complex of $\tX$.  Applying $\mathcal{MV}$ yields a double complex.  We take the compactly supported cohomology of the total complex to obtain
\begin{equation} \label{eqn: MV seq} \cdots \to \oplus _{i <  j} \Gamma _{c} (\tX_i \times_{\tX} \tX_j,  \dR ) \to \oplus _i  \Gamma _{c} (\tX_i ,\dR ) \to 0 ,\end{equation}
which we denote by $MV_c (\{ \tX_i\}_i , \dR )$.

The trace map fits into the following commutative diagram of complexes in the derived category $D^+(\bfk )$ of lower bounded complexes of $\bfk$-modules
{\small  \[ \xymatrix{ \RR \Gamma _{\tZ} \bfk_{\tX} \ar[r] \ar[d]  &  H^m_{\tZ} (\tX, \bfk_{\tX}) [-m]  \ar[d] \ar@/^10pc/[dd]^{\int _{\tX}} \\
              MV_c (\{ \tX_i\}_i , \dR ) \ar[r] &  
               \oplus _i  H^m_{c} (\tX_i , \dR ) / \oplus _{i>  j} H^m_{c} (\tX_i \times_{\tX} \tX_j,  \dR )) [-m]  \ar[d]^{\sum \int_{\tX_i}}     \\
               & \bfk [-m] .} \] }
   
   We {\em define} $H^*_c (\tX)$ as the cohomology of   the total complex of the Mayer-Vietoris sequence above following \cite{CM2},
    which shows in particular that the definition does not depending on the choices of separated open coverings and c-soft resolutions.
         Hence we have
         \[ H^m_{\tZ } (\tX ) \to H^m_c (\tX) \xrightarrow{\int _{\tX}}  \bfk .\]

   In fact we can do more, i.e., in the (possibly) non-separated case we can define an integration along  the fiber as follows.        
   Let $f : \tX \to \tM$ be a {\em smooth} map between smooth complex analytic DM stacks.  
Let $\tW_{\tM}$ be a regular function on $\tM$. Let $\{ \tM _j \}_{j\in J}$ be a collection of separated open substacks covering $\tM$
such that $\tX _i$ maps to $\tM _{j_i}$ for some assignment $j_{-} : I \to J$. We may assume that $j_{-}$ is an injective assignment,
allowing repeated elements $\tM_j$ but with different index $j$.  Fix well-orderings on $I$ and $J$ such that the map $j_{-}$ preserves
the well-ordering. 

Let $j_{\vec{i}} := (j_{i_1}, ..., j_{i_k})$, $\tX_{\vec{i}} :=  \tX_{i_1} \ti _{\tX} ... \ti _{\tX} \tX_{i_k}$, and $\tM_{j_{\vec{i}}} :=  \tM_{j_{i_1}} \ti _{\tM} ... \ti _{\tM} \tM_{j_{i_k}}$.
Then the usual pushforward maps (given by integration along the fiber of the smooth submersion maps $f |_{\tX_{\vec{i}} }$ in the $C^{\infty}$ sense),
\[
( f |_{\tX_{\vec{i}} })_{ *} : \Gamma_c (\tX_{\vec{i}} , \Dolbb) \to \Gamma_c (\tM_{j_{\vec{i}}}, \Dolbb[\dim f])
\]
 for all strictly-increasing-ordered multi-indices $\vec{i}=(i_1, ..., i_k)$,
together form a map
\begin{align}  f_* : MV_c (\{ \tX_i \}_i , (\Dolbb ,   \bar{\partial}  + f^*d\tW_{\tM} ))    \to    MV_c (\{ \tM _j \}_j ,  (\Dolbb ,   \bar{\partial}  + d\tW_{\tM} )) .\end{align}

This is in fact a cochain map since it commutes with all 3 differentials, i.e.,
\[
\sum _{\vec{i},  |\vec{i}|=l} (f|_{\tX_{\vec{i}}})_* \circ \delta_{MV} =  \delta_{MV}  \circ \sum _{{\vec{i}}',  |{\vec{i}}'|=l+1} (f|_{\tX_{{\vec{i}}'}})_*.
\]
Indeed, the standard argument that the $C^\infty$ de Rham differential commutes with $ (f |_{\tX_{\vec{i}} })_{ *}$ can be adapted to our situation (of a holomorphic fibration) to show $f_*$ commutes with 
$\bar{\partial}$.  Similarly,  $f_* ( \bullet \wedge f^*d\tW_{\tM}) = d\tW_{\tM} \wedge f_* \bullet$ (using \cite[Lemma 7.2.6]{RS}).  

In conclusion, we obtain an induced map on hypercohomology
 \begin{align}  f_* :  \HH ^* _{c}( \tX, (\Omega _{\tX }^{\bullet} , f^* d\tW_{\tM})) \to \HH^{*-2\dim f}_{c} (\tM, (\Omega _{\tM }^{\bullet}, d\tW_{\tM} )) ,
 \label{eq: pushforward map}
 \end{align}
 where $\HH ^* _{c}$ is defined to be the cohomology of the Mayer-Vietoris sequence. 
 Note that $f_*$ is independent of the choices of such coverings $\{ \tX_i\}_i $, $\{ \tM_j\} _j $. We call  $f_*$ {\em integration along the fiber}.
When $\tM = \Spec \bfk$, we denote  $f_*$ also by $\int _{\tX}$.

For a smooth complex analytic DM stack $\tX$ of dimension $n$, 
we have the composition of maps \begin{equation} \label{eqn: com of integration maps}
\xymatrix{        H^n _{\tZ} (\tX, \Omega ^n_{\tX}) \ar[r]
&  H^{2n}_{\tZ} (\tX) \ar[r] \ar@{..>}[d]_{\text{trace map}} & H^{2n}_c (\tX) ,  \ar@/^.5pc/[ld]^{\text{integration}}  \\
                                            & \bfk  . &
}
\end{equation}
Here the first map is given by
\[ \Omega ^n_{\tX} [-n] \to (\Omega ^{\bullet}_{\tX}, d) \sim_{qis} \underline{\bf k} \] 
and the vertical dotted map is defined in \eqref{eq:def trace1}  whenever $\tX$ is the analytification of an DM stack, in which case
the diagram obviously commutes. Hence by abusing notation we will also denote the composited map by 
\begin{eqnarray} \label{eqn: def of integration map} \int _{\tX} :  H^n _{\tZ} (\tX, \Omega ^n_{\tX}) \to \bfk . \end{eqnarray}

             \begin{Lemma}\label{lemma: integration along the fiber}   Consider the abstract wedge product described in  \S \ref{sec:  wedge product}.  The following holds for integration along fibers.
             \begin{enumerate}

                 \item (functoriality) Consider a smooth map $\tM\to \tL$ between smooth complex analytic DM stacks with $\tW_{\tM} = g^* \tW _{\tL}$ for some $\tW _{\tL}$. Then 
              \[          f_* \circ g_* = (f\circ g )_* :          \HH _c (\tX, (\Omega _{\tX }^{\bullet} , (g\circ f)^* d\tW_{\tL})) \to  \HH _c (\tL, (\Omega _{\tL }^{\bullet} ,  d\tW_{\tL})) . \]

             \item  (projection formula) 
             Let $\tW_1$, $\tW_2$ be regular functions on $\tM$. Then
             for $a \in \HH ^*_c( \tX , (\Omega _{\tX }^{\bullet} , f^* d\tW_1))$, $b \in \HH^* (\tM, (\Omega _{\tM }^{\bullet} ,  d\tW_2))$, we have
             \[ f_* (a \wedge f^* b) = f_* a \wedge b \] in  $\HH^*_c (\tM, (\Omega _{\tM }^{\bullet} ,  d\tW_1+ d\tW_2))$.

             \item  (base change)
             Consider a fiber product of (possibly non-separated) pure-dimensional smooth complex analytic DM stacks 
\[ \xymatrix{ \tY \ar[r]^{j} \ar[d]_{g} & \tX \ar[d]^{f} \\
         \tN \ar[r]_{i} & \tM }\] 
         with $f$ smooth.  
         Then for $ a \in \HH^*_{c} ( \tX,  (\Omega ^{\bullet}_{\tX}, f^*d\tW_{\tM})   )$  we have \[  i^* f_* a = g_*j^* a \]
         in $\HH^*(\tN,  (\Omega ^{\bullet}_{\tN}, i^*d\tW_N))$. Furthermore if $i$ is proper, the equality holds
         in $\HH^*_c(\tN,  (\Omega ^{\bullet}_{\tN}, i^*d\tW_N))$.
             \end{enumerate}
             \end{Lemma}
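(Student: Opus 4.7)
The plan is to reduce all three claims to classical identities for $C^\infty$ integration along the fiber on separated open pieces, transported across the Mayer-Vietoris complex used to define $\HH^*_c$. In each part, I would choose compatible systems of separated open covers, verify the identity pointwise on each multi-intersection (where everything is classical), and check that these identities glue to equalities of cochain maps between the total Mayer-Vietoris complexes.

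For functoriality, I would pick well-ordered separated covers $\{\tX_i\}$, $\{\tM_j\}$, $\{\tL_k\}$ of $\tX$, $\tM$, $\tL$ with injective index assignments $I \hookrightarrow J \hookrightarrow K$ such that $g(\tX_i) \subseteq \tM_{j_i}$ and $f(\tM_j) \subseteq \tL_{k_j}$. On each multi-intersection $\tX_{\vec i} \to \tM_{j_{\vec i}} \to \tL_{k_{j_{\vec i}}}$, the classical Fubini-type functoriality for compositions of smooth submersions of complex analytic orbifolds gives $(f|_{\tM_{j_{\vec i}}})_* \circ (g|_{\tX_{\vec i}})_* = ((f \circ g)|_{\tX_{\vec i}})_*$ on compactly supported Dolbeault forms. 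Since integration along a smooth submersion commutes with restriction to open subsets, this identity intertwines with the Mayer-Vietoris differentials and assembles into (1).

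For the projection formula, the wedge product of Appendix~\ref{sec: B} is built locally and therefore distributes over the Mayer-Vietoris decomposition. On each $\tX_{\vec i} \to \tM_{j_{\vec i}}$, the classical projection formula for $C^\infty$ integration gives $(f|_{\tX_{\vec i}})_*(\alpha \wedge f^*\beta) = (f|_{\tX_{\vec i}})_*\alpha \wedge \beta$; this holds verbatim in the curved setting since multiplication by $f^*\beta$ commutes with fiber integration. For base change, the separated covers of $\tX$ and $\tM$ pull back to covers $\{\tY_i := j^{-1}\tX_i\}$ of $\tY$ and $\{\tN_j := i^{-1}\tM_j\}$ of $\tN$, and each resulting fiber square of smooth submersions supports the classical base change identity $i^*(f|_{\tX_{\vec i}})_* = (g|_{\tY_{\vec i}})_* j^*$ on compactly supported forms (essentially a change of variables in the fiber integral). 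Summing across Mayer-Vietoris yields the unsupported version; properness of $i$ in the last sentence then guarantees that $j^*$ preserves the Mayer-Vietoris compact support condition, upgrading the equality to $\HH^*_c$.

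I expect the main obstacle to be the bookkeeping verification that the fiberwise pushforward $f_*$ genuinely defines a cochain map between Mayer-Vietoris total complexes, checking that it respects not only the Dolbeault differentials and the $d\tW$-twists (both of which commute with $C^\infty$ fiber integration by \cite[Lemma 7.2.6]{RS} and Stokes-type arguments, as already exploited in the construction of $f_*$ in \eqref{eq: pushforward map}) but also the Mayer-Vietoris differentials with the correct signs across all three identities. A secondary subtlety, specific to the compactly supported case of (3), is ensuring that the Crainic-Moerdijk-style compact support behaves well under pullback along the closed immersion $i$, which is precisely the role of the properness assumption on $i$.
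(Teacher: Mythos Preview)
Your approach is essentially the paper's: reduce each assertion to its classical analogue on the separated pieces $\tX_{\vec i}$, then glue through the Mayer--Vietoris total complex. The paper dismisses (1) as clear and treats (3) exactly as you do, by invoking base change for $(f|_{\tX_{\vec i}})_*$ on each piece.

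The one place where you are slightly glib is (2). You write that the wedge product of \S\ref{sec: wedge product} ``is built locally and therefore distributes over the Mayer--Vietoris decomposition,'' but that wedge product is defined abstractly via the K\"unneth isomorphism and derived diagonal pullback $\mathbb L\Delta^*$, not componentwise. The paper's proof spends its effort precisely on realizing this abstract product in the $MV_c/\text{Dolbeault}$ model: one has to check that $\Delta^* = \mathbb L\Delta^*$ on the Dolbeault complexes (this is where Malgrange's theorem on the flatness of $C^\infty_{\tX}$ over $\cO_{\tX}$ enters) and that the natural transformation $\eta:\mathcal{MV}\to\mathrm{Id}$ collapses the second factor to give the formula $(a_{\vec i})\wedge b = (a_{\vec i}\wedge b|_{\tX_{\vec i}})$. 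Once you have that explicit formula, the projection formula is indeed the classical one on each piece, exactly as you say. So your outline is correct, but make sure you can supply the commuting diagram identifying the abstract wedge with the componentwise one rather than asserting it.
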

      \begin{proof} (1) is clear.
     For (2),  let $\eta: \mathcal MV \to Id$ be the natural isomorphism of functors induced by the summing map.
     Then we have the following realization of the abstract wedge product in $D^+( \bfk )$.
\tiny
           \[
       \begin{tikzcd}\label{diag: MV C wedge} 
        \RR \Gamma _{c} (\Omega _{\tX}^\bullet ,  d\tW _1 ) \ot_{\bfk} \RR \Gamma  (\Omega _{\tX}^\bullet , d\tW _2 ) \ar[r] \ar[d, "{\mathbb L \Delta^*}"] &   MV_c(\{ \tX_i\}_i, (\Dolbb, \bar{\partial} + d\tW_1 )) \ot_{\bfk}   
        \Gamma(\mathcal{MV} (\{ \tX_i\}_i, (\Dolbb, \bar{\partial} + d\tW_2 )) \ar[d, "{\Delta^* = \mathbb L \Delta^* \text{ (by Malgrange's Theorem) }}"]  \\
                 \RR \Gamma _{c} (\Omega _{\tX}^\bullet \ot_{\cO_{\tX}} \Omega _{\tX}^\bullet,  d\tW _1 \boxplus d\tW_2)  \ar[d, "="]  \ar[r] & \Gamma_{c} (\mathcal{MV} (\{ \tX_i\}_i, (\Dolbb, \bar{\partial} + d\tW_1 )) \ot_{\cO_{\tX}}   \mathcal{MV}(\{ \tX_i\}_i, (\Dolbb, \bar{\partial} + d\tW_2 )))\ar[d, "{\Gamma_c(Id \otimes \eta)}"]  \\
         \RR \Gamma _{c} (\Omega _{\tX}^\bullet \ot_{\cO_{\tX}} \Omega _{\tX}^\bullet,  d\tW _1 \boxplus d\tW_2)  \ar[d, "{\wedge}"]  \ar[r] & MV_{c} (\{ \tX_i\}_i, (\Dolbb \otimes_{\cO_{\tX}} \Dolbb, (\bar{\partial} + d\tW_1) \ot 1 + 1\ot (\bar{\partial}  + d\tW_2)) \ar[d, "{\wedge}"] \\
          \RR \Gamma _{c} (\Omega_{\tX}^{\bullet} , d\tW_1 + d\tW_2 )    \ar[r]             
                & MV_{c} (\{ \tX_i\}_i, (\Dolbb , \bar{\partial} + d\tW_1 + d\tW_2))   \\
             \end{tikzcd}
             \]
             \normalsize     
where  $b|_{\tX_{\vec{i}}} := \eta(b|_{\tX_i})$ denotes the restriction of the class $b$ to the separated DM stack $\tX_i$ which lies in $\Gamma_c(\tX_i, (\Dolbb, \partial + dW_2))$.
  The right hand side of the diagram identifies the abstract wedge product as
  \begin{equation} \label{eq: wedge MV}
  (a_{\vec{i}} )_{\vec{i}} \wedge b =  (a_{\vec{i}} \wedge b|_{\tX_{\vec{i}}})_{\vec{i}}.
  \end{equation}    
       Hence
             \begin{align*}
             f_* (a \wedge f^*b) & = f_* (a_{\vec{i}}\wedge  f^*b|_{\tX _{\vec{i}}}) & \text{by \eqref{eq: wedge MV}}\\
             & = f_* (a_{\vec{i}}) \wedge  b|_{\tM _{j_{\vec{i}}}} & \text{by the usual projection formula} \\
             & = f_*a \wedge b & \text{again by \eqref{eq: wedge MV}.}
             \end{align*}

       (3) follows from the base change property for $(f|_{\tX_{\vec{i}}})_{ *}$ (this holds for any $C^\infty$ map).  
        \end{proof}

\begin{Cor}\label{cor: base change Poincare}

Consider a fiber product of pure-dimensional smooth complex analytic DM stacks 
\[ \xymatrix{ \tY \ar[r]^{j} \ar[d]_{g} & \tX \ar[d]^{f} \\
         \tN \ar[r]_{i} & \tM }\] 
         with $f$ smooth and $i$ proper. 
         Let $\tW$ be a regular function on $\tM$ with $\tW |_{\tN} = 0$ and let $\tZ$ be a proper closed substack of
         $\tX$.    
         Fix $\eta_{{\tN}} \in \HH^{2(\dim \tM - \dim \tN)}  (\tM, (\Omega ^{\bullet}_{\tM}, - d\tW)) $ and assume that $\int _{\tM} b \wedge \eta _{\tN} = \int_{\tN} i^*b $
          for every $b\in \HH^*_{c} (\tM, (\Omega ^{\bullet}_{\tM}, d\tW ))$.
         Then for any $a \in \HH^{*}_{\tZ}(\tX, (\Omega ^{\bullet}_{\tX}, f^*d\tW))$, we have
         \begin{eqnarray*}\label{eq:base change Poincare} \int _{\tX} a \wedge f^* \eta _{\tN} = \int _{\tY} j^* a  .             \end{eqnarray*}
       
\end{Cor}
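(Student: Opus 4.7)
The strategy is to push $a$ forward along $f$ using the integration-along-the-fiber map from \eqref{eq: pushforward map}, apply the defining property of $\eta_\tN$, use base change to rewrite the pullback of this pushforward, and finally invoke the functoriality of the trace to reduce to $\int_\tY j^*a$.

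First I would pass from local to compactly-supported cohomology. Since $\tZ$ is a proper closed substack of $\tX$, the natural map $\HH^*_\tZ(\tX,(\Omega^\bullet_\tX, f^*d\tW)) \to \HH^*_c(\tX,(\Omega^\bullet_\tX, f^*d\tW))$ sends $a$ to a class, still denoted by $a$, for which $\int_\tX a \wedge f^*\eta_\tN$ is computed as the composition of the abstract wedge product with the trace $\int_\tX$ defined via the Mayer-Vietoris model. Note that $\tW|_\tN = 0$ implies $f^*\tW|_\tY = 0$, so the target of $g_*$ has trivial superpotential, and likewise $j^*a$ lies in a group to which integration makes sense after noting that $\tZ \cap \tY = j^{-1}(\tZ)$ is proper over $\bfk$ (it is the preimage under the proper map $i$ of the proper $\tZ$-to-point image).

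Next, I would carry out the following chain of equalities:
\begin{align*}
\int_\tX a \wedge f^*\eta_\tN
&= \int_\tM f_*(a \wedge f^*\eta_\tN) && \text{by functoriality, Lemma \ref{lemma: integration along the fiber} (1)} \\
&= \int_\tM f_*(a) \wedge \eta_\tN && \text{by the projection formula, Lemma \ref{lemma: integration along the fiber} (2)} \\
&= \int_\tN i^* f_*(a) && \text{by the hypothesis on $\eta_\tN$} \\
&= \int_\tN g_* j^*(a) && \text{by base change, Lemma \ref{lemma: integration along the fiber} (3)} \\
&= \int_\tY j^*a && \text{by functoriality, Lemma \ref{lemma: integration along the fiber} (1)}.
\end{align*}

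The main subtlety to verify carefully is the third step (invoking the hypothesis on $\eta_\tN$): the hypothesis is stated for $b\in \HH^*_c(\tM,(\Omega^\bullet_\tM, d\tW))$, and I must confirm that $f_*(a)$ indeed lands in this group, which follows from \eqref{eq: pushforward map} together with the fact that the trace appearing in the hypothesis agrees with the trace we are using (both coming from the Mayer-Vietoris construction of \S\ref{sec: smooth case}). A secondary point is that the base-change step requires $i$ to be proper so that $g_*j^*a$ is interpreted in compactly-supported cohomology of $\tN$, which is precisely the part of Lemma \ref{lemma: integration along the fiber} (3) giving a compactly supported statement when $i$ is proper. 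All of the other steps are direct applications of the already-established results.
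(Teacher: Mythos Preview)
Your proof is correct and follows essentially the same approach as the paper: both arguments run the chain $\int_\tX a\wedge f^*\eta_\tN = \int_\tM f_*a \wedge \eta_\tN = \int_\tN i^*f_*a = \int_\tN g_*j^*a = \int_\tY j^*a$, invoking functoriality, the projection formula, the hypothesis on $\eta_\tN$, and base change from Lemma~\ref{lemma: integration along the fiber} in the same order. Your version simply makes the intermediate step $\int_\tM f_*(a\wedge f^*\eta_\tN)$ explicit before applying the projection formula, whereas the paper merges these two steps into one equality.
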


\begin{proof}
We assert the following sequence of equalities 
\begin{align*}    \int _{\tX} a \wedge f^* \eta_{\tN}  
 = \int _{\tM} f_{ *} {a}  \wedge   \eta_{\tN}  =  \int _{\tN}      i^* f_* {a} 
 =  \int _{\tN}     g_*  j^* {a} 
 =    \int _{\tY}    j^* a .
\end{align*} 
The first equality is  by the projection formula and the functoriality in Lemma \ref{lemma: integration along the fiber}. 
The second equality uses the assumption explicitly stated in the lemma. 
The last two equalities follow from the base change and the functoriality in Lemma \ref{lemma: integration along the fiber} respectively.
\end{proof}

   \begin{Lemma} \label{lemma: projection for Q}
    Let $\pi: Q \to \tX$ be a vector bundle on a smooth complex analytic DM stack $\tX$ and let $\tZ$ be a proper closed substack of $\tX$. Then
 for $a\in H^*_{\tZ} (\tX, (\Omega ^{\bullet}_{\tX} , d\tW)) $ and 
   $b \in  H^*_{\tX} (Q, (\Omega ^{\bullet}_{Q}, - \pi^* d\tW))$ we have
   \[ \int _{Q} \pi^* a \wedge b = \int _{\tX} a \wedge \pi _* b , \]
   where $\pi^* a \in H^*_{\pi^{-1}\tZ }(Q, (\Omega ^{\bullet}_{Q} , d\tW))$, $\pi_* b \in H^* (\tX, (\Omega ^{\bullet} _{\tX}, -d\tW ))$.
 \end{Lemma}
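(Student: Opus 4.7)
The plan is to reduce the statement to the functoriality and projection formula for integration along the fiber established in Lemma \ref{lemma: integration along the fiber}, with a preliminary step to define $\pi_*b$ despite $b$ not having compact support in $Q$.

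First I would construct the pushforward $\pi_*b \in \HH^*(\tX,(\Omega^\bullet_\tX,-d\tW))$. The class $b$ is supported on the zero section $s\colon \tX\hookrightarrow Q$, which is fiberwise a single point inside $\pi^{-1}(x)\cong \mathbb A^r$ (where $r=\rank Q$). Choosing a Mayer-Vietoris cover $\{\tX_i\}$ of $\tX$ by separated opens and pulling back to $\{Q_i=\pi^{-1}(\tX_i)\}$, each $b|_{Q_i}$ has fiberwise compact (in fact point) support, so classical integration along the fibers of the smooth map $\pi|_{Q_i}$ yields a class $\pi_*(b|_{Q_i})\in \HH^*(\tX_i,(\Omega^\bullet_{\tX_i},-d\tW|_{\tX_i}))$. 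The base-change property of fiber integration (Lemma \ref{lemma: integration along the fiber} (3)) shows these local pushforwards are compatible on overlaps $Q_i\cap_Q Q_j$, hence glue to a global class $\pi_*b$.

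Next, I would observe that $\pi^*a\wedge b$ is supported on $\pi^{-1}\tZ\cap \tX=\tZ$, which is proper over $\mathrm{Spec}\,\bfk$. Thus $\pi^*a\wedge b$ defines a class in the compactly supported Hodge cohomology $\HH^*_c(Q,(\Omega^\bullet_Q,0))$, and by the functoriality of integration (Lemma \ref{lemma: integration along the fiber} (1)) applied to $\pi\colon Q\to \tX$ followed by $\tX\to\mathrm{Spec}\,\bfk$,
\[
\int_Q \pi^*a\wedge b \;=\; \int_\tX \pi_*(\pi^*a\wedge b).
\]
It then remains to identify $\pi_*(\pi^*a\wedge b)$ with $a\wedge \pi_*b$. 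On the Mayer-Vietoris cover, $\pi^*a\wedge b|_{Q_i}$ has fiberwise compact support, and the classical projection formula for the smooth morphism $\pi|_{Q_i}$ (the same input that underlies Lemma \ref{lemma: integration along the fiber} (2)) gives
\[
\pi_*(\pi^*a\wedge b)\big|_{\tX_i}=a|_{\tX_i}\wedge \pi_*(b|_{Q_i})=(a\wedge \pi_*b)|_{\tX_i}.
\]
Using once more the compatibility of the local pushforwards on overlaps, these glue to the global identity $\pi_*(\pi^*a\wedge b)=a\wedge \pi_*b$, which combined with the display above proves the lemma.

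The main obstacle is the setup rather than the calculation: Lemma \ref{lemma: integration along the fiber} (2) is stated with the compact-support hypothesis on one of the two wedge factors, whereas in our situation neither $\pi^*a$ nor $b$ is compactly supported individually (only their product has proper support in $\tZ$). The trick is that $b$ has \emph{vertically proper} support (supported on the zero section), which is the weaker condition actually needed for fiber integration to make sense; once we recognize this and use it to define $\pi_*b$ directly at the cochain level on a Mayer-Vietoris cover, the remaining steps are routine bookkeeping with the twist $-\pi^*d\tW$, which is constant along the fibers of $\pi$ and hence passes through the integration without incident.
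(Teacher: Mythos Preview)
Your proposal is correct and follows essentially the same approach as the paper: the paper's proof simply says to repeat the argument of Lemma~\ref{lemma: integration along the fiber}~(2) (the Mayer--Vietoris/Dolbeault realization of the wedge product followed by the classical local projection formula) with the support conditions adjusted, which is exactly what you do. Your write-up is in fact more explicit than the paper's, particularly in spelling out how the fiberwise-compact support of $b$ on the zero section allows $\pi_*b$ to be defined at all.
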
            
 
  \begin{proof}
 The proof is the same as that of Lemma~\ref{lemma: integration along the fiber} (2).  The only modification is by replacing the support conditions. 
 \end{proof}

\subsection{The separated case}

Let us now drop the assumption that $f: \tX \to \tM$ is smooth.  Instead, suppose that
there is an open immersion $\tX \to \tY$, where $\tX, \tY$ are smooth DM stacks of finite type, and a {\em proper} map $g: \tY \to \tM$ extending $f$.
Then applying Nironi's Grothendieck-Serre duality \cite{Nironi}  for DM stacks to the complex $(\wedge ^{\bullet} T_{\tY} , \iota _{f^*d\tW_{\tM}})$, we obtain 
an algebraic version of the above map 
$$  f_* :  \HH ^* _{\tZ}( \tX, (\Omega _{\tX }^{\bullet} , f^* d\tW_{\tM})) \to \HH^{*-2\dim f} (\tM, (\Omega _{\tM }^{\bullet}, d\tW_{\tM} )) , $$
where $\tZ$ is a closed substack of $\tX$ which is proper over $\bfk$.

\begin{Rmk}
The existence of an open immersion $\tX \to \tY$ as above implies that $f$ is separated since $\tY$ is separated over $\tM$.  
On the other hand, at least for algebraic spaces, Nagata's Theorem implies the converse, i.e., that such an open immersion exists whenever $f$ is separated \cite{CLO}.
\end{Rmk}

\section{Factorizations and twisted Hodge cohomology}\label{sec: B}

\subsection{Wedge product for the twisted Hodge complex} \label{sec: wedge product}

Let $\tX$ be a smooth DM stack with two global functions $\tW_1, \tW_2$ and two closed substacks $\tZ_1, \tZ_2$. There is a wedge product operation defined by the composition of the following maps
\begin{equation} \label{kunneth} 
\begin{array}{ccll}
& & \HH^*_{\tZ_1}(\tX, (\Omega ^{\bullet}_{\tX}, d\tW_1 )) \ot _{\bf k}\HH^*_{\tZ_2}(\tX, (\Omega ^{\bullet}_{\tX}, d\tW_2 ))  & \\
&  \xrightarrow{\sim} & \HH^*_{\tZ_1 \times \tZ_2} (\tX \times \tX, ( \Omega ^{\bullet}_{\tX}, d\tW_1) \boxtimes _{\cO_{\tX}}  ( \Omega ^{\bullet}_{\tX}, d\tW_2) ) 
& \text{ by K\"unneth isomorphism}       \\
& \to  &  \HH^*_{\tZ_1 \cap \tZ_2}(\tX, ( \Omega ^{\bullet}_{\tX}, d\tW_1) \ot _{\cO_{\tX}}  ( \Omega ^{\bullet}_{\tX}, d\tW_2) )  & \text{ by diagonal pullback}\\
& \xrightarrow{\wedge} & \HH^*_{\tZ_1 \cap \tZ_2}(\tX,  \Omega ^{\bullet}_{\tX}, d(\tW_1 + \tW_2))  &  \text{ by wedge product. }  \end{array} \end{equation}

\subsection{Tensor product for factorizations} \label{sec:tensor}
Let $ D_{fl}(\tX, \tW)$ denote the full subcategory of the co-derived category of $\cO_{\tX}$-module factorizations consisting of objects which are co-quasi isomorphic to objects with flat components.   There is a tensor product functor (see \cite[{\S}B.2.2]{EP}) 
\begin{align*}
\bullet \ot_{\cO_\tX}^{\mathbb L} \bullet : D_{fl}(\tX, \tW_1) \times D(\tX, \tW_2) & \to D(\tX, \tW_1 + \tW_2) \\
(\mathcal F, \mathcal G) & \mapsto \cF' \ot _{\cO_\tX} \cG
\end{align*}
where $\cF'$ is a flat replacement of $\cF$.
From this we deduce
\begin{Lemma}
Let $\tX$ be  a smooth DM stack and $\tW$ be a global function. Let $\cR$ be a resolution of $\cO_{\tX}$ and $\cF \in D_{fl}(\tX, \tW)$ be a factorization.  Then
\begin{equation}
\cF \cong \cF \ot^{\mathbb L}_{\cO_{\tX}} \cR.
\label{eq: fact res}
\end{equation}
\end{Lemma}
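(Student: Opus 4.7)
The plan is to reduce the claim to showing that the natural morphism induced by the quasi-isomorphism between $\cR$ and $\cO_{\tX}$ is a co-quasi-isomorphism of factorizations. By definition of $D_{fl}(\tX,\tW)$, I may replace $\cF$ by a co-quasi-isomorphic factorization $\cF'$ with flat components, so that $\cF \ot^{\mathbb L}_{\cO_{\tX}} \cR = \cF' \ot_{\cO_{\tX}} \cR$. The quasi-isomorphism $\cR \to \cO_{\tX}$ (or $\cO_{\tX} \to \cR$, depending on the direction of the resolution) induces a morphism of factorizations $\cF' \ot_{\cO_{\tX}} \cR \to \cF'$ (respectively $\cF' \to \cF' \ot_{\cO_{\tX}} \cR$), and it is this map I would prove is a co-quasi-isomorphism.

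To verify this, I would analyze the cone of the natural morphism. The cone fits into a short exact sequence with $\cF' \ot_{\cO_{\tX}} \cN$, where $\cN$ denotes the cone of $\cR \to \cO_{\tX}$, an acyclic complex of $\cO_{\tX}$-modules. Thus the question reduces to: given a factorization $\cF'$ with flat components and an acyclic complex $\cN$, is $\cF' \ot_{\cO_{\tX}} \cN$ co-acyclic as a factorization of $\tW$? For bounded $\cN$, this follows by induction on the cohomological amplitude: any short exact sequence $0 \to \cN' \to \cN \to \cN'' \to 0$ of $\cO_{\tX}$-modules induces a short exact sequence of factorizations (using the flatness of the components of $\cF'$), and the totalization of such a short exact sequence is co-acyclic by definition of the co-derived category.

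For unbounded $\cN$ (as can occur for Godement-type resolutions), I would write $\cN$ as a filtered colimit of its canonical truncations $\tau_{\leq n} \cN$, each of which is acyclic and bounded above. Since tensor products commute with filtered colimits and filtered colimits of co-acyclic factorizations remain co-acyclic (a standard feature of the co-derived category, cf.\ Positselski), one concludes that $\cF' \ot_{\cO_{\tX}} \cN$ is co-acyclic, hence the original map is a co-quasi-isomorphism.

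The main obstacle is the co-acyclicity (rather than mere acyclicity) of $\cF' \ot_{\cO_{\tX}} \cN$, since co-quasi-isomorphism is genuinely stronger than quasi-isomorphism of the folded complexes when one is outside the bounded setting. Carrying out the filtered colimit argument rigorously requires checking that the relevant colimit of factorizations exists and commutes with the tensor product, and that co-acyclicity is indeed preserved; in the applications to the Godement--Thom--Sullivan resolutions used in this paper, an alternative route is to exploit the fact that $\tX$ is a finite-type DM stack of bounded cohomological dimension to reduce directly to a bounded situation at the outset.
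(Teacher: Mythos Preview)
Your argument is correct but takes a substantially different route from the paper's. The paper's proof is a single sentence: since the derived tensor product functor
\[
\bullet \ot_{\cO_\tX}^{\mathbb L} \bullet : D_{fl}(\tX, \tW) \times D(\tX, 0) \to D(\tX, \tW)
\]
is well-defined on the co-derived categories (citing \cite{EP}, \S B.2.2), and since a resolution $\cR$ of $\cO_{\tX}$ represents the same object as $\cO_{\tX}$ in $D(\tX,0)$, the isomorphism $\cF \ot^{\mathbb L} \cR \cong \cF \ot^{\mathbb L} \cO_{\tX} \cong \cF$ is immediate. In other words, the paper outsources to Efimov--Positselski precisely the statement you unwind by hand: that tensoring with a flat factorization preserves co-acyclicity.

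Your approach is self-contained and makes explicit what is happening inside the black box, namely the reduction to co-acyclicity of $\cF' \ot_{\cO_{\tX}} \cN$ for $\cN$ an acyclic complex, handled by dévissage for bounded $\cN$ and a telescope/colimit argument for bounded-below $\cN$. One small clarification: when you say each $\tau_{\le n}\cN$ is ``acyclic and bounded above,'' your bounded-case induction actually needs $\tau_{\le n}\cN$ bounded on both sides, which holds because $\cN$ (the cone of $\cO_{\tX}\to\cR$) is bounded below in all the applications here; you implicitly acknowledge this at the end. The paper's route is shorter; yours has the advantage of not relying on the reader having \cite{EP} at hand, and of making transparent exactly where flatness and the co-derived (rather than ordinary derived) framework enter.
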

\begin{proof}
As the functor is well-defined, the lemma is the special case where  $\tW_1 = \tW, \tW_2 = 0$ and  $\cG = \cR$.
\end{proof}

\subsection{Super trace for factorizations}\label{sec:super trace}
Consider a DM  stack $\tX$. 
Let $\tW$ be a regular function on $\tX$ and $\cH$ and $\cG$ be $\cO_{\tX}$-factorizations of $(\tX,\tW)$.
\begin{Def} \label{def: perfect}
We say that $\cH$ is {\em perfect} if there exists a surjective, \'etale map from a  variety $T$ to $\tX$ 
such that $\cH$ is isomorphic in the derived category $D(T, \tW |_T)$ of $\cO_{\tX}$-factorizations for $\tW |_T$ 
to a factorization with locally-free components of finite rank.
\end{Def}

\begin{example}
Any virtual factorization $\mathbb K( \alpha, \beta)$ is perfect (see Proposition~\ref{prop: perfect})
\end{example}

Let $\cF$ be a $\cO_{\tX}$-factorization for the $0$ function. 
We have $\mathrm{Hom}$-$\otimes$ adjunction:
\begin{equation} \label{homtensor}
\text{Hom}_{D(\tX,\tW)}(\cF \otimes^{\mathbb L }_{\cO_{\tX}}  \cH, \cG) \xrightarrow{\phi} \text{Hom}_{D(\tX, 0)}(\cF, \mathbb R \cH om(\cH,\cG)).
\end{equation}
\
\begin{Rmk}
In our case, $\cH = \cG = \mathbb K( \alpha, \beta)$  is a virtual factorization  hence $\cH,\cG$ are both perfect and $\cO_{\tX}$-flat.
In particular,  $\cF \otimes^{\mathbb L }_{\cO_{\tX}}  \cH = \cF \otimes_{\cO_{\tX}}  \cH$.  On the other hand, $\mathbb R \cH om(\cH,\cG)$ is less straight-forward.  
It can be defined, for example, by taking an injective replacement of $\cG$.
\end{Rmk}
Setting $\cF = \mathbb R \cH om(\cH,\cG)$ we get an evaluation map
\[
ev : \mathbb R \cH om(\cH,\cG) \otimes^{\mathbb L }_{\cO_{\tX}} \cH \to \cG
\] 
defined as $ev= \phi^{-1}(Id_{ \mathbb R \cH om(\cH,\cG)})$.

Setting $\cF = \cH^\vee \otimes^{\mathbb L }_{\cO_{\tX}} \cH$ and $\cG = \cH$ in \eqref{homtensor} yields a map
\[
\psi: \cH^\vee \otimes^{\mathbb L }_{\cO_{\tX}} \cH \to \RR \cH om(\cH,\cH).
\]
defined as $\psi= \phi(ev \otimes Id_\cH)$.

\begin{Lemma}
If $\cH$ is perfect then $\psi$ is an isomorphism in $D(\tX,\tW)$.
\end{Lemma}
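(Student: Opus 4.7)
The plan is to reduce the question to the case of factorizations with locally-free components of finite rank, where the statement becomes an elementary check on the nose.

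First I would observe that being an isomorphism in the co-derived category $D(\tX,\tW)$ can be detected after pulling back along a surjective \'etale cover $g : T \to \tX$: by definition a morphism is an isomorphism in $D(\tX,\tW)$ precisely when the underlying cohomology sheaves agree, and sheaf isomorphisms can be checked \'etale locally. Furthermore, both $(-)^\vee$ and $\mathbb R\cH om(-,-)$ commute with flat base change (for the latter, use a flat replacement of the first argument so that the derived internal Hom is just the naive internal Hom of factorizations, and note that the \'etale pullback $g^*$ is exact and commutes with internal Hom of $\cO_{\tX}$-modules). Similarly $\otimes^{\mathbb L}_{\cO_{\tX}}$ commutes with $g^*$. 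Hence the formation of $\psi$ commutes with $g^*$, and by the definition of perfection (Definition~\ref{def: perfect}) we may replace $\tX$ by a cover on which $\cH$ is represented by a factorization $\cH'$ whose components $\cH'{}^0$, $\cH'{}^1$ are locally-free of finite rank.

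Second, once the components are locally-free of finite rank, the factorization $\cH'$ is itself flat, so $\cH'{}^\vee \otimes^{\mathbb L}_{\cO_{\tX}} \cH' = \cH'{}^\vee \otimes_{\cO_{\tX}} \cH'$ as an honest factorization for $\tW + (-\tW) = 0$. Likewise $\mathbb R\cH om(\cH',\cH')$ is computed by the naive internal Hom factorization $\cH om(\cH',\cH')$, since $\cH'$ is a bounded complex of locally-free sheaves in each $\ZZ/2$-degree (any injective replacement of the target yields a quasi-isomorphic complex after applying $\cH om(\cH', -)$, exactly as in the derived category of $\cO_{\tX}$-modules). Under these identifications, $\psi$ becomes the classical evaluation/trace map
\[
\cH'{}^\vee \otimes_{\cO_{\tX}} \cH' \;\longrightarrow\; \cH om(\cH',\cH'),\qquad \varphi \otimes s \mapsto (t \mapsto \varphi(t)\,s),
\]
which is a well-known isomorphism of $\cO_{\tX}$-modules for locally-free sheaves of finite rank; a direct check shows it commutes with the factorization differentials, so it is an isomorphism of factorizations.

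The main (mild) obstacle will be verifying step one cleanly, namely that both sides of $\psi$ are compatible with \'etale pullback. For the left-hand side this is standard (dual and derived tensor both commute with flat base change). For the right-hand side one must unwind that $\mathbb R\cH om(\cH,\cH)$, defined via an injective replacement, pulls back correctly under $g^*$; this follows from flatness of $g^*$ and the fact that after reducing $\cH$ to a perfect representative, the derived and underived internal Homs agree. Everything else is formal.
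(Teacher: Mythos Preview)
Your proposal is correct and follows essentially the same route as the paper: reduce to an \'etale local situation where $\cH$ has locally-free finite-rank components, and then observe that $\psi$ is componentwise the classical isomorphism $\cH^\vee \otimes_{\cO_{\tX}} \cH \cong \cH om(\cH,\cH)$. One small caution: your justification that ``a morphism is an isomorphism in $D(\tX,\tW)$ precisely when the underlying cohomology sheaves agree'' is the criterion for the ordinary derived category, not the coderived one; the paper sidesteps this by citing \cite[Corollary~2.2.7]{MF}, and in practice it is harmless here since the objects involved are perfect, but you should either invoke that reference or note why the coderived/derived distinction collapses in this situation.
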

\begin{proof}
It is enough to show that $\psi$ is an isomorphism locally (see, e.g., \cite[Corollary 2.2.7]{MF}).  
Since $\cH$ is perfect, this means we may assume that $\cH$ is a complex of locally free coherent sheaves. 
In this case $\psi$ is component wise, the usual isomorphism from $ \cH^\vee \otimes_{\cO_{\tX}} \cH$ to $\cH om(\cH,\cH)$.
\end{proof}

Hence, when $\cH$ is perfect we may define a supertrace map
$$\mathrm{str}: \mathbb R \cH om(\cH,\cH) \to \cO_{\tX}$$
    by $\mathrm{str} = ev \circ \psi^{-1} $.

Let $\cS$ be a sheaf of abelian groups on $\tX$.  For any \'etale map $U \to \tX$ where $U$ is a scheme, we define the \emph{subsheaf of }$\cS$\emph{ with supports in }$\tZ$ by
\[
\gamma_\tZ (\cS)(U) := \Ker (\cS (U) \to \cS (U \times_{\tX} (\tX-\tZ)).
\]
This is a left exact functor and we define the \emph{local cohomology with support along }$\tZ$ as the hypercohomology of  corresponding right derived functor  
\[
H^*_{\tZ}(\tX, \cS) := \HH^*(\tX, \RR\gamma_\tZ(\cS)).
\]
\label{sheafylocalcohom}
In general for a complex $\cS$ of sheaves of abelian groups on $\tX$ or a factroization on $\tX$ for the $0$ function, 
we defined $H^*_{\tZ}(\tX, \cS) = H^*\Gamma _{\tZ} ( \God \cS)$ where 
$\God \cS$ is the Godement resolution of $\cS$; see \S \ref{sec: Godement} and Lemma \ref{lem: God acyclic}

Let $\cF$ be a factorization of the $0$ function and let $\cH$ be a perfect object of $D(\tX,\tW)$.
Consider $\mathrm{str}\ot \mathrm{id}_\cF: \RR\cE nd (\cH) \ot^{\mathbb L }_{\cO_{\tX}}  \cF \to \cO_{\tX} \ot^{\mathbb L }_{\cO_{\tX}} \cF  = \cF $. 
This induces a map \[ \RR\gamma_{\tZ}(\mathrm{str}\ot \mathrm{id}_\cF): \RR\gamma _{\tZ} (\RR\cE nd (\cH) \ot^{\mathbb L }_{\cO_{\tX}}  \cF)  \to \RR\gamma _{\tZ} (\cF). \]

\subsection{Atiyah class of a factorization} \label{subsec: Atiyah class}
We now define the Atiyah class of a factorization following \cite{KP}. In this subsection $\tX$ can be any smooth DM stack
and $\tW$ can be any regular function on $\tX$. 

\subsubsection{}
Let $\Omega ^{d\tW}_\tX$ denote the factorization
\begin{eqnarray}\label{Omega dW}  \Omega _\tX \xrightarrow{0} \cO _{\tX} \xrightarrow{d\tW} \Omega _\tX  \end{eqnarray}
for $(\tX, 0 )$. 
For a  factorization $\cH$ on $(\tX, \tW)$, we consider the tensor product of factorizations
$ \cH \ot_{\cO_{\tX}} \Omega _\tX^{d\tW}$. 

Define the 1-st jet factorization of $\cH$ as follows. 
As a $\ZZ/2\ZZ$-graded $\CC$-vector space, it is a sum $ \Omega ^{d\tW}_\tX \ot_{\cO_{\tX}} \cH [-1] \oplus \cH $.
Its $\cO _\tX$-module structure is, by definition,
\[   f \cdot (\sigma, s ) = (f \sigma + df  {\wedge} s , f s ) \text{ for } f \in \cO _\tX , \  (\sigma, s ) \in \Omega ^{d\tW}_\tX \ot \cH [-1] \oplus \cH . \]
The factorization structure for $\tW$ is defined by
\[ (\sigma , s) \mapsto (\delta _{ \Omega ^{d\tW}_\tX  \ot \cH } (\sigma) + 1\ot s, \delta _\cH(s) ) , \]  
where $\delta _{\mathcal E}$ means the boundary map of the factorization $\mathcal E$.

This yields a short exact sequence of $\cO_\tX$-module factorizations of $\tW$
\begin{eqnarray}\label{jet seq} 0 \to  \Omega ^{d\tW}_\tX \ot \cH [-1] \to   \Omega ^{d\tW}_\tX \ot \cH [-1] \oplus \cH \to \cH \to 0  \end{eqnarray}
and hence a map 
\[
\widehat{at}(\cH) : \cH \to  \Omega ^{d\tW}_\tX \ot_{\cO_{\tX}} \cH
\]
in the derived category of factorizations for $(\tX, \tW)$. 
We call $\widehat{at}(\cH)$ the {\em Atiyah class} of $\cH$ which can also be viewed as an element
\[ \widehat{at}(\cH) \in H^0(\RR\Hom (  \cH ,      \Omega ^{d\tW}_\tX \ot_{\cO_{\tX}} \cH   )) . \]

\begin{Rmk} \label{rmk: functor at}
Consider a map $f : \tY \to \tX$ between smooth DM stacks. 
It is clear that $$f^*\widehat{at}(\cH) = \widehat{at}(\LL f^*\cH) : \mathbb L f^*\cH \to  \Omega ^{df^*\tW}_\tY \ot_{\cO_{\tY}} \mathbb L f^*\cH . $$
\end{Rmk}

\begin{Rmk} \label{rmk: Cech functor}
\begin{enumerate} 
\item Let $E$ be a vector bundle on $\tX$. Then there is an \'etale atlas $Y\to \tX$ where $E|_Y$ has a connection.
Denote by $\vC$ the \v Cech functor giving the \v Cech resolution $\vC E$ of $E$. Then we have a component-wise connection $\nabla$
on $\vC E$, i.e., $\nabla$ is a collection $\{ \nabla ^{(i)}\}_{i\in \ZZ}$ where $\nabla ^{(i)}$ is a usual connection 
$\vC ^i E \to \vC ^i E \ot \Omega _{\tX}^\bullet$. 
The failure whether it is a cochain map is measured by a cochain map $[\nabla, \check{\delta}] : \vC E \to \vC E \ot \Omega _{\tX}^\bullet [1]$ 
which represents the usual Aityah class of $E$. The usual \v Cech representative in literature is  the composition $E \to \vC E \to \vC E \ot \Omega _{\tX}^\bullet [1]$.

\item Suppose that there is a component-wise connection $\nabla :  \cH \to \Omega _{\tX}^1 \ot \cH$.
Using the connection we take a component-wise right splitting $\cO_{\tX}$-module morphism $\cH \to   \Omega ^{d\tW}_\tX \ot \cH [-1] \oplus \cH $ of the form $(\nabla, \mathrm{id}_{\cH})$.
This yields an expression for $\ath \cH$ given by the cochain map 
\begin{equation}\label{eqn: at nabla}  (\mathrm{id}_{\cH} , [\nabla, \delta _{\cH}]) : \cH \to  \Omega ^{d\tW}_{\tX} \ot \cH  = \cH \oplus \Omega _{\tX}^1 [1] \ot \cH  \end{equation}
between factorizations whenever $\cH$ has a connection. 

\item Whenever $\cH$ is a matrix factorization, we can obtain a \v Cech representative of $\ath ({\cH})$ using local connections.

\end{enumerate}
\end{Rmk}

\begin{Rmk}\label{rmk: cech vir fact is flat} 
Starting from the de Rham differenital $d_{dR} : \cO _{\tX} \to \Omega _{\tX}^1$, by applying the Godement resolution followed by the Thom-Sullivan functor 
(see Appendix \ref{sec: TS}) we first obtain an induced connection 
$d_{GdR} : \God \cO _{\tX} \to \God\cO_{\tX}\ot \Omega _{\tX}$ compatible with the cosimplicial maps and hence
an induced connection $d_{ThGdR} : \mathrm{Th}\God \cO _{\tX} \to \mathrm{Th}\God\cO_{\tX}\ot \Omega _{\tX}$ compatible with the
differential of the complexes.
Let $\Kos$ be a Koszul complex. Since it is a complex of vector bundles, a \v Cech resolution $\vC \Kos$ has
a component-wise connection $\nabla _{\vC \Kos}$. We then obtain a component-wise connection $\nabla _{\vC \Kos} \ot \mathrm{id} + \mathrm{id} \ot d_{ThGdR}$ on the complex
$\vC \Th = \vC \Kos\ot \mathrm{Th}\God \cO _{\tX}$. 
Hence, in particular for a virtual factorization $\cH$, we have a \v Cech resolution $\vC (\cO_{\tX}) \ot \cH$ 
(which is flat by flatness of the \v Cech resolution and Remark~\ref{rmk: flat}) of $\cH$
with a connection $\nabla _{\vC\cH}$.
If $\cH_i$, $i=1, 2$ are virtual factorizations,  then $(\cH _1 \ot \vC (\cO _{\tX})) \ot (\cH_2 \ot \vC (\cO _{\tX}))$, which is co-quasi-isomorphic to $\cH_1 \ot \cH_2$,
has a connection $\nabla _{\vC\cH_1} \ot \mathrm{id} + \mathrm{id} \ot \nabla _{\vC\cH_2}$. 
\end{Rmk}

\begin{Rmk}\label{rmk: analytic At} Let $\tX$ be a smooth complex analytic DM stack.   An analytic Atiyah class can be defined for any $\cO_{\tX}$-factorization the same way (by working over the sheaf of analytic functions $\cO_{\tX}$).  We abuse notation and terminology by still simply calling this the Atiyah class and writing $\hat{at} (\cH)$. 
Afterall, these definitions are compatible.  Namely, if $\tX = \tY^{an}$ and $\cH$ is a $\cO_{\tY}$-factorization then 
\[
\widehat{at}(\cH^{an}) = (\widehat{at}(\cH))^{an}.
\] 
This analytic Atiyah class also has the following nice description.  Assume that $\tX$ is separated and that $\cH$ is a locally-free $\cO_{\tX}$-factorization.  Then $\widehat{at}(\cH)$ has a Dolbeault representative (see also \cite[Proposition 1.5]{KP}).
In this case, we may always choose a $C^{\infty}_{\tX}$-connection of $\cH\ot_{\cO_{\tX}} C^{\infty}_{\tX}$,
\[ \nabla: \cH\ot_{\cO_{\tX}} C^{\infty}_{\tX} \ra \cH \ot _{\cO _{\tX}}  (\mathscr{A}^{1, 0}_{\tX} \oplus \mathscr{A}^{0, 1}_{\tX} ) . \]
Note that the $(1,0)$-part $\nabla ^{1, 0}$ of $\nabla$ provides a component-wise splitting of (the analytic analog of) the exact sequence \eqref{jet seq} as $\cO_{\tX}$-modules
after the replacement of  $ \Omega _{\tX}^{d\tW} \ot _{\cO_{\tX}} \cH [-1]  $ by  $ \Omega _{\tX}^{d\tW} \ot _{\cO_{\tX}} \cH  \ot_{\cO_{\tX}} \sA ^{0, \bullet}_{\bar{\partial}} [-1]$.
Therefore the Atiyah class is represented by the Dolbeault cocycle
\begin{eqnarray}\label{At Dol} & & (\mathrm{id} _\cH , [\nabla ^{1, 0} , \delta _\cH  ] - \bar{\partial}(\nabla ^{1, 0}) )  \\ \nonumber & \in & 
\Gamma (\tX, (\mathscr{A}^{\le 1, \bullet }_{\tX} \ot \cE nd \cH , [\delta _\cH, \ ] + d\tW +   \bar{\partial} ) ) . \end{eqnarray}
\end{Rmk}

\begin{Rmk} \label{rmk: top c1 and cat c1}
Let $\tX$ be a smooth complex analytic DM stack.
When $\tW=0$, $\hat{\mathrm{at}}(\cH) = \mathrm{id}_\cH + \mathrm{at} (\cH)$, where  $\mathrm{at} (\cH)$
is the usual Atiyah class of the 2-periodic complex $\cH$ of $\cO_{\tX}$-modules.
If $\cH$ is a factorization whose zero-th component is the line bundle $L$ and other component 0, then the topological first Chern class $c^{top}_1 (L)$ 
is $\frac{i}{2\pi} \mathrm{at} (L)$ (see \cite{Huyb}).
\end{Rmk}

\subsubsection{}
We denote  by $\Upsilon(\Omega ^{\bullet} _{\tX}, d\tW) $ the factorization of $0$ defined as the folding of 
the complex  $(\Omega ^{\bullet} _{\tX}, d\tW)$.
For positive integers consider the $n^{th}$ iteration
\[ \widehat{at}^n(\cH)  : \cH \xrightarrow{\widehat{at}(\cH)} \Omega ^{d\tW}_\tX \ot \cH 
\xrightarrow{ \mathrm{id}_{\Omega ^{d\tW}_\tX} \ot \widehat{at}( \cH )}(\Omega ^{d\tW}_\tX)^{\ot 2} \ot \cH \to ... \to (\Omega ^{d\tW}_\tX)^{\ot n} \ot \cH   \]

Notice that   $(\Omega ^{d\tW}_\tX)^{\ot n}$ is the folding of the complex
\[
(\Omega ^{d\tW}_\tX)^{\ot n} =  \cO \to ... \to ((\Omega_\tX)^{\otimes k})^{\oplus {n \choose k}} \to ... \to (\Omega_\tX)^{\otimes n}
\]
Consider the following map $\phi_n$ of complexes:
\[ \xymatrix{   
\cO  \ar[r]^{ d\tW} \ar[d]_{=}  & (\Omega _{\tX})^{\oplus n} \ar[rr]^{d\tW} \ar[d]_{\sum} & &  (\Omega ^{\otimes 2}_\tX)^{\oplus {n \choose 2}} \ar[r] \ar[d]_{\sum} 
&  ... \ar[r]^{d\tW} & (\Omega ^{\otimes n-1}_\tX)^{\oplus n} \ar[r]^{d\tW} \ar[d]_{\sum} & \Omega ^{\otimes n}_\tX \ar[d]_{\sum} \\
\cO  \ar[r]^{n d\tW} \ar[d]_{=}  & \Omega _{\tX} \ar[rr]^{(n-1)d\tW} \ar[d]_{\frac{1}{n}} & &  \Omega ^2_\tX \ar[r] \ar[d]_{\frac{1}{n (n -1)}} 
&  ... \ar[r]^{2d\tW} & \Omega ^{n-1}_\tX \ar[r]^{d\tW} \ar[d]_{\frac{1}{n(n-1) \cdots 2}} & \Omega ^n_\tX \ar[d]_{\frac{1}{n!}} \\
 \cO  \ar[r]_{d\tW}  &  \Omega _{\tX} \ar[rr]_{d\tW} & &   \Omega ^2_\tX \ar[r]_{d\tW} &  ... \ar[r]_{d\tW} & \Omega ^{n-1}_\tX \ar[r]_{d\tW} & \Omega ^n_\tX  ,} \]
 where $\sum$ means the sum followed by the wedge product, i.e., the natural map $[\cO \to \Omega _{\tX}]^{\ot n} \to \Sym ^n[\cO \to \Omega _{\tX}]$.
Then, abusing notation and denoting by $\phi_n$ the induced map on the foldings tensor with $\cH$, we define, for large enough $n$
\[
e^{\widehat{at}(\cH)} := \phi _n (\widehat{at}^n(\cH)): \cH \to \cH \ot \Upsilon(\Omega ^{\bullet} _{\tX}, d\tW),
\] 
while the left side is stabilized after $n \ge \dim \tX$.
Notice that this agrees with the usual Chern character (i.e., the exponential of the usual Atiyah class) when $w=0$.
 
From now on assume that $\cH$ is perfect. 
View $e^{\widehat{at}(\cH)}$ as a map
\[
e^{\widehat{at}(\cH)}: \cO_{\tX} \to \RR\cE nd \cH \ot \Upsilon(\Omega ^{\bullet} _{\tX}, d\tW) .
\]
If $(\cH, \delta _\cH)$ is supported on a closed substack $\tZ$ (including the case when $\tZ=\tX$),  we have an isomorphism
\[
 \HH^*( \tX, \RR\cE nd \cH \ot \Upsilon(\Omega ^{\bullet} _{\tX}, d\tW))
\xrightarrow{\cong} H^*_{\tZ}( \tX, \RR\cE nd \cH \ot \Upsilon(\Omega ^{\bullet} _{\tX}, d\tW))
\] so that we may regard $e^{\widehat{at}(\cH)}$ as an element of the latter hypercohomology at $* = 0$.
Utilizing the  supertrace map described in \S\ref{sec:super trace} 
we get an induced map  
\[
\HH^0\RR\gamma_\tZ( \mathrm{str} \ot {\mathrm{id}_{\Upsilon(\Omega ^{\bullet} _{\tX}, d\tW)}}) : 
 H^0_Z( \tX, \RR\cE nd \cH \ot \Upsilon(\Omega ^{\bullet} _{\tX}, d\tW)) \to   H^0_\tZ (\tX, \Upsilon(\Omega ^{\bullet} _{\tX}, d\tW) ).
\]
We define the {\em localized Chern character} of $\cH$ as the local cohomology element
\begin{eqnarray} \label{eq: local Chern}
 \ch ^{\tX}_\tZ (\cH)   & := & \HH^0\RR\gamma_\tZ( \mathrm{str} \ot {\mathrm{id}_{\Upsilon(\Omega ^{\bullet} _{\tX}, d\tW)}} ) (e^{\widehat{at}(\cH)} ) \\
& \in &   H^{0} _{\tZ} (\tX, \Upsilon(\Omega ^{\bullet} _{\tX}, d\tW) ) 
 =   H^{even} _{\tZ} (\tX, (\Omega ^{\bullet} _{\tX}, d\tW) ) , \notag \end{eqnarray}
 where $\mathrm{str} \ $ is the super trace operation.

\begin{Rmk} 
We can also define the localized analytic Chern character of a perfect factorization $\cH$ on a smooth complex analytic DM stack $\tX$, using the analytic Atiyah class (see Remark \ref{rmk: analytic At}) of
a perfect $\cO_{\tX}$-factorization $\cH$. Similarly, the definition is compatible with the algebraic one. Thus we again abuse terminology and notation by calling  it simply the localized Chern character and writing it  $\ch ^{\tX}_{\tZ} (\cH)$ if $\cH$ is supported on a complex analytic substack $\tZ$ of $\tX$.
\end{Rmk}

\subsubsection{}
Let $(\cH_1, \delta _{\cH_1})$ be a perfect factorization supported on a closed substack $\tZ_1$.
Let $\cH_2$ be a bounded perfect complex on $\tX$ supported on a closed substack $\tZ_2$.
Consider  $\cH = \cH_1\ot ^{ \LL}_{\cO_{\tX}} \cH_2$
as a perfect factorization for $\tW$---here we take the folding of $\cH_2$.

\begin{Lemma} \label{lemma: at mult} 
Suppose that $\cH_i$ admits a connection and every component of $\cH_i$ is $\cO_{\tX}$-flat for $i=1,2$. Then the following multiplicative properties hold.
\begin{eqnarray}  \label{at mult}      \ath \cH & = &  \ath \cH_1 \ot^{} \mathrm{id}_{\cH_2}  + \mathrm{id} _{\cH_1} \ot^{} \mathrm{at} \cH_2 ;  \end{eqnarray}
\begin{eqnarray}   \label{ch mult}        \ch^{\tX}_{ \tZ_1 \cap \tZ_2} \cH & = & \ch^{\tX}_{\tZ_1} \cH_1 \wedge \ch^{\tX}_{\tZ_2} \cH_2 , \end{eqnarray}
where $\mathrm{at} \cH_2$ is the $\ZZ_2$-folding of the usual Atiyah class for $\cH_2$ considered as a bounded perfect complex 
and $\ch^{\tX}_{\tZ _2} \cH_2$ is the localized Chern character of $\cH_2$ considered as a factorization.
\end{Lemma}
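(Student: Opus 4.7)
The plan is to leverage the connection-based description of the Atiyah class given in Remark~\ref{rmk: Cech functor}(2) and its Dolbeault analogue in Remark~\ref{rmk: analytic At}, reducing both identities to direct computations with component-wise connections. By hypothesis each $\cH_i$ carries a component-wise connection $\nabla_i$; set
\[
\nabla := \nabla_1 \otimes \mathrm{id}_{\cH_2} + \mathrm{id}_{\cH_1} \otimes \nabla_2,
\]
which is a component-wise connection on $\cH = \cH_1 \otimes^{\mathbb L}_{\cO_\tX} \cH_2$ (the derived and underived tensor products agree here because the components of $\cH_1, \cH_2$ are $\cO_\tX$-flat). The total differential on $\cH$ is $\delta_\cH = \delta_{\cH_1}\otimes \mathrm{id}_{\cH_2} + \mathrm{id}_{\cH_1} \otimes \delta_{\cH_2}$, and the graded Leibniz rule yields
\[
[\nabla, \delta_\cH] = [\nabla_1, \delta_{\cH_1}] \otimes \mathrm{id}_{\cH_2} + \mathrm{id}_{\cH_1} \otimes [\nabla_2, \delta_{\cH_2}].
\]
Substituting into the cochain representative \eqref{eqn: at nabla} immediately gives \eqref{at mult}.

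For \eqref{ch mult}, first observe that the two summands $\widehat{at}(\cH_1) \otimes \mathrm{id}_{\cH_2}$ and $\mathrm{id}_{\cH_1} \otimes \mathrm{at}(\cH_2)$ live on disjoint tensor factors of $\mathcal RE nd(\cH) \otimes \Upsilon(\Omega^\bullet_\tX, d\tW) \cong (\mathcal RE nd(\cH_1)\otimes \Upsilon(\Omega^\bullet_\tX, d\tW)) \otimes (\mathcal RE nd(\cH_2) \otimes \Upsilon(\Omega^\bullet_\tX, 0))$; after ordering the endomorphism factors to the left and the form factors to the right, the two summands graded-commute because forms on $\tX$ wedge supercommute with themselves and the endomorphism factors act on different tensor slots. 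Consequently the exponential factors:
\[
e^{\widehat{at}(\cH)} = \bigl(e^{\widehat{at}(\cH_1)} \otimes \mathrm{id}_{\cH_2}\bigr)\cdot \bigl(\mathrm{id}_{\cH_1} \otimes e^{\mathrm{at}(\cH_2)}\bigr),
\]
where the product on the right is wedge product on the form factors and composition on the endomorphism factors. Applying the multiplicativity of the supertrace for tensor products, $\mathrm{str}(A\otimes B) = \mathrm{str}(A)\cdot \mathrm{str}(B)$ (with appropriate signs dictated by the $\ZZ_2$-grading), and the compatibility of the Godement resolution with tensor products of sheaves supported on $\tZ_1$, $\tZ_2$ respectively (giving a class supported on $\tZ_1 \cap \tZ_2$), we conclude
\[
\ch^{\tX}_{\tZ_1 \cap \tZ_2}(\cH) = \ch^{\tX}_{\tZ_1}(\cH_1) \wedge \ch^{\tX}_{\tZ_2}(\cH_2)
\]
where the wedge product is the one in \S\ref{sec: wedge product} realized via the external-to-internal map followed by wedging of forms.

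The main obstacle, and where one must take the most care, is the bookkeeping of supports and signs. Specifically, the identification of $\ch^{\tX}_{\tZ_1 \cap \tZ_2}(\cH)$ with the external wedge product requires that the localized Chern character behave well under the K\"unneth-type decomposition along $\tZ_1, \tZ_2$; this can be checked by applying $\RR\gamma_{\tZ_1\cap \tZ_2}$ to the factorization of $e^{\widehat{at}(\cH)}$ above and using that $\RR\gamma_{\tZ_1\cap \tZ_2} \cong \RR\gamma_{\tZ_1} \circ \RR\gamma_{\tZ_2}$, combined with the fact that supertrace for factorizations is multiplicative on tensor products of perfect objects. The sign conventions in the supertrace formula and in passing between $\Upsilon(\Omega^\bullet_\tX, d\tW) \otimes \Upsilon(\Omega^\bullet_\tX, 0)$ and $\Upsilon(\Omega^\bullet_\tX, d\tW)$ via wedge product must be consistent with those used in the definition \eqref{eq: local Chern}, but no conceptual difficulty arises beyond that.
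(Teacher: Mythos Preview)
Your proof is correct and follows essentially the same route as the paper's: both arguments build the tensor connection $\nabla = \nabla_1\otimes\mathrm{id} + \mathrm{id}\otimes\nabla_2$, use the Leibniz rule to obtain \eqref{at mult} from the cochain representative \eqref{eqn: at nabla}, and then deduce \eqref{ch mult} from the fact that the two summands graded-commute so the exponential factors and the supertrace is multiplicative on tensor products. The only stylistic difference is that the paper records the factorization of the exponential as an explicit combinatorial identity tracking the normalization maps $\phi_n$, namely
\[
\phi_{2n}\bigl(\ath\cH_1\otimes\mathrm{id}_{\cH_2} + \mathrm{id}_{\cH_1}\otimes\mathrm{at}\,\cH_2\bigr)^{2n}
= \sum_{k=0}^{n}\frac{1}{k!}\,\phi_{n+k}(\ath\cH_1)^{n+k}\wedge(\mathrm{at}\,\cH_2)^{k}
\]
for $n\ge\dim\tX$, whereas you phrase the same content abstractly as ``the exponential factors''; your added discussion of supports via $\RR\gamma_{\tZ_1\cap\tZ_2}\cong\RR\gamma_{\tZ_1}\circ\RR\gamma_{\tZ_2}$ is not spelled out in the paper but is harmless and correct.
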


\begin{proof} 
By assumption we have connections $\nabla _{i}$ for each $\cH_i$ and thus a connection $\nabla := \nabla _1\ot \mathrm{id} + \mathrm{id} \ot \nabla _2$ for $\cH := \cH_1 \ot_{\cO_{\tX}} \cH_2$.
Hence we have the expression \eqref{eqn: at nabla} for the Atiyah class of $\cH$. From this 
the first equation \eqref{at mult} is immediate. 
The second equation \eqref{ch mult} follows  from  \eqref{at mult} and the equality
\[ \phi _{2n} ( \ath \cH_1 \ot^{} \mathrm{id}_{\cH_2}  + \mathrm{id} _{\cH_1} \ot^{}  \mathrm{at} \cH_2)^{2n}  = 
            \sum _{k=0}^n   \frac{1 }{k!}   \phi _{n+k} (\ath \cH _1)^{n+k} \wedge (\mathrm{at} \cH_2) ^k   ,\]
            for any $n$ with $n\ge \dim \tX$ (where the identity above uses the Koszul sign rule).
\end{proof}

Consider a map $f : \tY \to \tX$ between smooth DM stacks and let $\tZ ' := f^{-1} \tZ$.

\begin{Lemma} \label{lemma: functor tdch} In $H^{even} _{\tZ '} (\tY, (\Omega ^{\bullet} _{\tY}, f^* d\tW) )$ we have
 $f^* \ch^{\tX}_{\tZ} \cH = \ch^{\tY}_{\tZ'} \LL f^* \cH $.
\end{Lemma}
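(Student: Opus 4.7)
The plan is to reduce the statement to functoriality of each ingredient in the definition of $\ch^{\tX}_{\tZ}$ under pullback, namely the Atiyah class, its exponential, the supertrace, and the local cohomology functor. First, Remark \ref{rmk: functor at} gives us $f^*\widehat{at}(\cH) = \widehat{at}(\LL f^*\cH)$, viewed as a map $\LL f^*\cH \to \Omega ^{df^*\tW}_\tY \ot_{\cO_\tY} \LL f^*\cH$. Iterating and applying $f^*$ to the definition of $\widehat{at}^n(\cH)$ yields $f^*\widehat{at}^n(\cH) = \widehat{at}^n(\LL f^*\cH)$, because tensor products of maps pull back to tensor products of pullbacks. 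Since the maps $\phi_n$ appearing in the definition of $e^{\widehat{at}(\cH)}$ are natural (each is constructed from summation and division by combinatorial constants, operations which commute with $f^*$), we obtain $f^*e^{\widehat{at}(\cH)} = e^{\widehat{at}(\LL f^*\cH)}$.

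Second, I would verify that the supertrace commutes with pullback: for a perfect factorization $\cH$, $f^*\mathrm{str}_{\cH} = \mathrm{str}_{\LL f^*\cH} \circ f^*$ as maps $\LL f^*\RR\cE nd(\cH) \to \cO_\tY$. This follows from the construction of the supertrace as $ev\circ \psi^{-1}$: the evaluation map $ev$ is natural in the pair $(\cH,\cG)$, and the isomorphism $\psi : \cH^\vee \ot^{\LL} \cH \cong \RR\cH om(\cH,\cH)$ is local (given component-wise on a locally-free model), so pulls back to the analogous isomorphism for $\LL f^*\cH$. Perfectness is preserved by pullback, so $\LL f^*\cH$ admits a locally-free replacement along a common \'etale refinement of the atlases.

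Third, I must handle compatibility with the local cohomology $\RR\gamma_\tZ$. For any sheaf of abelian groups $\cS$ there is a natural map $f^{-1}\RR\gamma_\tZ(\cS) \to \RR\gamma_{f^{-1}\tZ}(f^{-1}\cS)$ coming from the inclusion of kernels under the excision sequence, and it extends to a map $\LL f^*\RR\gamma_\tZ(-) \to \RR\gamma_{\tZ'}(\LL f^*(-))$ on complexes (or factorizations) of $\cO_\tX$-modules. Applying this to the morphism $e^{\widehat{at}(\cH)} : \cO_\tX \to \RR\cE nd(\cH)\ot\Upsilon(\Omega^\bullet_\tX,d\tW)$ and to the supertrace yields commutativity of the square that computes $f^*\ch^{\tX}_{\tZ}(\cH)$ and $\ch^{\tY}_{\tZ'}(\LL f^*\cH)$.

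Combining the three steps, $f^*$ intertwines each layer of the definition \eqref{eq: local Chern}, which gives the desired equality in $H^{even}_{\tZ'}(\tY,(\Omega^\bullet_\tY,f^*d\tW))$. The main obstacle I anticipate is bookkeeping for the supertrace step, since $\RR\cE nd(\cH)$ is defined via an injective replacement while $f^*$ is most naturally compatible with flat/locally-free replacements; handling this requires passing to a common locally-free model for $\cH$ on an \'etale atlas (using perfectness) and checking that the various zig-zags of quasi-isomorphisms compose to the identity in the co-derived category. Once a locally-free model is fixed the remaining diagrams are strict commutativities of cochain maps, so no further homotopical subtlety arises.
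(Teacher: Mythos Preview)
Your proposal is correct and follows exactly the same approach as the paper: the paper's proof is the one-line statement that all the maps $\RR\gamma_\tZ(\mathrm{str} \ot \mathrm{id}_{\Upsilon(\Omega^\bullet_\tX,d\tW)})$, $\phi$, and $\widehat{at}(\cH)$ in derived categories commute with $\LL f^*$. You have simply unpacked each of these compatibilities in more detail than the paper does, and your anticipated bookkeeping concern with the supertrace is a reasonable (if minor) point that the paper leaves implicit.
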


\begin{proof} 
This is immediate since all the maps $\RR\gamma_\tZ(\mathrm{str} \ot {\mathrm{id}_{\Upsilon(\Omega ^{\bullet} _{\tX}, d\tW)})}$, $\phi$,  $\widehat{at}(\cH)$
in derived categories  commute with $\LL f^*$.
\end{proof}

 We have the following multiplicative property for the tensor product of factorizations. 
 When $\tX$ is affine and the components of factorizations are vector bundles, it is Theorem 5.17 of Yu \cite{Yu}. 

\begin{Thm} \label{thm: mult chern}
Let $\cH_1$, $\cH_2$ be two perfect factorization on $\tX$ for $\tW_1, \tW_2$, respectively and let $n= \dim \tX$.
Assume that each of the components $\cH_i$, $i=1, 2$, are $\cO_{\tX}$-flat and admit connections.
Then as morphisms in derived category we have the following equality
\begin{equation} \label{eqn: Yu Mult} \phi_n (\ath \cH _1) \wedge \phi _n (\ath \cH_2) = 
 \phi _n (\ath (\cH _1 \ot \cH _2) )  \end{equation}
with the Koszul sign convention.
\end{Thm}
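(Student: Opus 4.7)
The plan is to generalize the argument of Lemma~\ref{lemma: at mult} from the case $\tW_2 = 0$ to the case of two arbitrary superpotentials, following the strategy of Yu's affine computation \cite{Yu}. Since the assertion is an equality of morphisms in the derived category and all components are assumed flat with connections, it suffices to verify the identity at the chain level using the given connections and then pass to cohomology.

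First, using the connection representatives $\ath(\cH_i) = (\mathrm{id}_{\cH_i}, [\nabla_i, \delta_{\cH_i}])$ from \eqref{eqn: at nabla}, I would equip $\cH := \cH_1 \otimes_{\cO_{\tX}} \cH_2$ with the tensor connection $\nabla := \nabla_1 \otimes \mathrm{id} + \mathrm{id} \otimes \nabla_2$. The graded Leibniz rule then yields
\[
[\nabla, \delta_\cH] = [\nabla_1, \delta_{\cH_1}] \otimes \mathrm{id} + \mathrm{id} \otimes [\nabla_2, \delta_{\cH_2}],
\]
since the cross terms vanish (each $\nabla_i$ only touches the $i$-th factor, and it graded-commutes with $\delta_j$ for $j\ne i$). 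This gives the additive chain-level identity
\[
\ath(\cH_1 \otimes \cH_2) = \ath(\cH_1) \boxtimes \mathrm{id}_{\cH_2} + \mathrm{id}_{\cH_1} \boxtimes \ath(\cH_2)
\]
as morphisms of factorizations for $\tW_1 + \tW_2$, which is the analogue of \eqref{at mult} in this more symmetric setting.

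Next, I would iterate $n$ times. Because the two summands act on disjoint tensor factors they graded-commute, so
\[
\ath^n(\cH_1 \otimes \cH_2) = \sum_{k+l = n} \binom{n}{k}\, \ath^k(\cH_1) \boxtimes \ath^l(\cH_2),
\]
up to signed shuffles inside $(\Omega^{d(\tW_1+\tW_2)}_{\tX})^{\otimes n}$. The crucial combinatorial step is that $\phi_n$ acts on such pure tensors as a shuffle product: for $\alpha \in (\Omega^{d\tW_1}_{\tX})^{\otimes k}$ and $\beta \in (\Omega^{d\tW_2}_{\tX})^{\otimes l}$ one has
\[
\phi_n(\alpha \boxtimes \beta) = \binom{n}{k}^{-1} \phi_k(\alpha) \wedge \phi_l(\beta),
\]
which is the factorization analogue of the combinatorial identity underlying $e^{A+B} = e^A \cdot e^B$. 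Summing over $k$ and cancelling the binomial coefficients gives
\[
\phi_n\bigl(\ath^n(\cH_1 \otimes \cH_2)\bigr) = \phi_n(\ath^n \cH_1) \wedge \phi_n(\ath^n \cH_2),
\]
which is the claim (for $n \geq \dim \tX$ both sides stabilize, matching the Chern character convention).

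The main obstacle will be a careful accounting of Koszul signs throughout: the shuffle identity for $\phi_n$ must simultaneously respect the internal factorization differentials coming from $d\tW_1$ and $d\tW_2$, as well as the anti-symmetrization used to pass from $(\Omega^{d(\tW_1+\tW_2)}_{\tX})^{\otimes n}$ to the folded $\Omega^\bullet_{\tX}$, and these interact with the bipartition $n = k+l$ in a non-trivial way. One expects that, after restricting to an affine étale chart where the factorizations are strictly locally free (using perfectness), the claim reduces to the explicit signed multinomial computation carried out in \cite[Theorem 5.17]{Yu}, and the main new content is verifying that the argument there extends verbatim to the $\cO_{\tX}$-flat (rather than strictly locally free) setting, which the flatness and connection hypotheses guarantee.
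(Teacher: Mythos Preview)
Your approach is essentially the same as the paper's: use the connections $\nabla_i$ to write down explicit chain-level representatives $\ath(\cH_i)=(\mathrm{id},[\nabla_i,\delta_{\cH_i}])$, equip $\cH_1\otimes\cH_2$ with the tensor connection, and then reduce the multiplicativity of $\phi_n(\ath^n)$ to the formal algebraic computation in \cite[Theorem 5.17]{Yu}. The paper's proof is in fact more terse than yours --- it simply records the connection representatives and then says ``the proof follows by a formal algebraic computation as in the proof of \cite[Theorem 5.17]{Yu}'' without spelling out the binomial/shuffle step.

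One small imprecision to fix: your displayed additive identity
\[
\ath(\cH_1\otimes\cH_2)=\ath(\cH_1)\boxtimes\mathrm{id}_{\cH_2}+\mathrm{id}_{\cH_1}\boxtimes\ath(\cH_2)
\]
is not literally correct, since each $\ath(\cH_i)$ already contains an identity component (the $\cO_\tX$-summand of $\Omega^{d\tW_i}_\tX$), so the right-hand side double-counts it. What actually adds is the $\Omega^1$-valued part $[\nabla_i,\delta_{\cH_i}]$; the correct chain-level statement is $\ath(\cH_1\otimes\cH_2)=(\mathrm{id},[\nabla_1,\delta_{\cH_1}]\otimes\mathrm{id}+\mathrm{id}\otimes[\nabla_2,\delta_{\cH_2}])$, exactly as the paper writes. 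Once you phrase it this way the binomial/shuffle bookkeeping goes through as you indicate, and no reduction to a local chart is needed --- the connection hypothesis already gives global representatives.
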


\begin{proof} 
Let $\nabla _i$ be connections of $\cH_i$.
Each sequence  \eqref{jet seq} for $\cH_i$ becomes component-wise split, i.e., there is a 
$\cO _{\tX}$-module homomorphism $(\nabla_i , \mathrm{id} ): \cH_i \to \Omega ^{d\tW} \ot \cH_i [-1] \oplus \cH_i $, which is not necessarily  a
chain map between factorizations.
The module homomorphism amounts to the connection property: $\nabla_i (fs) = \nabla_i (s) + df \wedge s $. 
We have  $\ath \cH_i = (\mathrm{id}, [\nabla_i , \delta _{\cH_i}] ) : \cH_i \to \Omega ^{d\tW} \ot  \cH_i $
and $\ath (\cH _1 \ot^{ } \cH _2) = (\mathrm{id}, [\nabla _1 \ot^{ } \mathrm{id}  + \mathrm{id} \ot^{ } \nabla _2 , \delta _{\cH _1 \ot  \cH _2} ] ) :
  \cH_1 \ot  \cH _2 \to \Omega ^{d\tW} \ot  (\cH_1 \ot  \cH _2) $. 
Now using these expressions of Atiyah classes, the proof follows by a formal algebraic computation as in 
the proof of \cite[Theorem 5.17]{Yu}. 
\end{proof}

Theorem above immediately implies the following.
\begin{Cor} \label{cor: multi of ch}
Let $\cH_1$, $\cH_2$ be two perfect factorization on $\tX$ for $\tW_1, \tW_2$, supported on $\tZ_1$, $\tZ_2$, respectively.
Assume that either (1) $\cH_i$, $i=1, 2$, are $\cO_{\tX}$-flat and allow connections; or (2) $\cH_i$ are virtual factorizations.
Then \begin{eqnarray}\label{eqn: multi of ch} \ch ^{\tX}_{ \tZ_1 \cap \tZ_2} ( \cH _1 \otimes_{\cO_{\tX}} \cH_2 )  = \ch ^{\tX}_{\tZ_1} \cH _1 \wedge  \ch ^{\tX}_{\tZ_2}  \cH_2 . \end{eqnarray}
\end{Cor}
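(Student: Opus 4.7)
The plan is to reduce the statement to Theorem~\ref{thm: mult chern} by arranging both factorizations to be flat and to carry compatible connections, and then exploit the multiplicativity of the supertrace under tensor products.

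First I would dispose of case (2). Given virtual factorizations $\cH_1, \cH_2$, Remark~\ref{rmk: cech vir fact is flat} produces co-quasi-isomorphic replacements $\cH_i \ot \vC(\cO_\tX)$ whose components are $\cO_\tX$-flat and which admit component-wise connections $\nabla_{\vC\cH_i}$. Their tensor product $(\cH_1 \ot \vC(\cO_\tX)) \ot (\cH_2 \ot \vC(\cO_\tX))$ is again flat, is co-quasi-isomorphic to $\cH_1 \ot \cH_2$, and inherits the connection $\nabla_{\vC\cH_1} \ot \mathrm{id} + \mathrm{id} \ot \nabla_{\vC\cH_2}$. Since $\ch^\tX_\tZ$ factors through the co-derived category of factorizations (the construction only used the perfectness of $\cH$) and the supports $\tZ_i$ are preserved under the replacement, this reduces case (2) to case (1).

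Next, in case (1), I would invoke Theorem~\ref{thm: mult chern}, which after stabilizing at $n \ge \dim \tX$ is equivalent to the identity
\[
e^{\ath(\cH_1 \ot \cH_2)} = e^{\ath \cH_1} \ot e^{\ath \cH_2}
\]
of morphisms $\cO_\tX \to \RR\cE nd(\cH_1 \ot \cH_2) \ot \Upsilon(\Omega^\bullet_\tX, d(\tW_1+\tW_2))$. Combined with the elementary identity $\mathrm{str}(f_1 \ot f_2) = \mathrm{str}(f_1)\,\mathrm{str}(f_2)$ for supertraces on tensor products of perfect factorizations, applying $\HH^0\RR\gamma_{(-)}(\mathrm{str} \ot \mathrm{id})$ should yield \eqref{eqn: multi of ch}.

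The main obstacle will be correctly handling the support data. The exterior construction $e^{\ath \cH_1} \boxtimes e^{\ath \cH_2}$ naturally produces a class with support contained in $\tZ_1 \times \tZ_2 \subset \tX \times \tX$, and one must verify that after the K\"unneth isomorphism and diagonal pullback built into the definition \eqref{kunneth} of the wedge product, the resulting class in $H^{even}_{\tZ_1 \cap \tZ_2}(\tX, (\Omega^\bullet_\tX, d(\tW_1+\tW_2)))$ agrees with $\ch^\tX_{\tZ_1 \cap \tZ_2}(\cH_1 \ot \cH_2)$. Concretely, this amounts to tracking the naturality of $\RR\gamma_{(-)}(\mathrm{str} \ot \mathrm{id})$ against the diagonal restriction map and against the internal multiplication on $\Upsilon(\Omega^\bullet_\tX, d(\tW_1+\tW_2))$, which I expect to follow from unpacking the definitions but requires care with the derived local cohomology functors involved.
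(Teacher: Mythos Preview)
Your proposal is correct and follows the same approach as the paper: the paper's proof says only that case~(1) is ``an obvious corollary of Theorem~\ref{thm: mult chern}'' and that case~(2) ``follows from Remark~\ref{rmk: cech vir fact is flat} and the first case,'' which is precisely your reduction strategy. The details you spell out---supertrace multiplicativity on tensor products and tracking the support data through the K\"unneth/diagonal wedge construction~\eqref{kunneth}---are exactly the content implicit in the word ``obvious,'' and your caution about the derived local cohomology bookkeeping is appropriate but does not indicate a gap.
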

\begin{proof} 
The first case is an obvious corollary of Theorem~\ref{thm: mult chern}. 
The second case follows from Remark \ref{rmk: cech vir fact is flat} and the first case. 
\end{proof}

\subsection{Atiyah classes and localized Chern characters in the $C^{\infty}$-case}
Let $\tX$ be a separated, smooth, finite type complex analytic DM stack and 
let $\cF$ be a $C^{\infty}_{\tX}$-module factorization for a function $\tW$ such that each component of $\cF$ as a $C^{\infty}_{\tX}$-module is
a finitely generated locally free module. We will simply say that $\cF$ is a $C^{\infty}_{\tX}$-module matrix factorization for $\tW$. 
For example, one can consider $\cH \ot_{\cO_{\tX}}C^{\infty}_{\tX}$ for any $\cO_{\tX}$-module matrix factorization $\cH$. 
Here again by a $\cO_{\tX}$-module matrix factorization we mean a factorization whose components are $\cO_{\tX}$ vector bundles.
We define the Atiyah class of $\cF$ as follows. First, we replace the $\cO_{\tX}$-factorization $\Omega _{\tX}^{dW}$ for the zero function by the $C^{\infty}_{\tX}$-factorization
\[ \Omega _{\tX, \bar{\partial}}^{dW} :=  \Omega _{\tX}^{dW} \ot _{\cO_{\tX}} \sA ^{0, \bullet}_{\bar{\partial}} \] for the zero function.
Here we omit the folding notation.
We take the $\ZZ/2\ZZ$-graded $\bf k$-vector space
\[ \Omega _{\tX, \bar{\partial}}^{dW}  \ot _{C^{\infty}_{\tX}} \cF [-1] \oplus \cF [-1] . \]
We define its $C^{\infty}_{\tX}$-module structure by
\[ f \cdot (\sigma, s ) := (f \sigma + (\partial f  + \bar{\partial} f )\wedge s , f s ) \text{ for } f \in C^{\infty}_{\tX} , \  (\sigma, s ) \in \Omega ^{d\tW}_{\tX, \bar{\partial}} \ot \cH [-1] \oplus \cH \]
and its $C^{\infty}_{\tX}$-factorization structure for $\tW$
by \[ (\sigma , s) \mapsto (\delta _{ \Omega ^{d\tW}_{\tX, \bar{\partial}}  \ot \cF } (\sigma) + 1\ot s, \delta _\cF (s) ).  \] 
By construction, we see that there is a short exact sequence of factorizations
\begin{eqnarray}\label{Cinfty jet seq} 0 \to  \Omega ^{d\tW}_{\tX, \bar{\partial}}  \ot_{C^{\infty}}  \cF [-1] \to   \Omega ^{d\tW}_{\tX, \bar{\partial}}  \ot_{C^{\infty}}  \cF [-1] \oplus \cF \to \cF \to 0.  \end{eqnarray}
Hence we obtain a class
\[ \hat{at} (\cF) \in  H^0 \RR\Hom (\cF,  \Omega _{\tX, \bar{\partial}}^{dW} \ot _{C^{\infty}_{\tX}} \cF) = \Gamma (\tX, \cE nd (\cF ) \ot  \Omega _{\tX, \bar{\partial}}^{dW} ) \]
corresponding to \eqref{jet seq} which we call the Atiyah class of $\cF$. 
For an  $\cO_{\tX}$-module matrix factorization $\cH$, we can tensor all elements in the construction by the identity to obtain the formula
\begin{equation} \label{eq: smooth atiyah class}
\hat{at} (\cH \ot_{\cO _{\tX}} C^{\infty}_{\tX}) = \hat{at} (\cH )  \ot 1.
\end{equation}

\begin{Rmk} As before 
we may always choose a $C^{\infty}_{\tX}$-connection of $\cF$,
\[ \nabla: \cF \ra \cF \ot _{\cO _{\tX}}  (\mathscr{A}^{1, 0}_{\tX} \oplus \mathscr{A}^{0, 1}_{\tX} ) . \]
Note that the $(1,0)$-part $\nabla ^{1, 0}$ of $\nabla$ provides a component-wise splitting of the exact sequence \eqref{Cinfty jet seq}.
Therefore the Atiyah class is represented by the Dolbeault cocycle
\begin{eqnarray}\label{Cinfty At Dol} & & (\mathrm{id} _\cF , [\nabla ^{1, 0} , \delta _\cF  ] - \bar{\partial}(\nabla ^{1, 0}) )  \\ \nonumber & \in & 
\Gamma (\tX, (\mathscr{A}^{\le 1, \bullet }_{\tX} \ot \cE nd \cF , [\delta _\cF, \ ] + d\tW +   \bar{\partial} ) ) . \end{eqnarray}
\end{Rmk}

We also define $e^{\hat{at} (\cF ) } : = \psi (\hat{at}^n (\cF))$
where $\hat{at}^n (\cF)$ is the composition $\cF \to \Omega _{\tX, \bar{\partial}}^{d\tW}  \ot_{C^{\infty} } \cF     \to ... \to   (\Omega _{\tX, \bar{\partial}}^{d\tW})^{\ot n}  \ot_{C^{\infty}} \cF$.
We need to explain $\psi$.
First consider the natural quotient map from $(\Omega _{\tX, \bar{\partial}}^{d\tW})^{\ot n}$ to $\Sym ^n \Omega _{\tX, \bar{\partial}}^{d\tW} $ which is the top line complex of 
\[ \xymatrix{   
\sA ^{0, \bullet}_{\bar{\partial}}   \ar[r]^{n d\tW} \ar[d]_{=}  & \sA ^{1, \bullet}_{\bar{\partial}} \ar[rr]^{(n-1)d\tW} \ar[d]_{\frac{1}{n}} & & \sA ^{2, \bullet}_{\bar{\partial}}     \ar[r] \ar[d]_{\frac{1}{n (n -1)}} 
&  ... \ar[r]^{2d\tW} &  \sA ^{n-1, \bullet} _{\bar{\partial}} \ar[r]^{d\tW} \ar[d]_{\frac{1}{n(n-1) \cdots 2}} & \sA ^{n, \bullet} _{\bar{\partial}}  \ar[d]_{\frac{1}{n!}} \\
\sA ^{0, \bullet}_{\bar{\partial}}  \ar[r]_{d\tW}  &  \sA ^{1, \bullet}_{\bar{\partial}}  \ar[rr]_{d\tW} 
& &   \sA ^{2, \bullet}_{\bar{\partial}} \ar[r]_{d\tW} &  ... \ar[r]_{d\tW} & \sA ^{n-1, \bullet}_{\bar{\partial}} \ar[r]_{d\tW} & \sA ^{n, \bullet}_{\bar{\partial}} } \]  
The map $\psi$ is defined to be the composition of the cochain maps above, after the folding, tensored with the identity of $\cF$.

If $\cF$ is supported on a closed sublocus $\tZ$  of $\tX$,
 we define the localized Chern character of $\cF$ as
\begin{eqnarray*} \ch ^{\tX}_{\tZ} (\cF)  & = &      \HH^0\RR\gamma_\tZ( \mathrm{str} \ot {\mathrm{id}_{\Upsilon(\sA ^{\bullet, \bullet} _{\tX}, d\tW + \bar{\partial} )}} ) (\psi ( e^{\widehat{at}(\cH)}) ) \\
& \in &   H^{0} _{\tZ} (\tX, \Upsilon(\sA ^{\bullet, \bullet} _{\tX}, d\tW + \bar{\partial} ) ).
                  \end{eqnarray*}
For an $\cO_{\tX}$-module factorization $\cH$ the identification
\begin{equation} \label{eq: smooth chern class}
\ch ^{\tX}_{\tZ} (\cH \ot_{\cO_{\tX} } C^{\infty}_{\tX}) =  \ch ^{\tX}_{\tZ} (\cH )
\end{equation}
follows from \eqref{eq: smooth atiyah class}.

We also have the multiplicative property (corresponding to  Corollary \ref{cor: multi of ch}), whose proof we omit since it is parallel to the given proof.
\begin{Cor} \label{cor: Cinfty multi of ch}
Let $\cF_1$, $\cF_2$ be two $C^{\infty}_{\tX}$ matrix factorization on $\tX$ for $\tW_1, \tW_2$, supported on $\tZ_1$, $\tZ_2$, respectively.
Then $$ \ch ^{\tX}_{\tZ_1 \cap  \tZ_2} ( \cF _1 \otimes_{C^{\infty}_{\tX}} \cF_2 )  = \ch ^{\tX}_{\tZ_1} \cF_1 \wedge  \ch ^{\tX}_{\tZ_2}  \cF_2 . $$
\end{Cor}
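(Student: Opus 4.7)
The plan is to follow the strategy of Corollary \ref{cor: multi of ch} in the first case (factorizations with flat components admitting connections), but transported to the $C^\infty$ setting. The key simplification is that both hypotheses are automatic here: every locally free $C^\infty_{\tX}$-module of finite rank is flat, and every such module admits a $C^\infty_{\tX}$-connection via a partition of unity on $\underline{\tX}^{an}$ (which is Hausdorff paracompact by separatedness, cf.\ the lemma in \S\ref{sec: Dolbeault}). So one does not need the two-case split from Corollary \ref{cor: multi of ch}, nor the \v Cech/Thom--Sullivan workaround of Remark \ref{rmk: cech vir fact is flat}.

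First, I would prove the $C^\infty$ analog of Theorem \ref{thm: mult chern}: for $C^\infty_{\tX}$-matrix factorizations $\cF_1,\cF_2$ for $\tW_1,\tW_2$, the identity
\[
\psi\bigl(\widehat{at}^{\,n}(\cF_1\otimes_{C^\infty_{\tX}}\cF_2)\bigr)
\;=\;
\psi\bigl(\widehat{at}^{\,n}(\cF_1)\bigr)\wedge\psi\bigl(\widehat{at}^{\,n}(\cF_2)\bigr)
\]
holds in the homotopy category of $C^\infty_{\tX}$-factorizations for $\tW_1+\tW_2$, once $n\ge \dim\tX$. Choose $C^\infty$-connections $\nabla_i$ on $\cF_i$ and equip $\cF_1\otimes_{C^\infty_{\tX}}\cF_2$ with the tensor connection $\nabla=\nabla_1\otimes\mathrm{id}+\mathrm{id}\otimes\nabla_2$. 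Using the Dolbeault representative \eqref{Cinfty At Dol}, we get explicit cocycles
\[
\widehat{at}(\cF_i)\;=\;\bigl(\mathrm{id}_{\cF_i},\;[\nabla_i^{1,0},\delta_{\cF_i}]-\bar\partial(\nabla_i^{1,0})\bigr),
\]
and a direct Leibniz-type computation, formally identical to the algebraic calculation of Yu reproduced in the proof of Theorem \ref{thm: mult chern}, yields the multiplicativity of $\widehat{at}$ on tensor products (the Koszul sign convention takes care of the grading).

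Second, once the multiplicativity of $\psi\circ\widehat{at}$ is established, I apply the supertrace map $\mathrm{str}\otimes\mathrm{id}$ from \S\ref{sec:super trace} and the local cohomology functor $\RR\gamma_{\tZ_1\cap\tZ_2}$ to both sides. The supertrace is multiplicative on a tensor product of perfect objects in the standard way (trace of a tensor is the product of traces), so
\[
\mathrm{str}_{\cF_1\otimes\cF_2}\bigl(\xi_1\otimes\xi_2\bigr)\;=\;\mathrm{str}_{\cF_1}(\xi_1)\cdot\mathrm{str}_{\cF_2}(\xi_2),
\]
and combined with the compatibility of local cohomology with intersections of supports (which gives $\RR\gamma_{\tZ_1\cap\tZ_2}\cong\RR\gamma_{\tZ_1}\RR\gamma_{\tZ_2}$) we conclude
\[
\ch^{\tX}_{\tZ_1\cap\tZ_2}(\cF_1\otimes_{C^\infty_{\tX}}\cF_2)\;=\;\ch^{\tX}_{\tZ_1}(\cF_1)\wedge\ch^{\tX}_{\tZ_2}(\cF_2).
\]

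The main obstacle is bookkeeping the Koszul signs in the Leibniz expansion of $[\nabla,\delta_{\cF_1\otimes\cF_2}]^{\wedge n}$ and verifying that the symmetrization map $\psi$ correctly converts the tensor power $(\Omega_{\tX,\bar\partial}^{d\tW})^{\otimes n}$ into the symmetric part $\sA^{n,\bullet}_{\bar\partial}$ compatibly with the tensor product structure. As in Theorem \ref{thm: mult chern}, this reduces to the identity $\sum_{k=0}^n \binom{n}{k}x^{n-k}y^k=(x+y)^n$ read in the symmetric algebra, so no new input beyond the algebraic case is required; the $\bar\partial$-term is inert because it commutes with $\nabla^{1,0}_1\otimes\mathrm{id}+\mathrm{id}\otimes\nabla^{1,0}_2$ up to the same Leibniz rule. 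Compatibility with the algebraic Chern character \eqref{eq: smooth chern class} then gives Corollary \ref{cor: multi of ch} as a special case.
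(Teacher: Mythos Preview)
Your proposal is correct and matches the paper's intended approach: the paper explicitly omits the proof of this corollary, stating only that it is ``parallel to the given proof'' of Corollary~\ref{cor: multi of ch}. You have spelled out precisely what that parallel means---namely, that in the $C^\infty$ setting the flatness and connection hypotheses of case (1) are automatic (via local freeness and partitions of unity), so the argument reduces directly to the $C^\infty$ analog of Theorem~\ref{thm: mult chern} and the formal computation from \cite{Yu}.
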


\subsection{Splitting Principle} \label{subsec: splitting prin}

Let $\tX$ be a separated, smooth, finite type complex analytic DM stack and let $E$ be a holomorphic vector bundle on $\tX$.
We consider the complete flag bundle $\tY$ of $E$. Let $\pi : \tY \to \tX$ be the projection and let
$Fl$ be the complete flag variety of $\CC ^n$ where $n$ is the rank of $E$.

\begin{Lemma} There is a quasi-isomorphism 
\begin{align*} \phi: (\Omega ^{\bullet}_{\tX}, d\tW )\ot _{\CC} (\bigoplus _{p, q} H^p(Fl, \Omega ^q _{Fl} ) [p+q]) \to \RR\pi_* (\Omega ^{\bullet}_{\tY}, \pi^*d\tW )) \end{align*}
in the bounded derived category of coherent sheaves on $\tX$, which extends 
$ \RR\pi_* \xi^*: (\Omega ^{\bullet}_{\tX}, d\tW ) \to \RR\pi_* (\Omega ^{\bullet}_{\tY}, \pi^*d\tW ))$, where  $\xi: \pi^*(\Omega ^{\bullet}_{\tX}, \pi^*d\tW ) \to (\Omega ^{\bullet}_{\tY}, \pi^*d\tW ))$ is the natural map.
\end{Lemma}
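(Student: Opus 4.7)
The plan is to construct $\phi$ using Atiyah classes of tautological line bundles on $\tY$ and then to establish the quasi-isomorphism property via a Hodge filtration argument, reducing the twisted case to the classical untwisted splitting principle. The complete flag bundle $\pi\colon \tY\to\tX$ carries a universal filtration $0\subset \tilde E_1\subset \cdots\subset \tilde E_n=\pi^*E$ with line bundle quotients $\tilde L_i:=\tilde E_i/\tilde E_{i-1}$. I would fix a homogeneous basis $\{\alpha_J\}$ of $\bigoplus_{p,q}H^p(Fl,\Omega^q_{Fl})$ consisting of monomials in the first Chern classes of the tautological line bundles on $Fl$, and for each $J$ lift $\alpha_J$ to the class $\tilde\alpha_J\in \HH^{|\alpha_J|}(\tY,(\Omega^\bullet_{\tY},\pi^*d\tW))$ obtained by replacing each $c_1$ by the corresponding Atiyah class $\widehat{at}(\tilde L_i)$ from Section~\ref{subsec: Atiyah class}; that these classes genuinely land in the twisted Hodge hypercohomology follows since the Atiyah class construction is available for any factorization. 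Each $\tilde\alpha_J$ then yields a morphism $\cO_{\tX}\to \RR\pi_*(\Omega^\bullet_{\tY},\pi^*d\tW)[|\alpha_J|]$ in $D^b(\mathrm{Coh}(\tX))$, and I define $\phi$ on the summand indexed by $J$ as the composition of multiplication by $\xi^*\pi^*\omega$ with this morphism, using the natural $\Omega^\bullet_{\tX}$-module structure on the target.

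That $\phi$ is a chain map with respect to the twisted differentials would follow from $\Omega^\bullet_{\tX}$-linearity: both twisted differentials are given by multiplication by the cocycle $d\tW\in \Omega^1_{\tX}$, which commutes with every $\Omega^\bullet_{\tX}$-linear morphism. To show $\phi$ is a quasi-isomorphism, I would introduce the Hodge filtration $F^p\Omega^\bullet_{\tY}:=\pi^*\Omega^{\ge p}_{\tX}\wedge\Omega^\bullet_{\tY}$ on the target and the analogous filtration on the source. Since $\pi^*d\tW\in\pi^*\Omega^1_{\tX}$, the twisted differential strictly increases this filtration by one, so the induced differential on both associated gradeds vanishes, and $Gr^p$ on the target is identified with $\pi^*\Omega^p_{\tX}\otimes_{\pi^*\cO_{\tX}}\Omega^{\bullet-p}_{\tY/\tX}$. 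Applying $\RR\pi_*$ and the projection formula, the quasi-isomorphism on associated gradeds reduces to the untwisted statement
\[
\cO_{\tX}\otimes_\CC \bigoplus_{p,q}H^p(Fl,\Omega^q_{Fl})[p+q]\xrightarrow{\ \sim\ }\RR\pi_*\Omega^\bullet_{\tY/\tX},
\]
which is the classical Hodge-theoretic splitting principle for flag bundles, established by iterating the projective bundle formula and invoking Deligne's degeneration theorem for smooth projective maps.

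The main obstacle is that Chern-form or \v Cech representatives of the lifts $\tilde\alpha_J$ are not closed under $\pi^*d\tW\wedge$, so $\phi$ cannot be realized naively as a map of cochain complexes on Dolbeault representatives; one must work at the level of the derived category and exploit $\Omega^\bullet_{\tX}$-linearity to package the twist correctly. With this in hand, the filtered quasi-isomorphism criterion (a filtration-preserving morphism that is a quasi-isomorphism on associated gradeds is itself a quasi-isomorphism) reduces the verification to the untwisted case. Finally, the compatibility of $\phi$ with $\RR\pi_*\xi^*$ on the distinguished summand $\alpha_J=1$ is immediate from the construction: in that case $\tilde\alpha_J=1$ acts as the identity, and $\phi(\omega\otimes 1)=\xi^*\pi^*\omega$ recovers the natural pullback.
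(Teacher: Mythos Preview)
Your approach is workable but takes a more elaborate route than the paper, and the ``main obstacle'' you flag is in fact a non-issue in the setting of the lemma.

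Recall that the ambient hypothesis is that $\tX$ is a separated smooth complex analytic DM stack. The paper exploits this directly: take $\bar\partial$-harmonic $(1,1)$-forms $\omega_i$ on $\tY$ representing the Chern classes of the tautological quotient line bundles (e.g.\ curvature forms of Hermitian connections), and choose monomials $\eta_j$ in the $\omega_i$ whose restrictions to a fibre form a basis of the Dolbeault cohomology of $Fl$. Then define $\phi(a\otimes[\eta_j|_{\text{fibre}}])=\pi^*a\wedge\eta_j$. Because each $\eta_j$ is a genuine global $\bar\partial$-closed form and $\pi^*a$ is holomorphic, one has
\[
(\bar\partial+\pi^*d\tW\wedge)(\pi^*a\wedge\eta_j)=\pi^*(d\tW\wedge a)\wedge\eta_j,
\]
so $\phi$ is an honest cochain map at the level of the Dolbeault model of $\RR\pi_*$. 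There is no need to work only in the derived category or to invoke $\Omega^\bullet_{\tX}$-module structures abstractly. The quasi-isomorphism is then checked locally on $\tX$, where $\pi$ trivialises and the statement reduces to the K\"unneth-type identification for a product.

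Your Atiyah-class construction and Leray-filtration reduction to the untwisted splitting principle are correct in outline and would be the natural route in a purely algebraic setting where no global closed-form representatives of Chern classes are available. But here they are unnecessary: the analytic hypothesis gives you exactly the explicit $\bar\partial$-closed representatives that make the direct construction go through, and the local check replaces your appeal to Deligne degeneration plus the projective bundle formula. In short, your diagnosis that ``$\phi$ cannot be realized naively as a map of cochain complexes on Dolbeault representatives'' is mistaken---that is precisely what the paper does.
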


\begin{proof}
Consider $\bar{\partial}$-Harmonic $(1,1)$-forms 
representing the Chern classes of tautological quotient line bundles  on the flag bundle $\tY$. 
We take a suitable collection $\{ \eta _j \}_j$ of the wedge products of $\omega _i$ which,
under the restrction, represents a basis $\{ [\eta _j|_{Fiber}]\}_j$ of the Dolbeault cohomology of a (and hence every) fiber $Fiber$ of $\pi$.
For each integer $k\ge 0$ and $a \in \Omega ^k_{\tX}$, we now define $\phi (a \ot [\eta _j|_{Fiber}] ) = \pi^* a \wedge \eta _j$. This yields a cochain map $\phi$. 
Note that $\phi$ is a quasi-isomorphism since it is so locally where
 $\pi$ is a trivial fibration.
\end{proof}

\begin{Cor}\label{cor: splitting prin} The pullback $\pi^* : \HH^* (\tX, (\Omega ^{\bullet}_{\tX}, d\tW )) \to \HH^* (\tY, (\Omega ^{\bullet}_{\tY}, \pi^*d\tW ))$ 
is injective.
\end{Cor}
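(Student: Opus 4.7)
The plan is to extract injectivity of $\pi^*$ directly from the quasi-isomorphism $\phi$ established in the preceding lemma, by identifying $\pi^*$ with the inclusion of a direct summand into a K\"unneth-type decomposition.

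First, I would apply $\RR\Gamma(\tX, -)$ to the quasi-isomorphism
\[ \phi: (\Omega ^{\bullet}_{\tX}, d\tW )\ot _{\CC} \bigl(\textstyle\bigoplus _{p, q} H^p(Fl, \Omega ^q _{Fl} ) [p+q]\bigr) \xrightarrow{\sim} \RR\pi_* (\Omega ^{\bullet}_{\tY}, \pi^*d\tW )). \]
Since the tensor factor $\bigoplus_{p,q} H^p(Fl,\Omega^q_{Fl})[p+q]$ is a $\CC$-vector space concentrated in various degrees, $\RR\Gamma$ commutes with tensoring by it, so taking hypercohomology on both sides yields an isomorphism
\[ \HH^*(\tX, (\Omega^{\bullet}_{\tX}, d\tW)) \otimes_{\CC} \Bigl(\bigoplus_{p,q} H^p(Fl, \Omega^q_{Fl})\Bigr) \xrightarrow{\sim} \HH^*(\tY, (\Omega^{\bullet}_{\tY}, \pi^*d\tW)). \]

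Next, I would identify where $\pi^*$ sits inside this decomposition. By the Hodge decomposition of the complete flag variety, the summand indexed by $(p,q) = (0,0)$ equals $H^0(Fl,\cO_{Fl}) = \CC\cdot 1$. The lemma explicitly states that $\phi$ extends $\RR\pi_*\xi^*$, meaning that $\phi$ restricted to the tensor subfactor $(\Omega^{\bullet}_{\tX}, d\tW) \otimes \CC \cdot 1$ coincides with $\RR\pi_*\xi^*$. Passing to hypercohomology, $\pi^*$ is therefore identified with the inclusion of the direct summand corresponding to the class $1 \in H^0(Fl,\cO_{Fl})$ inside the above K\"unneth-type sum.

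Since the inclusion of a direct summand of a $\CC$-vector space is always injective, $\pi^*$ is injective. I do not anticipate any serious obstacle: the hard work was done in the preceding lemma, which produces the global splitting. The only point requiring care is that the wedge products $\eta_j$ of Chern forms constructed in that lemma genuinely restrict to a basis of the Dolbeault cohomology of the fibre $Fl$, which is recorded in the lemma's proof, and that among these one may take $\eta_0 = 1$ as the class corresponding to $(p,q)=(0,0)$; with this normalization the identification of $\pi^*$ with the $\eta_0$-summand is immediate from the formula $\phi(a \otimes 1) = \pi^* a$.
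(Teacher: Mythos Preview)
Your proposal is correct and is exactly the intended argument: the paper states this as an immediate corollary of the preceding lemma without further proof, and the details you supply---taking hypercohomology of the quasi-isomorphism $\phi$, using $\RR\Gamma(\tX,\RR\pi_*(-))\simeq\RR\Gamma(\tY,-)$, and identifying $\pi^*$ with the inclusion of the summand indexed by $1\in H^0(Fl,\cO_{Fl})$---are precisely the standard unpacking.
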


The vector bundle $\pi^*E$ has a filtration by vector bundles whose quotients are holomorphic line bundles $L_i$. Since $\Gamma (\tY, L_i \ot_{\cO_{\tY}} C^{\infty}_{\tY})$ is a 
finitely generated projective $C^{\infty} (\tY)$-module by Swan's theorem (\cite{Nes}),
$\pi^*E \ot_{\cO _{\tY}} C^{\infty}_{\tY}$ is isomorphic to $\oplus _i L_i \ot_{\cO_{\tY}} C^{\infty}_{\tY}$
 in the category of $C^{\infty}_{\tY}$ complex vector bundles on $\tY$. 
 Let $\sigma$ be a holomorphic section of $E$ and let $\tau$ be a section of $E^{\vee}$ such that $\lan \tau, \sigma \ran = \tW$. 
 Then $\pi^*\sigma$, $\pi^*\tau$ induce $C^{\infty}$-sections $\sigma _i$ of $L_i\ot C^{\infty}_{\tY}$, $C^{\infty}$-sections $\tau _i$ of $L_i^{\vee}\ot C^{\infty}_{\tY}$.
Hence if $m$ is the rank of $E$, 
the matrix factorization $\pi^* \{ \tau, \sigma \} \ot _{\cO _{\tY}} C^{\infty}_{\tY}  $ is quasi-isomorphic to $\{ \tau_1, \sigma_1\} \ot _{C^{\infty}_{\tY} } ... \ot _{C^{\infty}_{\tY} } \{ \tau_m, \sigma_m\} $
as  $C^{\infty}_{\tY}$-factorizations for the function $\pi^*\tW$.

\begin{Lemma}\label{lem minimal}  
Let $\tZ$ be the zero locus of the section $\tau \oplus \sigma$ of $E^{\vee}\oplus E$ and  let $\tZ_i$ be the zero locus of $\tau _i \oplus \sigma _i$ of $(L_i^{\vee} \oplus L_i)\ot C^{\infty}_{\tY}$. 
We have \begin{align} \label{eqn: splitting prin}
\ch^{\tY}_{\pi ^{-1}\tZ } (\pi^*\{ \tau ,\sigma \} )
=  \wedge _i \ch^{\tY}_{\tZ _i} (\{ \tau _i ,\sigma _i \}  ) \end{align}
in $\HH ^{*}_{\pi^{-1}\tZ} (\tY,  (\Omega ^{\bullet}_{\tY}, \pi^* d\tW ))$.
In particular, every homogeneous component of $\ch ^{\tX}_{\tZ} \{ \tau , \sigma \}$ has degree at least $2\rank E$.
\end{Lemma}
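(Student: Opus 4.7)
The plan is to prove the identity \eqref{eqn: splitting prin} by combining three already-established ingredients and then deduce the degree bound by pulling back to the flag bundle.

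For the main identity, I will first invoke \eqref{eq: smooth chern class} to replace the holomorphic localized Chern character $\ch^{\tY}_{\pi^{-1}\tZ}(\pi^*\{\tau,\sigma\})$ with its $C^{\infty}$ counterpart $\ch^{\tY}_{\pi^{-1}\tZ}(\pi^*\{\tau,\sigma\}\otimes_{\cO_{\tY}} C^{\infty}_{\tY})$. The paragraph preceding the lemma already supplies a $C^{\infty}_{\tY}$-quasi-isomorphism of factorizations for $\pi^*\tW$,
\[
\pi^*\{\tau,\sigma\}\otimes_{\cO_{\tY}} C^{\infty}_{\tY}\ \simeq\ \{\tau_1,\sigma_1\}\otimes_{C^{\infty}_{\tY}}\cdots\otimes_{C^{\infty}_{\tY}} \{\tau_m,\sigma_m\},
\]
which provides the required identification of underlying factorizations. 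Under the isomorphism of $C^\infty$ bundles, the zero locus $\pi^{-1}\tZ$ coincides as a closed topological subset of $\tY$ with $\bigcap_i \tZ_i$, so the support data match. Applying the multiplicativity of the $C^\infty$-localized Chern character, Corollary \ref{cor: Cinfty multi of ch}, then yields
\[
\ch^{\tY}_{\pi^{-1}\tZ}\bigl(\pi^*\{\tau,\sigma\}\otimes C^{\infty}_{\tY}\bigr)\ =\ \bigwedge_i \ch^{\tY}_{\tZ_i}\bigl(\{\tau_i,\sigma_i\}\otimes C^{\infty}_{\tY}\bigr).
\]
A second application of \eqref{eq: smooth chern class} to each factor converts the right-hand side back into $\bigwedge_i \ch^{\tY}_{\tZ_i}\{\tau_i,\sigma_i\}$, giving \eqref{eqn: splitting prin}.

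For the ``in particular'' clause, the argument has two pieces. First I will observe that for each rank-one factorization $\{\tau_i,\sigma_i\}$, every homogeneous component of $\ch^{\tY}_{\tZ_i}\{\tau_i,\sigma_i\}$ has (Hodge) degree at least $2$: since both $\cO$-components of the factorization have rank one, the supertrace of the identity vanishes, so the $\Omega^0_{\tY}$-piece of $\mathrm{str}\bigl(e^{\widehat{at}(\{\tau_i,\sigma_i\})}\bigr)$ already vanishes; as the Chern character lies in the even folding $\Upsilon(\Omega^{\bullet}_{\tY}, d\pi^*\tW)$, the next possible nonzero component lies in $\Omega^2_{\tY}$. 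Wedging $m=\rank E$ such classes therefore yields a class each of whose homogeneous components has degree at least $2\rank E$.

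Combining with \eqref{eqn: splitting prin}, we conclude that $\pi^*\ch^{\tX}_{\tZ}\{\tau,\sigma\}=\ch^{\tY}_{\pi^{-1}\tZ}(\pi^*\{\tau,\sigma\})$ has all homogeneous components of degree at least $2\rank E$; the injectivity of $\pi^*$ on Hodge hypercohomology supplied by Corollary \ref{cor: splitting prin} then transports the degree bound back to $\ch^{\tX}_{\tZ}\{\tau,\sigma\}$ itself. The main technical obstacle I anticipate is verifying rigorously that the set-theoretic identification $\pi^{-1}\tZ=\bigcap_i \tZ_i$ is enough to equate the local cohomologies appearing in the two sides of \eqref{eqn: splitting prin}; the other steps are formal consequences of already-proved multiplicativity and comparison results.
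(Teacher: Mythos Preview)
Your proposal is correct and follows essentially the same route as the paper: both derive \eqref{eqn: splitting prin} from \eqref{eq: smooth chern class}, the $C^\infty$ tensor decomposition on the flag bundle, and Corollary~\ref{cor: Cinfty multi of ch}, and both reduce the degree bound via Corollary~\ref{cor: splitting prin} to showing each rank-one factor contributes degree~$\geq 2$. The only cosmetic difference is in that last step: the paper works with the explicit Dolbeault representative \eqref{Cinfty At Dol} and checks that the $(0,0)$ and $(1,0)$ components of $\mathrm{str}(e^{\widehat{at}})$ vanish (virtual rank zero, supertrace of an odd endomorphism zero), whereas you use the parity shortcut that $\ch\in\HH^{even}$ forces the next nonzero piece after $\HH^0$ into $\HH^{\geq 2}$ --- just be careful that your phrase ``lies in $\Omega^2_{\tY}$'' really means hypercohomological degree~$\geq 2$, not $\Omega$-degree.
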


\begin{proof}
Equation~\eqref{eqn: splitting prin} follows from \eqref{eq: smooth chern class}, Corollary~\ref{cor: Cinfty multi of ch} and the discussion above.
It remains to prove the assertion on the degree. 
By  \eqref{eqn: splitting prin}, Corollary \ref{cor: splitting prin} it is enough to prove that 
every homogeneous component of $\ch^{\tY}_{\tZ _i} (\{ \tau _i ,\sigma _i \} )$ 
has degree at least $2$. For this,
let $\nabla _i$ be a connection on the complex vector bundle $C^{\infty}_{\tY} \oplus L_i^{\vee}$
and let $\delta _i$ be the odd degree differential of $\{ \tau _i, \sigma _i\}$ so that  $\{ \tau _i, \sigma _i\} = (C^{\infty}_{\tY} \oplus L_i^{\vee}, \delta _i)$.
We then consider the expression \eqref{Cinfty At Dol} of the Atiyah class of $\{ \tau _i, \sigma _i\} $.
The virtual rank of $\{ \tau _i, \sigma _i\}$ is zero.  Note also that the super trace of $[ \nabla ^{1,0}_i , \delta _i]$
vanishes since $[ \nabla ^{1,0}_i , \delta _i]$ is of  odd degree. Hence each component of the super trace of $e^{(\mathrm{id} , [\nabla _i^{1, 0} , \delta _i  ] - \bar{\partial}(\nabla _i^{1, 0}) )}$ 
has bidegree at least $(1, 1)$ which is (more than) enough to 
complete  the proof.  
\end{proof}

\section{Thom-Sullivan and Godement Resolutions} \label{sec: TS}
\subsection{The Thom-Sullivan Construction}
We very briefly treat the Thom-Sullivan construction here and prove only the properties we require.  
Please see, for example \cite{HS}, for much more detail.

 Let $\Omega_\bullet$ \label{derham} be the simplicial commutative differential graded (cdg) $\bf{k}$-algebra 
 which sends the $n$-simplex to the algebraic de Rham complex of ${\bf k}[t_0, ..., t_n] / (\sum t_i - 1)$, i.e.,  
 \[
 \Omega[n] := {\bf k}[t_0, ..., t_n, dt_0, ..., dt_n] / (\sum t_i - 1, \sum dt_i ).
 \]
 with the deRham differential $d_{dR}$.
 
  Let $R$ be a ${\bf k}$-algebra and $A$ be a cosimplicial $R$-module.  Let $f$ be a morphism in the simplex category.  The Thom-Sullivan construction associates to $A$ the subcomplex
\begin{align*}
\Thb(A) := \{ (c_i)  \ | \  (A(f) \otimes 1 )(c_n) & = (1 \otimes \Omega(f))(c_m)  \ \forall f \in \rm{Hom}_{\Delta} ([n], [m]) \} \\
&\subseteq  \prod_{n \in \mathbb N} A[n] \otimes_{\bf{k}} \Omega[n].
\end{align*}
As $ \Omega[n]$ is a complex, this amounts to a functor from cosimplicial $R$-modules to complexes of $R$-modules
\begin{align*}
\Thb: \Delta R & \to \mathbf{dg}_{R}.
\end{align*}

Now, let $\cX$ be a Noetherian site over $\Spec \bf k$ and $\cF \in  \Delta_{\cX}$ be a cosimplicial sheaf.  Then we get a presheaf of complexes
\begin{align*}
\Thb: \Delta_{\cX}   & \to \mathbf{dg}_{\cX} \\
U & \mapsto \Thb(\cF(U)).
\end{align*}

\begin{Prop} \label{prop:Thsheaf}
If $\cF$ is a cosimplicial sheaf, then the presheaf $\Thb(\cF)$ is a sheaf.
\end{Prop}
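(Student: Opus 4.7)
The plan is to express $\Thb(\cF)$ as a limit (specifically an equalizer inside a product) of presheaves of the form $\cF[n] \otimes_{\bf k} \Omega[n]$, and then to verify that each such presheaf is already a sheaf. Since arbitrary limits of sheaves (computed in presheaves) are sheaves, the result will follow.

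First, I would write out the defining description: $\Thb(\cF)$ is the equalizer of the two presheaf morphisms
\[
\prod_n \cF[n] \otimes_{\bf k} \Omega[n] \rightrightarrows \prod_{f \in \Hom_{\Delta}([n],[m])} \cF[m] \otimes_{\bf k} \Omega[n]
\]
induced by the two natural ways of using a simplicial map $f$ (via $\cF(f)\otimes 1$ on the one hand, and $1 \otimes \Omega(f)$ on the other). Both products and equalizers are limits, so the claim reduces entirely to showing that each factor $\cF[n] \otimes_{\bf k} \Omega[n]$ is a sheaf on $\cX$.

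For this step I would invoke the Noetherian hypothesis. Every object of a Noetherian site is quasi-compact, so it suffices to check the sheaf axiom for finite covers $\{U_1,\ldots,U_N \to U\}$. Given such a cover, the sheaf axiom for $\cF[n]$ gives an exact sequence of ${\bf k}$-modules
\[
0 \to \cF[n](U) \to \prod_{i=1}^N \cF[n](U_i) \to \prod_{i,j} \cF[n](U_i \times_U U_j).
\]
Now $\Omega[n]$ is a free ${\bf k}$-module (as a polynomial ring modulo linear relations in the $t_i, dt_i$), hence in particular ${\bf k}$-flat, and moreover the tensor product $\bullet \otimes_{\bf k} \Omega[n]$ commutes with the finite products appearing here. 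Tensoring the displayed exact sequence with $\Omega[n]$ therefore yields the sheaf axiom for $\cF[n] \otimes_{\bf k} \Omega[n]$ relative to the cover $\{U_i\}$.

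The main obstacle is essentially bookkeeping: confirming that the Noetherian assumption is strong enough to reduce to finite covers in the site under consideration (the \'etale site of a Noetherian DM stack, as used throughout the paper), and that the flatness/finite-commutation argument above really does apply uniformly across all $n$. Once this is verified, the sheaf property of $\Thb(\cF)$ follows formally from the fact that limits of sheaves, taken inside the category of presheaves, remain sheaves.
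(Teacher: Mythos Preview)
Your proof is correct, but it takes a different route from the paper's. The paper reduces to finite covers via the Noetherian hypothesis (citing Milne), then observes that the sheaf axiom for $\cF$ amounts to an equalizer diagram of cosimplicial $\bfk$-modules; since $\Thb$ is exact by \cite[Corollary 6.12]{HS}, it preserves this finite limit, and the resulting equalizer is precisely the sheaf axiom for $\Thb(\cF)$. In other words, the paper applies $\Thb$ to the sheaf-axiom diagram, whereas you unfold the limit description of $\Thb$ itself and check that each building block $\cF[n]\otimes_{\bfk}\Omega[m]$ is already a sheaf.

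Your argument is more elementary in that it avoids invoking the exactness theorem from \cite{HS}; the flatness (indeed freeness) of $\Omega[n]$ and the finiteness of the covers do the work directly. The paper's argument is shorter once that black box is granted, and it makes transparent why the Noetherian hypothesis is exactly what is needed: $\Thb$ only preserves \emph{finite} limits, so one must reduce the sheaf condition to finite covers. One small point worth making explicit in your write-up: you also need the target of the equalizer, namely $\prod_f \cF[m]\otimes_{\bfk}\Omega[n]$, to be a sheaf, but your same argument (each factor is a sheaf tensored with a free $\bfk$-module, and arbitrary products of sheaves are sheaves) handles this identically.
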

\begin{proof}
Since $\cX$ is Noetherian, by \cite[III. Proposition 3.5]{Milne}, it is enough to check the sheaf axioms on all opens $U$ and all finite covers  $U = \bigcup U_i$.  Then, the sheaf axioms for $\cF$ are equivalent to requiring that
\[
\begin{tikzcd}
\cF(U) \ar[r] & \prod_{i} \cF(U_i)  \ar[r, shift right] \ar[r, shift left] & \prod_{i,j} \cF(U_i \cap U_j)
\end{tikzcd}
\]
is an equalizer diagram for all finite covers.  Since $\Thb$ is exact \cite[Corollary 6.12]{HS}, it preserves finite limits.  Hence
\[
\begin{tikzcd}
\Thb(\cF(U)) \ar[r] & \prod_{i} \Thb(\cF(U_i))  \ar[r, shift right] \ar[r, shift left] & \prod_{i,j} \Thb(\cF(U_i \cap U_j))
\end{tikzcd}
\]
is an equalizer diagram for all finite covers i.e\ $\Thb(\cF)$ is a sheaf.
\end{proof}

Finally, given a cosimplicial complex of sheaves of $\cO_\cX$-modules, we may view it as a complex of cosimplicial sheaves of 
$\cO_\cX$-modules.  As $\Thb$ is a functor, we get a complex of complexes of sheaves of $\cO_\cX$-modules, i.e., 
a double complex of   sheaves of $\cO_\cX$-modules.  Taking the total complex, we obtain a functor
\[
\Thb: \Delta  \mathbf{dg}_{\cO_\cX} \to \mathbf{dg}_{\cO_X}.
\label{Thb}
\]
which we call the {\em Thom-Sullivan functor}.

The following proposition is very well known.  We record its simple proof since it is one of the most crucial aspects of the functor we require.
\begin{Prop} \label{prop: TS cdga}
The Thom-Sullivan functor takes cosimplicial cdg $\cO_X$-algebras to cdg $\cO_X$-algebras, i.e., it restricts to a functor
\[
\Thb: \Delta  \mathbf{cdga}_{\cO_\cX} \to \mathbf{cdga}_{\cO_X}.
\]
\end{Prop}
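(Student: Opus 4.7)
The plan is to transport the cdga structure through the definition of $\Thb$ level by level and then check that the equalizer condition cutting out $\Thb(A)$ inside the product is compatible with multiplication.

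First, recall that for each $n$ the de Rham object $\Omega[n]$ is a cdga over $\bf k$, and by hypothesis each $A[n]$ is a cdga over $\cO_{\cX}$. Since the tensor product of two cdgas is again a cdga (with multiplication $(a\otimes \omega)\cdot(a'\otimes \omega') = (-1)^{|\omega|\,|a'|}(aa')\otimes(\omega\omega')$ and differential $d_A\otimes 1 + 1\otimes d_{dR}$ made into a derivation via the Koszul sign rule), the factor $A[n]\otimes_{\bf k}\Omega[n]$ is a cdga over $\cO_{\cX}$. Forming the product over $n$ preserves this structure, so $\prod_{n\in\NN} A[n]\otimes_{\bf k}\Omega[n]$ is a cdga over $\cO_{\cX}$ with coordinate-wise multiplication and differential.

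Next, I would verify that the subsheaf $\Thb(A)$ is closed under the multiplication and the differential. For any $f\colon [n]\to[m]$ in the simplex category, the cosimplicial structure map $A(f)\colon A[n]\to A[m]$ is a morphism of cdgas by hypothesis, and $\Omega(f)\colon \Omega[m]\to\Omega[n]$ is a morphism of cdgas because $\Omega_\bullet$ is a simplicial cdga. Given $(c_n),(c'_n)\in\Thb(A)$, applying $A(f)\otimes 1$ and $1\otimes\Omega(f)$ to the product $(c_nc'_n)$ and using that both maps are algebra homomorphisms immediately recovers the defining compatibility. Similarly, since each differential is a derivation and commutes with the structure maps, the total differential preserves the equalizer condition. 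Therefore $\Thb(A)$ inherits the structure of a cdga over $\cO_{\cX}$, and the resulting object is a sheaf by Proposition~\ref{prop:Thsheaf}.

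Finally, one should observe that a morphism $A\to A'$ of cosimplicial cdgas induces componentwise a map $A[n]\otimes\Omega[n]\to A'[n]\otimes\Omega[n]$ of cdgas respecting the equalizer conditions, and hence a morphism $\Thb(A)\to\Thb(A')$ of cdgas; this promotes the assignment to a functor as claimed. The only real content is the routine but careful Koszul-sign check that the tensor-product differential on $A[n]\otimes\Omega[n]$ is a derivation for the graded multiplication, which I do not expect to cause difficulty; everything else is formal once $\Omega_\bullet$ is recognized as a simplicial cdga and the Thom-Sullivan complex is identified with an equalizer in the category of cdgas.
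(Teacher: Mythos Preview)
Your proposal is correct and follows essentially the same approach as the paper: observe that $\prod_n A[n]\otimes_{\bf k}\Omega[n]$ is a cdga because each factor is, and then verify that the equalizer condition defining $\Thb(A)$ is stable under multiplication since the cosimplicial maps $A(f)$ and simplicial maps $\Omega(f)$ are algebra homomorphisms. You are slightly more explicit than the paper in mentioning closure under the differential and functoriality, but the core argument is the same.
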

\begin{proof}
Suppose $\cA$ is a cosimplicial cdg $\cO_X$-algebra.  Since $\Omega_\bullet$ is a simplicial cdg $\bf{k}$-algebra we see that
\[
\prod_{n \in \mathbb N} \cA[n] \otimes_{\bf{k}} \Omega[n]
\]
is a cdg $\cO_X$-algebra.  The algebra structure on this product restricts to the  subcomplex $\Thb(A)$ since
\begin{align*}
& \ \ \ \   (\cA(f) \otimes 1)((a \otimes b) \cdot (c \otimes d))& \\
& = \pm (\cA(f) \otimes 1)(ac \otimes bd ) &  \\ 
  & = \pm \cA(f)(a)\cA(f)(c) \otimes bd  & \text{since $\cA$ is a cosimplicial cdga} \\ 
    & = \pm (\cA(f)(a) \otimes b) \cdot (\cA(f)(c) \otimes d ) &  \\ 
        & = \pm (a \otimes \Omega(f)(b) ) \cdot (c \otimes \Omega(f)(d) )  & \text{for } a \otimes b, c\otimes d \in \Thb(\cA) \\ 
                & = (1 \otimes \Omega(f))((a \otimes b) \cdot (c \otimes d))   & \text{since $\Omega_\bullet$ is a simplicial cdga.}
\end{align*}
\end{proof}
The following proposition is known as the ``de-Rham'' theorem.
\begin{Prop} \label{prop: derham}
Integration of algebraic forms induces a quasi-isomorphism \label{Thint}
\[
\int_{\cA} : \Thb \cA : \to   N(\cA).
\]
where $N$ is the functor which associates to a cosimplicial object its normalized cochain complex. \label{normalized}
\end{Prop}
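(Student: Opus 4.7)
The plan is to define the integration map explicitly, check that it lands in the normalized cochain complex and is a morphism of complexes, and then verify the quasi-isomorphism by reducing to a well-understood generating family of cosimplicial objects.

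First I would define $\int_{\cA}$ as follows. For each $n$, write the standard geometric $n$-simplex as $\Delta^n \subset \RR^{n+1}$. Fiberwise integration of top algebraic forms gives a $\bf k$-linear map $\int_{\Delta^n}: \Omega[n] \to \bf{k}[-n]$, extended by zero on forms of lower polynomial degree. Given $(c_n)_n \in \Thb(\cA)$ with $c_n \in \cA[n] \otimes_{\bf{k}} \Omega[n]$, set $\int_{\cA}(c) := \bigl( (\mathrm{id}_{\cA[n]} \otimes \int_{\Delta^n})(c_n) \bigr)_n$, viewed as an element of $\prod_n \cA[n][-n]$. The Thom-Sullivan compatibility condition $(\cA(f) \otimes 1)(c_n) = (1 \otimes \Omega(f))(c_m)$ for coface and codegeneracy maps $f$, together with Stokes' theorem applied to the boundary inclusions of $\Delta^n$, forces $\int_{\cA}(c)$ to be annihilated by every codegeneracy $s^i$ and hence to lie in $N(\cA) \subseteq \prod_n \cA[n][-n]$. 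That $\int_{\cA}$ is a chain map follows from the fact that the total differential on $\Thb(\cA)$ is the sum of the internal differential of $\cA$ and the de Rham differential $d_{dR}$ of $\Omega[\bullet]$: the first matches the differential of $N(\cA)$ under fiberwise integration, and the $d_{dR}$ part contributes $\sum_i (-1)^i d^i$ via Stokes applied to the codifferentials of the simplex, which is exactly the alternating sum defining the differential of $N(\cA)$.

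Next I would prove the quasi-isomorphism. Both $\Thb$ and $N$ are exact functors from $\Delta\mathbf{dg}_{\cO_\cX}$ to $\mathbf{dg}_{\cO_\cX}$ (exactness of $\Thb$ is \cite[Corollary 6.12]{HS}; exactness of $N$ is classical Dold--Kan), so $\int_{\cA}$ is compatible with cones and hence a natural transformation of homological functors. It therefore suffices to verify that $\int_{\cA}$ is a quasi-isomorphism on a class of generators of the derived category of cosimplicial complexes of $\cO_\cX$-modules. A convenient generating family is $\cA = M \otimes_{\bf k} \bf{k}[\Delta^{\le m}]$ for $\cO_\cX$-modules $M$ and the representable cosimplicial sets $[n] \mapsto \mathrm{Hom}_\Delta([m],[n])$, since every cosimplicial object is a colimit of these and both functors preserve the relevant colimits. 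For such $\cA$ the verification reduces to the classical de~Rham theorem for the standard simplex: the cohomology of $\bigl(\Omega[\bullet], d_{dR}\bigr)$ against the cosimplicial structure, integrated over simplices, computes the normalized simplicial cochains of $\Delta^m$, which by acyclicity of $\Delta^m$ agree with $M$ concentrated in degree zero, matching $N(\cA)$.

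The main obstacle will be this last quasi-isomorphism step; the subtlety is that one cannot just apply the pointwise de Rham theorem because the functor $\Thb$ involves infinite products and is a subcomplex cut out by the compatibility conditions. I anticipate handling this cleanly by exhibiting an explicit right inverse built from the Whitney--Dupont elementary forms $\omega_{i_0 \cdots i_k} = k!\sum_j (-1)^j t_{i_j} dt_{i_0} \wedge \cdots \widehat{dt_{i_j}} \cdots \wedge dt_{i_k}$ on each $\Omega[n]$, which is known to give a section of $\int_{\Delta^n}$ satisfying the Thom-Sullivan compatibility for faces and degeneracies; the chain homotopy between the composition and the identity is then given by Dupont's explicit simplicial homotopy operator. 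With this section in hand the quasi-isomorphism for all cosimplicial objects follows by naturality and the exactness of both functors.
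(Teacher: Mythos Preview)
The paper does not actually prove this proposition: its entire proof is ``This is very well known. See for example \cite[Lemma 5.2.8]{HS}.'' Your proposal, by contrast, sketches the substance of that reference --- the classical Dupont simplicial de Rham theorem --- and the outline is correct: define $\int_{\cA}$ via fiberwise integration over $\Delta^n$, use Stokes to check it is a chain map into $N(\cA)$, and establish the quasi-isomorphism using the Whitney elementary forms $\omega_{i_0\cdots i_k}$ as an explicit section together with Dupont's homotopy operator.

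One structural comment: your argument contains two independent strategies for the quasi-isomorphism step. You first set up a reduction to representable generators using exactness of $\Thb$ and $N$, and then separately invoke the Whitney--Dupont section and homotopy. The second strategy already works uniformly for every cosimplicial object $\cA$ (the Whitney section and Dupont homotopy are natural in $\cA$ because they are built from operations on the $\Omega[n]$ side alone), so the generator reduction is unnecessary once you commit to that route. You can streamline by dropping the generator discussion and going directly to the explicit section-plus-homotopy argument, which is in fact how \cite{HS} proceeds.
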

\begin{proof}
This is very well known.  See for example \cite[Lemma 5.2.8]{HS}.
\end{proof}

The Thom-Sullivan construction behaves well with respect to acyclicity properties.  In this regard, we record the following two propositions.
\begin{Prop} \label{prop: TS flasque}
For a topological space $X$, the Thom-Sullivan functor takes cosimplicial  flasque sheaves of ${\bf k}$-vector spaces to  complexes of flasque sheaves of ${\bf k}$-vector spaces.
\end{Prop}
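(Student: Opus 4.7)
My plan is to verify flasqueness of each component of $\Thb \cF^\bullet$ directly from the defining equalizer description. Recall that the degree-$p$ component sits as a subsheaf
\[
(\Thb \cF^\bullet)^p \subseteq \prod_{n \geq 0} \cF^n \otimes_{\bf k} \Omega^p[n],
\]
namely the equalizer of the two maps into $\prod_{f:[n]\to[m]} \cF^m \otimes_{\bf k} \Omega^p[n]$ induced by $\cF^\bullet(f) \otimes \mathrm{id}$ and $\mathrm{id} \otimes \Omega(f)$.

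First, since $\Omega^p[n]$ is a ${\bf k}$-vector space, $\cF^n \otimes_{\bf k} \Omega^p[n]$ is a (possibly infinite) direct sum of copies of $\cF^n$ and is therefore flasque. Arbitrary products of flasque sheaves of abelian groups are flasque (the restriction map is coordinate-wise surjective), so the ambient product $\prod_n \cF^n \otimes_{\bf k} \Omega^p[n]$ is flasque.

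The main task is to lift a section $(c_n)_n \in (\Thb \cF^\bullet)^p(V)$ over $V\subseteq U$ to a compatible family $(t_n)_n$ over $U$. I plan to build the $t_n$ by induction on $n$, using the decomposition $\Omega^p[n] = \Omega^p[n]^{\mathrm{nd}} \oplus \Omega^p[n]^{\mathrm{deg}}$, where the degenerate summand is the sum of the images of the codegeneracies $s_i : \Omega^p[n-1] \to \Omega^p[n]$. The component of $t_n$ in $\cF^n \otimes \Omega^p[n]^{\mathrm{deg}}$ is forced by compatibility with the previously constructed $t_k$ for $k<n$, via the codegeneracy relations; this forced value restricts to that of $c_n$ over $V$ by the inductive hypothesis. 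The component of $t_n$ in $\cF^n \otimes \Omega^p[n]^{\mathrm{nd}}$ may then be chosen as any lift of $c_n$'s non-degenerate component, which is possible since $\cF^n \otimes_{\bf k} \Omega^p[n]^{\mathrm{nd}}$ is flasque.

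The principal subtlety will be verifying that the resulting family $(t_n)_n$ lies in the equalizer, i.e.\ that the compatibility relation holds for \emph{every} cosimplicial morphism $f:[n]\to[m]$, not merely the chosen generators. I plan to handle this by invoking the Eilenberg-Zilber lemma: every morphism in $\Delta$ factors uniquely as a composition of codegeneracies followed by cofaces. The compatibility with codegeneracies is built into the construction by definition of the degenerate summand, and the compatibility with cofaces then reduces, via the simplicial identities, to the inductively established compatibility at lower dimensions together with the equalizer condition for $(c_n)_n$ on $V$. Extending this verification coherently through the induction is the essential obstacle; once done, it shows each $(\Thb \cF^\bullet)^p$ inherits flasqueness, completing the proof.
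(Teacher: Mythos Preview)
Your approach differs substantially from the paper's, which is a one-liner: by \cite[Corollary~6.12]{HS} the functor $\Thb$ is exact on cosimplicial ${\bf k}$-vector spaces, hence preserves surjections; since flasqueness is precisely surjectivity of restriction maps and $(\Thb \cF)(U) = \Thb(\cF(U))$ by construction, flasqueness is preserved in each degree.

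Your inductive lifting is effectively an attempt to reprove this exactness by hand, but the sketch has a genuine gap. The claim that the component of $t_n$ in $\cF^n \otimes \Omega^p[n]^{\mathrm{deg}}$ is ``forced'' by the previously constructed $t_k$ does not follow from the equalizer conditions: for a degeneracy $s^j : [n] \to [n-1]$ the relevant compatibility reads $(\cF(s^j) \otimes 1)(t_n) = (1 \otimes \Omega(s^j))(t_{n-1})$, an equation in $\cF^{n-1} \otimes \Omega^p[n]$ which constrains the image of $t_n$ under the \emph{codegeneracy} $\cF(s^j)$ of $\cF$, not the projection of $t_n$ onto the degenerate summand of $\Omega^p[n]$. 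These are unrelated constraints. Meanwhile the face conditions $(1 \otimes \Omega(d^i))(t_n) = (\cF(d^i) \otimes 1)(t_{n-1})$, which pin down the restriction of $t_n$ to the boundary $\partial\Delta^n$ and are the natural inductive anchor, go entirely unaddressed in your scheme; your appeal to Eilenberg--Zilber factorization does not repair this, since compatibility with cofaces is never established in the first place. A correct hands-on argument would instead exploit the surjectivity of restriction of polynomial $p$-forms from $\Delta^n$ to $\partial\Delta^n$ (the extension property of $\Omega^p_\bullet$), but that is precisely the content underlying \cite[Corollary~6.12]{HS}, so one may as well cite it.
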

\begin{proof}
 By \cite[Corollary 6.12]{HS}, $\Thb$ is exact.  In particular, it preserves surjections.
\end{proof}

\begin{Prop}  \label{prop: TS acyclic}
Let $\emph{\'Et}(\tX)$ be the small \'etale site of a (not necessarily separated) Deligne-Mumford stack $\tX$.  Let $\tZ$ be a closed substack of $\tX$.
\label{Et}
The Thom-Sullivan functor takes cosimplicial $\GammaZ$-acyclic sheaves on $\emph{\'Et}(\tX)$ to  complexes of $\GammaZ$-acyclic sheaves  on $\emph{\'Et}(\tX)$.
\end{Prop}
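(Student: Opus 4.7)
The plan is to resolve the cosimplicial sheaf $\cF$ componentwise by flasque sheaves on the \'etale site and then use the exactness of $\Thb$ together with Proposition~\ref{prop: TS flasque} to produce a flasque resolution of $\Thb^k(\cF)$ for every cohomological degree $k$.

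Concretely, for each cosimplicial degree $n$, I would take the canonical Godement flasque resolution of $\cF[n]$; by functoriality these assemble into a resolution $\cF \to \mathcal{J}^\bullet$ in the abelian category of cosimplicial sheaves on $\mathrm{\acute Et}(\tX)$, each $\mathcal{J}^p$ being a cosimplicial sheaf whose components are flasque. Since $\Thb$ is exact (\cite[Cor.~6.12]{HS}), applying it in each cohomological degree preserves exactness and yields a resolution
\[ 0 \to \Thb^k(\cF) \to \Thb^k(\mathcal{J}^0) \to \Thb^k(\mathcal{J}^1) \to \cdots, \]
while Proposition~\ref{prop: TS flasque} ensures that every $\Thb^k(\mathcal{J}^p)$ is flasque, hence $\Gamma_{\tZ}$-acyclic. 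Thus this sequence computes $R^{\ast}\Gamma_{\tZ}\Thb^k(\cF)$.

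To finish, I would show that $\Gamma_{\tZ}$ commutes with $\Thb^k$: both operations are limits, $\Gamma_{\tZ}$ being the kernel of a restriction map of global sections and $\Thb^k$ being an end over the simplex category whose indexing factors $\Omega[n]^k$ are finite-dimensional ${\bf k}$-vector spaces. Therefore $\Gamma_{\tZ}\Thb^k(\mathcal{J}^\bullet) = \Thb^k(\Gamma_{\tZ}\mathcal{J}^\bullet)$, and combining with exactness of $\Thb$ (which forces $H^i\circ \Thb^k = \Thb^k\circ H^i$) gives, for $i>0$,
\[ R^i\Gamma_{\tZ}\Thb^k(\cF) \;=\; H^i\Thb^k(\Gamma_{\tZ}\mathcal{J}^\bullet) \;=\; \Thb^k\bigl(H^i\Gamma_{\tZ}\mathcal{J}^\bullet\bigr). \]
The cosimplicial sheaf $H^i\Gamma_{\tZ}\mathcal{J}^\bullet$ has its $n$-th term equal to $R^i\Gamma_{\tZ}\cF[n]$, which vanishes for $i>0$ by the assumed $\Gamma_{\tZ}$-acyclicity of each $\cF[n]$. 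Hence $\Thb^k(\cF)$ is $\Gamma_{\tZ}$-acyclic, as desired.

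The main technical obstacle will be the careful verification of the interchange $\Gamma_{\tZ}\circ \Thb^k = \Thb^k\circ \Gamma_{\tZ}$, which relies on the finite-dimensionality of $\Omega[n]^k$ so that tensoring with it is compatible simultaneously with the left-exact functor $\Gamma_{\tZ}$ and with the end (a subsheaf of an infinite product) defining $\Thb$; once this is established, the rest is formal from the exactness of $\Thb$ and Proposition~\ref{prop: TS flasque}.
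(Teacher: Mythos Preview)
Your overall strategy shares its core ingredient with the paper: both arguments establish the interchange $\Gamma_{\tZ}\circ\Thb=\Thb\circ\Gamma_{\tZ}$ and then pass to derived functors using the exactness of $\Thb$. Your justification of the interchange via ``both operations are limits and $\Omega[n]^k$ is finite-dimensional'' is a legitimate alternative to the paper's argument, which instead writes $\Gamma_{\tZ}(\tX,-)$ as the equalizer of $\Gamma_Z(X,-)\rightrightarrows\Gamma_{Z\times_{\tX}Z}(X\times_{\tX}X,-)$ for an atlas $X\to\tX$ and uses that $\Thb$ (being exact) commutes with equalizers, together with Proposition~\ref{prop:Thsheaf}.

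Where your plan has a genuine gap is in Step~2, the appeal to Proposition~\ref{prop: TS flasque}. That proposition is stated and proved for a \emph{topological space}, and its conclusion (``flasque'' in the sense that restriction to every open subset is surjective) is what makes flasque sheaves $\Gamma_{\tZ}$-acyclic there. On the small \'etale site of a DM stack the situation is different: morphisms in the site are \'etale maps, not open immersions, so the naive notion ``all restriction maps are surjective'' is neither the property Godement sheaves visibly enjoy nor one that is known to imply $\Gamma_{\tZ}$-acyclicity in this generality. So as written you cannot conclude that $\Thb^k(\mathcal{J}^p)$ is $\Gamma_{\tZ}$-acyclic, and without that your resolution does not compute $R^i\Gamma_{\tZ}\Thb^k(\cF)$.

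The paper avoids flasqueness entirely. After the interchange it simply writes
\[
\RR\bigl(\Gamma_{\tZ}(\tX,-)\circ\Thb\bigr)\cF
=\RR\bigl(\Thb\circ\Gamma_{\tZ}(\tX,-)\bigr)\cF
=\Thb\circ\RR\Gamma_{\tZ}\cF
=\Thb\circ\Gamma_{\tZ}(\tX,\cF)
=\Gamma_{\tZ}(\tX,\Thb\cF),
\]
invoking \cite[Cor.~6.12]{HS} for the second equality and the assumed acyclicity of $\cF$ for the third. If you want to repair your version rather than switch to this one, the cleanest fix is to replace the Godement resolution by a componentwise \emph{injective} resolution $\cF\to\cI^\bullet$ and observe that each $\Thb^k$, being an end against the finite-dimensional $\Omega[n]^k$, has an exact left adjoint $\cG\mapsto\bigl(\cG\otimes_{\bf k}(\Omega[\bullet]^k)^*\bigr)$ and therefore preserves injectives; then $\Thb^k(\cI^\bullet)$ is an injective resolution of $\Thb^k(\cF)$ and your computation goes through verbatim.
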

\begin{proof}
Let $\cF$ be a cosimplicial $\GammaZ$-acyclic sheaf of ${\bf k}$-vector spaces, let $X \to \tX$ be an atlas, and let 
$Z := \tZ \ti _{\tX} X$. 
We have 
\begin{footnotesize}
\begin{align*}
\Thb \GammaZ(\tX, \cF) & = \Thb  \text{eq}( \Gamma_Z(X, \cF) \rightrightarrows \Gamma_{Z\ti _{\tX} Z} (X \times_{\tX} X, \cF))&  \text{ since }\cF\text{ is a sheaf on \'Et}(\tX)\\
& =  \text{eq}( \Thb \Gamma_Z(X, \cF) \rightrightarrows  \Thb \Gamma_{ Z\ti _{\tX} Z} (X \times_{\tX} X, \cF))&\text{ by \cite[Corollary 6.12]{HS} } \\
& =  \GammaZ( \tX, \Thb  \cF) &   \text{ by Proposition~\ref{prop:Thsheaf}.}&
\end{align*}
\end{footnotesize}
Hence 
\small
\begin{align*} 
\RR ( \GammaZ(\tX , -) \circ \Thb )\cF  & = \RR (\Thb \circ \GammaZ(\tX, -) )\cF  \\
& =  \Thb \circ \RR \GammaZ \cF&  \text{ by \cite[Corollary 6.12]{HS} } \\
                      & =  \Thb \circ \GammaZ (\tX, \cF) &  \text{ as }\cF\text{ is $\GammaZ$-acyclic} \\
                       & =  \GammaZ( \tX, \Thb  \cF). &
\end{align*}
\normalsize
\end{proof}

\subsection{Godement meets Thom-Sullivan}\label{sec: Godement}

Let $\tX$ be a DM stack and $\tZ$ be a closed substack of $\tX$.
Following \cite[Construction 1.31]{Thomason}, we have a canonical cosimplicial Godement resolution $\God\cF$ of any sheaf $\cF$ on  $\text{\'Et}(\tX)$.  \label{God}

\begin{Lemma} \label{lem: God acyclic}
Let $\cF$ be a sheaf of abelian groups on $\tX$.  The cosimplicial sheaf of abelian groups $\God\cF$ is $\Gamma_{\tZ}$-acyclic.
\end{Lemma}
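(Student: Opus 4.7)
The plan is to reduce to the classical fact that flasque sheaves are acyclic for any left-exact functor that is computed via a flasque resolution, applied here to $\Gamma_{\tZ}$.

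First, I would recall from \cite[Construction 1.31]{Thomason} that each cosimplicial level $\God[i]\cF$ is constructed by iterating the functor $\cG \mapsto \prod_{\bar x} (i_{\bar x})_*(i_{\bar x})^*\cG$, where $\bar x$ ranges over geometric points of $\tX$ and $i_{\bar x}: \Spec \bar\kappa(\bar x) \to \tX$. Since a product of pushforwards from points is flasque on $\text{\'Et}(\tX)$ (the restriction map to any \'etale open is surjective, as the sections are determined stalk-wise and one can extend a section by zero on the missing stalks), each $\God[i]\cF$ is a flasque sheaf of abelian groups.

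Second, I would verify that any flasque sheaf $\cG$ on $\text{\'Et}(\tX)$ is $\Gamma_{\tZ}$-acyclic. By definition $\Gamma_{\tZ}(\cG) = \ker(\Gamma(\tX,\cG) \to \Gamma(\tX\setminus\tZ,\cG))$, so one has a left-exact sequence
\[ 0 \to \Gamma_{\tZ}(\cG) \to \Gamma(\tX,\cG) \to \Gamma(\tX\setminus\tZ,\cG) \to 0, \]
where surjectivity of the last map is exactly the flasqueness condition applied to the open immersion $\tX \setminus \tZ \hookrightarrow \tX$. Taking a flasque resolution $\cG \to \cI^\bullet$, the sequences $0 \to \Gamma_\tZ(\cI^n) \to \Gamma(\tX, \cI^n) \to \Gamma(\tX \setminus \tZ, \cI^n) \to 0$ are short exact for every $n$ because each $\cI^n$ is flasque; the associated long exact sequence in cohomology, together with the vanishing of $H^{>0}(\tX, \cI^\bullet)$ and $H^{>0}(\tX\setminus\tZ, \cI^\bullet)$ for flasque resolutions on any site with enough points, forces $R^i\Gamma_\tZ(\cG) = 0$ for $i > 0$.

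Combining these two steps, each term $\God[i]\cF$ is flasque, hence $\Gamma_\tZ$-acyclic, so the cosimplicial sheaf $\God\cF$ is term-wise $\Gamma_\tZ$-acyclic, which is the desired statement. The only genuine subtlety I anticipate is the verification that the skyscraper-like sheaves $(i_{\bar x})_*(i_{\bar x})^*\cG$ and their products are flasque on the \'etale site of a (possibly non-separated) DM stack rather than on a topological space; this reduces to the corresponding statement on an \'etale atlas by standard descent, so I would simply cite or briefly expand on \cite[Construction 1.31]{Thomason} at this point.
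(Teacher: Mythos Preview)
Your argument has a genuine gap in transplanting the topological notion of flasque to the \'etale site. On a topological space, surjectivity of restriction to opens forces $\Gamma$-acyclicity; on $\text{\'Et}(\tX)$ it does not. Already for $\tX = \Spec k$ with $k$ a field, every sheaf trivially has surjective restriction along open immersions (the only opens are $\emptyset$ and $\Spec k$), yet \'etale cohomology of $\Spec k$ is Galois cohomology and is generally nonzero. So the step ``flasque $\Rightarrow H^{>0}(\tX,-)=0$ on any site with enough points'' fails, and with it your long-exact-sequence deduction of $R^i\Gamma_{\tZ}(\cG)=0$. If instead you read ``\'etale open'' as ``arbitrary \'etale map,'' then Godement pieces are not flasque in that sense either: with $\tX=\Spec k$ and $L/k$ finite separable, the restriction $\God[0]\cF(\Spec k)\to\God[0]\cF(\Spec L)$ is the diagonal $\cF_{\bar k}\to\cF_{\bar k}^{[L:k]}$, which is not surjective. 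Either reading leaves a hole. (The paper itself is careful about this: its Proposition on flasqueness under $\Thb$ is stated only for topological spaces, while the DM-stack statement you need is proved by a different route.)

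The paper's proof avoids the word ``flasque'' and works directly with the morphism of topoi $p\colon \tX_{\mathrm{dis}}\to\tX_{\text{\'et}}$ from the discrete topos on the set $P$ of points. One has $\God[0]=p_*p^*$; since $p_*$ is exact and $p^*$ is exact (so $p_*$ preserves injectives), the composite-functor identification gives $\RR\Gamma_{\tZ}(p_*p^*\cF)=\RR\Gamma_{\tZ_{\mathrm{dis}}}(p^*\cF)$, and on a discrete topos $\Gamma_{\tZ_{\mathrm{dis}}}$ is already exact. This yields $\Gamma_{\tZ}$-acyclicity of $\God[0]\cF$ directly, and induction using $\God[n]=(\God[0])^{\circ(n+1)}$ finishes. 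Your instinct that Godement pieces behave ``like flasque sheaves'' is correct in spirit, but on the \'etale site the mechanism is the exactness of $p_*$ together with the triviality of cohomology on the discrete topos, not a restriction-surjectivity property.
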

\begin{proof}
Following Thomason \cite{Thomason}, 
we let $P$ be the set of all points of the topos $\tX_{\text{\'et}}$ associated to $\tX$ in some fixed universe.  Denote by $\tX_{dis}$ the discrete topos indexed by $P$.  There is a natural map of topoi $p: \tX_{dis} \to \tX_{\text{\'et}}$ and $\God[0] = p_*p^*$ by definition.  Hence
\begin{align*}
\RR\Gamma_{\tZ}(\God[0] \cF) & = \RR\Gamma_{\tZ}(p_*p^* \cF) & \\
 & = \RR\Gamma_{\tZ_{dis}}(p^* \cF) & \text{ since }p_*\text{ is exact} \\
  & = \Gamma_{\tZ_{dis}}(p^* \cF) & \text{ since }\Gamma_{\tZ_{dis}}\text{ is exact on a discrete topos.} \\
\end{align*}
Hence $\God[0] \cF$ is acyclic.  As $\God[n] := \overbrace{\God[0] \circ \cdots \circ \God[0]}^{n-\text{times}}$, the result follows by induction on $n$.
\end{proof}

Let $\cF$ be a sheaf of ${\bf k}$-vector spaces and consider it simultaneously as a constant cosimplicial sheaf.  The inclusion of cosimplicial $\cO_\cX$-modules
\[
\iota : \cF \to \God \cF
\]
 induces a map
\begin{equation} \label{eq: Thi}
\Thb(\iota): \cF = \Thb \cF  \to \Thb \God \cF.
\end{equation}
\begin{Lemma}  \label{lem: Th is qi}
The chain map $\Thb(\iota)$ is a quasi-isomorphism.
\end{Lemma}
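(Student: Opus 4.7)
The plan is to reduce the claim to the classical fact that the Godement resolution gives a quasi-isomorphism from $\cF$ to its associated cochain complex, via the de Rham quasi-isomorphism of Proposition~\ref{prop: derham}.

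More precisely, I would first observe that the inclusion $\iota: \cF \to \God\cF$, regarded as a morphism of cosimplicial sheaves of $\bf k$-vector spaces, fits into a commutative square
\[
\begin{tikzcd}
\Thb\cF \ar[r, "\Thb(\iota)"] \ar[d, "\int_{\cF}"] & \Thb\God\cF \ar[d, "\int_{\God\cF}"] \\
N(\cF) \ar[r, "N(\iota)"] & N(\God\cF)
\end{tikzcd}
\]
in which the vertical maps are quasi-isomorphisms by Proposition~\ref{prop: derham} (applied component-wise, since $\Thb$ and $N$ are applied to complexes of cosimplicial sheaves and the diagram is natural in each component). By the two-out-of-three property for quasi-isomorphisms, it suffices to show that $N(\iota)$ is a quasi-isomorphism.

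Next I would identify both sides of $N(\iota)$. Since $\cF$ is viewed as a constant cosimplicial object, $N(\cF)$ is concentrated in degree $0$ where it equals $\cF$. On the other hand, $N(\God\cF)$ is precisely the classical Godement cochain complex $\God[0]\cF \to \God[1]\cF \to \cdots$ obtained by taking the alternating sum of coface maps. Thus $N(\iota)$ is exactly the augmentation map $\cF \to N(\God\cF)$ of the classical Godement resolution. It is a standard fact (see \cite[Construction 1.31]{Thomason}) that this augmentation is a quasi-isomorphism of complexes of sheaves on $\text{\'Et}(\tX)$: indeed the stalk at every geometric point is a contractible simplicial object, and each $\God[n]\cF$ is flasque (cf.\ Lemma~\ref{lem: God acyclic}), so the Godement construction provides a functorial flasque resolution.

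There is essentially no obstacle here: the only subtlety is to confirm that the de Rham integration $\int$ of Proposition~\ref{prop: derham} is natural in the cosimplicial argument, which is immediate from its construction as integration against the fundamental cycle of the standard simplex and is already built into the functoriality of $\Thb$ and $N$ as functors on $\Delta\mathbf{dg}_{\cO_\tX}$. Combining the three quasi-isomorphisms in the square yields that $\Thb(\iota)$ is a quasi-isomorphism, completing the proof.
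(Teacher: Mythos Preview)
Your proof is correct and follows essentially the same approach as the paper: both use the de Rham integration map of Proposition~\ref{prop: derham} together with the fact that $N(\iota)$ is the classical Godement augmentation quasi-isomorphism, then apply two-out-of-three. The paper simplifies your square to a triangle by directly identifying $\Thb\cF = \cF = N(\cF)$ (since $\cF$ is constant cosimplicial), but the argument is otherwise identical.
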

\begin{proof}
There is a commutative diagram
\[
\begin{tikzcd}
\cF \ar[rr, "\Thb(\iota)"] \ar[dr, swap, "N(\iota)"] & & \ThbGod\cF \ar[dl, "{\int_{\God\cF}}"]  \\
& N(\God\cF) & 
\end{tikzcd}
\]
Since $N(\iota)$ and $\int_{\God\cF}$ are quasi-isomorphisms (see \ref{prop: derham}), it follows that $\Thb(\iota)$ is a quasi-isomorphism.
\end{proof}

Let us now gather much of what we have done into a single useful statement.
\begin{Prop} \label{prop:TSG acyclic}
Let $\tX$ be a (not necessarily separated) DM stack over ${\bf k}$ and $\cF$ be a quasi-coherent sheaf of cdg $\cO_{\tX}$-algebras. 
Let $\tZ$ be a closed substack of $\tX$.
The complex of sheaves $\ThbGod \cF$ is a $\GammaZ$-acyclic sheaf of  cdg $\cO_{\tX}$-algebras which is quasi-isomorphic to $\cF$.
\end{Prop}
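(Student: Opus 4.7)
The strategy is simply to assemble the ingredients that have already been built up in the appendix, so the plan is short. First I would verify that $\God\cF$ is naturally a cosimplicial cdg $\cO_{\tX}$-algebra whenever $\cF$ is. This is the only step where something needs to be observed that isn't literally stated as an earlier lemma: the Godement construction $\God[0] = p_*p^*$ (with $p : \tX_{dis} \to \tX_{\text{\'et}}$ as in Thomason's construction) is the composition of adjoints, both of which preserve finite products; hence $\God[0]$ is a symmetric monoidal functor for the tensor product of sheaves of $\cO_\tX$-modules, and the cdga multiplication $\cF \otimes \cF \to \cF$ lifts to a cdga multiplication on $\God[0]\cF$. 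Iterating and functoriality of the cosimplicial structure maps make $\God\cF$ into a cosimplicial object in $\mathbf{cdga}_{\cO_\tX}$.

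Next, Proposition~\ref{prop: TS cdga} applied to $\God\cF$ immediately yields that $\ThbGod\cF$ is a cdg $\cO_\tX$-algebra, and Proposition~\ref{prop:Thsheaf} guarantees it is a genuine sheaf (not merely a presheaf) on the \'etale site of $\tX$.

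For the acyclicity, Lemma~\ref{lem: God acyclic} says each $\God[n]\cF$ is $\Gamma_{\tZ}$-acyclic, so $\God\cF$ is a cosimplicial $\Gamma_\tZ$-acyclic sheaf. Applying Proposition~\ref{prop: TS acyclic} gives that every component of $\ThbGod\cF$ is $\Gamma_\tZ$-acyclic as required.

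Finally, Lemma~\ref{lem: Th is qi} provides a natural quasi-isomorphism $\Thb(\iota) : \cF \to \ThbGod\cF$, which is what we want. The ``main obstacle'' is really the compatibility check in the first step, but since the Godement construction is built from adjoint functors which preserve the relevant limits, it is essentially formal; once that is noted, the rest is assembly.
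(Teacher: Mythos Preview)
Your proposal is correct and follows essentially the same approach as the paper: the paper's proof is a one-line citation of Propositions~\ref{prop: TS cdga} and~\ref{prop: TS acyclic} together with Lemmas~\ref{lem: God acyclic} and~\ref{lem: Th is qi}. You have unpacked this and additionally supplied the observation that $\God\cF$ inherits a cosimplicial cdga structure (needed to invoke Proposition~\ref{prop: TS cdga}), which the paper leaves implicit.
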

\begin{proof}
This follows immediately from Propositions~\ref{prop: TS cdga} and~\ref{prop: TS acyclic} and Lemmas~\ref{lem: God acyclic} and~\ref{lem: Th is qi}.
\end{proof}

Finally, we record the following.
\begin{Lemma} \label{lem: projGod} 
Let $\cF$ be a bounded complex of locally-free $\cO_\cX$-modules of finite rank.   The natural map
\[
(\Thb \God\cO_{\cX} )\otimes_{\cO_{\cX}} \cF \to \Thb \God \cF.
\]
is an isomorphism.
\end{Lemma}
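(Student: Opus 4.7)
The strategy is to reduce to the simplest possible case by additivity and étale-local triviality, then verify the trivial case directly.

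First, I would reduce to $\cF$ being a single sheaf. Since $\Thb$, $\God$ and $-\otimes_{\cO_\cX}\cF$ are all additive functors in each slot, and $\cF$ is bounded, both sides of the natural map are total complexes of double complexes obtained by applying the relevant functors degreewise to $\cF$. It therefore suffices to check the map is an isomorphism when $\cF$ is a single locally-free $\cO_\cX$-module of finite rank placed in one degree.

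Second, I would localize. By Proposition~\ref{prop:Thsheaf} both sides are sheaves on $\mathrm{\acute{E}t}(\cX)$, and being an isomorphism of sheaves is étale-local. Since $\cF$ is locally free of finite rank, étale-locally on $\cX$ we may assume $\cF \cong \cO_\cX^{\oplus n}$.

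Third, I would reduce to $\cF=\cO_\cX$ by verifying that the natural map is compatible with finite direct sums in $\cF$. For this I need that both $\God[i]$ and $\Thb$ commute with finite direct sums. For $\God[i]$: the $0$-th Godement $\God[0]=p_*p^*$ (where $p\colon \cX_{dis}\to \cX_{\acute{e}t}$) commutes with finite direct sums because both $p^*$ and $p_*$ do, and higher Godement is obtained by iterating $\God[0]$. For $\Thb$: it is exact by \cite[Corollary 6.12]{HS}, and on additive categories exact functors preserve finite direct sums. Tensoring with $\cO_\cX^{\oplus n}$ is just $n$-fold direct sum, so both sides split compatibly, and checking the map on each summand amounts to checking it for $\cF=\cO_\cX$.

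Finally, when $\cF=\cO_\cX$ the natural map is the canonical isomorphism $(\Thb\God\cO_\cX)\otimes_{\cO_\cX}\cO_\cX \xrightarrow{\cong} \Thb\God\cO_\cX$, which completes the proof.

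The only point requiring care is the compatibility with direct sums in step three, and specifically that the natural transformation $\God[i]\cO_\cX\otimes_{\cO_\cX}(-)\Rightarrow \God[i](-)$ genuinely becomes an isomorphism on locally-free sheaves of finite rank. This follows from the pointwise description of the Godement construction: a section of $\God[0]\cG$ over an étale open $U$ is a tuple $(s_x)_{x\in P_U}$ with $s_x\in\cG_x$, and tensoring with the free module $\cO_\cX^{\oplus n}$ clearly passes through this description, giving the required isomorphism in each cosimplicial degree; the isomorphism is then preserved by $\Thb$ componentwise.
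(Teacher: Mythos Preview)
Your proof is correct and follows essentially the same route as the paper: localize using Proposition~\ref{prop:Thsheaf} to reduce to $\cF=\cO_\cX^{\oplus n}$, then use that $\God$ and $\Thb$ commute with finite direct sums. The paper phrases the last step slightly differently (both functors are built from limits and colimits, and finite sums are simultaneously finite limits and finite colimits), whereas you argue via the explicit $p_*p^*$ description of $\God[0]$ and the exactness of $\Thb$; both justifications are valid, and your added remarks on the pointwise description of the Godement construction make the argument more explicit than the paper's one-line version.
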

\begin{proof}
This can be checked locally by Proposition~\ref{prop:Thsheaf}.  
Hence, we may assume $\cF = \cO_{\cX}^{\oplus n}$.  Then, it follows from the fact that both $ \God$ and $\Thb$ are obtained as certain a combination of limits and colimits and hence commute with finite sums (which are both finite limits and finite colimits).
\end{proof}

\section{Notational glossary} \label{sec:glossary}
\begin{center}
\begin{longtable}{ l l }
${\bf k}$ & the field of complex numbers, \ \ \pageref{k} \\   
$(V, \kG, \chi, w, \nu)$  & the input GLSM data, \ \  \pageref{sec:input} \\  
$\CR$ & an auxiliary multiplicative group acting on $V$ \ \ \pageref{rcharge} \\
$d_w$ & the pairing of $\CR$ and $\chi$ \ \ \pageref{dw} \\
$P(Y)$ & the vector bundle associated to a $\Gamma$-representation $Y$ \ \ \pageref{pg: P(Y)} \\
 $\cX$ & the target of our theory, \ \  \pageref{cX}  \\ 
$\widehat{G}$ & the group of characters of $G$, \ \ \pageref{characters} \\
$\underline{\cC}$  & the coarse moduli space of a DM stack $\cC$, \ \ \pageref{coarse} \\ 
$\fB_\Gamma$ & a moduli stack of curves with additional bundle data, \ \ \pageref{fB} \\
  $LG(\cX)$ & a moduli space of LG maps to $\cX$, \ \ \pageref{LG}\\  
 $\U$ & a smooth DM stack containing $LG(\cX)$, \ \ \pageref{sec:const U}\\  
 $\pi :\fC \to \fB_{\Gamma}$ & the universal curve \ \ \pageref{pi} \\
$\cV$ & the universal vector bundle on the universal curve, \ \ \pageref{cV} \\
 $[\cA \to \cB]$ & a $\pi_*$-acyclic resolution of $\cV$, \ \ \pageref{cAB} \\
$[A \to B]$ & the pushforward of $[\cA \to \cB]$, \ \ \pageref{AB} \\
$\kb$ & a geometrically realized obstruction theory differential, \ \ \pageref{beta}\\
 $ev_i$ & the evaluation map, \ \ \pageref{sec: ev} \\
 $I{\cX}$  & the inertia stack of $\cX$, \ \ \pageref{inertia} \\    
 $\mathfrak a_\U$  & a class associated to the GLSM data, \ \ \pageref{au} \\      
$\{\tau, \sigma \} $ & the Koszul matrix factorization, \ \ \pageref{KosFact}  \\  
$\Delta\cC$ & cosimplicial objects in a category $\cC$, \ \ \pageref{cosimplicial objects} \\ 
$\mathbf{dg}_{\cO_{\cX}}$ & the category of complexes of $\cO_{\cX}$-modules, \ \ \pageref{sec:TS} \\ 
$\mathbf{cdga}_{\cO_{\cX}}$ & category of cdg ${\cO_{\cX}}$-algebras, \ \ \pageref{sec:TS} \\   
$\Kos$  & the Koszul complex, \ \ \pageref{Kos}\\
$\ka$ & a component of $\mathfrak a_\U$, \ \ \pageref{eqn:an alpha} \\
$\tX$  & a DM stack which is not necessarily separated, \ \ \pageref{tX}\\    
$\mathbb K( \tau, \sigma)$  & the TK factorization associated to $\tau, \sigma$, \ \ \pageref{TK} \\ 
$D(\tX, \tW)$ & the coderived category of $\cO_{\tX}$-module factorizations, \ \ \pageref{derived category} \\
 $\mathbb K(\ka, \kb)$ & the virtual factorization, \ \ \pageref{eq:fund factor}   \\   
 $\Mgr$ & the moduli stack of $n$-pointed genus $g$ stable  curves \ \ \pageref{Mgr} \\
$(\Omega_{I\cX}, dw)$ & the dual of the Koszul complex of $dw$ on $I\cX$, \ \ \pageref{cotangent} \\
$f^*\tau$ &  the pullback of $\tau$           \ \      \pageref{notation: pullback tau} \\
 $\Z$ &  the moduli space of LG maps to the critical locus, \ \ \pageref{Z}\\   
$\inv$  & an automorphism of $I\cX$, \ \ \pageref{inv} \\   
$\tdch$ & the Todd-Chern class of a TK factorization \ \ \pageref{def:tdch}\\    
$[\U]_W^{\vir}$ & the virtual fundamental class \ \ \pageref{def:virclass}\\    
$fgt$ & the forgetful map from $\U$ to $\Mgr$, \ \ \pageref{fgt} \\
$c_i$ & the weights of the $\CC^\times_R$-action, \ \ \pageref{pg: ci} \\
$\int_{\tX}$ & the trace map, \ \ \pageref{eq:def trace} \\
$\mathcal{MV}$ & the Mayer-Vietoris functor, \ \ \pageref{eq: MV} \\
$MV_c$ & compactly supported Mayer-Vietoris complex, \ \ \pageref{eq: MV}  \\
$\gamma_\tZ$ & the subsheaf of sections with supports in $\tZ$, \ \ \pageref{sheafylocalcohom} \\
$H^*_\tZ(\tX, \cF^\bullet)$ & hypercohomology of the complex $\RR\gamma_\tZ \cF^\bullet$ \ \ \pageref{sheafylocalcohom} \\
$\vC$ & the \v Cech functor for an \'etale cover, \ \ \pageref{rmk: Cech functor}\\
$(\Omega_\bullet, d_{dR})$ & the simplicial algebraic deRham complex, \ \ \pageref{derham} \\
$\Thb$  & the Thom-Sullivan functor, \ \ \pageref{Thb}  \\   
$\int_{\cA}$ & integration for a cosimplicial object $\cA$, \ \ \pageref{Thint} \\
$N$ & the normalized cochain complex, \ \ \pageref{normalized} \\ 
$\text{\'Et}(\tX)$ & the \'etale site associated to $\tX$, \ \ \pageref{Et} \\      
$\God$ & the Godement functor, \ \ \pageref{God}   
\end{longtable}
\end{center}


\begin{thebibliography}{99}


\bibitem{AV}  D. Abramovich and A. Vistoli, {\em Compactifying the space of stable maps,} J. Amer. Math. Soc. 15 (2002), no. 1, 27-75.

\bibitem{AGV} D. Abramovich T. Graber A. Vistoli, {\em  Gromov-Witten theory of Deligne-Mumford stacks,} 
Amer. J. Math., Volume 130, Number 5, October 2008, pp. 1337-1398.


\bibitem{BBH-R} C. Bartocci, Claudio, U. Bruzzo, and D. Hern\'andez Ruip\'erez,
{\em Fourier-Mukai and Nahm transforms in geometry and mathematical physics, }
 Progr. Math., 276. Birkh\"auser Boston, Inc., Boston, MA, 2009.




\bibitem{BDFIK12}
M. Ballard, D. Deliu, D. Favero, U. Isik, and L. Katzarkov, {\em Resolutions in factorization categories, }
Adv. Math. 295 (2016), 195-249.



\bibitem{Beh} K. Behrend, {\em Gromov-Witten invariants in algebraic geometry,} Invent. Math. 127 (1997), no. 3, 601-617.



\bibitem{ChKieLi} H. Chang, Y-H. Kiem, J. Li {\em Algebraic virtual cycles for quantum singularity theories} arXiv:1806.00216.

\bibitem{CCK}  D. Cheong, I. Ciocan-Fontanine,  and B. Kim, {\em Orbifold quasimap theory,} Math. Ann. 363 (2015), no. 3-4, 777-816.


\bibitem{CIR}
A. Chiodo, H. Iritani, and Y. Ruan. {\em Landau-Ginzburg/Calabi-Yau correspondence, global mirror symmetry and Orlov equivalence.} Publ. Math. Inst. Hautes \'Etudes Sci. 119.1 (2014): 127-216.


\bibitem{Ci-Ka} I. Ciocan-Fontanine and  M. Kapranov,
{\em Virtual fundamental classes via dg-manifolds,}
Geom. Topol. 13 (2009), no. 3, 1779-1804.

\bibitem{MF} I. Ciocan-Fontanine, D. Favero, J. Gu\'er\'e, B. Kim, and M. Shoemaker, {\em Fundamental factorization of GLSM part I: construction,}
	arXiv:1802.05247. 


\bibitem{CK: bigI}  I. Ciocan-Fontanine and B. Kim, {\em Big I-functions,}  
Development of moduli theory--Kyoto 2013, 323-347, Adv. Stud. Pure Math., 69, Math. Soc. Japan, [Tokyo], 2016.

\bibitem{CKM} I. Ciocan-Fontanine, B. Kim, and D. Maulik, {\em Stable quasimaps to GIT quotients,} J. Geom. Phys. 75 (2014), 17-47.
 






\bibitem{Conrad: more} B. Conrad,  {\em Addendum to Chapter 5 of ``Grothendieck Duality and Base Change''.} \href{http://math.stanford.edu/~conrad/papers/moreexample.pdf}{http://math.stanford.edu/~conrad/papers/moreexample.pdf}.

\bibitem{CLO} B. Conrad, M. Lieblich, and M. Olsson,  {\em Nagata compactification for algebraic spaces,}   J. Inst. Math. Jussieu 11 (2012), no. 4, 747-814.




\bibitem{CM2}  M. Crainic and I. Moerdijk, {\em A homology theory for \'etale groupoids,}  J. Reine Angew. Math. 521 (2000), 25-46.



\bibitem{EHKV} D. Edidin, B. Hassett, E. Kresch, and A. Vistoli, 
{\em Brauer groups and quotient stacks,}
Amer. J. Math. 123 (2001), no. 4, 761-777.

\bibitem{EP}
Efimov, Alexander I., and Leonid Positselski. {\em Coherent analogues of matrix factorizations and relative singularity categories.}
 Algebra Number Theory 9.5 (2015): 1159-1292.


\bibitem{FJR} H. Fan, T. Jarvis, and Y. Ruan, {\em The Witten equation, mirror symmetry, and quantum singularity theory, } 
Ann. of Math. (2) 178 (2013), no. 1, 1-106.


\bibitem{FJR:GLSM} H. Fan, T. Jarvis, and Y. Ruan, {\em A mathematical theory of the gauged linear sigma model,} Geom. Topol. 22 (2018), no. 1, 235-303.



\bibitem{J.Hall} J. Hall, {\em A relative GAGA principle for families of curves,}  J. Lond. Math. Soc. (2) 90 (2014) 29-48.

\bibitem{Hart}
 R. Hartshorne, {\em Algebraic geometry,} Vol. 52. Springer Science \& Business Media, 2013.

\bibitem{Hatcher} A. Hatcher,  {\em Algebraic topology,} Cambridge University Press, Cambridge, 2002.

\bibitem{HS} V. Hinich and V. Schechtman, {\em Deformation theory and Lie algebra homology,} Algebra Colloq., vol. 4 (1997), no. 3, 291-316. 	

 \bibitem{Huyb} D. Huybrechts, {\em Complex geometry,} An introduction. Universitext. Springer-Verlag, Berlin, 2005.
 


\bibitem{Iversen} B. Iversen, {\em Cohomology of sheaves,}  Universitext. Springer-Verlag, Berlin, 1986.


\bibitem{Loj} S. Lojasiewicz, {\em Triangulation of semi-analytic sets,}
Ann. Sc. Norm. Super. Pisa Cl. Sci., Classe di Scienze 3e s\'erie, tome 18, no. 4 (1964), p. 449-474.



\bibitem{KieLi} Y-H. Kiem and J. Li, {\em Quantum singularity theory via cosection localization,} arXiv:1806.00116.







\bibitem{KP} B. Kim and A. Polishchuk, {\em Atiyah class and Chern character for global matrix facorizations,}
arXiv:1907.11441v1.



\bibitem{Milne} J. Milne,
{\em \'Etale cohomology,}  Princeton Mathematical Series, 33. Princeton University Press, Princeton, N.J., 1980.




\bibitem{Nes} J. Nestruev, {\em Smooth manifolds and observables}, Graduate texts in mathematics, 220, Springer-Verlag


\bibitem{Nironi} F. Nironi, {\em Grothendieck duality for Deligne-Mumford stacks,} preprint arXiv:08111955 (2008).




\bibitem{Kirwan} F. Kirwan, {\em Partial desingularisations of quotients of nonsingular varieties and their Betti numbers,}  Ann. of
Math. (2) 122 (1985), no. 1, 41-85.


\bibitem{Pan18}
R. Pandharipande, {\em Cohomological field theory calculations,}
Proceedings of the International Congress of Mathematicians--Rio de Janeiro 2018. Vol. I. Plenary lectures, 869?898, World Sci. Publ., Hackensack, NJ, 2018.


 
\bibitem{PV:MF} A. Polishchuk and A. Vaintrob, {\em Matrix factorizations and cohomological field
theories,} J. Reine Angew. Math. 714 (2016), 1-122.


\bibitem{RS} J. Robbin and D. Salamon, {\em Differential Topology}, ETH, Lecture Notes, preliminary version (2011).
\href{https://people.math.ethz.ch/~salamon/PREPRINTS/difftop.pdf}{https://people.math.ethz.ch/~salamon/PREPRINTS/difftop.pdf.}




\bibitem{Thomason} R. Thomason, {\em Algebraic K-theory and \'etale cohomology,} Ann. Sci. \'Ec. Norm. Sup\'er. (4) 18 (1985), no. 3, 437-552.

\bibitem{Toen} B. Toen, {\em  K-theory and cohomology of algebraic stacks: Riemann-Roch theorems, D-modules and GAGA theorems,}
     \href{https://arxiv.org/abs/math/9908097}{arXiv:math/9908097}.

\bibitem{Tseng} H-H. Tseng, {\em Orbifold quantum Riemann-Roch, Lefschetz, and Serre,}  Geom. Topol. 14 (2010), no. 1, 1-81.

\bibitem{Warner} F. Warner, {\em Foundations of differentiable manifolds and Lie groups,}  Corrected reprint of the 1971 edition. Graduate Texts in Mathematics, 94.



\bibitem{Yu} X. Yu, {\em Chern character for matrix factorizations via Chern-Weil,}
J. Algebra 424 (2015) 416-447.

\end{thebibliography}
\end{document}